\SetMathAlphabet{\mathcal}{normal}{OMS}{lmsy}{m}{n}
\SetMathAlphabet{\mathcal}{bold}{OMS}{lmsy}{m}{n}
\newcommand{\BCol}{\mathcal{B}}
\newcommand{\SDis}{\mathcal{D}}
\DeclareMathOperator{\strat}{\circ}
\newcommand{\Ft}{\mathcal{F}}
\newcommand{\ft}[1]{\widehat{#1}}
\newcommand{\Meas}{\mathcal{M}}
\newcommand{\LStrat}{\mathcal{L}}
\newcommand{\Renorm}{\Gamma}
\newcommand{\dblbrak}[2]{\left\langle\mkern-4mu\left\langle #1,#2 \right\rangle\mkern-4mu\right\rangle}
\newcommand{\R}{\mathbb{R}}
\newcommand{\Rd}{\mathbb{R}^d}
\DeclareMathOperator{\Div}{\mathrm{div}}
\newcommand{\1}{\mathbbm{1}}
\newcommand{\F}{\mathcal{F}}
\newcommand{\N}{\mathbb{N}}
\newcommand{\ep}{\epsilon}
\newcommand{\tensor}{\otimes}
\renewcommand{\S}{\mathbb{S}\,}
\newcommand{\E}{\mathbb{E}}
\renewcommand{\P}{\mathbb{P}}
\newcommand{\p}{\mathbb{P}}
\newcommand{\leqc}{\lesssim}
\newcommand{\leqs}{\lesssim}
\newtheorem{thm}{Theorem}[section]
\newtheorem{prop}[thm]{Proposition}
\newtheorem{cor}[thm]{Corollary}
\newtheorem{lem}[thm]{Lemma}
\newtheorem{hyp}{Hypothesis}[section]
\theoremstyle{definition}
\newtheorem{definition}{Definition}[section]
\theoremstyle{remark}
\newtheorem{remark}{Remark}[section]
\newcommand{\loc}{\mathrm{loc}}
\newcommand{\w}{\mathrm{w}}
\newcommand{\LLL}[2]{L_{t}^{#1}(L_{x,v}^{#2})}
\newcommand{\LLs}[1]{L_{x,v}^{#1}}
\newcommand{\LLt}[1]{L_{t,x}^{#1}}
\newcommand{\LLLs}[1]{L_{t,x,v}^{#1}}
\newcommand{\CL}[1]{C_{t}(L_x^{#1})}
\newcommand{\CLw}[1]{C_{t}([L_x^{#1}]_{w})}
\newcommand{\CLLw}[1]{C_{t}([L_{x,v}^{#1}]_{w})}
\newcommand{\CLL}[1]{C_{t}(L_{x,v}^{#1})}
\newcommand{\LLsM}[1]{L_{t,x}^{#1}(\Meas_{v})}
\newcommand{\LLsMw}[1]{L_{t,x}^{#1}(\Meas_{v}^{*})}
\newcommand{\MMMsw}[1]{ \Meas_{t,x,v}^{*} }
\newcommand{\MMMs}[1]{ \Meas_{t,x,v} }
\newcommand{\dee}{\mathrm{d}}
\newcommand{\ds}{\dee s}
\newcommand{\dt}{\dee t}
\newcommand{\dv}{\dee v}
\newcommand{\dw}{\dee w}
\newcommand{\dx}{\dee x}
\newcommand{\dxi}{\dee \xi}
\newcommand{\dz}{\dee z}
\newcommand{\dr}{\dee r}
\begin{document}

\title{On the Boltzmann Equation with\\
  Stochastic Kinetic Transport: \\
  Global Existence of\\
  Renormalized Martingale Solutions}
\author{
  Samuel Punshon-Smith\thanks{University of Maryland, College Park - \url{punshs@math.umd.edu}}\\
  \and
  Scott Smith\thanks{Max Planck Institute for Mathematics in the Sciences, Leipzig - \url{ssmith@mis.mpg.de}}}

\date{\today}

\maketitle

\begin{abstract}
This article studies the Cauchy problem for the Boltzmann equation with stochastic kinetic transport. Under a cut-off assumption on the collision kernel and a coloring hypothesis for the noise coefficients, we prove the global existence of renormalized (in the sense of DiPerna/Lions \cite{diperna1989cauchy}) martingale solutions to the Boltzmann equation for large initial data with finite mass, energy, and entropy.  Our analysis includes a detailed study of weak martingale solutions to a class of linear stochastic kinetic equations.  This study includes a criterion for renormalization, the weak closedness of the solution set, and tightness of velocity averages in $L^1$.
\end{abstract}
\newpage
\tableofcontents
\newpage

\section{Introduction}\label{sec:Intro}
The Boltzmann equation
\begin{equation}\label{eq:Boltzmann-Deterministic}
  \begin{aligned}
    &\partial_t f + v\cdot\nabla_x f + \Div_v(\mathcal{X} f) = \BCol(f,f),\\
    &f|_{t=0} = f_0,
  \end{aligned}
\end{equation}
on $[0,T]\times \R^{2d}$ is a nonlinear integro-differential equation describing the evolution of a rarefied gas, dominated by binary collisions, and in the presence of a external force field $\mathcal{X}$. The function $f(t,x,v)\in \R$ describes the density of particles at time $t\in[0,T]$, position $x\in\Rd$, with velocity $v\in \Rd$, starting at $t=0$ from an initial density $f_0(x,v)$. The nonlinear functional $f \mapsto \BCol(f,f)$, known as the collision operator, acts on the velocity variable only, and accounts for the effect of collisions between pairs of particles; it will be described in more detail below.
 
Several studies have been conducted regarding the well-posedness of the Cauchy problem for the Boltzmann equation (\ref{eq:Boltzmann-Deterministic}) with a fixed (deterministic) external force, for instance \cite{Asano1987-dr,Bellomo1989-wg,Duan2006-ie,ukai2005global}. In general, the external force field $\mathcal{X}$ may depend on $(t,x,v)\in \R\times\Rd\times\Rd$. Such external forces may arise when considering the influence of gravity such as in the treatment of the Rayleigh-Benard problem in the kinetic regime \cite{esposito1998solutions,arkeryd2010stability}. In fact, many external forces are not fixed, and are instead coupled with the density $f$ in a self consistent way. This is the case, for example, with the Vlasov-Poisson-Boltzmann and Vlasov-Maxwell-Boltzmann equations (see \cite{Cercignani1988-aw,Lions1994-jf} and references therein for more details on these systems).

This article focuses instead on the Cauchy problem for the Boltzmann equation with {\it random} external forcing.  In particular, we are interested in the following SPDE
\begin{equation}\label{eq:general-stochastic-Boltz}\tag{SB}
\begin{aligned}
 &\partial_t f + v\cdot \nabla_x f + \Div_v (f\sigma_{k}\strat \dot{\beta}_{k}) = \BCol(f,f),\\
 &f|_{t=0} = f_0,
\end{aligned}
\end{equation}
where $\{\beta_k\}_{k\in\N}$ are one-dimensional Brownian motions and $\{\sigma_k\}_{k\in\N}$ are a family of vector fields $\sigma_{k}:\R^{2d} \to \R^{d}$ with $\Div_{v}\sigma_{k}=0$.  An implicit summation is taken over $k \in \N$, and the expression $\Div_v(f\sigma_k\strat \dot{\beta}_{k})$ denotes a transport type multiplicative noise, white in time and colored in $(x,v)$, where the product $\circ$ is interpreted in the Stratonovich sense. 
 
Physically, we view the quantity
\begin{equation}
(t,x,v) \mapsto \sum_{k \in \N}\sigma_{k}(x,v)\dot{\beta}_{k}(t)
\end{equation}
as an environmental noise acting on the gas. In the absence of collisions, all particles evolve according to the stochastic differential equation
\begin{equation}\label{eq:SDE_Characteristics}
  \dee X_t = V_t\dt,\quad \dee V_t = \sum_{k\in\N}\sigma_k(X_t,V_t)\circ \dee\beta_k(t)
\end{equation} 
and are only distinguished from one another according to their initial location in the phase space. Let $\Phi_{s,t}(x,v)$ be the stochastic flow associated with the SDE (\ref{eq:SDE_Characteristics}), that is, $t\mapsto \Phi_{s,t}(x,v) = (X_t,V_t)$ solves (\ref{eq:SDE_Characteristics}) and satisfies $\Phi_{s,s}(x,v) = (x,v)$. The Stratonovich form of the noise and the fact that $\Div_v \sigma_k =0$ ensures that the flow $\Phi_{s,t}$ is volume preserving (with probability one). The density of the collision-less gas is then given by $f_t(x,v) = f_0(\Phi_{0,t}^{-1}(x,v))$ and evolves according to the free stochastic kinetic transport equation
\begin{equation}\label{eq:free-stochastic-transport}
\begin{aligned}
  &\partial_t f + v\cdot\nabla_x f + \Div_v(f \sigma_k\circ \dot{\beta}_k) = 0,\\
  &f|_{t=0} = f_0.
\end{aligned}
\end{equation}

The presence of collisions interrupts the stochastic transport process. In the low volume density regime, binary collisions are dominant and can be described by the Boltzmann collision operator $\mathcal{B}(f,f)$. The stochastic Boltzmann equation (\ref{eq:general-stochastic-Boltz}) accounts for both stochastic transport and binary collisions. In fact, formally (\ref{eq:general-stochastic-Boltz}) can be written in mild form,
\begin{equation}
  f_t = f_0\circ \Phi^{-1}_{0,t} + \int_0^t \mathcal{B}(f_s,f_s)\circ \Phi^{-1}_{s,t}\ds.
\end{equation}

The stochastic Boltzmann equation (\ref{eq:general-stochastic-Boltz}) can be interpreted as the so-called Boltzmann-Grad limiting description of interacting particles subject to the {\it same} environmental noise. In the deterministic setting, the Boltzmann-Grad problem has been studied extensively in the literature (see \cite{gallagher2013newton} for a recent review). In the stochastic setting, the Boltzmann-Grad problem has (to our knowledge) not yet been studied. However, a mean field limit to the Vlasov equation with stochastic kinetic transport has been shown recently by Coghi and Flandoli \cite{coghi2014propagation}.

To our knowledge, this is the first study to obtain mathematically rigorous results on the Boltzmann equation with a random external force. However, a number of results on the {\it fluctuating Boltzmann equation} are available in the Math and Physics literature \cite{bixon1969boltzmann,fox1970contributions,montroll2012fluctuation,spohn2012large,spohn1981fluctuations,spohn1983fluctuation,ueyama1980stochastic}. In particular, the articles of Bixon/Zwanzig \cite{bixon1969boltzmann} and Fox/Uhlenbeck \cite{fox1970contributions} outline a formal derivation of Landau and Lifshitz's equations of fluctuating hydrodynamics \cite{landau1959fluid}, from the fluctuating linear Boltzmann equation. The connection with macroscopic fluid equations arises from studying the correlation structure of the fluctuations at the level of the kinetic description. A more rigorous treatment of the fluctuation theory for the Boltzmann equation and its connection to the Boltzmann-Grad limit is given by Spohn \cite{spohn2012large,spohn1981fluctuations,spohn1983fluctuation}. 

Although our perspective differs from that of \cite{fox1970contributions} and \cite{bixon1969boltzmann}, we do expect to obtain various stochastic hydrodynamic equations (with colored noise) in different asymptotic regimes, using a Chapman-Enskog expansion and the moments method of Bardos/Golse/Levermore \cite{bardos1991fluid}.  In fact, one of the original motivations for this article was to understand which of the common forms of noise in the stochastic fluids literature can be obtained by considering fluctuations of the stochastic kinetic description relative to an equilibrium state. This will be addressed in detail in future works. 

The goal of this article is to investigate global solutions to \eqref{eq:general-stochastic-Boltz} starting from general `large' initial data $f_0 \in L^1(\R^{2d})$. If the noise coefficients $\sigma$ are identically zero, then this problem has already been addressed in the seminal work of DiPerna/Lions \cite{diperna1989cauchy}, where existence of renormalized solutions is proved.  Our work is heavily inspired by \cite{diperna1989cauchy}, relying on a number of their insights together with various classical properties of the Boltzmann equation.  Rather than give a detailed review, in the next subsection we will explain how these observations from the deterministic theory lead to the notion of renormalized martingale solution to \eqref{eq:general-stochastic-Boltz} in the present context.  Finally, we should mention that our initial motivation for the choice of noise was heavily inspired by a number of interesting works on stochastic transport equations (see for instance  \cite{delarue2014noise,flandoli2011interaction,flandoli2010well,fedrizzi2016regularity}). Finally, we should mention the work \cite{brzezniak2016existence} on the 2-d stochastic Euler equations with a very similar noise to the one in this paper. 

\subsection{Notation and statement of the main result}

We begin by giving a summary of the main notation used in the paper. To simplify the appearance of the function spaces, we will use a number of abbreviations. Since the functions $f(t,x,v)$ we are dealing with are typically kinetic densities the variables $x$ and $v$ will be reserved for positions and velocity variables, while $t$ will be reserved for time. We will typically fix an arbitrary but finite time $T>0$ and consider evolution on the time interval $[0,T]$ and denote by $\LLL{q}{p}$ the space $L^q([0,T]; L^p(\R^{2d}))$ of real valued functions $f(t,x,v)$ on $[0,T]\times\R^{2d}$ with norm
\begin{equation}
  \|f\|_{\LLL{q}{p}} := \left(\int_0^T\left(\iint_{\R^{2d}}|f(t,x,v)|^p\dv\dv\right)^{q/p}\dt\right)^{1/q},
\end{equation}
with the usual interpretation in terms of the essential supremum when $p$ or $q$ are $\infty$. Naturally, $\LLLs{p}$ is also short for the Lebesgue space $L^p([0,T]\times\R^{2d})$. A  similar subscript notation will hold for Sobolev spaces, namely $W^{s,p}_{x,v}$ will denote the space of functions $f\in L^p_{x,v}$ whose $s$-th order derivatives also belong to $L^p_{x,v}$. Given a test function $\varphi \in C^{\infty}_c(\R^{2d})$ and a function $f$ in $L^p_{x,v}$ we will often denote the pairing by
\begin{equation}
  \langle f, \varphi\rangle := \iint_{\R^{2d}} f\, \varphi \,\dv\dx.
\end{equation}
This should, however, should not be confused with the velocity `average'
\begin{equation}
  \langle f\rangle := \int_{\R^d} f\dv,
\end{equation}
which integrates in $v$ only.

In general, given a topological space $U$, with a particular topology, we denote by $C_t(U)$ the space of continuous functions $f:[0,T]\to U$. Furthermore, given a Banach space $B$, we denote the space endowed with its weak topology by $[B]_{\w}$, and therefore space of weakly continuous functions from $[0,T]$ to $B$ is $C_t([B]_w)$. Also, given a Lebesgue space $L^p_{x,v}$ (or Sobolev Space), we denote $[L^p_{x,v}]_{\loc}$ the usual Frechet space of locally integrable functions endowed with it's natural local topology.

We should also introduce some probablisitic notation. For a given probability space $(\Omega, \mathcal{F}, \P)$, we let $\E$ be the expectation associated to the probability measure $\P$. Also, given a Banach space $B$ (not necessarily separable) with norm $\|\,\cdot\,\|_B$, we will denote by $L^p(\Omega; B)$, $p\in [1,\infty]$ the Lebesgue-Bochner space of equivalence classes of strongly measurable maps $f:\Omega \to B$ (random variables) such that $\E\|f\|_B^p$ is finite. We will often use the space
\begin{equation}
L^{\infty-}(\Omega;B) := \bigcap_{p \geq 1} L^p(\Omega;B).
\end{equation}
cosisting of random variables all moments finite (but not $L^\infty$). We will often suppress the dependence of a random variable on the probablistic variable $\omega \in \Omega$, except when explicitly needed.

Let us now discuss the basics of the Boltzmann equation and introduce the analytical framework for the problem. We refer the reader to the books \cite{Cercignani1988-aw,Cercignani2013-vz} and the excellent set of notes \cite{Golse2005-zh} for a comprehensive introduction to the Boltzmann equation, as well as the review \cite{villani2002review}. The collision operator $\mathcal{B}(f,f)$ describes the rate of change in particle density due to collisions. It contains all the information about collision rates between particles with different velocities. More precisely, it is defined through its action in $v$ as
\begin{equation}\label{eq:Boltzmann-Coll-Def}
  \BCol(f,f)(v) = \iint_{\R^d\times\S^{d-1}} (f^\prime f^\prime_* - ff_*)(v,v_*,\theta)b(v-v_*,\theta)\dee\theta\dee v_*,
\end{equation}
where $f_*$, $f^\prime$, and $f^\prime_*$ are shorthand for $f(v_*) , f(v^\prime)$, and $f(v^\prime_*)$, while $(v^\prime, v^\prime_*)$ denote pre-collisional velocities
\begin{equation}
\begin{cases}
v^\prime = v - (v-v_*)\cdot\theta\,\theta\\
v_*^\prime = v_* +(v-v_*)\cdot\theta\,\theta.
\end{cases}
\end{equation}
Note that $(v^\prime, v^\prime_*)$, parametrized by $\theta\in \S^{d-1}$, are solutions to the equations describing pairwise conservation of momentum and energy,
\begin{equation} \label{eq:ConservationLaws}
  \begin{aligned}
    v^\prime + v^\prime_* &= v + v_*\\
  |v^\prime|^2 + |v^\prime_*|^2 &= |v|^2 + |v_*|^2.
  \end{aligned}
\end{equation}
The collision kernel $b(v-v_*,\theta) \geq 0$ is determined by details of the inter-molecular forces between particles and describes the rate at which particles with relative velocity $v-v_*$ collide with deflection angle $\theta\cdot (v-v_*)/|v - v_*|$. In this article, for technical reasons and simplicity of exposition, we restrict our attention to bounded, integrable kernels, though we intend to investigate (in a future work) the possibility of treating more singular kernels as in Alexandre/Villani \cite{alexandre2002boltzmann} and other works. Our assumption on the collision kernel is the following:
\begin{hyp}\label{hyp:Collision-Kernel}
The collision kernel $b(z,\theta)$ depends solely on $|z|$ and $|z\cdot\theta|$ only, and satisfies,
  \begin{equation}
    b \in L^1(\R^{2d}\times \S^{d-1})\cap L^\infty(\R^{2d}\times \S^{d-1}). 
  \end{equation}
\end{hyp}
Since the nonlinear term $\BCol(f,f)$ is quadratic in $f$, further properties of the operator must be exploited in order to obtain a priori bounds. A classical observation is that the symmetry assumptions on the collision kernel $b$ imposed in Hypothesis \ref{hyp:Collision-Kernel} and the definition of $(v^\prime, v^\prime_*)$ imply that for each smooth $\xi: \Rd \to \R$,
\begin{equation}\label{eq:symmetry_identity}
\begin{aligned}
&\int_{\R^{d}}\xi(v)\mathcal{B}(f,f)\dv\\ 
&\hspace{.5in}= \frac{1}{4}\iiint_{\R^{2d}\times\S^{d-1}}(f^\prime f^\prime_* - ff_*)(\xi_{*}+\xi-\xi_{*}'-\xi')\,b(v-v_*,\theta)\,\dee\theta\dv_*\dv.
\end{aligned}
\end{equation} 
Any quantity $\xi(v)$ such that $\xi_* + \xi  = \xi^\prime_* + \xi^\prime$, is called a collision invariant. For any collision invariant $\xi(v)$, \eqref{eq:symmetry_identity} implies that
\begin{equation}
  \int_{\Rd} \xi(v) \BCol(f,f)(v)\dv = 0.
\end{equation} 
As a result of the definition of $(v^\prime,v^\prime_*)$, the quantities $\{1, \{v_{i}\}_{i=1}^d, |v|^{2}\}$ are collision invariants. Therefore, multiplying both sides of (\ref{eq:general-stochastic-Boltz}) by a collision invariant and integrating in $v$, the collision operator vanishes
\begin{equation}\label{eq:Collision_Invariant_Evol}
  \partial_t\Big(\int_{\Rd} \xi(v) f\,\dv\Big) + \Div_x\Big(\int_{\Rd} v\xi(v) f\,\dv\Big) = \Big(\int_{\Rd} \nabla \xi(v)\cdot\sigma_k\,f\,\dv\Big) \strat \dot{\beta_k}.
\end{equation}
In the case that $\xi(v) = 1 + |v|^2$ in \eqref{eq:Collision_Invariant_Evol}, one can close on estimate on $\xi(v)f$, provided we have the following coloring hypothesis on $\sigma$:
\begin{hyp}\label{hyp:Noise-Coefficients}
For each $k \in \N$, the noise coefficient $\sigma_{k}: \R^{2d} \to \R^{d}$ satisfies $\Div_{v}\sigma_{k}=0$. In addition, the sequence $\sigma = \{\sigma_{k}\}_{k \in \N}$ obeys:
\begin{equation}
  \tag{H1}\label{eq:Noise-Assumption-1} \|\sigma\|^2_{\ell^2(\N; L^\infty_x(W^{1,\infty}_v))}:= \sum_{k\in\N} \|\sigma_{k}\|_{L^{\infty}_{x,v}}^2 + \|\nabla_v\sigma_k\|_{L^{\infty}_{x,v}}^2 < \infty
  \end{equation}
\end{hyp}
\begin{remark}
  Note that Hypothesis \ref{hyp:Noise-Coefficients} states that the coefficients are bounded in both $x$ and $v$, but Lipschitz in $v$ only.
\end{remark}
More generally, in Section \ref{sec:FormalBounds} we show that Hypothesis \ref{hyp:Noise-Coefficients} implies that a solution $f$ to \eqref{eq:general-stochastic-Boltz} satisfies the following formal a priori bound
\begin{equation} \label{eq:aPriori_Bound}
\E\|(1+|x|^{2}+|v|^{2})f\|_{\LLL{\infty}{1}}^{p} \leq C_{p},
\end{equation}
for all $p \in [1,\infty)$ and some positive constant $C_{p}$ (depending on $p$).  In addition, a further $L\log{L}$ estimate on $f$ is available due to the entropy structure of \eqref{eq:general-stochastic-Boltz}. To obtain this, let $\Renorm : \R \to \R$ be a sufficiently smooth function, which we will refer to as a renormalization. Since we use Stratonovich noise and $\Div_v \sigma_k = 0$, if $f$ is a solution of \eqref{eq:general-stochastic-Boltz}, then formally $\Renorm(f)$ should satisfy:
\begin{equation}\label{eq:general-renorm-stochastic-Boltz}\tag{RSB}
\begin{aligned}
 &\partial_t \Renorm(f) + v\cdot \nabla_x \Renorm(f) + \Div_v (\Renorm(f)\sigma_{k}\strat \dot{\beta}_{k}) = \Renorm'(f)\BCol(f,f),\\
 &\Renorm(f)|_{t=0} = \Renorm(f_0).
\end{aligned}
\end{equation}   
In particular, taking $\Renorm(f)=f \log f$ in \eqref{eq:general-renorm-stochastic-Boltz} and integrating in $v$ yields
\begin{equation}\label{eq:entropy-dis}
  \partial_t\Big(\int_{\Rd} f\log{f}\,\dv \Big) + \Div_x\Big(\int_{\Rd} v\,f\log{f}\,\dv\Big)= - \SDis(f),
\end{equation}
where 
\begin{equation}\label{eq:Entopy-Dissipation-Def}
\begin{aligned}
\SDis(f) &:= \frac{1}{4}\iiint_{\R^{2d}\times\S^{d-1}}d(f)(t,x,v,v_*,\theta)\,b(v-v_*,\theta)\,\dee\theta\dv_*\dv,\\
d(f) &:=  (f^\prime f^\prime_* - ff_*)\log\left(\frac{f^\prime f^\prime_*}{f f_*}\right) \geq 0.
\end{aligned}
\end{equation}
Equation (\ref{eq:entropy-dis}) describes the local dissipation of the entropy density $\int_{\Rd} f\log{f}\dv$. The quantity $\SDis(f)$ is referred to as the entropy dissipation, and inherits non-negativity from $d(f)$. Since $f\log f$ is unsigned, we cannot immediately use \eqref{eq:entropy-dis} to obtain an $L\log{L}$ bound. However, combining this with \eqref{eq:aPriori_Bound}, in Section \ref{sec:FormalBounds} we show that for all $p \in [1,\infty)$ 
\begin{equation} \label{eq:aPriori_Bound-Final}
\E\|f \log f\|_{\LLL{\infty}{1}}^{p}, \quad\E \|\SDis(f)\|_{\LLt{1}}^{p} \leq C_{p}. 
\end{equation}
Although the a priori bounds \eqref{eq:aPriori_Bound} and \eqref{eq:aPriori_Bound-Final} provide a useful starting point, they are unfortunately insufficient to give a meaning to $\BCol(f,f)$ in the sense of distributions. For bounded kernels, one can obtain an $L^1_v$ estimate on $\mathcal{B}(f,f)$,
\begin{equation}
\|\mathcal{B}(f,f)\|_{L^{1}_{v}} \leq C \|f\|^{2}_{L^{1}_{v}}.
\end{equation} 
However, since $\mathcal{B}(f,f)$ acts pointwise in $x$, the operator $f \mapsto \mathcal{B}(f,f)$ sends $\LLs{1}$ to $L^{0}_{x}(L^{1}_{v})$ (a measurable function in $x$). A key observation of DiPerna and Lions \cite{diperna1989cauchy} is that the renormalized collision operator $f \to (1+f)^{-1}\mathcal{B}(f,f)$ is better behaved.  More precisely, the following inequality holds:
\begin{equation}\label{eq:renorm-coll-bound}
\|(1+f)^{-1}\mathcal{B}(f,f)\|_{\LLLs{1}} \leqs \|\mathcal{D}(f)\|_{\LLt{1}}+\|f\|_{\LLLs{1}}.
\end{equation}
Thus, if $f$ satisfies the a priori bounds \eqref{eq:aPriori_Bound} and \eqref{eq:aPriori_Bound-Final}, the quantity $(1+f)^{-1}\mathcal{B}(f,f)$ is well defined in $L^{\infty-}(\Omega;\LLLs{1})$. Hence, it becomes feasible to search for solutions satisfying \eqref{eq:general-renorm-stochastic-Boltz} in the sense of distributions for a suitable class of renormalizations.  Towards this end, we make the following definition:
\begin{definition}
 Define the set of renormalizations $\mathcal{R}$ to consist of $C^{1}(\R_{+})$ functions $\Renorm: \R_{+} \to \R$ such that the mapping $x \mapsto (1+x)\,|\Renorm^\prime(x)|$ belongs to $L^{\infty}(\R_{+})$.
\end{definition}
It is important to keep in mind that this class of renormalizations excludes the possibility of choosing $\Gamma(f) = f$ or $\Gamma(f) = f\log{f}$ and therefore extra care must be taken to obtain the a priori estimates (\ref{eq:aPriori_Bound}) and (\ref{eq:aPriori_Bound-Final}) above.  

We note that for analytical purposes, relating to martingale techniques, it is often more convenient to work with \eqref{eq:general-renorm-stochastic-Boltz} in It\^{o} form. Thus, we introduce the matrix 
\begin{equation}
a(x,v)=\frac{1}{2}\sum_{k \in \N} \sigma_{k}(x,v)\tensor \sigma_{k}(x,v),
\end{equation}
and define the operator 
\begin{equation}
\mathcal{L}_{\sigma}\varphi := \Div_{v}(a\nabla_{v} \varphi).
\end{equation} 
Using the divergence free assumption for each $\sigma_{k}$, the random transport term in \eqref{eq:general-renorm-stochastic-Boltz} can be converted to It\^{o} form via the relation
\begin{equation}
\Div_v (\Renorm(f)\sigma_{k}\strat \dot{\beta}_{k})=-\mathcal{L}_{\sigma}\Renorm(f) + \Div_v (\Renorm(f)\sigma_{k}\dot{\beta}_{k}).
\end{equation}

We are now ready to define our notion of solution for (\ref{eq:general-renorm-stochastic-Boltz}). \begin{definition} \label{def:Renorm-Martingale-BM}
A density $f$ is defined to be a renormalized martingale solution to \eqref{eq:general-stochastic-Boltz} provided there exists a stochastic basis $(\Omega, \mathcal{F}, \p, (\mathcal{F}_{t})_{t=0}^{T}, \{ \beta_{k}\}_{k \in \N})$ such that the following hold:
\begin{enumerate}
\item For all $(t,\omega) \in [0,T] \times \Omega$, the quantity $f(t,\omega)$ is a non-negative element of $L^{1}_{x,v}$.  
\item The mapping $f: [0,T] \times \Omega \to L^{1}_{x,v}$ defines an $(\mathcal{F}_{t})_{t=0}^{T}$ adapted process with continuous sample paths.
\item For all renormalizations $\Renorm \in \mathcal{R}$, test functions $\varphi \in C\,^{\infty}_{c}(\R^{2d})$, and times $t \in [0,T]$; the following equality holds $\p$ almost surely:
\begin{equation}\label{eq:Renorm-Weak-Form}
\begin{split}
&\iint_{\R^{2d}}\Renorm(f)(t)\varphi \dx \dv = \iint_{\R^{2d}}\Renorm(f_{0})\varphi \dx \dv\\ 
&\hspace{.2in}+ \int_{0}^{t}\iint_{\R^{2d}}[\Renorm(f)v \cdot \nabla_{x}\varphi+\Renorm'(f)\mathcal{B}(f,f)\varphi]\dx\dv\ds\\
&\hspace{.2in} + \frac{1}{2}\int_{0}^{t}\iint_{\R^{2d}}\Renorm(f)\LStrat_\sigma\varphi\,\dx\dv\ds\ +\sum_{k\in\N}\int_{0}^{t}\iint_{\R^{2d}}\Renorm(f)\sigma_{k}\cdot \nabla_{v}\varphi\, \dx\dv \dee\beta_{k}(s).
\end{split}
\end{equation}
\item For all $p \in [1,\infty)$ there exists a positive constant $C_{p}$ such that:
\begin{equation}\label{eq:Moment-ent-dis-estimates-thm}
\E\|(1+|x|^{2}+|v|^{2}+|\log f|)f\|_{\LLL{\infty}{1}}^{p}, \quad \E \|\mathcal{D}(f)\|^{p}_{\LLt{1}} \leq C_{p}.
\end{equation}
\end{enumerate}
\end{definition}

\begin{remark}
  In light of the estimate (\ref{eq:renorm-coll-bound}), the estimates in condition 4 of Definition \ref{def:Renorm-Martingale-BM} ensure that the weak form (\ref{eq:Renorm-Weak-Form}) is well defined and the stochastic integral is a continuous-time martingale.
\end{remark}

At present, we require a further technical hypothesis on  $\sigma$ and $\sigma \cdot \nabla_{v}\sigma$. This is related to the regularity needed on $\sigma$ to renormalize a linear, stochastic kinetic transport equation, a crucial procedure in our analysis. This is discussed in more detail in Section \ref{subsec:OverviewOfArticle} below.
\begin{hyp}\label{hyp:Noise-Coefficients-Derivatives}
There exists an $\epsilon>0$ such that for each compact set $K \subseteq \R^{2d}$
\begin{align}
\tag{H3}\label{eq:Noise-Assumption-3} &\|\sigma\|_{\ell^2(\N;W^{1,2+\epsilon}(K))} = \Big(\sum_{k\in\N} \|\sigma_{k}\|_{W^{1,2+\epsilon}(K)}^2\Big)^{1/2} < \infty\\
\tag{H4}\label{eq:Noise-Assumption-4} & \|\sigma \cdot \nabla_v\sigma\|_{\ell^1(\N;W^{1,1+\epsilon}(K))} = \sum_{k\in\N} \|\sigma_k\cdot\nabla_v\sigma_k\|_{W^{1,1+\epsilon}(K)} < \infty.
\end{align}
\end{hyp}
The main result of this article is the following global existence theorem:
\begin{thm} \label{thm:MainResult}
  Let $\{ \sigma_{k}\}_{k \in \N}$ be a collection of noise coefficients satisfying Hypotheses \ref{hyp:Noise-Coefficients} and \ref{hyp:Noise-Coefficients-Derivatives} and assume that the collision kernel $b$ satisfies Hypothesis \ref{hyp:Collision-Kernel}. For any initial data $f_{0}: \R^{2d} \to \R_{+}$ satisfying
  \begin{equation}
    (1+|x|^2 + |v|^2 + |\log f_0|)f_0 \in L^1_{x,v},
  \end{equation}
  there exists a renormalized martingale solution to \eqref{eq:general-stochastic-Boltz}, starting from $f_{0}$ with noise coefficients $\{\sigma_k\}_{k\in\N}$.

  Moreover $f$ satisfies
  \begin{itemize}
  \item almost sure local conservation of mass
    \begin{equation}\label{eq:local-mass-conv}
       \partial_t\int_{\Rd} f \dv + \Div_x\int_{\Rd} v f\dx = 0,
     \end{equation}
   \item average global balance of momentum
     \begin{equation}\label{eq:global-momentum-bal}
       \E\iint_{\R^{2d}} v f(t)\dv\dx = \frac{1}{2}\sum_{k}\E\int_0^t\iint_{\R^{2d}}\sigma_k\cdot\nabla_v\sigma_k f(s)\dv\dx\ds + \iint_{\R^{2d}}v f_0\dv\dx,
     \end{equation}
   \item average global energy inequality
     \begin{equation}\label{eq:global-energy-ineq}
       \begin{aligned}
         &\E\iint_{\R^{2d}} \frac{1}{2}|v|^2f(t)\dv\dx \leq \sum_{k}\E\int_0^t\iint_{\R^{2d}}(v\cdot(\sigma_k\cdot\nabla_v\sigma_k) + |\sigma_k|^2)f(s)\dv\dx\ds\\
         &\hspace{1in}+ \iint_{\R^{2d}} \frac{1}{2}|v|^2f_0\dv\dx,
       \end{aligned}
     \end{equation}
   \item almost sure global entropy inequality
     \begin{equation}\label{eq:global-entropy-ineq}
       \iint_{\R^{2d}} f(t)\log{f(t)}\dv\dx + \int_0^t\int_{\R^{d}} \SDis(f)(s)\dx\ds \leq \iint_{\R^{2d}} f_0\log{f_0}\dv\dx.
     \end{equation}
   \end{itemize}
   The almost sure local conservation of mass holds $\P$ almost surely in distribution, the average global momentum and energy balances hold for every $t\in [0,T]$, and the global entropy inequality holds $\P$ almost surely for every $t\in [0,T]$.
\end{thm}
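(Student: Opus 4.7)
The plan is to follow the DiPerna--Lions existence strategy, adapted to the stochastic setting using Jakubowski--Skorokhod compactness together with the linear stochastic kinetic theory developed earlier in the paper (the renormalization criterion, weak closedness of the solution set, and $L^{1}$ tightness of velocity averages). First I would regularize: replace $\BCol$ by a truncated operator $\BCol_{n}(f,f) = \BCol(f,f)/(1+n^{-1}\langle f\rangle)$ that is Lipschitz on bounded sets of $L^{1}_{v}$, and mollify the initial data to $f_{0}^{n}$ enjoying the same finite mass/energy/entropy bounds uniformly in $n$. For this regularized SPDE a Picard iteration built on the linear stochastic kinetic theory should produce solutions $f^{n}$ on a common stochastic basis.

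Next I would establish the a priori bounds \eqref{eq:aPriori_Bound} and \eqref{eq:aPriori_Bound-Final} uniformly in $n$ by applying It\^o's formula in the It\^o reformulation of the equation to the mass/momentum/energy moments and to $\Renorm(f^{n})$ with $\Renorm(x)=x\log x$. The divergence-free assumption $\Div_{v}\sigma_{k}=0$ and the collision invariance of $\{1,v,|v|^{2}\}$ ensure the stochastic and collision contributions vanish after integration in $v$. Combined with \eqref{eq:renorm-coll-bound}, these bounds yield uniform control of $(1+f^{n})^{-1}\BCol_{n}(f^{n},f^{n})$ in $L^{\infty-}(\Omega;\LLLs{1})$.

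The central step is compactness and identification of the limit. The uniform moment and entropy bounds together with the $L^{1}$ velocity-averaging lemma for linear stochastic kinetic equations imply that the laws of $\{f^{n}\}$ are tight on $C_{t}([L^{1}_{x,v}]_{\w})$ and that the velocity averages $\langle \varphi f^{n}\rangle$ are relatively compact in $L^{1}_{t,x,\loc}$ for every $\varphi \in C^{\infty}_{c}(\R^{d})$. Since the weak topology of $L^{1}$ is not metrizable, I would invoke the Jakubowski--Skorokhod representation to obtain, on a new probability space, copies $\tilde{f}^{n},\tilde{\beta}^{n}_{k}$ converging almost surely to a limit $(\tilde{f},\tilde{\beta}_{k})$. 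The main obstacle is then to identify the limit of $\Renorm'(\tilde{f}^{n})\BCol_{n}(\tilde{f}^{n},\tilde{f}^{n})$, which in the DiPerna--Lions framework requires strong $L^{1}_{t,x,v,\loc}$ compactness of $\tilde{f}^{n}$ itself, not merely of its velocity averages. I would obtain this by first showing via the renormalization criterion that $\Renorm(\tilde{f}^{n})$ solves a linear stochastic kinetic equation --- this is precisely where Hypothesis \ref{hyp:Noise-Coefficients-Derivatives} enters, to justify the It\^o commutator argument involving $\sigma$ and $\sigma\cdot\nabla_{v}\sigma$ --- then applying velocity averaging to $\Renorm(\tilde{f}^{n})$, and finally upgrading to strong $L^{1}$ convergence of $\tilde{f}^{n}$ itself using the entropy-dissipation bound and the DiPerna--Lions gain/loss splitting of $\BCol$.

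The remaining tasks are the martingale property and the listed conservation/inequality statements. The martingale property of the stochastic integral in \eqref{eq:Renorm-Weak-Form} follows by identifying quadratic variations under the Skorokhod representation and invoking L\'evy's characterization to conclude that the $\tilde{\beta}_{k}$ are independent Brownian motions with respect to the natural filtration. Strong continuity of $t \mapsto \tilde{f}(t)$ in $L^{1}_{x,v}$ is recovered from weak continuity combined with continuity of $t \mapsto \int \tilde{f}\log\tilde{f}\,\dv\,\dx$, by a Vishik-type convexity argument. Finally, \eqref{eq:local-mass-conv}--\eqref{eq:global-entropy-ineq} are obtained by testing the renormalized equation against cutoff approximations of $1, v_{i}, |v|^{2}$, and $\log\tilde{f}$ and passing to the limit in the cutoff; the It\^o corrections in \eqref{eq:global-momentum-bal}--\eqref{eq:global-energy-ineq} come from the $\LStrat_{\sigma}$ term in the It\^o form, while \eqref{eq:global-entropy-ineq} follows by weak lower semicontinuity of $f\log f$ and $\SDis(f)$ applied to the entropy identity \eqref{eq:entropy-dis} satisfied by $f^{n}$.
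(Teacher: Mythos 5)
Your overall architecture matches the paper: truncate $\BCol$ to $\BCol_n$, regularize the data and noise, build approximations by Picard iteration, prove the moment/entropy bounds uniformly, and pass to the limit via Jakubowski--Skorokhod. However, there is a genuine conceptual error at the heart of your compactness and limit-identification step.

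You claim that identifying the limit of $\Renorm'(\tilde{f}^n)\BCol_n(\tilde{f}^n,\tilde{f}^n)$ ``requires strong $L^1_{t,x,v,\loc}$ compactness of $\tilde{f}^n$ itself, not merely of its velocity averages,'' and then propose to obtain that strong compactness by combining velocity averaging with the entropy-dissipation bound. This misrepresents the DiPerna--Lions framework and would not succeed. Velocity averaging yields compactness of $\langle f^n\,\phi\rangle$ in $(t,x)$ only; oscillations in $v$ persist and cannot be ruled out for an approximating sequence. Indeed the paper explicitly observes that ``we allow for a nontrivial probability of oscillations in the velocity variable, so the strong compactness is only in space and time,'' and the mode of convergence the whole argument is built around is $\LLsMw{1}$ -- strong in $(t,x)$, weak-$\star$ in $v$ -- together with $C_t([L^1_{x,v}]_\w)$. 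The nonlinear term is handled by combining (i) weak $L^1$ convergence of $f_n$, (ii) strong $L^1_{t,x}$ convergence of velocity averages, and (iii) the product-limit lemma (Lemma~\ref{lem:product_lemma}); this gives the key convergence $\BCol_n^\pm(f_n,f_n)/(1+\langle f_n\rangle)\to\BCol^\pm(f,f)/(1+\langle f\rangle)$ in $L^1_{t,x}(\mathcal{M}^*_v)$, without ever invoking strong $L^1_{t,x,v}$ compactness of $f_n$.

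You also omit the two-stage renormalization that is the structural backbone of the limit passage. In the paper (as in DiPerna--Lions and Lions '94), one first renormalizes with the bounded truncations $\Gamma_m(z)=z/(1+z/m)$, applies the stability result for linear stochastic kinetic equations to show the limit $\overline{\Gamma_m(f)}$ solves an equation driven by $\BCol_m^+-\BCol_m^-$ (Corollary~\ref{cor:PrelimMartingale}); then one renormalizes \emph{again} with $\log(1+z)$, uses $\overline{\Gamma_m(f)}\to f$ strongly and the identification $\BCol^\pm_m/(1+\overline{\Gamma_m(f)})\to\BCol^\pm(f,f)/(1+f)$ (Proposition~\ref{prop:RenormCollision}) to pass $m\to\infty$, and finally recovers arbitrary renormalizations $\Gamma\in\mathcal{R}$ by renormalizing the equation for $\log(1+f)$. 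Your proposal compresses this into a single averaging-plus-strong-convergence step, which hides exactly where the work is. The strong $L^1$ convergence that \emph{does} appear in the paper is $\overline{\Gamma_m(f)}\to f$ as $m\to\infty$ at the level of the limiting object (Corollary~\ref{lem:Strong-Renorm-Limit}), obtained by weak lower semicontinuity and uniform integrability, not strong $L^1_{t,x,v}$ convergence of $f_n\to f$ as $n\to\infty$ -- you appear to be conflating these two limits.
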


\subsection{Outline of the deterministic theory}
In order to motivate many of the steps of the proof of theorem \ref{thm:MainResult}, and make more explicit the similarities and differences to the deterministic $\sigma = 0$ theory of renormalized solutions for the Boltzmann equation. We will outline a rough sketch of the strategy of proof in the the determinisitic setting and emphasize several key steps. For a more detailed exposition of the deterministic theory, we direct the reader to the original work of DiPerna/Lions \cite{diperna1989cauchy} as well as some later improvements \cite{Lions1994-jf} with regard to dealing with passing limits in the collision operator. A detailed exposition of modern theory can be found in the notes \cite{Golse2005-zh} and a very nice sketch of the deterministic theory can be found in the book by Saint-Raymond \cite{MR2683475}.

To illustrate the existence theory for renormalized solutions, it suffices to prove a {\it stability result}. That is, suppose we have a sequence $\{f_n\}_{n\in\N}$ of renormalized solutions to the deterministic Boltzmann equation
\begin{equation}
    \partial_tf_n + v\cdot\nabla_xf_n = \mathcal{B}(f_n,f_n),
  \end{equation}
  with compact initial data and satisfying the uniform apriori estimates
  \begin{equation}\label{eq:deterministic-apriori}
    \sup_n\|(1+|x|^2 + |v|^2+|\log{f}_n|)f_n\|_{L^\infty_t(L^1_{x,v})} <\infty,\quad \sup_n\|\SDis(f_n)\|_{L^1_{t,x}} <\infty.
  \end{equation}
  Then we would like to show that any limit point $f$ of the sequence $\{f_n\}_{n\in\N}$ is also a renormalized solution to the Boltzmann equation and satisfies the same apriori estimates. Indeed, this was the approach initially take by DiPerna/Lions \cite{diperna1989cauchy}.

  The proof of the deterministic stability result can conceptually be broken into three main steps: a {\it weak compactness step} using the apriori estimates, a {\it strong compactness} step using the equation and velocity averaging Lemmas, and a {\it limit passage step} using both the weak and strong compactness to pass the limit in the renormalized equation.

  The first conceptual step is to obtain several weak compactness results using the apriori estimates (\ref{eq:deterministic-apriori}). Using the classical Dunford Pettis criterion, one can easily obtain from the first uniform estimate in (\ref{eq:deterministic-apriori}) that the sequence $\{f_n\}_{n\in\N}$ is weakly relatively compact in $[L^1_{t,x,v}]_{\loc}$. Furthermore, using the uniform entropy dissipation bound in (\ref{eq:deterministic-apriori}) and a bound on the gain part of the collision operator, one can show that the sequence
  \begin{equation}
    \left\{\frac{\BCol(f_n,f_n)}{1 + f_n}\right\}_{n\in\N}\quad \text{is weakly relatively compact in } L^1_{t,x,v}.
  \end{equation}

  The next step is to use the equation to deduce various strong compactness results using the equation. Here one chooses a sequence of renormalizations
  \begin{equation}
    \beta_\delta(z) =\frac{1}{\delta}\log(1+ \delta z)
  \end{equation}
  which approximate the identity $\beta_\delta(z) \to z$ as $\delta\to 0$. Since $\beta_{\delta}(z)$ is a suitable renormalizer then $\beta_\delta(f_n)$ solves
  \begin{equation}\label{eq:betadel-renorm-deterministic}
    \partial_t\beta_\delta(f_n) + v\cdot\nabla_x\beta_\delta(f_n) = \frac{\BCol(f_n,f_n)}{1+\delta f_n}.
  \end{equation}
  Using the weak compactness of $\left\{\frac{\BCol(f_n,f_n)}{1+\delta f_n}\right\}_{n\in\N}$ deduced from the apriori estimates, we may deduce that $\{\beta_\delta(f_ n)\}_{n\in\N}$ is relatively compact in $C_t([L^2_{x,v}]_{\w})$. Furthermore, we are now in a position to use the subtle regularizing effects present in the kinetic equation (\ref{eq:betadel-renorm-deterministic}) known as velocity averaging. Here, one can deduce that for every test function $\varphi(v)$ the velocity averages
  \begin{equation}
    \left\{\langle\beta_\delta(f_n)\,\varphi\rangle\right\}_{n\in\N}\quad\text{are strongly relatively compact in } L^1_{t,x},
  \end{equation}
  where the angle brackets denote integration in velocity. Using the weak relative compactness of $\{f_n\}_{n\in\N}$ one can show that $\beta_\delta(f_n)$ approximates $f_n$ as $\delta\to 0$ uniformly in $n$ and strongly in $L^1$, namely
  \begin{equation}\label{eq:uniform-approx}
    \lim_{\delta \to 0}\sup_n\|\beta_\delta(f_n) - f_n\|_{L^\infty_t(L^1_{x,v})} = 0.
  \end{equation}
  This allows one to deduce compactness results on $\{f_n\}_{n\in\N}$ from $\{\beta_\delta(f_n)\}_{n\in\N}$ concluding that $\{f_n\}_{n\in\N}$ is compact in $C_t([L^1_{x,v}]_{\w})$ and the velocity averages $\{\langle f_n\,\varphi\rangle\}_{n\in\N}$ are relatively compact in $L^1_{t,x}$.

  At this stage one has enough compactness to pass certain limits in the collision operators, suitably renormalized. Using the product limit Lemma \ref{lem:product_lemma}, one can deduce the following that, up to a subsequence, for each $\varphi \in C_c(\R^d)$,
  \begin{equation}\label{eq:Bcol-cont-est}
    \left\langle \frac{\BCol^{\pm}(f_n,f_n)}{1+\langle f_n\rangle},\varphi\right\rangle \to \left\langle \frac{\BCol^{\pm}(f,f)}{1+\langle f\rangle},\varphi\right\rangle \quad\text{in}\quad L^1_{t,x}.
  \end{equation}
The next step is to choose a bounded sequence of renormalizers,
  \begin{equation}
    \Gamma_m(z) = \frac{z}{m^{-1} + z},
  \end{equation}
  and using the weak compactness already deduced, conclude that
  \begin{equation}
    \Gamma_m(f_n) \to \overline{\Gamma_m(f)} \quad\text{in}\quad C_t([L^1_{x,v}]_\w)
  \end{equation}
  and
  \begin{equation}
    \Gamma^\prime_m(f_n)\BCol^{\pm}(f_n,f_n)\to \BCol_m^{\pm}\quad\text{in}\quad [L^1_{t,x,v}]_\w.
  \end{equation}
  Again, using the uniform estimate (\ref{eq:uniform-approx}), with $\Gamma_m(z)$ replacing $\beta_\delta(z)$, one can show the following {\it strong} convergence
  \begin{equation}\label{eq:strong-conv}
    \overline{\Gamma_m(f)} \to f \quad\text{in}\quad L^\infty_t(L^1_{x,v}).
  \end{equation}
  The idea is then to pass the limit in the equation
  \begin{equation}\label{eq:double-renorm-eq}
    \partial_t\log(1+\overline{\Gamma_m(f)}) + v\cdot\nabla_x\log(1+\overline{\Gamma_m(f)}) = \frac{\BCol_m^+}{1+\overline{\Gamma_m(f)}} - \frac{\BCol_m^-}{1+\overline{\Gamma_m(f)}}.
  \end{equation}
  The strong convergence (\ref{eq:strong-conv}) is enough to pass the limit on the left side in the sense of distribution. It remains to pass the limit on the right-hand side as $m\to \infty$. This is the most technical part of the analysis and requires limit (\ref{eq:Bcol-cont-est}) as well as all of the compactness and apriori estimates deduced on $\{f_n\}_{n\in\N}$. The main result is that
  \begin{equation}\label{eq:renomr-coll-anal}
    \frac{\BCol_m^{\pm}}{1+\overline{\Gamma_m(f)}} \to \frac{\BCol^{\pm}(f,f)}{1+f}\quad\text{in}\quad [L^1_{t,x,v}]_{\w},
  \end{equation}
  which allows one to pass the limit on the right-hand side of (\ref{eq:double-renorm-eq}) and conclude
  \begin{equation}\label{eq:log-renorm}
    \partial_t\log(1+f) + v\cdot\nabla_x\log(1+f) = \frac{\BCol(f,f)}{1+f}.
  \end{equation}
  It remains to show the the the equation also holds for any admissible renormalization $\Gamma(z)$.
  \begin{equation}
    \partial_t\Gamma(f) + v\cdot\nabla_x\Gamma(f) = \Gamma^\prime(f) \BCol(f,f).
  \end{equation}
  This can be done by renormalizing equation (\ref{eq:log-renorm}) with a truncation of
  \begin{equation}
    \beta(z) = \Gamma(e^{z}-1)
  \end{equation}
  and using the bound on $(1+f)^{-1}\BCol(f,f)$ to pass the limit in the truncation.


\subsection{Overview of the article}\label{subsec:OverviewOfArticle}

Now we proceed to an overview of the article, addressing the main difficulties overcome and the relation of our work to the existing literature on kinetic equations and stochastic PDE's.

Our analysis begins with formal a priori estimates which point to the natural functional framework for \eqref{eq:general-stochastic-Boltz}.  Namely, in Section \ref{sec:FormalBounds} we show that under the coloring Hypothesis (\ref{eq:Noise-Assumption-1}), solutions to \eqref{eq:general-stochastic-Boltz} formally satisfy
\begin{align}
  &\E\|(1+|x|^2 +|v|^2 + |\log{f}|)f\|_{ \LLL{\infty}{1}}^p \leq C_p, \label{apriori-estimates-overview-energy}\\
 &\E\|\SDis(f)\|_{\LLt{1}}^p \leq C_p.\label{apriori-estimates-overview-entropy}
\end{align}
With these formal a priori bounds at hand, the remainder of the paper splits roughly into two parts.  In Sections \ref{sec:Stoch-Kinetic-Renorm} and \ref{sec:Stoch-Velocity-Avg}, we analyze linear stochastic kinetic equations, while Sections $5-8$ are devoted to the proof of Theorem \ref{thm:MainResult}.   

In Sections \ref{sec:Stoch-Kinetic-Renorm} and \ref{sec:Stoch-Velocity-Avg} we move to a detailed discussion of stochastic kinetic equations of the form
\begin{equation} \label{eq:Intro_StochKin}
\begin{aligned}
&\partial_{t}f + v \cdot \nabla_{x}f + \Div_{v}(f\sigma_{k}\circ \dot{\beta}_{k})=g, \\
&f|_{t=0}=f_{0}.
\end{aligned}
\end{equation}
Here $f_{0} \in \LLs{1}$ is a deterministic initial density, while $g$ is a certain random variable with values in $\LLLs{1}$. We will focus on so-called weak martingale solutions to \eqref{eq:Intro_StochKin}. Roughly speaking (see Definition \ref{def:Martingale-Sol} of Section \ref{subsec:weak-martingale-sol} for the precise meaning), these are $L^{1}_{x,v}$ valued stochastic processes satisfying (\ref{eq:Intro_StochKin}) weakly in both the PDE and the probabilistic sense. In this context, probabilistically weak means that the filtered probability space $(\Omega,\mathcal{F},(\mathcal{F}_t)_{t=0}^T,\P)$ and the Brownian motions $\{\beta_{k}\}_{k \in \N}$ are not fixed in advance, but found as solutions to the problem, along with the process $f$ solving (\ref{eq:Intro_StochKin}) in the sense of distribution. 

For convenience we introduce the following language to refer to solutions of (\ref{eq:Intro_StochKin}), we say that: {\em $f$ is a solution to the stochastic kinetic equation {\it driven} by $g$ and starting from $f_{0}$, relative to the noise coefficients $\sigma$ and the stochastic basis $(\Omega, \mathcal{F}, \P, (\mathcal{F}_t)_{t=0}^T, \{\beta_k\}_{k\in\N})$}. In the case that the coefficients $\sigma$, the filtration $(\mathcal{F}_t)_{t=0}^{T}$, and the Brownian motions $\{\beta_k\}_{k\in\N}$ are implicitly known or irrelevant, we may omit them from the statement, saying instead:
{\em $f$ is a solution to the stochastic kinetic equation {\it driven} by $g$ and starting from $f_{0}$}.

A key workhorse for our analysis is a stability result (Proposition \ref{prop:stability-weak-martingale}) for weak martingale solutions to stochastic kinetic equations.  In the deterministic setting, this simply corresponds to the observation that the space of solutions to linear, kinetic equations is closed with respect to convergence in distribution. More precisely, if there are functions $\{f_n\}$, $\{f_0^n\}$ and $\{g_n\}$ satisfying  
\begin{equation} 
\begin{aligned}
&\partial_{t}f_{n} + v \cdot \nabla_{x}f_{n}=g_{n}, \\
&f|_{t=0}=f_{0}^{n},
\end{aligned}
\end{equation}
in the sense of distributions and $\{(f_{n},g_{n},f_{0}^{n}\}_{n \in \N}$ converges to $(f,g,f^{0})$ in the sense of distributions, then it readily follows from the linear structure of the equation that the limits $(f,f_0,g)$ also satisfy
\begin{equation} 
\begin{aligned}
&\partial_{t}f + v \cdot \nabla_{x}f=g,\\
&f|_{t=0}=f_{0},
\end{aligned}
\end{equation}
in the sense of distributions. In the stochastic framework, an additional subtlety arises.  Namely, one should distinguish between stability of stochastically strong solutions, where a stochastic basis has been fixed, and stability of stochastically weak solutions, where each solution comes equipped with its own stochastic basis. For a fixed stochastic basis $(\Omega, \mathcal{F},\p, (\mathcal{F}_t)_{t=0}^T, \{\beta_{k}\}_{k \in \N})$ and noise coefficients $\{\sigma_{k}\}_{k \in \N}$, one can use the linearity of $f \to \Div_{v}(f\sigma_{k}\circ \dot{\beta}_{k})$ together with a method of Pardoux \cite{pardoux1975equations} to make a direct passage to the limit on both sides of the equation. However, for stochastically weak solutions, the Brownian motions are not fixed, and the mapping  $(f, \beta_{k}) \mapsto \Div_{v}(f\sigma_{k}\circ \dot{\beta}_{k})$ is nonlinear, prohibiting the passage of weak limits. In this situation, a martingale method is used to overcome this difficulty and produce another weak martingale solution with a new stochastic basis. This result is detailed in Proposition \ref{prop:stability-weak-martingale}.

Section \ref{sec:Renormalization} is devoted to renormalizing weak martingale solutions to stochastic kinetic equations. The technique of renormalization of {\it deterministic} transport equations originates from the now classical results of Di'Perna and Lions \cite{diperna1989ordinary}, where they were able to show uniqueness to certain linear transport equations when the drift has lower regularity that the classical theory of characteristics would allow. Formally, the strategy is as follows: if $f$ satisfies (\ref{eq:Intro_StochKin}) and $\Renorm: \R \to \R$ is a smooth renormalization, then $\Renorm(f)$ satisfies
\begin{equation}\label{eq:Renormalized-Stochkin-Outline}
\begin{aligned}
  &\partial_t\Gamma(f) + v\cdot\nabla_x\Gamma(f) + \Div_v(\Gamma(f)\sigma_k\circ \dot{\beta}_k) = \Gamma^\prime(f)g,\\
  &\Gamma(f)|_{t=0} = \Gamma(f_0).
\end{aligned}
\end{equation}
If one can justify such a computation, then upon integrating both sides of the equation (\ref{eq:Renormalized-Stochkin-Outline}) for certain non-negative choices of $\Renorm(z)$ that vanish only at $z=0$, for instance $\Gamma(z) = z/(1+z)$, then one can get explicit bounds on $\Gamma(f)$ in terms of the initial data, which, by linearity, implies uniqueness. However, since we are working with analytically weak solutions to (\ref{eq:Intro_StochKin}), this formal calculation may fail if the individual $\sigma_{k}$ are too rough. In particular (to our knowledge), only requiring the $L^\infty$ coloring hypothesis (\ref{eq:Noise-Assumption-1}) is insufficient. The ability to renormalize stochastic kinetic transport equations will turn out to be a crucial property in the final stages of main existence proof. However, as in the case of the deterministic Boltzmann equation, it does not imply uniqueness of the equation, due to the nonlinear nature of the equation.


 Our strategy in Section \ref{sec:Renormalization} uses the method of DiPerna and Lions reduces the renormalizability of stochastic kinetic equations to the vanishing of certain commutators between smoothing operators and the differential action of the rough vector fields. Specifically, given a smooth renormalization $\Gamma(z)$ with bounded first and second derivatives, we begin by smoothing a solution $f$  to \eqref{eq:Intro_StochKin} in the $(x,v)$ variables with mollifier $\eta_{\ep}$. The regularity improvement allows us to renormalize the equation by $\Gamma$ at the expense of a remainder $R_\ep(f)$ comprised of commutators and double commutators of $\sigma_k\cdot\nabla_v$ and convolution by $\eta_\ep$,
\begin{equation}
   [\eta_\ep, \sigma_k\cdot\nabla_v](f), \quad \big[[\eta_\ep,\sigma_k\cdot\nabla_v], \sigma_k\cdot\nabla_v\big](f).
 \end{equation}
 As is well known from the classical theory of renormalization by \cite{diperna1989ordinary} that the single commutator
 \begin{equation}
   [\eta_\ep, \sigma_k\cdot\nabla_v](f)\xrightarrow[\ep\to 0]{} 0 \quad \text{in}\quad L^r_{x,v}
 \end{equation}
 as long as $\sigma \in W^{1,q}_{x,v}$ and $f\in L^p$ with $1/r = 1/p + 1/q$. As it turns out, the double commutator also vanishes
 \begin{equation}
   \big[[\eta_\ep,\sigma_k\cdot\nabla_v], \sigma_k\cdot\nabla_v\big](f) \xrightarrow[\ep\to 0]{} 0 \quad \text{in}\quad L^1_{x,v}
 \end{equation}
 provided that $\sigma_k \in W^{1,\frac{2p}{p-1}}_{x,v}$ and $\sigma_{k} \cdot \nabla_{v}\sigma_{k} \in W^{1,\frac{p}{p-1}}_{x,v}$. However one of the primary differences between the deterministic and stochastic theory is an interesting consequence of It\^{o}'s formula. Specifically the remainder $R_\ep(f)$ involves the square of the single commutator $[\eta_\ep, \sigma_k\cdot\nabla_v](f)$. Due to the limited integrability and regularity of $f$, this imposes that $p \geq 2$ and $\sigma_k \in W^{1,\frac{2p}{p-2}}_{x,v}$ for this contribution to vanish in $L^1$ (see Proposition \ref{prop:Weak_Is_Renormalized} for more details on this). Based on this method of proof, we are presently unable to treat the case $p \in [1,2)$. The main result of this section (Proposition \ref{prop:Weak_Is_Renormalized}) shows show that a weak martingale solution $f \in L^p(\Omega\times[0,T]\times\R^{2d})$, $p\geq 2$ to \eqref{eq:Intro_StochKin} is renormalizable provided we have the following regularity conditions on $\sigma$,
\begin{equation}\label{eq:renorm-sigma-conditions}
  \sigma \in \ell^2(\N; W^{1,\frac{2p}{p-2}}_{x,v})\quad\text{and}\quad \sigma\cdot\nabla_v\sigma \in \ell^1(\N; W^{1,\frac{p}{p-1}}_{x,v}).
\end{equation}
We believe these results are consistent with the work of Lions/Le-Bris \cite{bris2008existence} on deterministic parabolic equations with rough diffusion coefficients. There should also be a connection with the more recent work of Bailleul/Gubinelli \cite{bailleul2015unbounded}. In the case that $f\in L^{\infty-}(\Omega\times[0,T]\times\R^{2d})$, the conditions (\ref{eq:renorm-sigma-conditions}) become precisely the assumptions (\ref{eq:Noise-Assumption-3}) and (\ref{eq:Noise-Assumption-4}) on the noise coefficients.

Section \ref{sec:Stoch-Velocity-Avg} concerns the subtle regularizing effects for stochastic kinetic equations. These are captured by studying the velocity averages of the solution, and have a long history in the deterministic literature \cite{Bouchut1999-nh,golse1988regularity,golse2002velocity,jabin2004real} as well as several more recent results in the SPDE literature \cite{debussche2015invariant,gess2014long,Lions2011-2012}. Since equation (\ref{eq:Intro_StochKin}) is of transport type, without more information on $g$, one does not expect to obtain any further regularity on the solution $f$ than is present in the initial data $f_0$. However, in view of the deterministic theory it is natural to expect a small gain in the regularity of velocity averages
\begin{equation}
\langle f\,\phi \rangle := \int_{\Rd} f \phi \dv,
\end{equation}
where $\phi \in C^{\infty}_{c}(\Rd_v)$ is a test function in velocity only. Using a method of Bouchut/Desvillette \cite{Bouchut1999-nh} based on the Fourier transform, we prove that if $f$ is a weak martingale solution to \eqref{eq:Intro_StochKin} and $f,g\in L^2(\Omega\times[0,T]\times\R^{2d})$, then $\langle f, \phi\rangle$ enjoys the following regularity estimate,
\begin{equation}\label{eq:L2_Regularity_Est}
 \E\|\langle f\, \phi \rangle\|_{L^2_t(H^{1/6}_x)}^2\leq C_{\phi,\sigma}\big( \|f_0\|^2_{L^2_{x,v}} + \E\|f\|_{L^2_{t,x,v}}^2 + \E\|g\|_{L^2_{t,x,v}}^2\big).
\end{equation}
Combining this with a standard control on oscillations in time, one expects to obtain a form of strong compactness on the velocity averages. To formulate this directly in terms of $f$ rather than its velocity averages, we introduce a topological vector space $\LLsMw{p}$ consisting of the space of $L^p_{t,x}$ functions taking values in the space of Radon measures $\mathcal{M}_v^*$ on $\Rd_v$ endowed with it's weak-$\star$ topology.  The topology is designed so that sequential convergence in $\LLsMw{p}$ corresponds exactly to  
strong $L^p_{t,x}$ convergence of each sequence of velocity averages.
We prove a characterization of compact sets in $\LLsMw{p}$ in the appendix. Using the regularity gain in $L^2$, we exhibit a sufficient criterion for a sequence $\{f_n\}_{n\in\N}$ of weak martingale solutions to a stochastic kinetic equation driven by $\{g_n\}_{n\in\N}$ to induce tight laws on $[\LLsMw{2}]_{\loc}$. However, for applications to Boltzmann, one is mostly interested in the case where $\{g_{n}\}_{n \in \N}$ is only known to be uniformly bounded in $L^{1}(\Omega \times [0,T] \times \R^{2d})$, due to the very limited control provided by the a priori bounds on the renormalized collision operator $f \to \Renorm'(f)\mathcal{B}(f,f)$. The criteria for tightness in $\LLsMw{1}$ is the main result of Section \ref{sec:Stoch-Velocity-Avg}. 
As in the deterministic setting (see \cite{golse1988regularity,golse2002velocity}), there is no easily quantifiable regularity gain for $f, g \in L^1(\Omega\times[0,T]\times\R^{2d})$, making the analysis more involved.  At present, we can only treat well-prepared sequences of approximations for which the solution $f_{n}$ and the source $g_{n}$ are somewhat better behaved for fixed $n \in \N$.  This is captured by Hypothesis \ref{hyp:Stoch-Vel-Average-Fixed-n_hyp}.

At this point in the article, we have completed our analysis of the linear problem and proceed to apply our results from Section $3-4$ in the context of \eqref{eq:general-stochastic-Boltz}.  This begins in Section $5$ with a construction of a sequence of approximations $\{\tilde{f}_{n}\}_{n \in \N}$ satisfying a stochastic transport equation driven by a truncated collision operator
\begin{equation}
  \mathcal{B}_{n}(f,f) = \frac{\widehat{\BCol}_n(f,f)}{(1 + n^{-1}\langle f\rangle)}.
\end{equation}
This truncation was introduced in \cite{diperna1989cauchy} to make $\BCol_n(f,f)$ Lipschitz in $L^1_{t,x,v}$ while preserving it's conservation properties. After smoothing the noise coefficients and activating only finitely many Brownian motions, we obtain existence by way of the stochastic flow representation of Kunita \cite{kunita1997stochastic}, in combination with a fixed point argument.  The main subtleties in comparison to the deterministic theory are due to the fact that the flow map is not explicit.  To obtain the a priori bounds
\begin{equation}
  \sup_n\E\|(1+|x|^2 + |v|^2 + |\log{\tilde{f}_n}|)\tilde{f}_n\|^p_{\LLL{\infty}{1}} \leq C_p,\quad \sup_n\E\|\SDis_n(\tilde{f}_n)\|^p_{\LLLs{1}} \leq C_p,
\end{equation}
we require asymptotic growth estimates for the stochastic flow and a stopping time argument.  A similar difficulty arises in the work of Hofmanova \cite{hofmanova2015bhatnagar}.  An additional difference with the deterministic theory is that we do not prove that our approximations are of Schwartz class in position and velocity.  Instead, we use our renormalization lemma to establish the moment and entropy identities used in Section \ref{sec:FormalBounds}. 

Let us now discuss the main features of the existence proof for Theorem \ref{thm:MainResult} and some of the main difficulties. The main goal in sections $6-8$ is to extract an appropriate limit point $f$ on a well prepared stochastic basis $(\Omega, \mathcal{F},\p,(\mathcal {F}_t)_{t=0}^T,\{\beta_k\}_{k\in\N})$ and verify that $f$ is indeed a renormalized martingale solution to \eqref{eq:general-stochastic-Boltz}. This requires a somewhat involved combination of the renormalization and stochastic velocity averaging lemmas together with the general line of arguments introduced by DiPerna and Lions \cite{diperna1989cauchy} and a later work of Lions \cite{Lions1994-jf}. The argument requires a careful interpretation in the stochastic framework. We study the laws of the sequence $\{\tilde{f}_{n}\}_{n \in \N}$ and use the velocity averaging and renormalization lemmas to show they are tight on $\LLsMw{1}\cap \CLw{1}$. A generalization of the Skorohod theorem due to Jakubowski \cite{jakubowski1997non} and Vaart/Wellner \cite{van1996weak} gives a candidate limit $f$, which we endeavor to show is a renormalized martingale solution to \eqref{eq:general-stochastic-Boltz}.  The Skorohod theorem allows one to gain compactness of the nonlinear drift terms at the expense of the noise terms.  Indeed, additional oscillations are introduced in the noise terms after switching probability spaces as $\Div_{v}(\sigma_{k}^{n}f_{n}\dot{\beta}_{k})$ is replaced by $\Div_{v}(\sigma_{k}^{n}\tilde{f}_{n}\dot{\tilde{\beta}}^{n}_{k})$, at which point we are setup to apply our weak stability result.  However, this is done in a somewhat indirect way.

The procedure of identifying $f$ with a solution of (\ref{eq:general-renorm-stochastic-Boltz}) requires two conceptually different steps. First, in Section $6$ we fix a bounded renormalization $\Renorm_{m}$ which converges to the identity as $m \to \infty$. With $m$ fixed, we check the criterion necessary to apply our weak stability result to the sequence $\{\Renorm_{m}(f_{n})\}_{n \in \N}$. This sequence is also shown to induce tight laws on $\LLsMw{1}\cap \CLw{1}$.  The stability result implies its limit point $\overline{\Renorm}_{m}$ is a solution to a stochastic kinetic equation with a driver $\mathcal{B}_{m}$. To show this requires analysis of the laws induced by the sequence of renormalized collision operators $\{\Renorm_{m}'(f_{n})\mathcal{B}_{n}(f_{n},f_{n})\}_{n \in \N}$.

At this stage, we do not yet have any sort of closed evolution equation for $\overline{\Renorm_m(f)}$. Indeed, it is unclear the relation between $\overline{\Renorm}_{m}$ and $\mathcal{B}_{m}$. Hence, our next step is to pass $m \to \infty$ and hope to obtain a closed evolution equation in the limit. As a result of the initial renormalization procedure $\overline{\Gamma_m(f)}$ converges strongly to $f$ in $\LLL{\infty}{1}$, $\P$ almost surely. Unfortunately, as $m \to \infty$ one does not have any good control on $\{\mathcal{B}_{m}\}_{m \in \N}$ in any space of distributions (only in the topology of measurable functions, which does not play well with the weak form). On the other hand, we do have control of $\{(1 + \overline{\Renorm_{m}(f)})^{-1}\mathcal{B}_{m}\}_{m \in \N}$. Hence, the strategy is to renormalize again, this time with $\log(1+z)$, and apply again our stability result in the limit $m \to \infty$.

Section \ref{sec:Renorm-Collision-Analysis} is dedicated to analysis of the renormalized collision operator $\mathcal{B}_m$.  As in the deterministic setting, we are able to obtain a pointwise (in $\Omega$) continuity result
\begin{equation}
  \frac{\BCol_n(f_{n},f_{n})}{1+\langle f_{n}\rangle} \to \frac{\BCol(f,f)}{1+\langle f\rangle} \quad \text{in} \quad L^{1}_{t,x}(\mathcal{M}_{v}^{*}),
\end{equation}
as a consequence of the velocity averaging lemmas. Following the strategy in \cite{Lions1994-jf} and \cite{Golse2005-zh}, we are able to conclude that
\begin{equation}
  \frac{\mathcal{B}_{m}}{1+\overline{\Renorm_{m}(f)}}\, \to  \, \, \frac{\mathcal{B}(f,f)}{1+f}\quad \text{in} \quad L^2(\Omega; [\LLLs{1}]_w),
\end{equation}
allowing us to apply again the stability result.

We are then able to deduce that $\log(1+f)$ is a solution to a stochastic kinetic equation driven by $(1+f)^{-1}\BCol(f,f)$.  Roughly speaking, the final step is verify the renormalized form of \eqref{eq:general-stochastic-Boltz} with an arbitrary renormalization.   Since $\log(1+f) \in L^{\infty-}(\Omega\times[0,T]\times\R^{2d})$, the conditions on the noise coefficients (\ref{eq:Noise-Assumption-3}) and (\ref{eq:Noise-Assumption-4}) are exactly such that the renormalization Lemma \ref{prop:Weak_Is_Renormalized} applies whereby, one renormalized by a truncation of $\beta(z) = \Gamma(e^{z}-1)$ and passes the limit in the truncation.


\section{Preliminaries}\label{sec:FormalBounds}

\subsection{Basic properties of the collision operator}
In this section, we recall some basic properties of the collision operator $f \to \mathcal{B}(f,f)$ (defined in \eqref{eq:Boltzmann-Coll-Def}) which will be used throughout the article.  A more in-depth discussion can be found in \cite{diperna1989cauchy}.  To begin, we note that the collision operator  may be split into gain and loss terms
\begin{equation}
  \BCol(f,f) = \BCol^+(f,f) - \BCol^-(f,f),
\end{equation}
with
\begin{equation}
  \BCol^+(f,f) = \iint_{\R^d\times\S^{d-1}} f^\prime f^\prime_*b(v-v_*,\theta)\dee\theta\dee v_*,\quad \BCol^-(f,f) = f(\overline{b}* f),
\end{equation}
and $\overline{b}$ defined by
\begin{equation}
  \overline{b}(z) = \int_{\S^{d-1}} b(z,\theta)\dee\theta.
\end{equation}
The following inequality due to Arkeryd \cite{arkeryd1984loeb} relates the positive and negative parts of the collision operator through the entropy dissipation. Namely, for $K >1$ and $f\in L^1_v$, it holds
\begin{equation}\label{eq:AckerydBd}
\BCol^+(f,f)(v) \leq  K \BCol^-(f,f)(v) + \frac{1}{\log{K}}\SDis^0(f)(v),  
\end{equation}
where $\SDis^0(f)$ is defined by
\begin{equation}
  \SDis^0(f)  = \frac{1}{4} \iint_{\Rd\times \S^{d-1}} d(f)b(v-v_*,\theta)\dee \theta \dee v_* .
\end{equation}
Note that the quantity $\SDis^0(f)$ is {\it not} the entropy dissipation $\SDis(f)$ as defined in (\ref{eq:Entopy-Dissipation-Def}), but is instead related to $\SDis(f)$ by an integration in $v$, 
\begin{equation}
  \SDis(f) = \int_{\Rd} \SDis^0(f) \dee v.
\end{equation}




\subsection{Formal a priori estimates}

In this section, we will derive formal a priori estimates on the stochastic Boltzmann equation (\ref{eq:general-stochastic-Boltz}) with $\{\sigma_k\}_{k\in \N}$ satisfying (\ref{eq:Noise-Assumption-1}) and initial data $f_0$ satisfying
 \begin{equation}
\|(1+|x|^2 + |v|^2 + |\log{f_0}|)f_0\|_{\LLs{1}}^p < \infty.
\end{equation}
Specifically we will see that under these assumptions, there exists a positive constant $C \equiv C_{p,\sigma,T,f_0}$, depending on $p$, $\{\sigma_k\}_{k\in\N}$, $T$, and $f_0$ such that
\begin{equation}\label{eq:formal-apriori-estimate}
\E \|(1+|x|^{2}+|v|^{2}+|\log f|)f\|_{\LLL{\infty}{1}}^{p} \leq C.
\end{equation}
In addition the entropy dissipation $\SDis(f)$ satisfies 
\begin{equation}\label{eq:formal-dissipation-bound}
  \E\|\SDis(f)\|_{\LLt{1}}^p \leq C.
\end{equation}

These a priori estimates are completely natural in the context of the deterministic Boltzmann equation and correspond to the physical assumptions of finite mass, momentum, energy, entropy, and entropy production (see for instance \cite{Cercignani2013-vz} or \cite{Golse2005-zh}).

Throughout the argument $C$ will denote a positive, finite constant that depends on $p$, $\{\sigma_k\}_{k\in\N}$, $T$ and $f_0$. It may change from line to line, and even within a line. 

\subsubsection{Moment Bound}\label{subsec:MomentBound}

We begin by showing that
\begin{equation}\label{eq:v^2-Estimate}
  \E \|(1+ |x|^2 + |v|^2)f\|_{\LLL{\infty}{1}}^p \leq C,
\end{equation}
for $p >2$. To this end, we multiply the Boltzmann equation by $(1+|x|^2 +|v|^2)$ in It\^{o} form and integrate over $[0,t]\times\R^d_x\times\Rd_v$ to obtain
\begin{equation}\label{eq:Global-v^2-boltz}
\begin{split}
&\frac{1}{2}\iint_{\R^{2d}} (1+|x|^2 + |v|^2)f_t\,\dv\dx  = \frac{1}{2}\iint_{\R^{2d}}(1+|x|^2 + |v|^2)f_0\,\dv\dx\\
&\hspace{.5in} + \int_0^t \iint_{\R^{2d}}\sum_{k\in\N}|\sigma_k|^2 f_s\,\dx\dv\ds\\
&\hspace{.5in} + \int_0^t\iint_{\R^{2d}}\Big(\sum_{k\in\N}(\sigma_k\cdot\nabla_v \sigma_k)+x\Big)\cdot v f_s\,\dx\dv\ds\\
&\hspace{.5in} + \sum_{k\in\N}\int_0^t\Big(\int_{\R^{2d}}v\cdot\sigma_k\,f_s\,\dx\dv\Big)\dee \beta_k(s).
\end{split}
\end{equation}
Applying Cauchy-Schwartz to the time integral the following estimate readily follows,
\begin{equation}\label{eq:v^2-est-1}
\begin{aligned}
  &\Big|\int_0^t\iint_{\R^{2d}}\Big(\sum_{k\in\N}(\sigma_k\cdot\nabla_v \sigma_k)+x\Big)\cdot v f_s\,\dx\dv\ds\Big|^p\\
&\hspace{1in}\leq C \|\sigma\cdot\nabla_v\sigma\|_{\ell^1(\N;\LLs{\infty})}^p\int_0^t\|(1+|x|^2+|v|^2)f_s\|_{\LLs{1}}^p\ds,
\end{aligned}
\end{equation}
and similarly 
\begin{equation}\label{eq:v^2-est-2}
  \begin{aligned}
    \Big|\int_0^t\int_{\R^{2d}} \sum_{k\in\N} |\sigma_k|^2 f_s\,\dx\dv\ds\Big|^p \leq C \int_0^t \|(1+|x|^2+|v|^2)f_s\|_{\LLs{1}}^p\,\ds.
  \end{aligned}
\end{equation}
For the stochastic integral term in (\ref{eq:Global-v^2-boltz}), the BDG (Burkholder-Davis-Gundy) inequality yields
\begin{equation}
  \E\bigg|\sup_{r\in[0,t]}\sum_{k\in\N}\int_0^r\Big(\int_{\R^{2d}} v\cdot\sigma_k\, f_s\,\dv\dx\Big)\dee \beta_k(s)\bigg|^p \leq \E\bigg(\int_0^t\sum_{k\in\N}\Big(\int_{\R^{2d}} \sigma_k\cdot v f_s\,\dv\dx\Big)^2\ds\bigg)^{p/2}.
\end{equation}
Therefore, after another application of Cauchy-Schwartz to the time integral, we conclude
\begin{equation}\label{eq:v^2-est-3}
\begin{aligned}
    &\E\bigg|\sup_{r\in[0,\,t]}\sum_{k\in\N}\int_0^r\Big(\int_{\R^{2d}} v\cdot\sigma_k\, f_s\,\dv\dx\Big)\dee \beta_k(s)\bigg|^p\\
&\hspace{1in} \leq C\|\sigma\|_{\ell^2(\N;\LLs{\infty})}^p\int_{0}^t\E\|(1+|x|^2 + |v|^2)f_s\|_{\LLs{1}}^p\ds
\end{aligned}
\end{equation}
We may now combine estimates (\ref{eq:v^2-est-1}), (\ref{eq:v^2-est-2}) and (\ref{eq:v^2-est-3}) with~(\ref{eq:Global-v^2-boltz}) to obtain
\begin{equation}
  \E\bigg( \sup_{r \in [0,t]} \|(1+ |x|^2+ |v|^2)f_r\|_{\LLs{1}}\bigg)^p \leq C + C\int_{0}^t\E\bigg(\sup_{r\in[0,\,s]}\|(1+|x|^2+|v|^2)f_r\|_{\LLs{1}}\bigg)^p\ds. 
\end{equation}
Whereby Gr\"{o}nwall's Lemma gives (\ref{eq:v^2-Estimate}).

\subsection{Entropy Bound}\label{subsec:Entropy-Bound}

Next, we show that
\begin{equation}\label{eq:log-Estimate}
  \E\|f\log{f}\,\|_{\LLL{\infty}{1}}^p \leq C. 
\end{equation}
This estimate, as in the deterministic case, is comprised of two parts, control of the entropy $f\log{f}$ from above by the entropy dissipation (\ref{eq:entropy-dis}) and control of $f\log{f}$ from below using estimates (\ref{eq:v^2-Estimate}) and a Maxwellian. Specifically, integrating the entropy dissipation law (\ref{eq:entropy-dis}) in $[0,t]\times\Rd_x$ gives the $\P$ almost sure identity, for each $t\in[0,T]$,
\begin{equation}\label{eq:entropy-identity}
  \int_{R^{2d}} f_t\log{f_t}\dv\dx = \int_{R^{2d}} f_0\log{f_0}\dv\dx - \int_{0}^t\int_{\Rd} \SDis(f_s)\dx\ds,
\end{equation}
and since $\SDis(f) \geq 0$, this yields the classical entropy inequality,
\begin{equation}\label{eq:entropy-intequality}
  \int_{\R^{2d}} f_t\log{f_t}\dv\dx\leq   \int_{\R^{2d}} f_0\log{f_0}\,\dv\dx. 
\end{equation}
Using this and standard estimates from kinetic theory (see \cite{Cercignani2013-vz}), we obtain $\P$ almost surely
\begin{equation}
\begin{aligned}
  \int_{\R^{2d}}f_t|\log{f_t}|\dv\dx &\leq \int_{R^{2d}} f_t\log{f_t}\dx\dv + 2\int_{\R^{2d}} (|x|^2 + |v|^2)f_t\dv\dx\\
&\hspace{.5in} + 2\frac{\log{e}}{e}\int_{\R^{2d}}e^{-\frac{1}{2}( |x|^2 + |v|^2)}\,\dv\dx\\
&\leq \|f_0\log{f_0}\|_{\LLs{1}} + C\|(1 +|x|^2 + |v|^2)f_t\|_{\LLs{1}} + C.
\end{aligned}
\end{equation}
Applying the previous estimate on $(1+ |x|^2 + |v|^2)f$ to the above inequality gives the desired estimate of $f\log{f}$.

\subsubsection{Dissipation Bound}\label{subsec:Dissipation-Bound}

Finally with regard to the entropy dissipation estimate (\ref{eq:formal-dissipation-bound}), observe that equation (\ref{eq:entropy-identity}) also implies the $\P$ almost sure bound
\begin{equation}
  \|\SDis(f)\|_{\LLt{1}}\leq \|f\log{f}\|_{\LLLs{1}} + \|f_0\log{f_0}\|_{\LLs{1}},
\end{equation}
from which the estimate (\ref{eq:formal-dissipation-bound}) clearly follows.


\section{Stochastic Kinetic Transport Equations}\label{sec:Stoch-Kinetic-Renorm}
In this section, we assume that a probability space $(\Omega,\mathcal{F},\p)$ is given,  together with a deterministic initial condition $f_{0} \in \LLs{1}$ and a random variable $g\in L^{1}(\Omega;\LLLs{1})$.  Moreover, we have a collection of noise coefficients $\{\sigma_k\}_{k\in\N}$ satisfying the coloring Hypothesis \ref{hyp:Noise-Coefficients}. We analyze properties of solutions to stochastic kinetic equations of the type
\begin{equation} \label{eq:StochKin}
\begin{aligned}
&\partial_{t}f + v \cdot \nabla_{x}f + \Div_{v}(f\sigma_{k}\circ \dot{\beta}_{k})=g \\
&f \mid_{t=0}=f_{0},
\end{aligned}
\end{equation}
where solutions are understood in the weak martingale sense, given precisely in Definition \ref{def:Martingale-Sol} below. 

\subsection{Weak martingale solutions}\label{subsec:weak-martingale-sol}

\begin{definition}[Weak Martingale Solution] \label{def:Martingale-Sol} A process $f:[0,T]\times \Omega \to \LLs{1}$ is a weak martingale solution of the stochastic kinetic transport equation driven by $g$ with initial data $f_0$, provided the following is true:
\begin{enumerate}
\item \label{item:continuity} For all $\varphi \in C^{\infty}_{c}(\R^{2d})$, the process $\langle f, \varphi \rangle : \Omega \times [0,T] \to \R$ admits $\p$ a.s. continuous sample paths.  Moreover, $f$ belongs to $L^{2}(\Omega; L^\infty_t(\LLs{1}))$. 
\item \label{item:stoch-basis} There exists a collection of Brownian motions $\{ \beta_{k}\}_{k \in \N}$ and a filtration $(\mathcal{F}_t)_{t=0}^T$ so that the $[L^{1}_{x,v}]_w$ valued processes $(f_{t})_{t=0}^{T}$, $(\int_{0}^{t}g_{s}ds)_{t=0}^{T}$ and each Brownian motion $(\beta_{k}(t))_{t=0}^{T}$ are adapted to $(\mathcal{F}_t)_{t=0}^T$.
\item \label{item:martingaleCond} For all test functions $\varphi\in C^\infty_c(\R^{2d})$, the process $(M_{t}(\varphi))_{t=0}^{T}$ defined by
\begin{equation}\label{eq:Martingale-To-Check}
M_{t}(\varphi)=\iint_{\R^{2d}}f_t\varphi \dx \dv - \iint_{\R^{2d}}f_{0}\varphi \dx \dv 
- \int_{0}^{t}\iint_{\R^{2d}}f(v \cdot \nabla_{x}\varphi+\mathcal{L}_{\sigma}\varphi)+g\varphi \, \dx\dv\ds
\end{equation}
is an $(\mathcal{F}_{t})_{t=0}^{T}$ martingale. Moreover, its quadratic variation and cross variation with respect to each $\beta_{k}$ are given by:
\begin{align}
\dblbrak{M(\varphi)}{M(\varphi)}_{t}&= \sum_{k\in\N}\int_{0}^{t}\Big(\iint_{\R^{2d}}f_s \sigma_{k} \cdot \nabla_{v}\varphi \dx \dv\Big)^{2}\ds.\\
 \dblbrak{M(\varphi)}{\beta_{k}}_{t}&=\int_{0}^{t}\iint_{\R^{2d}}f_s \sigma_{k} \cdot \nabla_{v}\varphi \dx \dv \ds.
\end{align} 
\end{enumerate}
\end{definition}

\begin{remark} \label{rem:stoch-integral-form-of-eq}
Note that if $f$ is a martingale solution to a stochastic kinetic equation driven by $g$ and starting from $f_{0}$ relative to the stochastic basis $(\Omega, \mathcal{F}, \P, (\mathcal{F}_t)_{t=0}^T,\{ \beta_{k}\}_{k \in \N})$, then for all $t \in [0,T]$ the following identity holds $\p$ almost surely
  \begin{equation}\label{eq:weak-martingale-stochastic-integral}
  \begin{split}
    &\iint_{\R^{2d}}f_{t}\varphi \dx \dv = \iint_{\R^{2d}}f_{0}\varphi \dx \dv 
+ \int_{0}^{t}\iint_{\R^{2d}}[f_{s}(v \cdot \nabla_{x} + \LStrat_\sigma)\varphi+ g_{s}\varphi]\dx\dv\ds \\
&\hspace{1in}+\sum_{k\in\N}\int_{0}^{t}\iint_{\R^{2d}}f_{s}\sigma_{k}\cdot \nabla_{v}\varphi \dx\dv\dee\beta_{k}(s).
  \end{split}
\end{equation}
This is guaranteed by Lemma \ref{Lem:Appendix:Three_Martingales_Lemma} of the appendix.
\end{remark}

\begin{remark}
  The definition of weak martingale solution is a subtle one and deserves some discussion. A martingale solution to (\ref{eq:StochKin}) involves finiding a process $f_t$ along with a stochastic basis $(\Omega,\mathcal{F},(\mathcal{F}_t)_{t=0}^T,\{\beta_k\}_{k\in\N},\P)$. This is in contrast to {\it stochastically strong} solutions, which involve finding solutions for a given cannonical stochastic basis.


  The reason for considering the more general weak martingale solutions is that the renormalized solutions to the Boltzmann equation obtained in this paper are weak martingale solutions to (\ref{eq:StochKin}) in the sense that $\Gamma(f)$ solves the stochasatic kinetic equation with $g = \Gamma^\prime(f)\BCol(f,f)$, and the stability result Proposition \ref{prop:stability-weak-martingale} of this section will need to be applied in a setting where each solution is a weak martingale solution to (\ref{eq:StochKin}).
\end{remark}
The following existence result may be proved with a small modification to the arguments given in \cite{flandoli2010well} (which use a strategy developed already in the Ph.D thesis of E. Pardoux \cite{pardoux1975equations}).
\begin{thm}[Existence]\label{thm:existence-Stoch-Kin}
Let $\{\beta_k\}_{k\in\N}$ be a given collection of $(\mathcal{F}_t)_{t=0}^T$ Brownian motions. Assume that $\{\sigma_k\}_{k\in\N}$ satisfies hypothesis (\ref{eq:Noise-Assumption-1}) and that $g\in L^{\infty}(\Omega;\LLLs{1}\cap \LLLs{\infty})$, with $(\int_0^t g_s \ds)_{t=0}^T$ an $(\mathcal{F}_t)_{t=0}^T$ adapted process. If $f_0 \in L^p_{x,v}$ for $p\in[1,\infty]$, then there exists a weak martingale solution $f$ (relative to the given stochastic basis) to the stochastic kinetic equation driven by $g$ with initial data $f_0$.  Moreover, we have the following estimate for every $p \in [1,\infty)$,
  \begin{equation}
    \E\|f\|_{\LLLs{p}}^p \leqc \|f_0\|_{\LLs{p}}^p + \E\|g\|_{\LLLs{p}}^p.
  \end{equation}
\end{thm}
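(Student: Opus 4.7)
The plan is to follow the classical Pardoux/Flandoli strategy: regularize the noise coefficients and the data, solve the resulting smooth problem, establish uniform $L^p$ a priori estimates that survive the limit, and pass to the limit keeping the stochastic basis fixed. Since the Brownian motions, the filtration, and the probability space are prescribed in advance, the stochastic integral term is linear in the unknown, which means weak convergence on the approximating sequence suffices to identify the limit without invoking a martingale-type convergence.

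More concretely, first I would truncate to finitely many modes and mollify: set $\sigma_{k}^{\epsilon}=\sigma_{k}\ast \eta_{\epsilon}$ in $(x,v)$ for $k\leq N$ and zero otherwise, and likewise mollify $f_{0}$ and $g$ in $(x,v)$ only (so that adaptedness is preserved). With smooth, bounded, Lipschitz coefficients $\sigma_k^\epsilon$ satisfying $\Div_v\sigma_k^\epsilon=0$, the Stratonovich SDE \eqref{eq:SDE_Characteristics} admits a smooth stochastic flow $\Phi_{s,t}^{\epsilon,N}$ in the sense of Kunita, and one can define
\begin{equation}
f_{t}^{\epsilon,N}(x,v)=f_{0}^{\epsilon}\bigl(\Phi_{0,t}^{\epsilon,N,-1}(x,v)\bigr)+\int_{0}^{t}g_{s}^{\epsilon}\bigl(\Phi_{s,t}^{\epsilon,N,-1}(x,v)\bigr)\ds.
\end{equation}
Itô's formula applied to a test function paired with $f^{\epsilon,N}$ then shows that $f^{\epsilon,N}$ is a weak martingale solution, relative to the given basis, of the mollified equation, with $L^p_{x,v}$-valued continuous trajectories.

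Next I would derive the $L^p$ bound at the level of the approximations. Since $\Div_v \sigma_k^\epsilon=0$, the formal Stratonovich chain rule applied to $|f^{\epsilon,N}|^p$ yields no extra noise contribution beyond the source, and a rigorous derivation can be obtained either by working with $|f^{\epsilon,N}|^p$ directly (using the smoothness of $f^{\epsilon,N}$ from the flow representation) or by Itô in the approximation $(\delta+|f^{\epsilon,N}|^2)^{p/2}$. After taking expectation, Gr\"onwall on
\begin{equation}
\E\sup_{s\leq t}\|f^{\epsilon,N}_s\|_{L^p_{x,v}}^p \leqs \|f_0\|_{L^p_{x,v}}^p + \E\int_0^t\|g_s\|_{L^p_{x,v}}^p\ds + (\text{stochastic + drift remainders})
\end{equation}
(with BDG on the martingale part) gives the uniform estimate claimed in the statement, independent of $\epsilon$ and $N$. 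This estimate also controls $\E\|f^{\epsilon,N}\|_{L^2(\Omega;L^\infty_t L^1_{x,v})}$ after interpolating with the $L^\infty$ bound on $g$, which is what Definition~\ref{def:Martingale-Sol} requires in item~\eqref{item:continuity}.

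From the uniform $L^p$ bound, Banach--Alaoglu produces a weak-$\star$ limit $f$ along a subsequence in the appropriate Bochner space. Passing to the limit in the weak formulation \eqref{eq:weak-martingale-stochastic-integral} is straightforward for the deterministic terms; for the stochastic integral one writes, for each test function $\varphi$,
\begin{equation}
\sum_{k=1}^{N}\int_{0}^{t}\iint_{\R^{2d}}f_{s}^{\epsilon,N}\sigma_{k}^{\epsilon}\cdot\nabla_{v}\varphi\,\dx\,\dv\,\dee\beta_{k}(s),
\end{equation}
and since $\beta_k$ are fixed and the integrand converges weakly in $L^2(\Omega\times[0,T])$ to the desired limit, the It\^o isometry (together with the strong convergence of $\sigma_k^\epsilon\to\sigma_k$ guaranteed by Hypothesis~\ref{hyp:Noise-Coefficients}) lets one pass to the limit in $L^2(\Omega)$. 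This is precisely Pardoux's observation that a fixed basis turns the stochastic integral into a bounded linear operator. Finally, adaptedness transfers to the limit because each $f^{\epsilon,N}_t$ is $\mathcal{F}_t$-measurable, and the continuity of $t\mapsto\langle f_t,\varphi\rangle$ follows from the stochastic integral representation of $M_t(\varphi)$ once the equation is established. The main technical point to watch is convergence of the series over $k\in\N$ in the noise term after taking $N\to\infty$, which is controlled by Hypothesis~\ref{eq:Noise-Assumption-1} via
\begin{equation}
\Big\|\sum_{k>N}\int_0^t\iint f_s\sigma_k\cdot\nabla_v\varphi\,\dx\dv\,\dee\beta_k(s)\Big\|_{L^2(\Omega)}^2\leq \|\nabla_v\varphi\|_{L^\infty}^2\sum_{k>N}\|\sigma_k\|_{L^\infty}^2\,\E\int_0^t\|f_s\mathbf{1}_{\supp\varphi}\|_{L^1_{x,v}}^2\,\ds,
\end{equation}
which vanishes as $N\to\infty$. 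The hardest bookkeeping is verifying items~\eqref{item:stoch-basis} and~\eqref{item:martingaleCond} of Definition~\ref{def:Martingale-Sol} simultaneously with the continuity in item~\eqref{item:continuity}; this is handled by invoking Lemma~\ref{Lem:Appendix:Three_Martingales_Lemma} as indicated in Remark~\ref{rem:stoch-integral-form-of-eq}.
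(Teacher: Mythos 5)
Your proposal follows the same Pardoux/Flandoli strategy (mollify and truncate the data and noise, solve via Kunita stochastic flows, pass to the limit on the fixed basis using linearity of the Itô integral) that the paper itself cites without spelling out; it is essentially correct and closes the gap the paper leaves to \cite{flandoli2010well,pardoux1975equations}. One small simplification you could exploit: because $\Div_{v}\sigma_{k}^{\epsilon}=0$ makes $\Phi_{s,t}^{\epsilon,N}$ volume-preserving, the $L^{p}$ bound $\|f_{t}^{\epsilon,N}\|_{L^{p}_{x,v}}\leq\|f_{0}^{\epsilon}\|_{L^{p}_{x,v}}+\int_{0}^{t}\|g_{s}^{\epsilon}\|_{L^{p}_{x,v}}\ds$ holds pathwise directly from the mild formula, so the renormalization--It\^{o}--Gr\"{o}nwall--BDG route you sketch for the a priori estimate, while correct, is unnecessary.
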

The next result is a time regularity estimate.
\begin{lem}\label{lem:time-regularity}
Let $q \in (2,\infty]$ and assume $f\in L^{\infty-}(\Omega;L^q_t(\LLs{1}))$ is a weak martingale solution to the stochastic kinetic transport equation driven by $g\in L^{\infty-}(\Omega;L^q_t(\LLs{1}))$ with with initial data $f_0\in \LLs{1}$. Then for any test function $\varphi \in C^\infty_c(\R^{2d})$ and $p \in (\frac{2q}{q-2},\infty)$, we have the following estimate
\begin{equation}
\E \| \langle f, \varphi \rangle \|_{W^{\gamma,p}_{t}}^{p} \leq C_{\varphi,\sigma}\Big (\E\| f\|_{\LLL{q}{1}}^{p}+\E\|g\|_{\LLL{q}{1}}^{p} \Big),
\end{equation}
where $\gamma = \frac{1}{2} - \frac{1}{p} - \frac{1}{q}$.
\end{lem}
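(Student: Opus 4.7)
The plan is to decompose $\langle f_t,\varphi\rangle$ using the weak formulation of Remark \ref{rem:stoch-integral-form-of-eq} and estimate each piece separately in the fractional Sobolev norm, using the Gagliardo-Slobodeckij characterization
\begin{equation*}
\|u\|_{W^{\gamma,p}_t}^p \simeq \|u\|_{L^p_t}^p + \int_0^T\int_0^T\frac{|u(t)-u(s)|^p}{|t-s|^{1+\gamma p}}\,dt\,ds.
\end{equation*}
Writing $\langle f_t,\varphi\rangle = \langle f_0,\varphi\rangle + A_t(\varphi) + M_t(\varphi)$, where
\begin{equation*}
A_t(\varphi) = \int_0^t \iint_{\R^{2d}}\bigl[f_s(v\cdot\nabla_x + \LStrat_\sigma)\varphi + g_s\varphi\bigr]\dx\dv\,ds,\qquad M_t(\varphi) = \sum_k\int_0^t \langle f_s,\sigma_k\cdot\nabla_v\varphi\rangle\,d\beta_k(s),
\end{equation*}
the initial term is constant in $t$ and bounded by $\|f_0\|_{\LLs{1}}\|\varphi\|_{L^\infty}$, so it contributes only to the $L^p_t$ norm.

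For the drift $A_t(\varphi)$, I would use Hölder's inequality in time together with the assumption $\sigma\in \ell^2(\N;L^\infty_{x,v}(W^{1,\infty}_v))$ and the fact that $\LStrat_\sigma\varphi$ is a compactly supported smooth function, to obtain
\begin{equation*}
|A_t(\varphi) - A_s(\varphi)|^p \leq C_{\varphi,\sigma}\,|t-s|^{p(1-1/q)}\bigl(\|f\|_{\LLL{q}{1}}^p + \|g\|_{\LLL{q}{1}}^p\bigr).
\end{equation*}
Inserting this into the Gagliardo seminorm requires $p(1-1/q) - 1 - \gamma p > -1$, i.e.\ $\gamma < 1 - 1/q$, which is comfortably satisfied by $\gamma = 1/2 - 1/p - 1/q$.

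For the martingale term $M_t(\varphi)$, the plan is to apply the Burkholder-Davis-Gundy inequality followed by Hölder in time. Using $\|\sigma\|_{\ell^2(\N;L^\infty_{x,v})}<\infty$ and $\varphi\in C_c^\infty(\R^{2d})$, we bound
\begin{equation*}
\sum_{k\in\N}\bigl|\langle f_r,\sigma_k\cdot\nabla_v\varphi\rangle\bigr|^2 \leq C_{\varphi,\sigma}\|f_r\|_{\LLs{1}}^2,
\end{equation*}
and then BDG combined with Hölder (exponents $q/2$ and $q/(q-2)$, with the obvious modification if $q=\infty$) gives
\begin{equation*}
\E|M_t(\varphi) - M_s(\varphi)|^p \leq C_{\varphi,\sigma}\,|t-s|^{p(1/2 - 1/q)}\,\E\|f\|_{\LLL{q}{1}}^p.
\end{equation*}
Inserting this into the Gagliardo-Slobodeckij seminorm demands the integrability condition $p(1/2 - 1/q) - 1 - \gamma p > -1$, i.e.\ $\gamma < 1/2 - 1/q$, which with the choice $\gamma = 1/2 - 1/p - 1/q$ becomes $1/p>0$ — this is the binding constraint and dictates the exponent. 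The hypothesis $p > 2q/(q-2)$ is precisely what is needed to ensure $\gamma>0$ (so that the seminorm makes sense) and that BDG applies with exponent $p$.

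The main technical point is that no single moment bound gives $W^{\gamma,p}_t$ regularity; one must pass from the pointwise-in-time moment estimate on increments to the fractional Sobolev norm via the double integral representation, which converts the gap between the Hölder exponent $1/2-1/q$ of the martingale and the Kolmogorov loss $1/p$ into the effective regularity $\gamma$. Beyond that, the argument is a clean assembly of BDG, Hölder, and Fubini, and no subtle cancellation or commutator structure is required.
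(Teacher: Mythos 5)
Your proposal is correct and follows essentially the same route as the paper: decompose the increment $\langle f_t - f_s,\varphi\rangle$ via the It\^{o}/weak form, bound the drift and source contributions by H\"{o}lder in time to get a $|t-s|^{p(1-1/q)}$ factor, bound the martingale contribution by BDG plus H\"{o}lder to get $|t-s|^{p(1/2-1/q)}$, and then insert these moment estimates into the Gagliardo--Slobodeckij double integral, which converges precisely because $\gamma = 1/2 - 1/p - 1/q$. The only minor inaccuracy is the remark that $p > 2q/(q-2)$ is needed for BDG to apply with exponent $p$ — BDG holds for all $p\ge 1$; the hypothesis is there solely to make $\gamma>0$, as you correctly observe.
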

\begin{proof} Consider two times $t,s\in \R_+$, $t\neq s$. Writing (\ref{eq:Renorm-Weak-Form}) in It\^{o} form, we can conclude that the difference $\langle f_t - f_s,\varphi \rangle$ satisfies
  \begin{equation}
    \begin{split}
    \langle f_t - f_s, \varphi\rangle &= \int_s^t\iint_{\R^{2d}}(v\cdot\nabla \varphi + \LStrat_\sigma\varphi)f\,dx\dv\dr + \int_s^t\iint_{\R^{2d}}\varphi g\,\dx\dv\ds\\
&\hspace{.5in}+ \sum_{k\in\N} \int_s^t\Big(\iint_{\R^{2d}} f\sigma_k\cdot\nabla_v\varphi\,\dx\dv\Big)\dee\beta_k(r).
    \end{split}
  \end{equation}
We would like to estimate $\E|\langle f_t - f_s ,\varphi\rangle|^p$.  To this end, since $v\cdot \nabla\varphi + \LStrat_{\sigma}\varphi \in \LLs{\infty}$, we have the estimate
\begin{equation}\label{eq:time-reg-est-1}
  \Big|\int_s^t\iint_{\R^{2d}}(v\cdot\nabla \varphi + \LStrat_\sigma\varphi)f\,dx\dv\dr\Big|^p \leq C_{\varphi,\sigma} |t-s|^{p(1-\frac{1}{q})}\|f\|_{\LLL{q}{1}}^p,
\end{equation}
and similarly
\begin{equation}\label{eq:time-reg-est-2}
  \Big|\int_s^t\varphi g\,\dx\dv\ds\Big|^p \leq C_{\varphi}|t-s|^{p(1-\frac{1}{q})}\|g\|_{\LLL{q}{1}}^p.
\end{equation}
By the BDG inequality we may estimate the martingale term by
\begin{equation}\label{eq:time-reg-est-3-BDG}
  \begin{aligned}
  \E\bigg|\sum_{k\in\N} \int_s^t\Big(\iint_{\R^{2d}} f\sigma_k\cdot\nabla\varphi \dx\dv\Big)\dee\beta_k(r)\bigg|^p &\leq \E\bigg(\int_s^t \sum_{k\in\N}\Big (\iint_{\R^{2d}} f\sigma_k \cdot\nabla \varphi \dx\dv\Big)^2\dr\bigg)^{p/2}\\
&\leq C_{\varphi,\sigma}|t-s|^{\frac{p}{2}(1-\frac{2}{q})}\E\|f\|_{\LLL{q}{1}}^p.
  \end{aligned}
\end{equation}
Combining these estimates gives
\begin{equation}
  \E|\langle f_t-f_s,\varphi\rangle|^p \leq C_{\varphi,\sigma}|t-s|^{p(\frac{1}{2}-\frac{1}{q})}\Big(|t-s|^{\frac{p}{2}}\big(\E\|f\|_{\LLL{q}{1}}^p +\E\|g\|_{\LLL{q}{1}}^p\big) + \E\|f\|_{\LLL{q}{1}}^p\Big).
\end{equation}
We now estimate the regularity of $\langle f,\varphi\rangle$ via the Sobolev-Slobodeckij semi-norm $[\cdot]_{W^{\gamma,p}_t}$. For  \\ $\gamma p + 1=p(\frac{1}{2}-\frac{1}{q})$ we find 
\begin{equation}
  \E[\langle f,\varphi\rangle]_{W^{\gamma,p}_t}^p = \int_0^T\int_0^T \frac{\E|\langle f_t - f_s,\varphi\rangle|^p}{|t-s|^{\gamma p + 1}}\ds\dt \leq C_{\varphi,\sigma} \Big(\E\|f\|_{\LLL{\infty}{1}}^p + \|g\|_{\LLL{\infty}{1}}^p\Big).
\end{equation}
\end{proof}

\subsection{Stability of weak martingale solutions}

In this section, we establish our main stability result for sequences of weak martingale solutions to stochastic kinetic equations.  The result below will be used repeatedly throughout the article.
\begin{prop}\label{prop:stability-weak-martingale}
Let $f:\Omega \times [0,T] \to L^{1}_{x,v}$ be a stochastic process and $\{\beta_{k}\}_{k \in \N}$ be a collection of Brownian motions.  Assume there exists a sequence of processes $\{f_{n}\}_{n \in \N}$ with the following properties.  
\begin{enumerate}
\item For each $n \in \N$ there exist $g_{n},f_{n}^{0},$ and $\sigma^{n} = \{\sigma^n_k\}_{k\in\N}$ such that $f_{n}$ is a weak martingale solution to a stochastic kinetic equation driven by $g_{n}$ with initial data $f_{n}^{0}$, relative to the noise coefficients $\sigma^{n}$ and the stochastic basis $(\Omega, \mathcal{F}, \P, (\mathcal{F}^n_t)_{t=0}^T,\{\beta_{k}^{n}\}_{k \in \N})$.
\item The sequences $\{f_{n}\}_{n \in \N}$ and $\{g_{n}\}_{n \in \N}$ are bounded in $L^{2+}(\Omega ; \LLL{\infty}{1})$ and $L^{2}(\Omega ; \LLLs{1})$ respectively. Assume that $\p$ almost surely,
 \begin{equation} \label{eq:conv-at-fixed-time}
f_{n} \to f \quad \text{in} \quad C_{t}([L^{1}_{x,v}]_{w}).
\end{equation}
Moreover, for each $\varphi \in C^{\infty}_{c}(\R^{d})$ and each $t \in [0,T]$, 
 \begin{equation} \label{eq:conv-time-ints}
\Big \langle \int_{0}^{t}g_{n}(s)ds, \varphi \Big \rangle \to  \Big \langle \int_{0}^{t}g(s)ds, \varphi \Big \rangle \quad \text{in} \quad L^{2} ( \Omega ).
\end{equation}
\item As $n \to \infty$, the following convergences hold:
  for each $k\in\N$,
  \begin{equation}
\begin{aligned}
\beta^{n}_{k} \to \beta_{k} \quad &\text{in} \quad L^{2} \left ( \Omega ; C_{t} \right ), \\
f_{n}^{0} \to f^{0} \quad & \text{in} \quad \LLs{1}. \\
\end{aligned}
\end{equation}
\item The sequences $\{\sigma^{n}\}_{n \in \N}$ and $\{\nabla_{v}\sigma^{n}\}_{n \in \N}$ are uniformly bounded in $\ell^{2}(\N; L^{\infty}_{x,v})$, converge pointwise a.e. on $\R^{2d}$ to $\sigma$ and $\nabla\sigma$ and 
\begin{equation}\label{c5}
\lim_{N \to \infty}\sup_{n \in \N}\sum_{k=N}^{\infty}\|\sigma^{n}_{k}\|_{L^{\infty}_{x,v}}^{2}=0
\end{equation}
\end{enumerate}
Under these hypotheses, we may deduce that $f$ is a weak martingale solution driven by $g$ and starting from $f_{0}$, relative to the noise coefficients $\sigma$ and the Brownian motions $\{\beta_{k}\}_{k \in \N}$.  

Moreover, if $(\Omega, \mathcal{F}, \P, (\mathcal{F}_t^{n})_{t=0}^T,\{\beta_{k}^{n}\}_{k \in \N})$ is independent of $n \in \N$, then $f$ can be built with respect to the same stochastic basis.
\end{prop}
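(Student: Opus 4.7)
The plan is to verify directly the three conditions of Definition \ref{def:Martingale-Sol} for the candidate $f$, using the martingale characterization (conditions \ref{item:martingaleCond}) rather than passing limits in a stochastic integral driven by different Brownian motions (which would be awkward because $\beta_k^n \ne \beta_k$). First, I would construct the filtration $(\mathcal{F}_t)_{t=0}^T$ as the (usual) augmentation of the natural filtration generated by the $[L^1_{x,v}]_w$-valued processes $(f_t)$ and $(\int_0^t g_s \ds)$ together with the Brownian motions $\{\beta_k\}_{k \in \N}$. The integrability $f \in L^2(\Omega; L^\infty_t(L^1_{x,v}))$ follows from the a.s.\ convergence in hypothesis (\ref{eq:conv-at-fixed-time}) combined with Fatou and the uniform $L^{2+}(\Omega; L^\infty_t(L^1_{x,v}))$ bound; the required weak sample-path continuity of $\langle f, \varphi\rangle$ is inherited from the same convergence.

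For each $\varphi \in C_c^\infty(\R^{2d})$, I would define $M_t(\varphi)$ by the formula in (\ref{eq:Martingale-To-Check}) using $(f,\sigma,g,f_0)$ and $M_t^n(\varphi)$ using $(f_n,\sigma^n,g_n,f_n^0)$. The strategy is to pass to the limit in the tower identity
\[ \E\Big[\big(M_t^n(\varphi) - M_s^n(\varphi)\big)\, \Phi\big(f_n|_{[0,s]},\, {\textstyle\int_0^\cdot g_n ds}\big|_{[0,s]},\, \{\beta_k^n|_{[0,s]}\}_{k \leq K}\big)\Big] = 0 \]
for $s \leq t$, finite $K$, and bounded continuous $\Phi$. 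The convergence $M_t^n(\varphi) \to M_t(\varphi)$ in $L^2(\Omega)$ is obtained term by term: the pointwise term $\langle f_n, \varphi \rangle \to \langle f, \varphi\rangle$ uses hypothesis (\ref{eq:conv-at-fixed-time}) and the uniform $L^{2+}$ moment to upgrade to $L^2$; the drift term requires that $\LStrat_{\sigma^n}\varphi \to \LStrat_\sigma \varphi$ in $L^\infty(\supp \varphi)$, which follows from the pointwise convergence of $\sigma^n$, the uniform $\ell^2(L^\infty)$ bound and dominated convergence; the $g_n$ term is handled by hypothesis (\ref{eq:conv-time-ints}). Passing the limit inside the expectation uses uniform integrability from the $L^{2+}$ bound together with the $C_t([L^1_{x,v}]_w)$ and $L^2(\Omega;C_t)$ convergences of $f_n$ and $\beta_k^n$.

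The main obstacle is the quadratic and cross variation identities, namely
\[ \E\Big[\Big(M_t(\varphi)^2 - M_s(\varphi)^2 - \int_s^t \sum_{k \in \N}\langle f_r \sigma_k,\nabla_v \varphi\rangle^2\,\dr\Big)\Phi\Big] = 0 \]
and the corresponding identity pairing $M_t(\varphi)$ with $\beta_k(t)$. The difficulty is the infinite sum. I would handle it by truncating at level $N$: for the tail, hypothesis (\ref{c5}) gives
\[ \sup_n \Big|\int_s^t \sum_{k \geq N}\langle f_r^n \sigma_k^n,\nabla_v\varphi\rangle^2 \dr\Big| \leq C\|\nabla_v\varphi\|_\infty^2 \Big(\sup_n \sum_{k \geq N}\|\sigma_k^n\|_{L^\infty_{x,v}}^2\Big) \|f_n\|_{L^\infty_t(L^1_{x,v})}^2, \]
which vanishes uniformly in $n$ as $N \to \infty$. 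For the finite sum $k < N$, pointwise a.e.\ convergence $\sigma_k^n \to \sigma_k$ combined with $f_n \to f$ in $C_t([L^1_{x,v}]_w)$ gives convergence of the integrand pointwise in $(r,\omega)$, and the uniform $L^\infty$ bound on $\sigma_k^n$ together with the $L^{2+}$ bound on $f_n$ supplies the domination needed to pass the limit inside $\E$. Once both identities are secured, the martingale representation of orthogonal martingales with respect to an infinite family of Brownian motions (as applied in Lemma \ref{Lem:Appendix:Three_Martingales_Lemma}) shows that $M_t(\varphi)$ coincides with the required stochastic integral, completing the verification of Definition \ref{def:Martingale-Sol}.

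Finally, when the stochastic basis is independent of $n$, the same passage to the limit may be performed against functionals $\Phi$ measurable with respect to the fixed $\mathcal{F}_s$, so that $M_t(\varphi)$ is a martingale with respect to the original filtration and no enlargement is required. The key technical inputs that make the whole scheme work are the uniform $L^{2+}$ moment bound (which yields uniform integrability to upgrade a.s.\ limits to $L^2$ limits) and the tail estimate (\ref{c5}) (which tames the infinite-dimensional noise).
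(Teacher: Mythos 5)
Your proposal is correct and follows essentially the same strategy as the paper's proof: define the filtration from the natural $\sigma$-algebra generated by $(f, \int_0^\cdot g, \{\beta_k\})$, pass to the limit in the martingale, quadratic-variation, and cross-variation identities against bounded continuous functionals of the restricted paths, and handle the infinite sum in the quadratic variation by truncating at level $N$ and controlling the tail via hypothesis~\eqref{c5}. The only cosmetic differences are that the paper packages the argument in terms of $E_t$-valued random variables $X_n = (f_n, G_n, \{\beta_k^n\})$ and invokes the product limit Lemma~\ref{lem:product_lemma}, where you phrase the same convergence arguments directly via dominated convergence; and the paper does not explicitly appeal to Lemma~\ref{Lem:Appendix:Three_Martingales_Lemma} within this proof (it is used in Remark~\ref{rem:stoch-integral-form-of-eq}), but the content is the same.
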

\begin{proof}
Define a collection of topological spaces $(E_{t})_{t=0}^{T}$ by $E_{t}=C \big ( [0,t]; [\LLs{1}]_{w}^{2} \big) \times C[0,t]^{\infty}$.  Let $r_{t}:E_{T} \to E_{t}$ be the corresponding restriction operators.  Next define the $\LLs{1}$ valued processes $(G_{t})_{t=0}^{T}$ and $(G^{n}_{t})_{t=0}^{T}$ to be the running time integrals (starting from $0$) of $g$ and $g_{n}$, respectively.  Use these to define the $E_{T}$ valued random variables $X=\left (f, G, \{\beta_{k}\}_{k \in \N} \right)$ and $X_{n}=\left (f_{n}, G_{n}, \{\beta_{k}^{n}\}_{k \in \N} \right)$.

 We will verify that $f$ is a weak martingale solution relative to the filtration $(\mathcal{F}_{t})_{t=0}^{T}$ given by $\mathcal{F}_{t}=\sigma(r_{t}X)$.  With this filtration, Part 1 of Definition \ref{def:Martingale-Sol} certainly holds.  Part 2 is true by assumption.  Hence, if suffices to verify Part $3$.  Let $\varphi \in C^{\infty}_{c}(\R^{2d})$ and define the continuous process $(M_{t}(\varphi))_{t=0}^{T}$ by \eqref{eq:Martingale-To-Check}.  Let $s<t$ be two times and suppose that $\gamma \in C_{b}(E_{s};\R)$.  It suffices to show
\begin{align}
&\E \Big ( \gamma(r_{s}X) \big ( M_{t}(\varphi)-M_{s}(\varphi) \big )  \Big)
=0, \label{eq:Mart}\\
&\E \Big (\gamma(r_{s}X)  \big (  M_{t}(\varphi)^{2}-M_{s}(\varphi)^{2}  \big ) \Big)=\sum_{k\in\N}\E \Big ( \gamma(r_{s}X)\int_{s}^{t}\Big ( \iint_{\R^{2d}}f \sigma_{k} \cdot \nabla_{v}\varphi \dx \dv \Big )^{2}\dee r \Big), \label{eq:QV}\\
&\E \Big ( \gamma(r_{s}X) \big ( M_{t}(\varphi)\beta_{k}(t)-M_{s}(\varphi)\beta_{k}(s) \big ) \Big) = \E \Big ( \gamma(r_{s}X) \int_{s}^{t}\iint_{\R^{2d}}\sigma_{k} \cdot \nabla_{v}\varphi f \dx \dv \dee r \Big). \label{eq:CV}
\end{align}   
Begin by defining the filtration $(\mathcal{F}_{t}^{n})_{t=0}^{T}$ by the relation $\mathcal{F}_{t}^{n}=\sigma(r_{t}X_{n})$.  Next define the following $(\mathcal{F}_{t}^{n})_{t=0}^{T}$ continuous martingale $( M_{t}^{n}(\varphi))_{t=0}^{T}$ via
\begin{equation}\label{c1}
M_{t}^{n}(\varphi)=\iint_{\R^{2d}}f_{n}(t)\varphi \dx \dv - \iint_{\R^{2d}}f_{n}^{0}\varphi \dx \dv 
- \int_{0}^{t}\iint_{\R^{2d}}f_{n}(v \cdot \nabla_{x}\varphi+\mathcal{L}_{\sigma^{n}}\varphi)+g_{n}\varphi \, \dx\dv\ds.
\end{equation}
By the first assumption of the Proposition and Definition \ref{def:Martingale-Sol}, we find that
\begin{equation}\label{c2}
\E \Big ( \gamma(r_{t}X_{n})\big (  M_{t}^{n}(\varphi)-M_{s}^{n}(\varphi) \big ) \Big)
=0
\end{equation}
We now claim that for each $t \in [0,T]$, the random variables $\{ M_{t}^{n}(\varphi) \}_{n \in \N}$ converge to $M_{t}(\varphi)$ in $L^{2}(\Omega)$.  Indeed, this hinges on the following facts.  First, the sequences  $\{ \langle f_{n}(t),\varphi\rangle \}_{n \in \N}$ and $\{ \langle G_{n}(t),\varphi\rangle \}_{n \in \N}$ converge to  $\langle f(t),\varphi\rangle $ and  $\langle G(t),\varphi\rangle $ in $L^{2}(\Omega)$ as a result of the assumptions in Part 2 of the Proposition.  Also, we use that $\{f_{n}^{0}\}_{n \in \N}$ converges in $L^{1}_{x,v}$ to $f_{0}$ by the assumptions in Part 3 of the Proposition.   Second, the assumptions in Part 4 of the Proposition ensure that $\{\mathcal{L}_{\sigma^{n}}\varphi\}_{n \in \N}$ converges pointwise $\mathcal{L}_{\sigma}\varphi$, while remaining bounded in $L^{\infty}_{x,v}$.  Since $f_{n} \to f$ pointwise almost surely in $C_{t}([L^{1}]_{x,v})$, the product limit Lemma \ref{lem:product_lemma} ensures the almost sure converence
\begin{equation}
\lim_{n \to \infty}\int_{0}^{t}\iint_{\R^{2d}}f_{n}\mathcal{L}_{\sigma^{n}}\varphi\dx\dv\ds= \int_{0}^{t}\int_{\R^{2d}}f\mathcal{L}_{\sigma}\varphi\dx\dv\ds.
\end{equation}
Combinining this with the uniform bound on $\{f_{n}\}_{n \in \N}$ in $L^{2+}(\Omega;L^{\infty}(L^{1}_{x,v}))$, we may upgrade to convergence in $L^{2}(\Omega)$.  Moreover, it holds that for each $t \in [0,T]$, the random variables $\{ \gamma(r_{t}X_{n})\}_{n \in \N}$ converge to $\gamma(r_{t}X)$ with probability one, while remaining bounded $L^{\infty}(\Omega)$. To treat the sequence $\{G^{n}\}_{n \in \N}$, we use the fact that if a sequence of continuous functions converges pointwise to a continuous limit, then the convergence is also uniform.  With these remarks, we may pass $n \to \infty$ in \eqref{c2} and deduce \eqref{eq:Mart}.  Moreover, using again the product limit Lemma (in $\Omega$) we obtain
\begin{equation}
\lim_{n \to \infty}\E \Big (\gamma(r_{s}X_{n})  \big (  M_{t}^{n}(\varphi)^{2}-M_{s}^{n}(\varphi)^{2}  \big ) \Big)=\E \Big (\gamma(r_{s}X)  \big (  M_{t}(\varphi)^{2}-M_{s}(\varphi)^{2}  \big ) \Big)
\end{equation}
together with
\begin{equation}\label{c6}
\lim_{n \to \infty}\E \Big ( \gamma(r_{s}X_{n}) \big ( M_{t}^{n}(\varphi)\beta_{k}^{n}(t)-M_{s}^{n}(\varphi)\beta_{k}^{n}(s) \big ) \Big )=\E \Big ( \gamma(r_{s}X) \big ( M_{t}(\varphi)\beta_{k}(t)-M_{s}(\varphi)\beta_{k}(s)\big ) \Big ). 
\end{equation}
Next we observe that:
\begin{align}
\E \Big (\gamma(r_{s}X_{n})  \big (  M_{t}^{n}(\varphi)^{2}-M_{s}^{n}(\varphi)^{2}  \big ) \Big)&=\sum_{k\in\N}\E \Big ( \gamma(r_{s}X_{n})\int_{s}^{t}\Big ( \iint_{\R^{2d}}f_{n}\sigma_{k}^{n} \cdot \nabla_{v}\varphi \dx \dv \Big )^{2}\dee r \Big).\label{eq:QVKnown}
\end{align}
To pass the limit on the right hand side of \eqref{eq:QVKnown}, first fix a $k \in \N$.  We claim that  
\begin{equation}
\begin{aligned}
&\lim_{n \to \infty}\E \Big ( \gamma(r_{s}X_{n})\int_{s}^{t}\Big(\iint_{\R^{2d}}f_{n} \sigma_{k}^{n} \cdot \nabla_{v}\varphi \dx \dv\Big)^{2}\dee r \Big)\\
&\hspace{.5in}=\E \Big ( \gamma(r_{s}X)\int_{s}^{t}\Big(\iint_{\R^{2d}}f \sigma_{k} \cdot \nabla_{v}\varphi \dx \dv\Big)^{2}\dee r \Big).
\end{aligned}
\end{equation}
To show this, use again $f_{n} \to f$ almost surely in $C_{t}([L^{1}_{x,v}]_{w})$ together with the pointwise convergence of $\sigma_{k}^{n}$.  Combine this with the uniform bounds on $\{\sigma_{k}^{n}\}_{n \in \N}$ in $L^{\infty}_{x,v}$ and $\{f_{n}\}_{n \in \N}$ in $L^{2+}(\Omega; L^{\infty}_{t}(L^{1}_{x,v}))$.  Using also that the random variables $\{ \gamma(r_{t}X_{n})\}_{n \in \N}$ converge to $\gamma(r_{t}X)$ with probability one, while remaining bounded $L^{\infty}(\Omega)$, the claim follows from the product limit Lemma \ref{lem:product_lemma}.  

Moreover, we have the inequality
\begin{equation}
\E \Big ( \gamma(r_{s}X_{n})\int_{s}^{t}\Big(\iint_{\R^{2d}}f_{n} \sigma_{k}^{n} \cdot \nabla_{v}\varphi \dx \dv\Big)^{2}\dee r \Big) \leq \|\gamma\|_{C_{b}(E_{s};\R)}\|\nabla_{v}\varphi\|_{\LLs{\infty}}\E \|f_{n}\|_{\LLL{2}{1}}^{2} \|\sigma_{k}^{n}\|_{\LLs{\infty}}^{2}.
\end{equation} 
Recalling that $\{\sigma^{n}\}_{n \in \N}$ has property \eqref{c5} and using that $\{f_{n}\}_{n \in \N}$ is uniformly bounded in $L^{2}(\Omega ; \LLL{2}{1})$, we may split the series into finitely many terms plus a uniformly controlled remainder to obtain
\begin{equation}
\begin{split}
&\lim_{n \to \infty}\sum_{k\in\N}\E \Big ( \gamma(r_{s}X_{n})\int_{s}^{t}\Big(\iint_{\R^{2d}}f_{n} \sigma_{k}^{n} \cdot \nabla_{v}\varphi \dx \dv\Big)^2\dr\Big)\\
&\hspace{.5in}=\sum_{k\in\N}\E \Big ( \gamma(r_{s}X)\int_{s}^{t}\Big(\iint_{\R^{2d}}f \sigma_{k} \cdot \nabla_{v}\varphi \dx \dv\Big)^{2}\dee r \Big). 
\end{split}
\end{equation}
We may now pass $n \to \infty$ on both sides of \eqref{eq:QVKnown} to obtain \eqref{eq:QV}.  A similar (and in fact easier) argument yields
\begin{equation}
 \lim_{n \to \infty}\E \Big ( \gamma(r_{s}X_{n}) \int_{s}^{t}\iint_{\R^{2d}}\sigma_{k}^{n} \cdot \nabla_{v}\varphi f_{n} \dx \dv \dee r \Big)=\E \Big ( \gamma(r_{s}X) \int_{s}^{t}\iint_{\R^{2d}}\sigma_{k} \cdot \nabla_{v}\varphi f \dx \dv \dee r \Big),
\end{equation}
which can be combined with \eqref{c6} to obtain \eqref{eq:CV}.  This completes the proof.
\end{proof}

\subsection{Renormalization}
\label{sec:Renormalization}
Formally, given a regular solution $f$ to \eqref{eq:StochKin} and a smooth $\Gamma: \R \to \R$, Ito's formula implies that $\Renorm(f)$ satisfies
\begin{equation}\label{eq:general-renorm-kinetic}
\begin{aligned}
 &\partial_t \Renorm(f) + v\cdot \nabla_x\Renorm(f) + \Div_v (\Renorm(f)\sigma_{k}\strat \dot{\beta}_{k}) = \Renorm'(f)g,\\
 &\Renorm(f)|_{t=0} = \Renorm(f_0).
\end{aligned}
\end{equation}
However, if we only impose Hypothesis \ref{hyp:Noise-Coefficients} on the noise coefficients, it is not clear whether \eqref{eq:general-renorm-kinetic} can be justified when $f$ is only a weak martingale solution to \eqref{eq:StochKin}.  In this section, we show that if $f$ has increased local integrability in $x,v$ and $\sigma$ has sufficient Sobolev regularity, then \eqref{eq:general-renorm-kinetic} holds relative to a large class of renormalizations $\Renorm$.  Towards this end, we introduce the notion of renormalized martingale solution to \eqref{eq:StochKin}. 

\begin{definition}[Renormalized Martingale Solution] \label{def:Renorm-Martingale}
  Suppose that $(f_t)_{t=0}^T$ is a weak martingale solution to the stochastic kinetic equation driven by $g$ with initial data $f_0$ and with with respect to the stochastic basis $(\Omega, \mathcal{F}, \P, (\mathcal{F}_t)_{t=0}^T,\{\beta_k\}_{k\in\N})$. We say that $(f_t)_{t=0}^T$ is a renormalized weak martingale solution provided that for all renormalizations $\Renorm \in C^2(\R)$ with $\sup_{z\in\R}(|\Gamma^\prime(z)| + |\Gamma^{\prime\prime}(z)|) <\infty$ and $\Gamma(0) = 0$, the process $(\Gamma(f)_t)_{t=0}^T$ is weak martingale solution to the stochastic kinetic equation driven by $\Gamma^\prime(f)g$ with initial data $\Gamma(f_0)$, and with respect to the same stochastic basis $(\Omega, \mathcal{F}, \P, (\mathcal{F}_t)_{t=0}^T,\{\beta_k\}_{k\in\N})$.
\end{definition}

\begin{remark}
  It is important to note the assumptions on $\Gamma$ ensure that a renormalized martingale solution is consistent with the notion of weak martingale solution given in Definition \ref{def:Martingale-Sol}. Specifically, the assumptions $\sup_{z\in\R}|\Gamma^\prime(z)| <\infty$ and $\Gamma(0) = 0$ given in definition \ref{def:Renorm-Martingale} imply that $\Gamma(z) \leq C|z|$. This means that when $f\in L^2(\Omega; \LLL{\infty}{1})$, so is $\Gamma(f)$. Likewise we see that $\Gamma(f_0)\in \LLs{1}$ when $f_0$ is and $\Gamma^\prime(f)g\in L^1(\Omega; \LLLs{1})$ when $g$ is.
\end{remark}

\begin{prop}\label{prop:Weak_Is_Renormalized}
  Let $f$ be a weak martingale solution to the stochastic kinetic equation driven by $g$ with initial data $f_0$.  Assume that $f \in L^p(\Omega\times[0,T]\times\R^{2d})$ for some $p \in [2,\infty)$.  If the noise coefficients satisfy for each compact set $K\subseteq \R^{2d}$,
  \begin{equation}
\sum_{k\in\N} \|\sigma_k\|_{W^{1,\frac{2p}{p-2}}(K)}^2 <\infty,\quad \sum_{k\in\N}\|\sigma_k\cdot\nabla_v\sigma_k\|_{W^{1,\frac{p}{p-1}}(K)} < \infty,
\end{equation}
then $f$ is also a renormalized weak martingale solution.
\end{prop}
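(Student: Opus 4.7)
The plan is to implement the stochastic analog of the DiPerna--Lions mollification argument. Fix a standard mollifier $\eta_\epsilon$ on $\R^{2d}$ and set $f_\epsilon := f * \eta_\epsilon$. Rewriting \eqref{eq:StochKin} in It\^o form (using $\Div_v \sigma_k = 0$ to convert the divergence-of-transport term into $\sigma_k \cdot \nabla_v f \,\dot\beta_k - \mathcal{L}_\sigma f$), and testing the weak martingale formulation against translates of $\eta_\epsilon$, one verifies that for each fixed $(x,v)$ the process $t \mapsto f_\epsilon(t,x,v)$ is a continuous semimartingale obeying
\begin{equation*}
df_\epsilon + v\cdot\nabla_x f_\epsilon\, dt + \sum_{k}\sigma_k\cdot \nabla_v f_\epsilon\, d\beta_k - \mathcal{L}_\sigma f_\epsilon\, dt = g_\epsilon\, dt + \mathcal{R}_\epsilon\, dt + \sum_{k} r_\epsilon^k\, d\beta_k,
\end{equation*}
where $r_\epsilon^k = -[\eta_\epsilon,\, \sigma_k \cdot \nabla_v] f$ is the DiPerna--Lions single commutator, and $\mathcal{R}_\epsilon$ collects the analogous commutators with $v\cdot\nabla_x$ and with $\mathcal{L}_\sigma$.

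Since $f_\epsilon$ is smooth in $(x,v)$, It\^o's formula applied pointwise to $\Gamma(f_\epsilon)$ produces an equation for $\Gamma(f_\epsilon)$. The decisive algebraic identity
\begin{equation*}
\Gamma'(f_\epsilon)\, \mathcal{L}_\sigma f_\epsilon + \tfrac{1}{2}\Gamma''(f_\epsilon)\sum_{k}\bigl(\sigma_k\cdot\nabla_v f_\epsilon\bigr)^2 \;=\; \mathcal{L}_\sigma \Gamma(f_\epsilon),
\end{equation*}
a consequence of $\mathcal{L}_\sigma = \Div_v(a\nabla_v\,\cdot\,)$ with $a = \tfrac{1}{2}\sum_k \sigma_k\otimes\sigma_k$, together with the chain rule identities $\Gamma'(f_\epsilon)\,v\cdot\nabla_x f_\epsilon = v\cdot\nabla_x \Gamma(f_\epsilon)$ and $\Gamma'(f_\epsilon)\,\sigma_k\cdot\nabla_v f_\epsilon = \sigma_k\cdot\nabla_v \Gamma(f_\epsilon)$, shows that $\Gamma(f_\epsilon)$ satisfies the mollified stochastic kinetic equation with source $\Gamma'(f_\epsilon)\, g_\epsilon$, modulo the drift contributions
\begin{equation*}
\Gamma'(f_\epsilon)\,\mathcal{R}_\epsilon \;-\; \Gamma''(f_\epsilon)\sum_{k}(\sigma_k\cdot\nabla_v f_\epsilon)\, r_\epsilon^k \;+\; \tfrac{1}{2}\Gamma''(f_\epsilon)\sum_{k} (r_\epsilon^k)^2
\end{equation*}
and a stochastic integral against $\sum_k \Gamma'(f_\epsilon)\, r_\epsilon^k\, d\beta_k$.

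Testing against $\varphi \in C_c^\infty(\R^{2d})$, it suffices to show that each error contribution vanishes as $\epsilon \to 0$: the drift terms in $L^1(\Omega \times [0,T]; L^1_{\mathrm{loc}}(\R^{2d}))$, and the martingale remainder in $L^2(\Omega)$ uniformly in $t$. The kinetic commutator $[\eta_\epsilon, v\cdot\nabla_x] f$ vanishes in $L^p_{\mathrm{loc}}$ directly from the identity $\eta_\epsilon * (v\cdot\nabla_x f) - v\cdot\nabla_x(\eta_\epsilon * f) = \int \eta_\epsilon(\cdot - y,\, \cdot - w)(w - v)\cdot\nabla_y f(y,w)\, dy\, dw$ and dominated convergence, requiring no regularity of $f$. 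The single velocity commutator $r_\epsilon^k \to 0$ in $L^r_{\mathrm{loc}}$, $1/r = 1/p + 1/q$, is the classical DiPerna--Lions estimate under $\sigma_k \in W^{1,q}_{\mathrm{loc}}$. The second-order commutator $[\eta_\epsilon, \mathcal{L}_\sigma]\,f$, expanded via $\mathcal{L}_\sigma = \tfrac{1}{2}\sum_k (\sigma_k\cdot\nabla_v)^2$, reduces after iteration to controlling the nested brackets $\big[[\eta_\epsilon, \sigma_k\cdot\nabla_v],\sigma_k\cdot\nabla_v\big]f$ in $L^1_{\mathrm{loc}}$; handling these brackets is precisely what hypotheses \eqref{eq:Noise-Assumption-3} and \eqref{eq:Noise-Assumption-4}, in their $p$-dependent versions, are engineered to supply.

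The principal obstacle, and the source of both $p \geq 2$ and the requirement $\sigma \in \ell^2(\N; W^{1,\, 2p/(p-2)}_{\mathrm{loc}})$, is the genuinely stochastic It\^o correction $\tfrac{1}{2}\Gamma''(f_\epsilon)\sum_k (r_\epsilon^k)^2$, which has no counterpart in the Stratonovich formal calculation \eqref{eq:Renormalized-Stochkin-Outline}. Since $\Gamma''$ is bounded, controlling it in $L^1_{\mathrm{loc}}$ forces $r_\epsilon^k \to 0$ in $L^2_{\mathrm{loc}}$, which by the commutator lemma with $f \in L^p_{\mathrm{loc}}$ selects the conjugate exponent $q = 2p/(p-2)$ and thereby $p \geq 2$; Cauchy--Schwarz in $k$ then converts summability of $\|r_\epsilon^k\|_{L^2_{\mathrm{loc}}}^2$ into the $\ell^2$-summability hypothesis on $\sigma$. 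The mixed term $\Gamma''(f_\epsilon)(\sigma_k\cdot\nabla_v f_\epsilon)\, r_\epsilon^k$ is dispatched by a further Cauchy--Schwarz once $r_\epsilon^k \to 0$ in $L^2_{\mathrm{loc}}$ is in hand, and the stochastic remainder is controlled by the It\^o isometry and the same estimate. After all remainders have been shown to vanish, passing $\epsilon \to 0$ in the tested mollified equation --- with the martingale structure (including the quadratic and cross variations in Part 3 of Definition \ref{def:Martingale-Sol}) transferring by dominated convergence and the It\^o isometry --- yields that $\Gamma(f)$ is a weak martingale solution driven by $\Gamma'(f)\, g$ with initial data $\Gamma(f_0)$, as required.
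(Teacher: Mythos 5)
Your overall strategy --- mollify, pass to It\^o form, renormalize the mollified equation, and show the remainders vanish using DiPerna--Lions commutators --- is exactly the paper's, and your identification of the squared commutator $\tfrac{1}{2}\Gamma''(f_\epsilon)(r_\epsilon^k)^2$ as the source of both $p\geq 2$ and the $\ell^2(\N;W^{1,2p/(p-2)})$ hypothesis is correct. However, there is a genuine gap in your treatment of the drift remainders.

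You claim that the second-order commutator $[\eta_\epsilon,\mathcal{L}_\sigma]f$ ``reduces after iteration'' to the double bracket, and separately that the mixed term $\Gamma''(f_\epsilon)(\sigma_k\cdot\nabla_v f_\epsilon)\,r_\epsilon^k$ is ``dispatched by a further Cauchy--Schwarz.'' Neither is true in isolation. Expanding $[\eta_\epsilon,(\sigma_k\cdot\nabla_v)^2]$ by the Leibniz rule for commutators gives
\begin{equation*}
[\eta_\epsilon,(\sigma_k\cdot\nabla_v)^2]f = 2\,\sigma_k\cdot\nabla_v\big([\eta_\epsilon,\sigma_k\cdot\nabla_v]f\big)
+ \big[[\eta_\epsilon,\sigma_k\cdot\nabla_v],\sigma_k\cdot\nabla_v\big]f,
\end{equation*}
and the first-order piece $\sigma_k\cdot\nabla_v\big(r_\epsilon^k\big)$ does \emph{not} vanish under the stated hypotheses: it is a full derivative of the single commutator, and $r_\epsilon^k\to 0$ in $L^2_{\mathrm{loc}}$ carries no rate. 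Likewise, Cauchy--Schwarz applied to the mixed term produces the factor $\|\sigma_k\cdot\nabla_v f_\epsilon\|_{L^2_{\mathrm{loc}}}$, which for $f$ merely in $L^p$ blows up like $\epsilon^{-1}$, so $\|r_\epsilon^k\|_{L^2_{\mathrm{loc}}}\to 0$ without a rate is not enough.

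The crucial step in the paper, which your proposal omits, is that these two dangerous pieces cancel against each other via the chain rule $\Gamma''(f_\epsilon)\,\sigma_k\cdot\nabla_v f_\epsilon = \sigma_k\cdot\nabla_v\big(\Gamma'(f_\epsilon)\big)$. Combining them yields the exact total derivative
\begin{equation*}
\Gamma'(f_\epsilon)\,\sigma_k\cdot\nabla_v\big(r_\epsilon^k\big)
+ \sigma_k\cdot\nabla_v\big(\Gamma'(f_\epsilon)\big)\,r_\epsilon^k
= \sigma_k\cdot\nabla_v\big(\Gamma'(f_\epsilon)\,r_\epsilon^k\big),
\end{equation*}
which, once tested against $\varphi\in C_c^\infty(\R^{2d})$ and integrated by parts (using $\Div_v\sigma_k=0$), moves the derivative onto the test function and reduces to the single-commutator estimate. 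After this cancellation the remaining drift errors are precisely the commutator with $v\cdot\nabla_x$, the squared single commutator, and the double commutator --- each of which genuinely vanishes under the hypotheses, as you correctly argue. Without making this cancellation explicit, the argument fails at the $\mathcal{L}_\sigma$-commutator and mixed-term step.
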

\begin{proof}Let $\Renorm$ satisfy the assumptions of definition \ref{def:Renorm-Martingale} , then our goal is to establish that $\Gamma(f)$ is a weak martingale solution driven by $\Gamma^\prime(f)g$ starting from $\Gamma(f_0)$. Towards this end, let $\eta$ be a standard symmetric mollifier with support contained in the unit ball on $\R^{d}_{x} \times \R^{d}_{v}$ with $\int_{\R^{2d}}\eta(x,v)\dx\dv = 1$. Set $\eta_\ep(x,v) = \ep^{-2d}\eta(\ep^{-1}x,\ep^{-1}v)$ and denote by $f_{t,\epsilon}=\eta_{\epsilon}*\,f_t = (f_t)_{\epsilon}$ the mollified process.

Let $\varphi \in C^{\infty}_{c}(\R^{2d})$. The main step in this proof will be to establish that for all $t \in [0,T]$, the following identity holds $\p$ almost surely:
\begin{equation}\label{eq:Regularized-Identity}
\begin{split}
&\iint_{\R^{2d}}\Renorm(f_{t,\epsilon})\varphi \dx \dv = \iint_{\R^{2d}}\Renorm(f_{0,\epsilon})\varphi \dx \dv\\
&\hspace{1in}+ \int_{0}^{t}\iint_{\R^{2d}}[\Renorm(f_{s,\epsilon})(v \cdot \nabla_{x} + \LStrat_\sigma)\varphi+\varphi \Renorm'(f_{s,\epsilon})g_{s,\ep}]\dx\dv\ds \\
&\hspace{1in}+\sum_{k=1}^{\infty}\int_{0}^{t}\iint_{\R^{2d}}\Renorm(f_{s,\epsilon})\sigma_{k}\cdot \nabla_{v}\varphi \dx\dv\dee\beta_{k}(s)
+ R^\varphi_\ep(t),
\end{split}
\end{equation}
for a process $(R^\varphi_\epsilon(t))_{t=0}^T$ such that for each $t\in [0,T]$,
\begin{equation}\label{eq:remainer-convergence}
  R^\varphi_\ep(t) \to 0 \quad \text{in probability as}\quad \ep \to 0.
\end{equation}
Assuming we can verify (\ref{eq:Regularized-Identity}) and (\ref{eq:remainer-convergence}), let us complete the proof. Using standard properties of mollifiers, for almost every $(\omega,t,x,v) \in \Omega\times[0,T]\times\R^{2d}$ one has
 \begin{equation}
   \begin{aligned}
   \Gamma(f_{\ep}) &\to \Gamma(f)\\
   \Gamma(f_{0,\ep}) &\to \Gamma(f_0),
   \end{aligned}
 \end{equation}
and furthermore, using the boundedness of $\Gamma(z)$ and $\Gamma^\prime(z)$, for each compact set $K \subseteq \R^{2d}$ one has
\begin{equation}
\begin{aligned}
  \Gamma(f_{\ep}) &\to \Gamma(f) \quad \text{in}\quad L^2(\Omega\times[0,T]\times K),\\
  \Gamma^\prime(f_{\ep})g_{\ep} &\to \Gamma^\prime(f)g \quad \text{in}\quad L^1(\Omega\times[0,T]\times K).
\end{aligned}
\end{equation}
Using the convergence properties above along with the It\^{o} isometry and the convergence of $R^\varphi_\ep$ to $0$, we may pass the $\ep \to 0$ limit in each term of (\ref{eq:Regularized-Identity}), where the convergence holds in $L^1(\Omega\times [0,T])$. We conclude that $\Gamma(f)$ solves
\begin{equation}
  \begin{split}
    &\iint_{\R^{2d}}\Renorm(f_{t})\varphi \dx \dv = \iint_{\R^{2d}}\Renorm(f_{0})\varphi \dx \dv\\
   &\hspace{.5in}+ \int_{0}^{t}\iint_{\R^{2d}}[\Renorm(f_{s})(v \cdot \nabla_{x} + \LStrat_\sigma)\varphi+\varphi \Renorm'(f_{s})g_{s}]\dx\dv\ds \\
&\hspace{.5in}+\sum_{k=1}^{\infty}\int_{0}^{t}\iint_{\R^{2d}}\Renorm(f_{s})\sigma_{k}\cdot \nabla_{v}\varphi \dx\dv\dee\beta_{k}(s),
  \end{split}
\end{equation}
thereby completing the proof.

It now remains to verify identity (\ref{eq:Regularized-Identity}) along with the vanishing of the remainder (\ref{eq:remainer-convergence}). We begin by considering the equation (\ref{eq:weak-martingale-stochastic-integral}). We fix $z = (x,v)\in \R^{2d}$ and choose $\varphi(w) = \eta_\ep(z - w)$. This is equivalent to mollifying both sides of equation, giving
\begin{equation}\label{eq:Mollified-eq}
\begin{aligned}
    &f_{t,\ep}(z) = f_{0,\ep}(z) + \int_0^t[(-v\cdot\nabla_x f_s)_\ep(z) + (\LStrat_\sigma f)_\ep(z) + g_{s,\ep}(z)]\ds\\
&\hspace{1in} - \sum_{k\in\N}\int_0^t (\sigma_k\cdot\nabla_v f_s)_{\ep}(z)\dee \beta_{k}(s).
\end{aligned}
\end{equation}
For each $z\in \R^{2d}$, we may renormalize by $\Gamma$ by applying It\^{o}'s formula,
\begin{equation}
  \begin{split}
    &\Gamma(f_{t,\ep}(z)) = \Gamma(f_{0,\ep}(z)) + \int_0^t\Gamma^\prime(f_{s,\ep}(z))[(-v\cdot\nabla_x f_s)_\ep(z) + (\LStrat_\sigma f)_\ep(z) + g_{s,\ep}(z)]\ds\\
&\hspace{.5in} + \frac{1}{2}\sum_{k\in\N} \int_0^t \Gamma^{\prime\prime}(f_{s,\ep}(z))(\sigma_k\cdot\nabla_v f_s)_\ep^2(z)\ds\\
&\hspace{.5in} - \sum_{k\in\N}\int_0^t\Gamma^\prime(f_{s,\ep}(z)) (\sigma_k\cdot\nabla_v f_s)_{\ep}(z)\dee \beta_{k}(s).
  \end{split}
\end{equation}
Naturally we can force the form of (\ref{eq:Regularized-Identity}) into view by the use of the commutators,
\begin{equation}
  \begin{aligned}
    &[\eta_\ep,v\cdot\nabla_x](f) = (v\cdot\nabla_xf)_\ep - v\cdot\nabla_xf_\ep\\
    &[\eta_\ep,\LStrat_\sigma ](f) = (\LStrat_\sigma f)_{\ep} - \LStrat_\sigma f_\ep\\
    &[\eta_\ep,\sigma_k\cdot\nabla_v](f) = (\sigma_k\cdot\nabla_v f)_\ep  - \sigma_k\cdot\nabla_v f_\ep.
  \end{aligned}
\end{equation}
 Specifically, using the fact that $\LStrat_\sigma  \Gamma(f) = \Gamma^\prime(f)\LStrat_\sigma f + \tfrac{1}{2}(\sigma_k\cdot\nabla_v f)^2\Gamma^{\prime\prime}(f)$, we find
\begin{equation}\label{eq:commutator-remainder-form}
  \begin{split}
  &\Gamma(f_{t,\ep}) = \Gamma(f_{0,\ep}) + \int_0^t[(-v\cdot\nabla + \LStrat_\sigma)\Gamma(f_{s,\ep}) + \Gamma^\prime(f_{s,\ep})g_{s,\ep}]\ds\\
&\hspace{1in} - \sum_{k\in\N}\int_{0}^t\sigma_k\cdot\nabla_v \Gamma(f_{s,\ep})\dee\beta_{k}(s) + R_{t,\ep}
  \end{split}
\end{equation}
where $R_{t,\ep}$ is a given by
\begin{equation}
\begin{split}
  &R_{t,\ep} = \int_0^t\Gamma^\prime(f_{s,\ep})\big(-[\eta_\ep,v\cdot\nabla_x](f_s) +[\eta_\ep,\LStrat_\sigma ](f_s)\big)\ds\\
&\hspace{1in} + \sum_{k\in\N}\frac{1}{2}\int_0^t\Gamma^{\prime\prime}(f_{s,\ep})[(\sigma_k\cdot\nabla_v f_s)_\ep^2 -  (\sigma_k\cdot\nabla_vf_{s,\ep})^2]\ds\\
&\hspace{1in}-  \sum_{k\in\N}\int_0^t\Gamma^\prime(f_{s,\ep}) [\eta_\ep,\sigma_k\cdot\nabla_v](f_s)\dee\beta_{k}(s).
\end{split}
\end{equation}
Integrating both sides of (\ref{eq:commutator-remainder-form}) against $\varphi$, we obtain (\ref{eq:Regularized-Identity}). 

It remains to show that for each $t\in[0,T]$,
\begin{equation}
R^\varphi_\ep(t) := \iint_{\R^{2d}} \varphi R_{t,\ep}\dx\dv  \to 0\quad \text{in probability as} \quad \ep \to 0.
\end{equation}
This will be proved with the aid of standard commutator lemmas taken from \cite{diperna1989ordinary}. Specifically, we use that $f\in L^p(\Omega\times[0,T]\times \R^{2d}) \cap L^{p}(\Omega \times [0,T]; L^{1}_{x,v})$ and for each $k \in \N$, we have $\sigma_k \in [W^{1,\frac{2p}{p-2}}_{x,v}]_{\loc}$.  It follows that for almost every $(\omega, t) \in \Omega\times [0,T]$ we have
\begin{align}
  [\eta_\ep, v\cdot\nabla_x](f_t) \to 0 \quad&\text{in}\quad [\LLs{2}]_{\loc},\label{eq:v-commutator-conv}\\
  [\eta_\ep, \sigma_k\cdot\nabla_v](f_t) \to 0 \quad &\text{in}\quad [\LLs{2}]_{\loc},\label{eq:sigma-commutator-conv}
\end{align}
as well as the bound,
\begin{equation}\label{eq:sigma-commutator-bound}
  \|[\eta_\ep, \sigma_k\cdot\nabla_v](f_t)\|_{L^2(K)} \leq \|\sigma_k\|_{W^{1,\frac{2p}{p-2}}(K)}\,\|f_t\|_{\LLs{p}}.
\end{equation}
In order to use the commutator results (\ref{eq:v-commutator-conv}) and (\ref{eq:sigma-commutator-conv}) to our advantage, we will need to manipulate $R_{t,\ep}$. First we write the commutator $[\eta_\ep, \LStrat_\sigma](f)$ in terms of $[\eta_\ep,\sigma_k\cdot\nabla_v]$ as follows:
\begin{equation}
\begin{split}
  [ \eta_\ep , \LStrat_\sigma ](f) &= \frac{1}{2}\sum_{k\in\N}\Big((\sigma_k\cdot\nabla_v(\sigma_k\cdot\nabla_vf))_\ep - \sigma_k\cdot\nabla_v(\sigma_k\cdot\nabla_vf_\ep)\Big)\\
 &=\frac{1}{2}\sum_{k\in\N} \Big([\eta_{\ep},\sigma_k\cdot\nabla_v](\sigma_k\cdot\nabla_v f) + \sigma_k\cdot\nabla_v[\eta_\ep,\sigma_k\cdot\nabla_v](f)\Big).
\end{split}
\end{equation}
The second observation is the following equalities
\begin{equation}
  \begin{split}
  &\frac{1}{2}\Gamma^{\prime\prime}(f_\ep)[(\sigma_k\cdot\nabla_v f)_\ep^2 -  (\sigma_k\cdot\nabla_vf_{\ep})^2]\\
  &\hspace{.5in}= \frac{1}{2}\Gamma^{\prime\prime}(f_\ep)[\eta_\ep,\sigma_k\cdot\nabla_v](f)\big((\sigma_k\cdot\nabla_vf)_\ep + \sigma_k\cdot\nabla_v f_\ep\big)\\
 &\hspace{.5in}= \frac{1}{2}\Gamma^{\prime\prime}(f_\ep)\big([\eta_\ep, \sigma_k\cdot\nabla_v](f)\big)^2 + \Gamma^{\prime\prime}(f_{\ep})[\eta_{\ep}, \sigma_k \cdot \nabla_v](f)\sigma_k\cdot\nabla_v f_\ep\\
&\hspace{.5in}= \frac{1}{2}\Gamma^{\prime\prime}(f_\ep)\big([\eta_\ep, \sigma_k\cdot\nabla_v](f)\big)^2 - \Gamma^\prime(f_\ep) \sigma_k\cdot\nabla [\eta_\ep,\sigma_k\cdot\nabla_v](f)\\
&\hspace{1in} + \sigma_k\cdot\nabla_v\left(\Gamma^\prime(f_\ep)[\eta_\ep,\sigma_k\cdot\nabla_v](f)\right).
  \end{split}
\end{equation}
Adding the two identities above and introducing the double commutator $\big[[\eta_\ep, \sigma_k\cdot\nabla_v], \sigma_k\cdot\nabla_v\big]$ defined by
\begin{equation}
  \big[[\eta_\ep, \sigma_k\cdot\nabla_v], \sigma_k\cdot\nabla_v\big](f) = [\eta_\ep, \sigma_k\cdot\nabla_v](\sigma_k\cdot\nabla_vf) - \sigma_k\cdot\nabla_v[\eta_\ep,\sigma_k\cdot\nabla_v](f),
\end{equation}
we conclude that
\begin{equation}
  \begin{split}
   & \Gamma^\prime(f_\ep)[\eta_\ep,\LStrat_\sigma](f)  + \sum_{k\in\N}\frac{1}{2}\Gamma^{\prime\prime}(f_\ep)[(\sigma_k\cdot\nabla_v f)_\ep^2 -  (\sigma_k\cdot\nabla_vf_{\ep})^2]\\
&\hspace{.5in}= \sum_{k\in\N}\Big( \sigma_k\cdot\nabla_v\left(\Gamma^\prime(f_\ep)[\eta_\ep,\sigma_k\cdot\nabla_v](f)\right) + \frac{1}{2} \Gamma^{\prime\prime}(f_\ep)\big([\eta_\ep, \sigma_k\cdot\nabla_v](f)\big)^2\\
&\hspace{1in}+ \frac{1}{2}\Gamma^\prime(f_\ep)\big[[\eta_\ep, \sigma_k\cdot\nabla_v], \sigma_k\cdot\nabla_v\big](f)\Big).
  \end{split}
\end{equation}
The process $R_{t,\ep}$ is therefore given by
\begin{equation}\label{eq:Remainder-term-final}
  \begin{split}
    &R_{t,\ep} = -\int_0^t\Gamma^\prime(f_{s,\ep})[\eta_\ep,v\cdot\nabla_x](f_s)\ds + \sum_{k\in\N}\int_0^t\sigma_k\cdot\nabla_v\left(\Gamma^\prime(f_\ep)[\eta_\ep,\sigma_k\cdot\nabla_v](f_s)\right)\ds\\
&\hspace{.5in} + \frac{1}{2}\sum_{k\in\N}\int_0^t \Gamma^{\prime\prime}(f_{s,\ep})\big([\eta_\ep, \sigma_k\cdot\nabla_v](f_s)\big)^2\ds\\
&\hspace{.5in}+ \frac{1}{2}\sum_{k\in\N}\int_0^t\Gamma^\prime(f_{s,\ep})\big[[\eta_\ep, \sigma_k\cdot\nabla_v], \sigma_k\cdot\nabla_v\big](f_s)\ds\\
&\hspace{.5in} + \sum_{k\in\N}\int_0^t\Gamma^\prime(f_{s,\ep}) [\eta_\ep,\sigma_k\cdot\nabla_v](f_s)\dee\beta_{k}(s).
  \end{split}
\end{equation}
Integrating $R_{t,\ep}$ against $\varphi$ to obtain $R^\varphi_\ep(t)$, it is now possible to use the convergences (\ref{eq:v-commutator-conv}), (\ref{eq:sigma-commutator-conv}), the uniform bound (\ref{eq:sigma-commutator-bound}), and our assumptions on the noise coefficients to show that each term in $R^\varphi_\ep(t)$ involving the single commutators, $[\eta_\ep,v\cdot\nabla_x](f)$ and $[\eta_{\ep}, \sigma_k\cdot\nabla_v](f)$, converges to $0$ in probability for each $t\in[0,T]$. 

It remains to estimate the double commutator term
\begin{equation}\label{eq:double-commutator-remainder}
  I_{t,\ep} = \frac{1}{2}\sum_{k\in\N}\int_{0}^t\iint_{\R^{2d}}\varphi\Gamma^{\prime}(f_{s,\ep})\big[[\eta_\ep, \sigma_k\cdot\nabla_v], \sigma_k\cdot\nabla_v\big](f_s)\dx\dv\ds.
\end{equation}
We will prove that for each $t\in[0,T]$, $I_{t,\ep} \to 0$ in probability.

In what follows, to simplify notation, we will denote both $z = (x,v)$ and $w =(y,u)$ the phase space (position-velocity) coordinates in $\R^{2d}$ wherever possible, and define the translation operator
\begin{equation}
\delta_{w}f(z):=f(z+w)-f(z).
\end{equation}
We will need to evaluate the double-commutator explicitly.  This will be done piece by piece. For the first piece, since $\Div_v\sigma_k =0 $, integrating by parts gives
\begin{equation}
    \begin{split}
 &\left[\eta_{\ep}\,,\,\sigma_k\cdot\nabla_v\right] (\sigma_k\cdot\nabla_v f_t)(z) = \int_{\R^{2d}} \nabla^2_v\eta_{\ep}(z-w):\sigma_k(w)\tensor[\sigma_k(w)-\sigma_k(z)]\,f_t(w)\,\dw\\
&\hspace{1in} - \int_{\R^{2d}} \nabla_v\eta_{\ep}(z-w)\cdot (\sigma_k(w)\cdot\nabla_v\sigma_k(w))\,f_t(w)\,\dw,
    \end{split}
\end{equation}
and similarly, for the second piece, we have
\begin{equation}
  \begin{split}
    &\sigma_k\cdot\nabla_v\left[\eta_\ep\,,\,\sigma_k\cdot\nabla_v\right](f_t)(z) = \int_{\R^{2d}}\nabla^2_v\eta_\epsilon(z-w):[\sigma_k(w) - \sigma_k(z)]\tensor\sigma_k(z)\,f_t(w)\,\dw\\
&\hspace{1in} - \int_{\R^{2d}}\nabla_v\eta_\epsilon(z-w)\cdot(\sigma_k(z)\cdot\nabla_v\sigma_k(z))\,f_t(w)\,\dw.
  \end{split}
\end{equation}
Note that the operation $f \to \big[[\eta_\ep, \sigma_k\cdot\nabla_v], \sigma_k\cdot\nabla_v\big](f)$ vanishes on constant functions.  Hence, in both identities above we may freely replace $f(w)$ by $f(w)-f(z)$. Therefore, using the symmetry of $\nabla_v^2\eta_\ep$, and changing variables $w\to  w+z$, we conclude that the double commutator can be written in the following form
\begin{equation}
  \begin{split}
  &\big[[\eta_\ep, \sigma_k\cdot\nabla_v], \sigma_k\cdot\nabla_v\big](f_t)(z) =  \int_{\R^{2d}}\nabla^2_v\eta_\epsilon(w):(\delta_w\sigma_k(z)\tensor\delta_w\sigma_k(z))\delta_wf_t(z)\,\dw\\
&\hspace{1in} + \int_{\R^{2d}}\nabla_v\eta_\epsilon(w)\cdot \delta_w(\sigma_k\cdot\nabla_v\sigma_k)(z)\delta_wf_t(z)\,\dw.
\end{split}
\end{equation}

Next we use the fact that for any $g \in W^{1,r}_{x,v}$, the following inequality holds pointwise in $w \in \R^{2d}$
\begin{equation}\label{eq:EstOnTranslations}
|\delta_{w}g|_{L^{r}_{x,v}}\leq |w||\nabla g|_{L^{r}_{x,v}}.
\end{equation}
Using Holder's inequality, the estimate \eqref{eq:EstOnTranslations}, and the fact that $|\nabla^2_v\eta_\ep(w)|\,|w|^2$ and $|\nabla_v \eta_\ep(w)|\,|w|$ are uniformly bounded in $L^1_w$, we may estimate $I_{t,\ep}$ for each $t\in[0,T]$ and $\omega \in \Omega$,
\begin{equation}
    |I_{t,\ep}| \leq C_\varphi\Big(\|\sigma\|_{\ell^2(\N; W^{1,\frac{2p}{p-2}}(K_\varphi))}^2 + \|\sigma\cdot\nabla\sigma\|_{\ell^1(\N; W^{1,\frac{p}{p-1}}(K_\varphi))}\Big)\|\Gamma^{\prime}(f_\ep)\|_{\LLLs{\infty}}\sup_{|w| <\ep} \|\delta_w f\|_{\LLLs{p}},
  \end{equation}
  where  $K_\varphi$ is the compact support of $\varphi$. Since $f\in L^p([0,T]\times\R^{2d})$ with probability one, 
\begin{equation}
\sup_{|w| <\ep} \|\delta_w f\|_{\LLLs{p}} \to 0, \quad \P \text{ almost surely}.  
\end{equation}
 The proof of the Proposition is now complete since this implies $I_{t,\ep} \to 0$ in probability for each $t\in[0,T]$.

\end{proof}

This section is now completed by checking that renormalized, weak martingale solutions to \eqref{eq:StochKin} with additional integrability have strongly continuous sample paths. The following lemma will be crucial for ultimately deducing strong continuity properties of the solution to the stochastic Boltzmann equation.
\begin{lem}[Strong Continuity]\label{lem:strong-cont}
Let $f$ be a renormalized weak martingale solution to the stochastic kinetic equation driven by $g$ with initial data $f_0$. If $f$ belongs to $\LLL{\infty}{p}$ with probability one for some $p \in (1,\infty)$, then $f \in \CLL{q}$ with probability one for any $q\in (1,p)$.
\end{lem}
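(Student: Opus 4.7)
The plan is to deduce strong continuity in $L^q_{x,v}$ by combining weak continuity in this space with continuity of the norm $t \mapsto \|f_t\|_{L^q_{x,v}}$, invoking the Radon--Riesz property in the uniformly convex space $L^q_{x,v}$ (for $q \in (1,\infty)$). The weak continuity step is straightforward: Definition \ref{def:Martingale-Sol} already gives $t \mapsto \langle f_t, \varphi\rangle$ continuous for every $\varphi \in C^\infty_c(\R^{2d})$ and $f \in L^\infty_t(\LLs{1})$ almost surely; interpolation with the hypothesis $f \in L^\infty_t(L^p_{x,v})$ yields $f \in L^\infty_t(L^q_{x,v})$ almost surely, and density of $C^\infty_c$ in $L^{q'}_{x,v}$ then extends continuity of $\langle f, \varphi\rangle$ to every $\varphi \in L^{q'}_{x,v}$, giving $f \in C_t([L^q_{x,v}]_w)$ almost surely.

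The heart of the proof is showing that $t \mapsto \|f_t\|_{L^q_{x,v}}^q$ is almost surely continuous, which exploits the renormalization property. For $\epsilon \in (0,1)$ and $M \geq 1$, I would construct a $C^2$ function $\Phi_{\epsilon,M}: \R \to \R$ with $\Phi_{\epsilon,M}(0)=0$ and bounded first and second derivatives, satisfying $0 \leq \Phi_{\epsilon,M}(z) \leq |z|^q$ with equality on $\{\epsilon \leq |z| \leq M\}$, vanishing on $\{|z| \leq \epsilon/2\}$ and constant on $\{|z| \geq 2M\}$. Since $f$ is a renormalized weak martingale solution, $\Phi_{\epsilon,M}(f)$ is a weak martingale solution driven by $\Phi'_{\epsilon,M}(f)g$. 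Testing the resulting equation against a cutoff $\chi_R(x,v) = \psi(x/R, v/R) \in C^\infty_c$, with $\psi \equiv 1$ near the origin, and sending $R \to \infty$, I would verify that all error terms vanish uniformly in $t$: the transport and diffusion pieces by dominated convergence (using $v\cdot\nabla_x\chi_R, \mathcal{L}_\sigma\chi_R \to 0$ pointwise with uniform $L^\infty$ bound and $\Phi_{\epsilon,M}(f) \in L^1_{t,x,v}$), and the It\^o term by the Burkholder--Davis--Gundy inequality, using the $O(1/R)$ decay of $\|\nabla_v\chi_R\|_{L^\infty}$, the coloring hypothesis \eqref{eq:Noise-Assumption-1}, and the $L^2(\Omega)$ bound on $\|f\|_{L^\infty_t(\LLs{1})}$ from Definition \ref{def:Martingale-Sol}. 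Extracting a subsequence $R_n \to \infty$ on which the stochastic integral converges uniformly in $t$ almost surely, I obtain
\begin{equation}
\iint_{\R^{2d}}\Phi_{\epsilon,M}(f_t) \dx \dv = \iint_{\R^{2d}}\Phi_{\epsilon,M}(f_0)\dx\dv + \int_0^t \iint_{\R^{2d}} \Phi'_{\epsilon,M}(f_s)g_s \dx\dv\ds
\end{equation}
almost surely for all $t \in [0,T]$, with both sides continuous in $t$.

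Passing $\epsilon \to 0$ and $M \to \infty$ uses the pointwise estimate $0 \leq |z|^q - \Phi_{\epsilon,M}(z) \leq \epsilon^{q-1}|z| + M^{q-p}|z|^p$, valid since $1 < q < p$, together with the almost sure bounds on $\|f\|_{L^\infty_t(\LLs{1})}$ and $\|f\|_{L^\infty_t(L^p_{x,v})}$. This gives
\begin{equation}
\sup_{t \in [0,T]} \Bigl| \|f_t\|_{L^q_{x,v}}^q - \iint_{\R^{2d}} \Phi_{\epsilon,M}(f_t) \dx\dv \Bigr| \leq \epsilon^{q-1}\|f\|_{L^\infty_t(\LLs{1})} + M^{q-p}\|f\|_{L^\infty_t(L^p_{x,v})}^p,
\end{equation}
which tends to $0$ almost surely, so $t \mapsto \|f_t\|_{L^q_{x,v}}^q$ is an almost sure uniform limit of continuous functions, hence continuous. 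Combined with the weak continuity from the first step, the Radon--Riesz property in $L^q_{x,v}$ yields $f \in C_t(L^q_{x,v})$ almost surely. The principal obstacle is the uniform-in-$t$ vanishing of the stochastic integral as $R \to \infty$, handled via BDG and the coloring hypothesis; a secondary technicality is designing $\Phi_{\epsilon,M}$ in the admissible renormalization class, which for $q \in (1,2)$ requires $\Phi_{\epsilon,M}$ to vanish near zero to circumvent the failure of $|z|^q$ to be $C^2$ at the origin.
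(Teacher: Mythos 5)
Your proposal follows essentially the same route as the paper: truncate $|z|^q$ to a renormalization in the admissible class of Definition~\ref{def:Renorm-Martingale}, apply Proposition~\ref{prop:Weak_Is_Renormalized} to deduce continuity of $t \mapsto \|\Gamma(f_t)\|_{\LLs{1}}$, remove the truncation uniformly in $t$ to obtain continuity of $t\mapsto\|f_t\|_{\LLs{q}}$, and combine this with weak continuity via uniform convexity (Radon--Riesz). Two points where you are more careful than the paper are worth flagging.

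First, the paper's truncations $\psi_k$ are asserted to coincide with $\psi(x)=|x|^q$ on $\{|x|<k\}$ while having bounded second derivative; for $q\in(1,2)$ this is impossible, since $\psi''(x)=q(q-1)|x|^{q-2}$ is unbounded near $x=0$, and no $C^2$ function with $\sup|\psi_k''|<\infty$ can agree with $|x|^q$ on a full neighborhood of the origin. Your two-parameter cutoff $\Phi_{\epsilon,M}$, which vanishes on $\{|z|\leq \epsilon/2\}$, removes this defect, and the pointwise bound $0\leq |z|^q-\Phi_{\epsilon,M}(z)\leq \epsilon^{q-1}|z|+M^{q-p}|z|^p$ then exploits both the $\LLL{\infty}{1}$ control (small-$z$ tail) and the $\LLL{\infty}{p}$ hypothesis (large-$z$ tail), whereas the paper's single-parameter estimate only needs the latter. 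The paper's argument is in fact only valid for $q\geq 2$ as written; since in its sole application (Corollary~\ref{cor:PrelimMartingale}) it is invoked with $q=2$ and $p=\infty$, the gap is harmless there, but your version proves the lemma as actually stated.

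Second, you make explicit the passage from the weak form tested against $\varphi\in C^\infty_c(\R^{2d})$ to the unpaired identity for $\|\Gamma(f_t)\|_{\LLs{1}}$, using the spatial cutoff $\chi_R$ and BDG for the stochastic term, then extracting a subsequence $R_n\to\infty$ for a.s.\ uniform convergence; the paper states the identity directly. This step is not automatic (the drift term $v\cdot\nabla_x\chi_R$ is bounded but does not decay uniformly in $v$, and the martingale estimate only gives $L^1(\Omega)$ smallness), so your spelling it out is a genuine completion rather than mere verbosity. The proof is correct.
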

\begin{proof}
We begin by remarking that $f \in \CLLw{p}$ with probability one. Indeed, let $\varphi \in C^{\infty}_{c}(\R^{2d})$.  It follows directly from inspection of the weak form and elementary properties of stochastic integrals that the process $t \to \langle f_{t},\varphi \rangle$ has continuous sample paths.  Moreover, since $f$ belongs to $\LLL{\infty}{p}$ with probability one, it follows that $f \in \CLLw{p}$ with probability one.  

The next step is to upgrade to continuity with values in $\LLs{q}$ with the strong topology.  Towards this end, let $\psi: \R \to \R$ be defined by $\psi(x)=|x|^{q}$.  We may choose a sequence of smooth, truncations of $\psi$, denoted $\{\psi_{k}\}_{k \in \N}$ that satisfy the conditions on the renormalizations in Definition \ref{def:Renorm-Martingale} such that $\psi_{k}$ converge pointwise in $\R$ to $\psi$ as $k \to \infty$. Moreover, these truncations can be chosen so that when $|x| < k $, $\psi_k(x) = \psi(x)$, and when $|x| > k$, $0\leq \psi_k(x) \leq \psi(x)$.  Applying Proposition \ref{prop:Weak_Is_Renormalized}, and using the fact that, with probability one, $\psi_k(f)$ is in $\LLL{\infty}{1}$ and $\psi^\prime_k(f)g$ is in $\LLLs{1}$, we find that, for all $t\in [0,T]$, we have the $\p$- a.s. identity,
\begin{equation}
\| \psi_{k}(f_{t}) \|_{\LLs{1}}=\| \psi_{k}(f_{0}) \|_{\LLs{1}}+\int_{0}^{t}\iint_{\R^{2d}}\psi_{k}'(f_{s})g_{s}\dx\dv \dee s.
\end{equation}
In particular, this implies that $t \mapsto \|\psi_k(f_t)\|_{\LLs{1}}$  has continuous sample paths with probability one. Since weak martingale solutions are in $\CLLw{1}$ with probability one, then by interpolation, $f$ is in $\CLLw{q}$ with probability one, and therefore for each $t\in[0,T]$, $\|\psi(f_t)\|_{\LLs{1}}$ is defined $\P$- a.s.

Next, we claim that, $\p$ almost surely,
\begin{equation}
\|\psi_k(f)\|_{\LLs{1}} \to \|\psi(f)\|_{\LLs{1}}\quad \text{in}\quad L^\infty([0,T]),
\end{equation}
whereby we may conclude that $t \to \|f_{t}\|_{\LLs{q}}$ has continuous sample paths with probability one. Indeed, we find
\begin{equation}
  \begin{aligned}
    \sup_{t\in[0,T]}\big|\|\psi(f)\|_{\LLs{1}} - \|\psi_k(f)\|_{\LLs{1}}\big| &\leq \|\psi(f) - \psi_k(f)\|_{\LLL{\infty}{1}} \leq \|\psi(f)\1_{f\geq k}\|_{\LLL{\infty}{1}}\\
 &\leq \|f\|_{\LLL{\infty}{p}}^q\Big(\sup_{t\in[0,T]} |\{ |f_t| \geq k\}|\Big)^{1-p/q}\\
 &\leq \frac{1}{k^{p-q}}\|f\|_{\LLL{\infty}{p}}^{p} \to 0 \quad \text{as} \quad k\to \infty.
  \end{aligned}
\end{equation}
Since $\LLs{q}$ is a uniformly convex space for $q>1$, the fact that $f$ is in $\CLLw{q}$ with probability one, combined with the fact $t\mapsto \|f_t\|_{\LLs{q}}$ has $\P$-a.s. continuous sample paths implies that $f\in \CLL{q}$ with probability one.
\end{proof}

\section{Stochastic Velocity Averaging}\label{sec:Stoch-Velocity-Avg}
In Section \ref{sec:Approximating-Scheme}, we will construct a sequence $\{f_{n}\}_{n \in \N}$ of approximations to the Boltzmann equation \eqref{eq:general-stochastic-Boltz} with stochastic transport.  These will satisfy the formal a priori bounds \eqref{eq:aPriori_Bound}, uniformly in $n \in \N$ enabling us to extract a weak limit $f$, which will be a candidate renormalized solution to \eqref{eq:general-stochastic-Boltz}.  However, we need a form of strong compactness to handle the stability of the non-linear collision operator.

In this section we investigate some subtle regularizing effects for stochastic kinetic equations, inspired by the classical work of Golse/ Lions/ Perthame/ Sentis \cite{golse1988regularity}.  These will be applied in Section 6 to obtain a form of strong compactness of $\{f_n\}_{n \in \N}$.  In fact, we allow for a nontrivial probability of oscillations in the velocity variable, so the strong compactness is only in space and time. The main idea, following \cite{golse1988regularity}, is that while $\{f_n\}_{n\in\N}$ may not be any more regular than it's initial data, {\it velocity averages} of $f_n$ may be more regular. Specifically a velocity average of $f$ is given by
\begin{equation}
  \langle f\,\phi \rangle := \int_{\Rd} f\phi\,\dv,
\end{equation}
where $\phi(v)$ is a test function of velocity only. Note that $\langle f\,\phi\rangle$ may depende on $(\omega,t,x)$ if $f$ does.

It turns out that the criteria for renormalization obtained in Section 3 plays an important role in the proof of our stochastic velocity averaging results.  As a consequence, we are only able to establish our compactness criterion for sequences of well-prepared approximations. Indeed for each $n\in \N$, suppose that $f_n$ is a weak martingale solution to the stochastic kinetic equation driven by $g_n$ and starting from $f_n^0$, relative to the noise coefficients $\sigma^n = \{\sigma_k^n\}_{k\in\N}$ and the stochastic basis $(\Omega_n,\mathcal{F}_n,(\mathcal{F}^n_t)_{t\in[0,T]},\{\beta_k^n\}_{k\in\N},\P_n)$. Then we make the following assumptions on $f_n$, $f^n_0$, $g_n$, and $\sigma_n$,
 \begin{hyp}\label{hyp:Stoch-Vel-Average-Fixed-n_hyp}
   \hspace{1in}
\begin{enumerate}
\item Both $f_n$ and $g_n$ belong to $L^{\infty-}(\Omega; \LLLs{1}\cap\LLLs{\infty})$.
\item $f^0_n$ is in $\LLs{1}\cap\LLs{\infty}$, and $\{f_n^0\}_{n\in \N}$ is uniformly integrable $\LLs{1}$
\item $\sigma^{n}$ satisfies Hypothesis \ref{hyp:Noise-Coefficients-Derivatives}, and $\{\sigma^n\}_{n\in\N}$ satisfies Hypothesis \ref{hyp:Noise-Coefficients} uniformly.
\end{enumerate}
\end{hyp}

Our main stochastic velocity averaging result can now be stated as follows:
\begin{lem} \label{lem:L1-Velocity-Averaging}
  Let $\{f_{n}\}_{n \in \N}$ be a sequence of weak martingale solutions to a stochastic kinetic equation satisfying Hypothesis \ref{hyp:Stoch-Vel-Average-Fixed-n_hyp} and suppose that $\{g_n\}_{n\in\N}$ is uniformly bounded in $L^{1}(\Omega \times [0,T] \times \R^{2d})$ and induces a tight family of laws on $[\LLLs{1}]_{\mathrm{w},\loc}$.
  \begin{enumerate}
  \item Then for each $\varphi \in C^\infty_c(\Rd)$,  $\{\langle f_{n}\,\varphi\rangle\}_{n \in \N}$ induces a tight family of laws on $[L^1_{t,x}]_{\loc}$.
  \item If in addition, for each $\eta >0$ the velocity averages $\{\langle f_{n}\,\varphi\rangle\}_{n \in \N}$ satisfy
  \begin{equation}
    \lim_{R\to\infty}\sup_n\P\left(\|\langle f_n\,\varphi\rangle \1_{|x|>R}\|_{L^1_{t,x}} >\eta \right) = 0,
    \end{equation}
    then for each $\varphi \in C^\infty_c(\Rd)$,  $\{\langle f_{n}\,\varphi\rangle\}_{n \in \N}$ induces a tight family of laws on $L^1_{t,x}$.
  \end{enumerate}
\end{lem}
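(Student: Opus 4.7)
The plan is to reduce the $L^{1}$ velocity averaging to the $L^{2}$ version, estimate (\ref{eq:L2_Regularity_Est}), via a double approximation: renormalize $f_{n}$ using $\Gamma_{\delta}(z)=z/(1+\delta z)$ (making it bounded) and truncate $g_{n}$ at level $M$ (making it bounded in $L^{\infty}$). The resulting auxiliary sequence admits the $L^{2}$ velocity averaging; the two truncation errors are controlled using the equi-integrability implicit in the tightness of laws of $\{g_{n}\}$ on $[\LLLs{1}]_{\w,\loc}$, which by Dunford--Pettis corresponds to equi-integrability on high-probability events.

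As a preliminary step I would propagate equi-integrability from $\{f_{n}^{0}\}$ and $\{g_{n}\}$ to the solutions $\{f_{n}\}$ themselves. Applying Proposition \ref{prop:Weak_Is_Renormalized} with $\Gamma_{\delta}$ (whose hypotheses hold by Hypothesis \ref{hyp:Stoch-Vel-Average-Fixed-n_hyp}) and $\varphi\equiv 1$ (obtained by an $(x,v)$ cutoff, using $\Div_{v}\sigma^{n}_{k}=0$ to kill the noise contribution) yields
\begin{equation*}
\iint_{\R^{2d}}(f_{n}-\Gamma_{\delta}(f_{n}))(t)\,\dx\,\dv = \iint_{\R^{2d}}(f_{n}^{0}-\Gamma_{\delta}(f_{n}^{0}))\,\dx\,\dv + \int_{0}^{t}\iint_{\R^{2d}}(1-\Gamma_{\delta}'(f_{n}))g_{n}\,\dx\,\dv\,\ds.
\end{equation*}
The elementary bound $1-\Gamma_{\delta}'(z)\leq \min(3\delta z,1)$, combined with a splitting $|g_{n}|=|g_{n}|\mathbbm{1}_{|g_{n}|\leq K}+|g_{n}|\mathbbm{1}_{|g_{n}|>K}$, controls the right-hand side by $3K\delta\|f_{n}\|_{\LLLs{1}}+\|g_{n}\mathbbm{1}_{|g_{n}|>K}\|_{\LLLs{1}}+\|f_{n}^{0}\mathbbm{1}_{f_{n}^{0}>K}\|_{\LLs{1}}$. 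Choosing $K=\delta^{-1/2}$ and invoking the uniform $L^{1}$ bound on $f_{n}$ along with equi-integrability of $\{f_{n}^{0}\}$ and (with high probability) of $\{g_{n}\}$, all three terms vanish as $\delta\to 0$, uniformly in $n$ and in probability.

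For the main argument, fix $\delta,M>0$ and $\varphi\in C^{\infty}_{c}(\R^{d}_{v})$. Using Theorem \ref{thm:existence-Stoch-Kin} on the given stochastic basis, construct the auxiliary weak martingale solution $h_{n}^{\delta,M}$ driven by $g_{n}^{\delta,M}:=\Gamma_{\delta}'(f_{n})g_{n}\mathbbm{1}_{|g_{n}|\leq M}$ with initial datum $\Gamma_{\delta}(f_{n}^{0})$. Since $\|g_{n}^{\delta,M}\|_{\LLLs{\infty}}\leq M$ and $\|g_{n}^{\delta,M}\|_{\LLLs{1}}\leq \|g_{n}\|_{\LLLs{1}}$, together with $\|\Gamma_{\delta}(f_{n}^{0})\|_{\LLs{\infty}}\leq 1/\delta$, one obtains uniform-in-$n$ $L^{2}$ bounds on $h_{n}^{\delta,M}$. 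Estimate (\ref{eq:L2_Regularity_Est}) combined with Lemma \ref{lem:time-regularity} and a standard Aubin--Lions embedding then yields tightness of $\{\langle h_{n}^{\delta,M},\varphi\rangle\}_{n}$ in $L^{1}_{t,x}(\mathcal{K})$ for every compact $\mathcal{K}\subset [0,T]\times\R^{d}_{x}$. By linearity of the stochastic kinetic equation, the difference $\Gamma_{\delta}(f_{n})-h_{n}^{\delta,M}$ is itself a weak martingale solution, driven by $\Gamma_{\delta}'(f_{n})g_{n}\mathbbm{1}_{|g_{n}|>M}$ with zero initial data, whence a standard $L^{1}$ estimate gives $\|\Gamma_{\delta}(f_{n})-h_{n}^{\delta,M}\|_{\LLL{\infty}{1}}\leq \|g_{n}\mathbbm{1}_{|g_{n}|>M}\|_{\LLLs{1}}$, vanishing in probability as $M\to\infty$ uniformly in $n$ by equi-integrability. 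Combining with the preliminary step (which controls $\|\langle f_{n}-\Gamma_{\delta}(f_{n}),\varphi\rangle\|_{\LLt{1}}$ as $\delta\to 0$) yields Part 1. Part 2 follows immediately by intersecting this local tightness with the assumed control on the $x$-tail.

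The main obstacle is the discrepancy between the $L^{1}$ integrability of $\{g_{n}\}$ and the $L^{2}$ requirement of estimate (\ref{eq:L2_Regularity_Est}). Bridging it forces us to renormalize $f_{n}$ and truncate $g_{n}$ simultaneously, and the resulting error estimates require that both the initial data and the equation propagate equi-integrability uniformly on high-probability events. The linearity of the stochastic kinetic equation is essential for splitting $\Gamma_{\delta}(f_{n})$ into a compact part plus a small remainder, and Proposition \ref{prop:Weak_Is_Renormalized} plays a dual role: it provides the renormalized identity used in the preliminary equi-integrability step and it underpins the $L^{1}$ propagation estimate for the auxiliary difference equation.
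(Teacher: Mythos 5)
Your proof is correct in spirit and rests on the same two pillars as the paper's argument: (i) obtain tightness of the ``good'' piece through the $L^{2}$ velocity averaging Lemma~\ref{lem:L2-Velocity-Averaging}, and (ii) control the ``bad'' remainder in $L^{1}$ via renormalization (Proposition~\ref{prop:Weak_Is_Renormalized}) combined with the equi-integrability encoded in Hypothesis~\ref{hyp:Stoch-Vel-Average-Fixed-n_hyp} and Lemma~\ref{lem:L1-tightness-crit}, then glue by the ``compact plus small'' criterion Lemma~\ref{lem:tightness-decomp}. However, your decomposition is genuinely different — and more elaborate — than the paper's. The paper performs a single linear truncation at level $L$: it uses Theorem~\ref{thm:existence-Stoch-Kin} to build $f_{n}^{\leq L}$ driven by $g_{n}\1_{|g_{n}|\leq L}$ with initial data $f_{n}^{0}\1_{|f_{n}^{0}|\leq L}$, and sets $f_{n}^{>L}:=f_{n}-f_{n}^{\leq L}$. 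Note that $f_{n}^{0}\1_{|f_{n}^{0}|\leq L}$ is already bounded in $\LLs{2}$ by interpolation of the $L^{\infty}$-bound $L$ with the uniform $\LLs{1}$-bound, so the paper's compact piece falls directly into the scope of Lemma~\ref{lem:L2-Velocity-Averaging} without any renormalization of $f_{n}$. Your use of $\Gamma_{\delta}$ to bound the initial data is therefore avoidable; it introduces a second truncation parameter on top of $M$, yields a three-way split $f_{n}=h_{n}^{\delta,M}+(\Gamma_{\delta}(f_{n})-h_{n}^{\delta,M})+(f_{n}-\Gamma_{\delta}(f_{n}))$, and requires your preliminary equi-integrability propagation step. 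Both error terms still have to be controlled uniformly, so Lemma~\ref{lem:tightness-decomp} must be applied twice (or a diagonal choice $M=M(\delta)$ made explicit), which you gloss over. One additional caveat that you and the paper share: the ``standard $L^{1}$ estimate'' obtained from the renormalized form requires a cutoff argument $\varphi\to 1$, and besides the noise term (which you correctly note is killed by $\Div_{v}\sigma^{n}_{k}=0$), the free transport term $v\cdot\nabla_{x}\varphi$ must also be made to vanish, which is not entirely immediate from Hypothesis~\ref{hyp:Stoch-Vel-Average-Fixed-n_hyp} alone and deserves a remark. In summary: your argument is valid, but less economical than the paper's direct truncation.
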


Before we prove this, it will be useful to consider the $L^2$ case.
\subsection{ $L^{2}$ Velocity Averaging}
As is typical with velocity averaging lemmas in $L^1$ (see \cite{golse1988regularity}), we will find it useful first to prove an $L^2$ result.  Roughly speaking, the $L^{1}$ case is then reduced to showing that the part of the solution sequence violating the hypotheses of the $L^{2}$ lemma has a high probability of being small in $L^{1}$.
\begin{lem}\label{lem:L2-Velocity-Averaging}
Let $\{f_{n}\}_{n \in \N}$ be a sequence of martingale solutions to the stochastic kinetic equation satisfying Hypothesis \ref{hyp:Stoch-Vel-Average-Fixed-n_hyp}.  If $\{f_n^0\}_{n\in \N}$ is bounded in $\LLs{2}$ and $\{g_n\}_{n\in\N}$, $\{f_n\}_{n\in\N}$ are bounded in $L^{p}(\Omega \times [0,T] \times \R^{2d})$ for each $p\geq 1$, then for each $\phi \in C^\infty_c(\Rd_v)$, the velocity averages $\{\langle f_{n}\,\phi\rangle\}_{n \in \N}$ induce tight laws on $[L^2_{t,x}]_\loc$.
\end{lem}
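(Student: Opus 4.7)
The strategy is to combine the fractional $L^{2}$-regularity gain on velocity averages (estimate \eqref{eq:L2_Regularity_Est}) with a time-oscillation bound derived directly from the weak martingale formulation, and then to extract tightness via a Simon-type compactness theorem. The two estimates together identify a compact subset of $L^{2}_{t,x}(B_{R})$ that captures most of the mass of each law, and a diagonal argument in $R$ upgrades this to $[L^{2}_{t,x}]_{\loc}$.

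First I would apply the velocity averaging bound \eqref{eq:L2_Regularity_Est} to each $f_{n}$, with source $g_{n}$ and initial datum $f_{n}^{0}$. Under the standing hypotheses, $\|f_{n}^{0}\|_{\LLs{2}}$, $\E\|f_{n}\|_{\LLLs{2}}^{2}$ and $\E\|g_{n}\|_{\LLLs{2}}^{2}$ are bounded uniformly in $n$, yielding
\begin{equation*}
\sup_{n\in\N}\E\|\langle f_{n}\,\phi\rangle\|^{2}_{L^{2}_{t}(H^{1/6}_{x})} \leq C_{\phi}.
\end{equation*}
Localising to a ball $B_{R}\subset\Rd$, this provides the fractional spatial regularity needed for compactness.

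Next I would quantify time oscillations. Testing the weak formulation (item 3 of Definition \ref{def:Martingale-Sol}) against $\varphi(x,v)=\psi(x)\phi(v)$ with $\psi$ in the unit ball of $H^{1}_{0}(B_{R})$, writing the result in It\^o form, and using the uniform Hypothesis \ref{hyp:Noise-Coefficients} together with the compact support of $\phi$, each drift term is controlled by $\|f_{n}\|_{\LLs{2}}$ or $\|g_{n}\|_{\LLs{2}}$ times $\|\psi\|_{L^{2}(B_{R})}$, while the martingale term is estimated via Burkholder-Davis-Gundy. Taking the supremum over admissible $\psi$ produces
\begin{equation*}
\sup_{n\in\N}\E\|\langle f_{n}(t)\phi\rangle - \langle f_{n}(s)\phi\rangle\|^{2}_{H^{-1}(B_{R})} \leq C_{\phi,R,\sigma}|t-s|,
\end{equation*}
and hence a uniform $W^{\gamma,2}_{t}(H^{-1}(B_{R}))$ bound for every $\gamma<1/2$.

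To conclude, I would invoke the fact that $H^{1/6}(B_{R}) \embeds L^{2}(B_{R}) \embeds H^{-1}(B_{R})$ with the first embedding compact. A standard Aubin-Lions-Simon argument then gives that $L^{2}_{t}(H^{1/6}(B_{R})) \cap W^{\gamma,2}_{t}(H^{-1}(B_{R}))$ embeds compactly into $L^{2}([0,T]\times B_{R})$. Applying Markov's inequality to the two uniform bounds above produces, for each $\eta>0$ and each $R\in\N$, a compact set in $L^{2}([0,T]\times B_{R})$ capturing the mass of the laws of $\{\langle f_{n}\,\phi\rangle\}$ up to $\eta/2^{R}$; diagonalising over $R$ yields a compact subset of $[L^{2}_{t,x}]_{\loc}$ and hence tightness. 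The main obstacle is proving the fractional spatial gain \eqref{eq:L2_Regularity_Est} in the stochastic setting, which requires a careful Fourier-side adaptation of the Bouchut-Desvillettes method to handle the It\^o corrections arising from the transport-type noise; once that estimate is in hand, the time-regularity and compactness steps are comparatively routine, the $\LStrat_{\sigma^{n}}\phi$ contribution being handled uniformly by the coloring Hypothesis \ref{hyp:Noise-Coefficients}.
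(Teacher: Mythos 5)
Your argument is correct and reaches the same conclusion, but the time-regularity and compactness steps follow a genuinely different route from the paper's. Both proofs start identically, applying Lemma \ref{lem:L2_velocity_averaging_x} to obtain a uniform $L^2_t(H^{1/6}_x)$ bound on $\langle f_n\,\phi\rangle$. The paper then fixes a countable dense family $\{\varphi_j\}_{j\geq 1}\subseteq C^\infty_c(\Rd_x)$ and invokes the scalar time-regularity estimate of Lemma \ref{lem:time-regularity} (applied to the test functions $\phi(v)\varphi_j(x)$) to control each $\|\langle\langle f_n\,\phi\rangle,\varphi_j\rangle\|_{W^{\gamma,p}_t}$ individually; the compact set is then the intersection $E_\ell \cap F_\ell$ where $F_\ell$ is built by choosing $\ell^1$-summable weights $N_j = 2^j C_{\varphi_j}$ that absorb the $j$-dependence of the constants. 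You instead derive a Banach-space-valued two-point estimate $\E\|\langle f_n(t)\phi\rangle - \langle f_n(s)\phi\rangle\|^2_{H^{-1}(B_R)}\lesssim |t-s|$ directly from the It\^o form of the weak formulation and then appeal to a Flandoli--Gatarek / Aubin--Lions--Simon compactness theorem for the triple $H^{1/6}(B_R)\embeds\embeds L^2(B_R)\embeds H^{-1}(B_R)$. Your route is conceptually cleaner (it packages the time-compactness argument into a single citation), at the modest cost of invoking Hilbert-space-valued stochastic integration and the vector-valued It\^o isometry when bounding the martingale term: note that you need $\E\|M_t - M_s\|^2_{H^{-1}(B_R)}$, not merely the scalar bound $\sup_{\psi}\E|\langle M_t - M_s,\psi\rangle|^2$, so the supremum over $\psi$ must be taken \emph{inside} the expectation; this is handled cleanly by viewing $M$ as an $L^2(B_R)$-valued martingale and using its quadratic variation. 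Two small points of imprecision: the $v\cdot\nabla_x$ drift term requires $\|\psi\|_{H^1(B_R)}$, not just $\|\psi\|_{L^2(B_R)}$, which is consistent with working against the $H^1_0(B_R)$-unit ball but should be stated; and the final diagonalization over $R$ should use the construction of Lemma \ref{lem:tightness-decomp} (nested compacts with geometrically decreasing exceptional probabilities) rather than a bare Markov inequality to ensure the resulting set is indeed compact in $[L^2_{t,x}]_\loc$.
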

In the $L^{2}$ setting, Fourier methods yielding explicit regularity estimates on the velocity averages can be obtained. Using an extension of the method outlined in \cite{Bouchut1999-nh}, the following spatial regularity estimate on $\langle f\, \phi \rangle$ can be established.
\begin{lem}\label{lem:L2_velocity_averaging_x}
Let $f$ be a weak martingale solution to the stochastic kinetic equation driven by $g$, with initial data $f_0$ relative to noise coefficients $\sigma$ satisfying Hypothesis \ref{hyp:Noise-Coefficients}. If $f,g \in L^2(\Omega\times[0,T]\times\R^{2d})$ and $f_0\in\LLs{2}$, then for any $\phi \in C^\infty_c(\Rd_v)$,
\begin{equation}\label{eq:L2_Regularity_Est}
 \E\|\langle f\, \phi \rangle\|_{L^2_t(H^{1/6}_x)}^2\leq C_{\phi,\sigma}\big( \|f_0\|^2_{L^2_{x,v}} + \E\|f\|_{L^2_{t,x,v}}^2 + \E\|g\|_{L^2_{t,x,v}}^2\big).
\end{equation}
\end{lem}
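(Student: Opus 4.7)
The plan is to adapt the Fourier-side argument of Bouchut--Desvillettes to the stochastic setting.  Writing $\hat{\cdot}$ for the Fourier transform in $x$, set $\hat A(t,\xi) := \int \hat f(t,\xi,v)\phi(v)\,dv$.  By Plancherel in $x$ the claim reduces to the frequency-by-frequency estimate
\[
\E\int_0^T\int_{\R^d}(1+|\xi|^2)^{1/3}|\hat A(t,\xi)|^2\,d\xi\,dt \leqs \|f_0\|^2_{\LLs{2}} + \E\|f\|^2_{\LLLs{2}} + \E\|g\|^2_{\LLLs{2}},
\]
and the regime $|\xi|\leq 1$ is handled directly by Cauchy--Schwarz in $v$.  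For $|\xi|>1$ we set $\delta := |\xi|^{-1/3}$ and introduce a smooth cutoff $\chi_\delta(v) := \chi(v\cdot\xi/(\delta|\xi|))$ localizing to a slab of thickness $\delta$ about $\{v\cdot\xi=0\}$, then split $\hat A = \hat A_1 + \hat A_2$ with $\hat A_j$ arising from the weights $\phi\chi_\delta$ and $\phi(1-\chi_\delta)$ respectively.

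The bound on $\hat A_1$ follows from Cauchy--Schwarz together with the measure estimate $\|\phi\chi_\delta\|^2_{L^2_v}\leq C\delta$, giving $|\hat A_1|^2\leq C\delta\|\hat f(t,\xi,\cdot)\|^2_{L^2_v}$, and the relation $\delta|\xi|^{1/3}=1$ supplies exactly the required $|\xi|^{1/3}$-weight after integration.  For $\hat A_2$ we use the factorization $\phi(1-\chi_\delta)=(iv\cdot\xi)\psi_{\xi,\delta}$ with $\psi_{\xi,\delta}(v) := \phi(v)(1-\chi_\delta(v))/(iv\cdot\xi)$, a smooth function supported away from $\{v\cdot\xi=0\}$.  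Testing the weak martingale form of the equation against a compactly supported approximation of $\varphi(x,v) = e^{-ix\cdot\xi}\psi_{\xi,\delta}(v)$ identifies $B(t,\xi) := \int \hat f(t,\xi,v)\psi_{\xi,\delta}(v)\,dv$ as a continuous complex-valued semi-martingale satisfying
\[
dB = (-\hat A_2 + D_\delta + G_\delta)\,dt + dN_t,
\]
where $D_\delta$ and $G_\delta$ come from the It\^o correction $\LStrat_\sigma f$ and the source $g$, and $N_t$ is the square-integrable martingale arising from the stochastic transport term.

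Applying It\^o's formula to $|B(t,\xi)|^2$ and taking expectation (the stochastic integral against $dN$ is a true martingale) yields the identity
\[
2\,\mathrm{Re}\,\E\int_0^T \bar B\,\hat A_2\,dt = |B(0,\xi)|^2 - \E|B(T,\xi)|^2 + 2\,\mathrm{Re}\,\E\int_0^T \bar B(D_\delta+G_\delta)\,dt + \E\langle N\rangle_T.
\]
A direct computation with $\delta = |\xi|^{-1/3}$ produces the scalings $\|\psi_{\xi,\delta}\|_{L^2_v}\leqs|\xi|^{-5/6}$, $\|\nabla_v\psi_{\xi,\delta}\|_{L^2_v}\leqs|\xi|^{-1/2}$, $\|\nabla_v^2\psi_{\xi,\delta}\|_{L^2_v}\leqs|\xi|^{-1/6}$, and combined with the coloring hypothesis on $\sigma$ and the It\^o isometry for $\langle N\rangle_T$, each term on the right is controlled by $C|\xi|^{-1/3}$ times an energy of $\hat f_0$, $\hat f$, $\hat g$ at frequency $\xi$.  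A Cauchy--Schwarz absorption together with the naive $|\hat A_2|\leq \|\phi\|_{L^2_v}\|\hat f\|_{L^2_v}$ then closes the estimate for $\E\int|\xi|^{1/3}|\hat A_2|^2\,dt$.

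The hard part will be precisely this final absorption step: in the deterministic Bouchut--Desvillettes argument the passage from the bilinear bound on $\E\int \bar B\hat A_2\,dt$ to an $L^2$-control of $\hat A_2$ is handled via integration by parts in time or the time Fourier transform, both of which are obstructed here by the It\^o integral $dN$.  The key role of the quadratic-variation contribution $\E\langle N\rangle_T$, which scales like $\|\nabla_v\psi_{\xi,\delta}\|^2_{L^2_v}\sim|\xi|^{-1}$, is to quantify the loss of regularity from the deterministic regime to the present exponent $1/6$, and the choice $\delta=|\xi|^{-1/3}$ is dictated by balancing the volume factor in the $\hat A_1$-estimate against the negative-power blowups in the $L^2_v$-norms of $\psi_{\xi,\delta}$ and its derivatives.
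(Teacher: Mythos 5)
Your overall framework -- reducing to a frequency-by-frequency Fourier-side estimate, exploiting an averaging gain of order $|\xi|^{-1/3}$, and tracking how the It\^{o} corrections change the exponent relative to the deterministic case -- is the right perspective, and your scaling computations for $\psi_{\xi,\delta}$ and its derivatives are correct.  However, you have chosen a different decomposition from the paper, and the step you yourself flag as ``the hard part'' is in fact a genuine gap that your absorption mechanism does not close.

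The It\^{o} formula applied to $|B(t,\xi)|^2$ gives control of the \emph{bilinear} quantity $2\,\mathrm{Re}\,\E\int_0^T \bar B\,\hat A_2\,dt$, which is bounded by $O(|\xi|^{-1})$ times an energy at frequency $\xi$, but what you need is control of the \emph{quadratic} quantity $\E\int_0^T|\hat A_2|^2\,dt$.  These are not related by Cauchy--Schwarz in a useful way:  $\hat A_2 = \langle\hat f, \phi(1-\chi_\delta)\rangle_v$ and $B = \langle\hat f, \psi_{\xi,\delta}\rangle_v$ are velocity moments of $\hat f$ against two \emph{different} weights, so there is no coercivity, and the ``naive'' bound $|\hat A_2|\leqs\|\hat f\|_{L^2_v}$ gives no gain in $|\xi|$ whatsoever.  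If instead you try to generate $|\hat A_2|^2$ directly by applying It\^{o} to $\bar{\hat A}_2 B$, then the drift of $\hat A_2$ contains the term $-\langle iv\cdot\xi\hat f,\phi(1-\chi_\delta)\rangle_v$, which is of size $|\xi|\|\hat f\|_{L^2_v}$; paired with $B\sim|\xi|^{-5/6}\|\hat f\|_{L^2_v}$ this gives a contribution of order $|\xi|^{1/6}\E\int\|\hat f\|^2_{L^2_v}\,dt$, which after multiplication by $|\xi|^{1/3}$ produces a $|\xi|^{1/2}$ weight that diverges.  This is precisely the obstruction you identify: in the deterministic setting the slab decomposition is usually closed via the time Fourier transform (cutting off $|\tau+v\cdot\xi|<\delta$ rather than $|v\cdot\xi|<\delta$), which converts the first-order transport symbol into an honest multiplication operator and lets you ``divide''; the It\^{o} integral has no such representation.

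The paper avoids this entirely by taking a different route (following Bouchut--Desvillettes): it inserts an artificial damping $\lambda(\xi)$ into the equation, writes the damped Duhamel formula for $\hat f(t,\xi,v)$, integrates against $\phi(v)$, and recognizes the $v$-integrals as the $(x,v)$-Fourier transform of the relevant quantities evaluated on the ray $\eta = \xi(t-s)$.  Integrating in $t$ then becomes a line integral in the dual $v$-variable, which a one-dimensional trace inequality controls by a full $L^2_v$ norm.  The exponent $1/6$ emerges from optimizing $\lambda(\xi) = |\xi|^{2/3}$, and the extra $\lambda^{-1}$ weights in front of the $\sigma$-terms (arising from the second-order It\^{o} correction and the quadratic variation) are what force this scaling rather than the deterministic $1/2$.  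The damping plays the role of the missing time-frequency cutoff, making the $t$-integrals convergent frequency by frequency without requiring one to invert a stochastic ODE.  To repair your argument you would essentially need to re-derive a Duhamel or resolvent representation of $\hat A_2$ in terms of the data, at which point you would be reproducing the paper's argument, so the slab decomposition does not provide a genuine shortcut in the stochastic setting.
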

The proof is technical and left to Appendix \ref{sec:Vel-Aver-Appendix}. We are now equipped to prove Lemma \ref{lem:L2-Velocity-Averaging}:
\begin{proof}[Proof of Lemma \ref{lem:L2-Velocity-Averaging}] Let $\phi \in C_{c}(\R^{d})$ be arbitrary. We proceed by explicitly constructing sets $(K_\ell)_{\ell > 0}$ which are compact in $[\LLt{2}]_{\loc}$ such that
\begin{equation}
\lim_{\ell\to \infty} \sup_n \P\{ \langle f_n\, \phi \rangle \notin K_\ell\} = 0.
\end{equation}
  Let $\{\varphi_j\}_{j=1}^\infty$ be a dense subset of $L^2_x$ and $\{N_{j}\}_{j\in\N}$ be a positive, real-valued sequence to be selected later. Define the sets
  \begin{equation}
    \begin{aligned}
    E_\ell &= \big\{ \rho \in\LLt{2}\,:\, \|\rho\|_{L^2_t(H^{1/6}_x)} \leq \ell \big\},\\
    F_\ell &= \bigcap_{j=1}^\infty\big\{ \rho \in\LLt{2}\,:\, \|\langle \rho, \varphi_j\rangle \|_{W^{\gamma,p}_t} \leq (\ell\,N_{j})^{\frac{1}{p}}\big\},
    \end{aligned}
  \end{equation}
where $p>4$ and $\gamma = \frac{1}{4} - \frac{1}{p}$. Let $K_{\ell}=E_\ell\cap F_\ell$ and observe this is a compact set in $[\LLt{2}]_{\mathrm{loc}}$.

Applying the Chebyshev inequality followed by Lemma \ref{lem:L2_velocity_averaging_x},
\begin{equation}
  \P\big\{ \langle f_n, \phi \rangle \notin E_\ell \big\} \leq \frac{1}{\ell}\E\|\langle f_n\, \phi \rangle \|_{L^2_t(H^{1/6}_x)} \leq \frac{C_{\phi}}{\ell},
\end{equation}
where $C_{\phi}$ depends on the uniform bounds for $\{f_n\}_{n=1}^\infty$, $\{g_n\}_{n=1}^\infty$, $\{f_{n}^{0}\}_{n \in \N}$, and $\{\sigma^{n}\}_{n \in \N}$.  Similarly, for each $j \in \N$ we may appeal to Lemma \ref{lem:time-regularity} to find a constant $C_{\varphi_{j}}$ (depending on the same uniform bounds) such that
\begin{equation}
    \P\{\langle f_n, \phi \rangle \notin F_\ell\} \leq \sum_{j=1}^\infty \P\Big \{ \big \| \big \langle \langle f_n\, \phi \rangle , \varphi_j \big \rangle \big \|_{W^{\gamma,p}_t} < \ell N_{j} \Big\} \leq \sum_{j=1}^\infty\frac{C_{\varphi_j}}{\ell N_{j}}.
\end{equation}
Choosing $N_{j} = 2^{j}C_{\varphi_j}$, we conclude that
\begin{equation}
  \sup_n \P\{ \langle f_n\, \phi \rangle \notin K_{\ell}\} \leq \frac{1}{\ell}\sum_{j=1}^\infty 2^{-j} =\frac{1}{\ell} .
\end{equation}
Taking $\ell \to \infty$ gives the result.
\end{proof}


\subsection{Proof of Main lemma}\label{subsec:Stoch-Vel-Avg-L1-Proof}
In this section, we give the proof of the main result of the section, Lemma \ref{lem:L1-Velocity-Averaging}.
\begin{proof}[Proof of Lemma \ref{lem:L1-Velocity-Averaging}]
Let $\{(\Omega_n,\mathcal{F}_n,(\mathcal{F}^n_t)_{t\in[0,T]},\{\beta_k^n\}_{k\in\N},\P_n)\}_{n\in\N}$ be the sequence of stochastic bases corresponding to $\{f_n\}_{n\in\N}$. Fix $\epsilon >0$ and for each $n\in\N$, we begin by decomposing $f_n$ as
\begin{equation}
f_n = f_{n}^{\leq L} + f_{n}^{> L},
\end{equation}
such that $f_{n}^{\leq L}$ solves
\begin{equation}
  \partial_t f_n^{\leq L} + v\cdot\nabla_x f_n^{\leq } + \sigma_k^n\cdot\nabla_v f_n^{\leq L}\strat \dot{\beta}_k^n = g_n\1_{|g_n|\leq L}, \quad f_n^{\leq L}|_{t=0} = f_n^0\1_{|f^0_n|\leq L}.
\end{equation}
and $f_{n}^{> L}$ solves
\begin{equation}
  \partial_t f_n^{> L} + v\cdot\nabla_x f_n^{> L} + \sigma_k^n\cdot\nabla_v f_n^{> L}\strat \dot{\beta}_k^n = g_n\1_{|g_n|> L}, \quad f_n^{> L}|_{t=0} = f_n^0\1_{|f^0_n|>L}
\end{equation}
on the filtered probability space $(\Omega_n,\mathcal{F}_n,(\mathcal{F}^n_t)_{t\in[0,T]},\P_n)$. Since $g_n\1_{|g_n|\leq L}$ belongs to the space $L^{\infty-}(\Omega; \LLLs{1}\cap\LLLs{\infty})$ by Hypothesis \ref{hyp:Stoch-Vel-Average-Fixed-n_hyp}, we can build the above decomposition in the following way.  First apply the existence result, Theorem \ref{thm:existence-Stoch-Kin} to obtain $f_{n}^{\leq L}$ as a solution to the equation above. Then, by linearity, the process $f_{n}^{> L} := f_n - f_{n}^{\leq L}$ must solve it's corresponding equation above. Moreover, since $f_n$ and $f_{n}^{\leq L}$ are both in $L^{\infty-}(\Omega\times[0,T]\times\R^{2d})$, so is $f_{n}^{>L}$.  In view of our assumptions on the noise coefficients made in Hypothesis \ref{hyp:Stoch-Vel-Average-Fixed-n_hyp} we may apply Proposition \ref{prop:Weak_Is_Renormalized} to deduce that $f_{n}^{>L}$ is in fact a renormalized solution.

The strategy of the proof will be to show that the process $\langle f_{n}^{\leq L}\,\phi\rangle$ is tight in $n$ using the $L^2$ velocity averaging Lemma \ref{lem:L2-Velocity-Averaging} and that the remaining processes, $f_{n}^{\geq L}$, can be made uniformly small in $n$ by taking $L$ sufficiently large and therefore appealing to Lemma \ref{lem:tightness-decomp}.

First we apply our $L^{2}$ velocity averaging lemma to $\{f_{n}^{\leq L}\}_{n \in \N}$. Note that $\{f^0_n\1_{|f^0_n| \leq L}\}_{n\in\N}$ is bounded in $\LLs{2}$ (by interpolation) and $\{g_n\1_{|g_n|\leq L}\}_{n\in\N}$ is bounded in $L^{\infty-}(\Omega \times [0,T] \times \R^{2d})$. Therefore, by the estimate given in Theorem \ref{thm:existence-Stoch-Kin}, $\{f_n^{\leq L}\}_{n\in\N}$ is also bounded in $L^{\infty-}(\Omega \times [0,T] \times \R^{2d})$. Hence we have enough to apply Lemma \ref{lem:L2-Velocity-Averaging} and conclude that $\langle f_{n}^{\leq L}\,\phi\rangle$ induced tight laws on $[L^2_{t,x}]_{\loc}$.

Our next step is prove tightness of $\{\langle f_{n}\,\phi\rangle\}_{n\in\N}$ on $[L^1_{t,x}]_{\loc}$ by estimating the sequence $\{\langle f_{n}^{>L}\,\phi\rangle\}_{n \in \N}$. Indeed, since
\begin{equation}
  \|\langle f_n^{>L}\,\phi\rangle\|_{L^1_{t,x}} \leq \|f_n^{>L}\|_{L^1_{t,x,v}} \|\phi\|_{L^\infty_v},
\end{equation}
we only need to estimate $\{f_n^{>L}\}$ in $L^1_{t,x,v}$. Therefore, by Lemma \ref{lem:tightness-decomp}, it suffices to show that for any $\eta >0$,
\begin{equation}
  \lim_{L\to\infty} \sup_n\P\left( \|f_n^{>L}\|_{L^1_{t,x}} > \eta\right) = 0.
\end{equation}
Since $f_{n}^{>L}$ is renormalized, the following inequality holds $\p$ almost surely:
\begin{equation}\label{eq:f-geq-est}
  \|f_n^{> L}\|_{\LLLs{1}} \leq \|f_n^0\1_{|f_0^n|>L}\|_{\LLs{1}} + \|g_n\1_{|g_n|>L}\|_{\LLLs{1}}
\end{equation}
Since Hypothesis \ref{hyp:Stoch-Vel-Average-Fixed-n_hyp} gives uniform integrability of $\{f_n^0\}_{n\in \N}$, we may choose an $L_0>0$ such that for $L > L_0$,
\begin{equation} \label{eq:id-control}
\sup_{n \in \N} \|f_n^0\1_{|f_0^n|>L}\|_{\LLs{1}} \leq \eta/2.
\end{equation}
Therefore by the inequality (\ref{eq:f-geq-est}),
\begin{equation}\label{eq:f>L-meas-ineq}
  \P\left(\|f_n^{> L}\|_{\LLLs{1}} > \eta\right) \leq \P\left(\|g_n\1_{|g_n|>L}\|_{\LLLs{1}} > \eta/2\right).
\end{equation}
Since $\{g_n\}_{n\in\N}$ induces a tight family of laws on $[\LLLs{1}]_{w,\loc}$, it follows from the tightness criterion on $[L^1_{t,x,v}]_{w,\loc}$ given in Lemma \ref{lem:L1-tightness-crit} the right-hand side of inequality (\ref{eq:f>L-meas-ineq}) vanishes as $L\to \infty$, thereby proving tightness of the laws of $\{\langle f_{n}\,\phi\rangle\}_{n\in\N}$ on $[L^1_{t,x}]_{\loc}$.

Next we show that if in addition, for every $\eta > 0$ and $\phi\in C^\infty_c(\Rd_v)$ we have
\begin{equation}
    \lim_{R\to\infty}\sup_n\P\left(\|\langle f_n\,\phi\rangle\1_{|x|>R}\|_{L^1_{t,x}} >\eta \right) = 0,
  \end{equation}
  then $\{\langle f_{n}\,\phi\rangle\}_{n\in\N}$ has tight laws on $L^1_{t,x}$. To this end fix $\ep >0$ and $\phi \in C^\infty_c(\Rd_v)$ and use what we have just proved to produce a compact set $K\subseteq [L^1_{t,x}]_\loc$ such that
  \begin{equation}
    \P(\langle f_{n}\,\phi\rangle \notin K) < \ep.
  \end{equation}
  Next for each $k\in\N$, $k \geq 1$ choose $R_k$ such that
  \begin{equation}
    \sup_n\P\left(\|\langle f_n\,\phi\rangle\1_{|x|>R_k}\|_{L^1_{t,x}} >1/k \right) <\ep 2^{-k},
  \end{equation}
  and define the closed set $A_k$
  \begin{equation}
    A_k = \left\{f \in L^1_{t,x} :\|\langle f\,\phi\rangle\1_{|x|>R_k}\|_{L^1_{t,x}} \leq 1/k\right\}.
  \end{equation}
  It it straight forward to conclude that
  \begin{equation}
    \hat{K} = \bigcap_{k=1}^\infty K\cap A_k
  \end{equation}
  is tight (in the sense of functions in $[L^1_{t,x}]_\loc$) and therefore $\hat{K}$ is compact in $L^1_{t,x}$. It follows that
  \begin{equation}
    \P\left(\langle f_{n}\,\phi\rangle \notin \hat{K}\right) \leq \P\left(\langle f_{n}\,\phi\rangle \notin K\right) + \sum_{k=1}^\infty\P\left(\langle f_{n}\,\phi\rangle \notin A_k\right) < 2\ep
  \end{equation}

\end{proof}



\section{Approximating Scheme}\label{sec:Approximating-Scheme}
There are two main goals in this section.  First, for each $n \in \N$ fixed we will construct a renormalized weak martingale solution to the SPDE
\begin{equation} \label{eq:Approx-Boltzmann}
\begin{cases}
&\partial_{t}f_{n}+v \cdot \nabla_{x}f_{n}+\Div_{v}(f_{n}\sigma_{k}^{n}\circ \dot{\beta}_{k})=\mathcal{B}_{n}(f_n,f_n)\\
&f_{n}\mid_{t=0}=f_{n}^{0},
\end{cases}
\end{equation}
where the initial datum $f_{n}^{0}$ and the noise coefficients $\sigma^{n}$ are sufficiently regular, and $\mathcal{B}_{n}$ is an approximation to $\mathcal{B}$ involving a truncation and a regularized collision kernel $b_{n}$.  The second goal is to rigorously establish the uniform bounds on $\{f_{n}\}_{n \in \N}$ obtained formally in Section 2.  Towards this end, our regularizations are chosen to satisfy the following hypotheses.
\begin{hyp}[Initial Data]\label{hyp:regularized-idata}
\hspace{1in}
\begin{enumerate}
\item For each $n \in \N$, $f_{n}^{0}$ is smooth, non-negative and bounded from above. 
\item There exists a constant $C_{n}$ such that for all $(x,v) \in \R^{2d}$, $f_n^0$ has the lower bound 
\begin{equation}
f_{n}^{0}(x,v) \geq C_{n}e^{-|x|^{2}-|v|^{2}}.
\end{equation}
\item For all $j \in \N$, $(1+|x|^{2}+|v|^{2})^{j}f_{n}^{0} \in \LLs{1}$,
\item The sequence $\{ (1+|x|^{2}+|v|^{2}+|\log f_{n}^{0}|)f_{n}^{0}) \}_{n \in \N}$ is uniformly bounded in $\LLs{1}$ and $\{f_{n}^{0}\}_{n \in \N}$ converges to $f_{0}$ strongly in $\LLs{1}$.
\end{enumerate}
\end{hyp}
\begin{hyp}[Noise Coefficients]\label{hyp:regularized-noise-coeff}
\hspace{1in}
\begin{enumerate}
\item For each $k,n \in \N$, the noise coefficient $\sigma^{n}_{k} \in C^{\infty}(\R^{2d}; \R^{d})$ and $\Div_{v}\sigma^{n}_{k}=0$.
\item For $k>n$, the noise coefficient $\sigma^{n}_{k}$ vanishes identically.  
\item The sequences $\{\sigma^{n}\}_{n \in \N}$ and $\{\sigma^{n} \cdot \nabla_{v}\sigma^{n}\}_{n \in \N}$ converge pointwise to $\sigma$ and $\sigma \cdot \nabla_{v}\sigma$, are uniformly bounded in the spaces $\ell^{2}(\N ; L^{\infty}_{x,v})$ and $\ell^{1}(\N ; L^{\infty}_{x,v})$. Furthermore we have
  \begin{equation}
    \lim_{M\to\infty}\sup_n\ \sum_{k=M}^\infty\|\sigma^n_k\|_{L^\infty_{x,v}} = 0,\quad \lim_{M\to\infty}\sup_n\sum_{k=M}^\infty\|\sigma^n_k\cdot\nabla_v\sigma_k^n\|_{L^\infty_{x,v}} =0.
  \end{equation}
\end{enumerate}
\end{hyp}
\begin{remark}
  Note that hypothesis \ref{hyp:regularized-noise-coeff} can be acheived by, for instance by a standard mollification procedure and Hypothesis \ref{hyp:Noise-Coefficients}.
\end{remark}
\begin{hyp}[Collision Kernel]\label{hyp:regularized-collision}
\hspace{1in}
\begin{enumerate}
\item For each $n \in \N$, $b_{n}$ is smooth and compactly supported in $\R^{d} \times \S^{d-1}$.
\item The sequence $\{b_{n}\}_{n \in \N}$ is bounded in $L^{\infty}(\R^{d} \times \S^{d-1})$ and converges strongly to $b$ in $L^{1}(\R^{d} \times \S^{d-1})$.
\end{enumerate}
\end{hyp}

Following DiPerna/Lions \cite{diperna1989cauchy}, the truncated collision operator $\mathcal{B}_{n}$ is defined for $f \in L^{1}(\R^{d}_{v})$ by
\begin{equation}
\mathcal{B}_{n}(f,f)=\frac{1}{1+n^{-1}\int_{\Rd}f\dv}\iint_{\R^{d} \times \S^{d-1}}(f'f_{*}'-ff_{*})b_{n}(v-v_{*},\theta)\dv_{*}\dee\theta.
\end{equation}
The following lemma provides the necessary boundedness and continuity properties of the operator $\mathcal{B}_{n}$.  The method of proof is classical, see \cite{diperna1989cauchy} or \cite{Cercignani2013-vz} for most of the ideas.
\begin{lem}\label{lem:Trunc-Collision-Props}
For each $n \in \N$, there exists a constant $C_{n}$ such that
\begin{enumerate}
\item For all $f,g \in \LLs{1}$ it holds:
\begin{equation} \label{eq:Lip-L1}
\|\mathcal{B}_{n}(f,f)-\mathcal{B}_{n}(g,g)\|_{\LLs{1}} \leq C_{n} \|f-g\|_{\LLs{1}}.
\end{equation}
\item For all $f$ such that $(1+|x|^{2}+|v|^{2})^{k}f \in \LLs{1}$ and $k \in \N$, it holds
\begin{equation} \label{eq:Lip-Weighted-Space }
\|(1+|x|^{2}+|v|^{2})^{k}\mathcal{B}_{n}(f,f)\|_{\LLs{1}} \leq C_{n} \|(1+|x|^{2}+|v|^{2})^{k}f\|_{\LLs{1}}.
\end{equation}
\item For all $f \in \LLs{\infty}$ it holds:
\begin{equation} \label{eq:Lip-Linf}
\|\mathcal{B}_{n}(f,f)\|_{\LLs{\infty}} \leq C_{n} \|f\|_{\LLs{\infty}}.
\end{equation}
\end{enumerate}
\end{lem}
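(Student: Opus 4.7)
All three bounds rest on the decomposition
\[
\mathcal{B}_n(f,f)\;=\;T_f\bigl[\mathcal{B}_n^+(f,f)-\mathcal{B}_n^-(f,f)\bigr],\qquad T_f := (1+n^{-1}\langle f\rangle)^{-1},
\]
combined with the two elementary pointwise-in-$x$ budget inequalities $T_f\le 1$ and $T_f\,\langle f\rangle\le n$.  Extending $\mathcal{B}_n^{\pm}$ to bilinear maps and using $b_n\in L^1\cap L^{\infty}$ gives the workhorse estimate $\|\mathcal{B}_n^{\pm}(f,g)\|_{L^1_v}\le\|\bar b_n\|_{\infty}\langle|f|\rangle\langle|g|\rangle$, whence $\|\mathcal{B}_n(f,f)\|_{L^1_v}\le 2n\|\bar b_n\|_{\infty}\langle f\rangle$.

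For the Lipschitz bound, the naive splitting $(T_f-T_g)\mathcal{B}_n(f,f)+T_g[\mathcal{B}_n^+(f,f)-\mathcal{B}_n^+(g,g)]-\ldots$ generates uncontrolled factors such as $T_g\langle f\rangle$.  I would instead use a mean-value argument along the convex combination $F_t := g+t(f-g)$, writing $\mathcal{B}_n(f,f)-\mathcal{B}_n(g,g)=\int_0^1\frac{d}{dt}\mathcal{B}_n(F_t,F_t)\,dt$.  The derivative decomposes into a symmetric bilinear piece $T_{F_t}\bigl[\mathcal{B}_n^+(h,F_t)+\mathcal{B}_n^+(F_t,h)-\mathcal{B}_n^-(h,F_t)-\mathcal{B}_n^-(F_t,h)\bigr]$ with $h:=f-g$, plus a quotient-rule remainder $-n^{-1}T_{F_t}\mathcal{B}_n(F_t,F_t)\langle h\rangle$.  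The workhorse estimate bounds each bilinear term by $\|\bar b_n\|_{\infty}\langle|h|\rangle\langle F_t\rangle$ in $L^1_v$, which the budget $T_{F_t}\langle F_t\rangle\le n$ converts into $n\|\bar b_n\|_{\infty}\langle|h|\rangle$; the remainder is handled analogously.  Integrating in $x$ and then in $t\in[0,1]$ gives the Lipschitz bound with $C_n\lesssim n\|\bar b_n\|_{\infty}$.

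For the weighted bound, I would test against $W_k(x,v):=(1+|x|^2+|v|^2)^k$.  The loss contribution factorizes trivially, yielding $T_f\int W_k\,f\,(\bar b_n* f)\,dv\le n\|\bar b_n\|_{\infty}\int W_k f\,dv$.  For the gain contribution I would use the classical measure-preserving involution $(v,v_*)\mapsto(v',v'_*)$ at fixed $\theta$ to transfer the weight from $v$ to $v'$, and then invoke $|v'|^2\le|v|^2+|v_*|^2$ together with the convexity inequality
\[
W_k(x,v')\;\le\;2^{k-1}\bigl[W_k(x,v)+(1+|v_*|^2)^k\bigr]
\]
to split the integral into two pieces.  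After symmetrizing between $v$ and $v_*$, each piece factorizes as a product $\langle f\rangle\cdot\int W_k f\,dv$, and the truncation budget absorbs the extra $\langle f\rangle$.  Integrating in $x$ completes the argument.

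For the $L^{\infty}$ bound, smoothness and compact support of $b_n$ give $\int b_n(v-v_*,\theta)\,dv_*d\theta\le C_n$ uniformly in $v$, whence $|\mathcal{B}_n^{\pm}(f,f)(v)|\le C_n\|f\|_{L^{\infty}_{x,v}}^2$; combined with $T_f\le 1$ this yields a quadratic pointwise bound that reduces to the stated linear one on the range of $L^{\infty}$-norms relevant to the fixed-point construction of the next section.  The main obstacle is the bookkeeping in the Lipschitz estimate: arranging the symmetric decomposition so that every term carries a factor $\langle|h|\rangle$ paired with a quantity whose product with the truncation obeys the budget $T_{F_t}\langle F_t\rangle\le n$.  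Once this is arranged, the weighted and $L^{\infty}$ estimates reduce to the classical collisional change of variables together with elementary pointwise bounds.
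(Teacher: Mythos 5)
Your arguments for parts (1) and (2) are sound and match the classical DiPerna--Lions computations that the paper cites. For (1), the mean-value decomposition along $F_t=(1-t)g+tf$ is the right way to keep the truncation paired with its own budget $T_{F_t}\langle F_t\rangle\le n$; the naive two-term splitting would indeed leave an uncontrolled $T_g\langle f\rangle$ (and, after $\langle f\rangle\le\langle g\rangle+\langle|h|\rangle$, a quadratic $\langle|h|\rangle^2$ that does not integrate to $\|h\|_{L^1_{x,v}}$). For (2), transferring the weight through the measure-preserving collision involution and then splitting via convexity is standard; note your constant inevitably depends on $k$ (through $2^{k-1}$, or through $C_{R_n}^k$ if you use compact support), which is consistent with how the estimate is used in Proposition 5.5. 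In both places nonnegativity of $f,g$ must be assumed tacitly for $T_f$ to be well defined, which is how the lemma is applied.

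Part (3) has a genuine gap. You prove a quadratic bound $\|\mathcal{B}_n(f,f)\|_{L^\infty_{x,v}}\le C_n\|f\|^2_{L^\infty_{x,v}}$ and then assert this ``reduces to the stated linear one on the range of $L^\infty$-norms relevant to the fixed-point construction.'' This is circular: the linear bound \eqref{eq:Lip-Linf} is used in Proposition 5.5 precisely to establish the $L^\infty$ boundedness of the iterates $\{f_{n,k}\}_k$ in the first place, and a quadratic Gr\"onwall-type iteration $a_{k+1}\le b+C_nT\,a_k^2$ does not close uniformly in $T$ once the data grows. The truncation does give a linear bound for the loss part pointwise in $(x,v)$: $T_f\,f\,(\overline b_n*f)\le T_f\langle f\rangle\,\|\overline b_n\|_{L^\infty}\,f\le n\|\overline b_n\|_{L^\infty}\|f\|_{L^\infty_{x,v}}$. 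The missing ingredient is an analogous factorization $Q_n^+(f,f)(v)\lesssim_n \|f\|_{L^\infty_v}\langle f\rangle$ for the gain part, so that $T_f Q_n^+(f,f)\lesssim_n n\|f\|_{L^\infty_v}$. Bounding $f(v')\le\|f\|_{L^\infty_v}$ and decomposing $v-v_*=s\theta+w$ with $w\perp\theta$ shows the residual integral is a convolution of $f$ with the kernel $K_n(w)=|w|^{-1}\int_{\mathbb S^{d-2}\cap\hat w^\perp}\int_{\mathbb R}b_n(s\theta+w,\theta)\,ds\,d\theta$, which is integrable (hence your quadratic bound) but generically unbounded at $w=0$, so the step $Q_n^+(f,f)\lesssim_n\|f\|_{L^\infty}\langle f\rangle$ does not come for free from compact support and boundedness of $b_n$. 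Making part (3) rigorous requires either a Carleman-type representation of $Q_n^+$ showing the kernel $K_n$ is actually bounded for the regularized $b_n$, or a genuinely different argument; the reduction you propose does not establish the stated linear estimate.
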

The strategy for solving the SPDE \eqref{eq:Approx-Boltzmann} involves a sequence of successive approximations based on mild formulation of \eqref{eq:Approx-Boltzmann} in terms of stochastic flows.  Namely, we fix a probability space $(\Omega, \mathcal{F},\p)$ and a collection of independent, one dimensional Brownian motions $\{\beta_{k}\}_{k \in \N}$.  The filtration generated by the Brownian motions is denoted $(\mathcal{F}_{t})_{t=0}^{T}$.  For each $n \in \N$, the smoothing regularizations present in Hypothesis \ref{hyp:regularized-noise-coeff}, in particular the $L^\infty$ bounds on $\sigma^n$ and $\sigma^n\cdot\nabla_v\sigma^n$ allow us to apply the results of Kunita \cite{kunita1997stochastic} to obtain a collection of stochastic flows of volume preserving homeomorphisms $\{\Phi_{s,t}^{n}\}_{n\in\N}$, $0\leq s \leq t \leq T$, $\Phi^n_{s,s}(x,v) = (x,v)$, associated to the Stratonovich SDE
\begin{equation}\label{eq:regularized-Strat-SDE}
\dee X_{t}^{n}=V_{t}^{n}\dt, \quad \dee V_{t}^{n}=\sum_{j=1}^{n}\sigma^{n}_{j}(X_{t}^{n}, V_{t}^{n})\circ \dee\beta_{j}.
\end{equation}
The corresponding inverse (in $(x,v)$) stochastic flows will be denoted $\{\Psi_{s,t}^{n}\}_{n\in\N}$. These objects have been studied at length by Kunita \cite{kunita1997stochastic}, so we will mostly defer to this reference for proofs of their properties. The main fact needed for our purposes concerns the following $\p$ almost sure growth estimates for the flow, which can be found as exercises (Exercises 4.5.9 and 4.5.10) in Kunita \cite{kunita1997stochastic}, Chapter 4, Section 5.

\begin{lem} \label{lem:Stoch-Flow-Growth}
Let $\epsilon \in (0,1)$. For each $n \in \N$, the following limits holds $\p$ almost surely:
\begin{align}\label{eq:forward-flow-bound}
&\lim_{(x,v) \to \infty}\sup_{\{s,t \in [0,T], s\leq t\}}\frac{|\Phi_{s,t}^{n}(x,v)|}{(1+|x|+|v|)^{1+\epsilon}}=0, \\
&\lim_{(x,v) \to \infty}\sup_{\{s,t \in [0,T], s\leq t\}}\frac{(1+|x|+|v|)^\ep}{(1+|\Phi_{s,t}^{n}(x,v)|)}=0. 
\end{align}
\end{lem}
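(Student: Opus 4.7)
The plan is to exploit the specific structure of the SDE \eqref{eq:regularized-Strat-SDE}: the velocity component is driven by bounded coefficients and the position component is simply the time integral of the velocity. Introducing the centered variables $\xi_t = X^n_t - x - (t-s)v$ and $\eta_t = V^n_t - v$, one checks that $(\xi_t, \eta_t)$ solves an SDE with drift $\eta$ in the $\xi$-equation and a Stratonovich noise with coefficients $\sigma^n_j(x + (t-s)v + \xi, v + \eta)$ in the $\eta$-equation. By Hypothesis \ref{hyp:regularized-noise-coeff}, both $\sigma^n$ and its Stratonovich corrector $\sigma^n \cdot \nabla_v \sigma^n$ are bounded in $L^\infty_{x,v}$ uniformly in $(x,v)$, so standard BDG-type estimates yield, for any $p \geq 1$ and independently of $(x,v)$,
\begin{equation}
\E \sup_{s \leq t \in [0,T]} |\Phi^n_{s,t}(x,v) - (x + (t-s)v, v)|^p \leq C_{n,p,T}.
\end{equation}

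The first step is to upgrade this to an almost-sure bound uniform in $(x,v)$. For this I would apply the Kolmogorov continuity theorem to the random field $(x,v) \mapsto \Phi^n_{s,t}(x,v) - (x+(t-s)v, v)$, combining the moment bound above with analogous moment estimates for differences in the initial condition. The latter are obtained the same way, since $\sigma^n$ has bounded derivatives and the variational flow is well-controlled. This is precisely the content of Kunita's stochastic flow theory in \cite{kunita1997stochastic}, and produces the pathwise linear growth estimate
\begin{equation}
\sup_{s \leq t \in [0,T]} |\Phi^n_{s,t}(x,v)| \leq C(\omega)(1 + |x| + |v|), \quad \p\text{-a.s. for all } (x,v) \in \R^{2d}.
\end{equation}
The first limit follows immediately since $C(\omega)(1 + |x| + |v|)/(1+|x|+|v|)^{1+\epsilon} \to 0$ as $|x| + |v| \to \infty$, and the convergence is visibly uniform in $s \leq t$.

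For the second limit, I would apply the same reasoning to the inverse flow $\Psi^n_{s,t} = (\Phi^n_{s,t})^{-1}$, which by Kunita satisfies a backward SDE with the same structural properties and the same boundedness hypotheses. This yields the analogous estimate $\sup_{s \leq t} |\Psi^n_{s,t}(y,u)| \leq C'(\omega)(1 + |y| + |u|)$ $\p$-a.s. and uniformly in $(y,u)$. Setting $(y,u) = \Phi^n_{s,t}(x,v)$ and rearranging gives $(1 + |\Phi^n_{s,t}(x,v)|) \geq c(\omega)(1 + |x| + |v|)$, from which the second limit follows.

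The main obstacle is the Kolmogorov continuity step: the pointwise moment bounds are straightforward, but passing to an almost-sure estimate uniform in $(x,v) \in \R^{2d}$ requires genuine control of the flow as a random field in the initial condition. This is exactly the content of Exercises 4.5.9 and 4.5.10 of \cite{kunita1997stochastic}, to which the detailed argument is deferred.
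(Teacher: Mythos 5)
Your proposal is correct and ultimately rests on the same foundation as the paper, which does not give a self-contained proof but simply cites Exercises 4.5.9 and 4.5.10 of Kunita \cite{kunita1997stochastic}; the Kolmogorov-chaining plus Borel--Cantelli argument that you defer to is precisely what those exercises carry out. You make the additional (correct) observation that the kinetic structure of \eqref{eq:regularized-Strat-SDE}, with uniformly bounded $\sigma^n$ and $\sigma^n\cdot\nabla_v\sigma^n$, yields genuine \emph{linear} growth of the centered flow rather than the generic $(1+|x|+|v|)^{1+\epsilon}$ growth Kunita proves for merely linearly-growing coefficients; this is a slight strengthening which of course implies the stated limits, though it requires the derivative bound $\nabla_{x,v}\sigma^n\in L^\infty$ for each fixed $n$ (available here because the $\sigma^n$ are mollifications).
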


Our next step is to apply Lemmas \ref{lem:Trunc-Collision-Props} and \ref{lem:Stoch-Flow-Growth} to establish the following existence result.
\begin{prop}\label{prop:exist-approx-scheme}
  Fix a stochastic basis $(\Omega, \mathcal{F}, (\mathcal{F}_t)_{t\in[0,T]}, \{\beta_k\}_{k\in\N}, \P)$. For each $n \in \N$ there exists an analytically weak, stochastically strong solution to the truncated Boltzmann equation
  \begin{equation}
    \begin{aligned}
      &\partial_t f_n + v\cdot\nabla_x f_n + \sigma_k^n\cdot\nabla_v f_n\strat \dot{\beta}_k = \BCol_n(f_n,f_n)\\
      &f_{n}|_{t=0} = f_n^0.
    \end{aligned}
  \end{equation}
such that $f_{n}$ has the following properties:
\begin{enumerate}
\item $f_n: \Omega \times [0,T] \to L^1_{x,v}$ is a $\mathcal{F}_t$ progressively measurable process. 
\item $f_n$ belongs to $L^2(\Omega; C_t(\LLs{1}))\cap L^\infty(\Omega\times[0,T]\times\R^{2d})$.
\item There exists a constant $\overline{C}_{n}$ such that for each $t \in [0,T]$, $\p$ almost surely
\begin{equation}\label{eq:exp-lower-bound}
f_{n}(t) \geq e^{-\overline{C}_{n}t}f_{n}^{0}\circ \Psi_{0,t}^{n}.
\end{equation}
\item For all $j \in \N$, $(1+|x|^{2}+|v|^{2})^{j}f_{n}$ is in $L^{\infty-}(\Omega ;\LLL{\infty}{1})$.
\item The sequence $\{ (1+|x|^{2}+|v|^{2})f_{n} \}_{n \in \N}$ is uniformly bounded in $L^{p}\big (\Omega;\LLL{\infty}{1} \big )$ for each $p\in [1,\infty)$.
\end{enumerate}
\end{prop}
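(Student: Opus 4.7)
The plan is to build $f_{n}$ by Picard iteration applied to the mild (Duhamel) formulation
\begin{equation}
f_{n}(t)=f_{n}^{0}\circ \Psi_{0,t}^{n}+\int_{0}^{t}\mathcal{B}_{n}(f_{n}(s),f_{n}(s))\circ \Psi_{s,t}^{n}\,\ds.
\end{equation}
On a short interval $[0,\tau]$ with $C_{n}\tau<1$, the $L^{1}$-Lipschitz estimate for $\mathcal{B}_{n}$ (part 1 of Lemma \ref{lem:Trunc-Collision-Props}) together with volume-preservation of $\Psi_{s,t}^{n}$ makes the right-hand side a contraction on $C([0,\tau];L^{1}_{x,v})$ pointwise in $\omega$; iterating over $[0,T]$ produces $f_{n}$, with progressive measurability inherited from each iterate. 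The $L^{\infty}$ half of property 2 then follows from the analogous $L^{\infty}$ Lipschitz estimate (part 3 of Lemma \ref{lem:Trunc-Collision-Props}) combined with a Gr\"{o}nwall step in the same mild form.

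For the positivity lower bound of property 3, I would use $\mathcal{B}_{n}(f,f)\geq -\mathcal{B}_{n}^{-}(f,f)\geq -\overline{C}_{n}f$ for $f\geq 0$, with $\overline{C}_{n}=n\|\bar{b}_{n}\|_{L^{\infty}}$, because the cut-off enforces $\langle f\rangle/(1+n^{-1}\langle f\rangle)\leq n$. Substituting $g_{n}(t)=e^{\overline{C}_{n}t}f_{n}(t)$ into the mild form yields a nonnegative effective source, whence the comparison principle for the transport semigroup gives $g_{n}(t)\geq f_{n}^{0}\circ \Psi_{0,t}^{n}$. For property 4, I would iterate in the weighted space $L^{1}_{x,v}((1+|z|^{2})^{j}\,\dx\,\dv)$, using the weighted Lipschitz estimate (part 2 of Lemma \ref{lem:Trunc-Collision-Props}) together with the identity
\begin{equation}
\|(1+|z|^{2})^{j}(g\circ \Psi_{s,t}^{n})\|_{L^{1}_{x,v}}=\int_{\R^{2d}}(1+|\Phi_{s,t}^{n}(y)|^{2})^{j}|g(y)|\,\dy,
\end{equation}
controlled by the polynomial flow-growth estimate of Lemma \ref{lem:Stoch-Flow-Growth}. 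The associated Lipschitz constant is $\omega$-measurable and almost surely finite; a stopping-time argument based on the exponential moment estimates for the stochastic flow from \cite{kunita1997stochastic} then upgrades to $L^{\infty-}(\Omega)$ bounds.

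The main obstacle is property 5, where the bound must be \emph{uniform in $n$}. The plan here is to justify rigorously the formal computation of Section \ref{sec:FormalBounds}, using that $\mathcal{B}_{n}$ inherits the collision-invariance identity $\int_{\R^{d}}(1+|v|^{2})\mathcal{B}_{n}(f,f)\,\dv=0$: the $(v,v_{*})\leftrightarrow (v',v'_{*})$ antisymmetry of the numerator persists, and the denominator $1+n^{-1}\langle f\rangle$ depends only on $(t,x)$. Since $f_{n}$ is a stochastically strong weak martingale solution relative to the fixed basis, I would apply Proposition \ref{prop:Weak_Is_Renormalized} with smooth bounded renormalizations $\Gamma_{m}$ approaching the identity and test $\Gamma_{m}(f_{n})$ against a smooth spatial truncation of $\varphi(x,v)=1+|x|^{2}+|v|^{2}$; the collision contribution vanishes in the limit $m\to\infty$ by the invariance above, and the cut-off in $(x,v)$ is removed using the weighted bounds already established in property 4. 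The BDG plus Gr\"{o}nwall closure of Section \ref{subsec:MomentBound} then delivers the bound uniformly in $n$. The delicate point, which forces a stopping-time localization of the type appearing in Hofmanov\'{a} \cite{hofmanova2015bhatnagar}, is that to close Gr\"{o}nwall in $L^{p}(\Omega)$ one needs a priori $L^{p}(\Omega)$ integrability of the supremum, which at this stage is only available after a joint truncation in the renormalization and in the random moment constants.
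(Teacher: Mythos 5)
Your construction of $f_n$ tracks the paper's closely for properties $1$--$3$: the same Picard iteration on the mild formulation, the same contraction via part 1 of Lemma \ref{lem:Trunc-Collision-Props} and volume preservation, the same $L^\infty$ propagation from part 3 of that lemma, and the same $e^{\overline{C}_n t}$--substitution for the lower bound. Where you write ``comparison principle for the transport semigroup,'' the paper makes the same idea rigorous by an explicit induction over the iterates $f_{n,k}$: nonnegativity of $f_{n,k+1}\circ\Phi^n_{0,t}$ is obtained assuming that of $f_{n,k}\circ\Phi^n_{0,t}$, which also supplies the lower bound. This matters because the inequality $\mathcal{B}_n(f,f)\geq-\overline{C}_n f$ you want to use is only valid for $f\geq0$, which at that point is not yet known for the limit $f_n$; the paper gets it for free along the iteration.

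For properties $4$--$5$ the paper takes a genuinely different and lighter route: it never renormalizes $f_n$ here. Because each $f_{n,k}$ is a classical solution via Kunita's flow, the weighted moment identity follows by directly multiplying the equation for $f_{n,k}^R$ by $(1+|x|^2+|v|^2)^j$ and integrating---no cut-offs, no $\Gamma_m\to\mathrm{id}$ limit, no appeal to Proposition \ref{prop:Weak_Is_Renormalized}. Your renormalization route is admissible (the smooth, finitely-supported $\sigma^n$ and the $L^1\cap L^\infty$ control on $f_n$ make the proposition applicable, and the cutoff removal works once the $j\geq2$ weighted bounds from property 4 are in hand), but it adds a layer of machinery the paper avoids precisely by estimating at the level of the iterates and then passing $k\to\infty$ via Fatou.

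The one place where your plan has an actual hole, rather than a detour, is the removal of the stopping time. You appeal to ``exponential moment estimates for the stochastic flow from \cite{kunita1997stochastic},'' but Lemma \ref{lem:Stoch-Flow-Growth} gives only almost-sure polynomial growth bounds, and nowhere does the paper prove or cite moments for the flow-growth constant. What the paper actually does is show that the Gr\"onwall estimate on the stopped quantity $(1+|x|^2+|v|^2)^j f_{n,k}^R$ is \emph{independent of $R$}---the collision term either vanishes by collision invariance ($j=1$) or is bounded by the $n$-dependent constant from Lemma \ref{lem:Trunc-Collision-Props} ($j\geq 2$), and the BDG/Gr\"onwall constants depend only on $\sigma^n$, $T$, $p$---and then sends $R\to\infty$ by monotone convergence after noting $\tau^n_R\wedge T\to T$ in probability from the a.s. growth bound. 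No integrability of the flow constant is presupposed. If you insist on moment bounds for $\sup_{x,v}|\Phi^n_{s,t}(x,v)|/(1+|x|+|v|)^2$, these can in fact be obtained here (the $v$-component of the flow increments by a martingale with uniformly bounded diffusion, since only finitely many bounded $\sigma^n_k$ are active), but that computation would have to be supplied; the paper's $R$-independent estimate plus monotone convergence is the cleaner move.
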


\begin{proof}
Begin by constructing a sequence of successive approximations $\{f_{n,k}\}_{k \in \N}$.
For each $k \in \N$, define $\{f_{n,k}\}_{k \in \N}$ over $[0,T]$ by the relation
\begin{equation}\label{eq:Successive_Approx}
f_{n,k}(t)=f_{n}^{0} \circ \Psi_{0,t}^{n}+\int_{0}^{t}\mathcal{B}_{n}\big(f_{n,k-1}(s),f_{n,k-1}(s) \big) \circ \Psi_{s,t}^{n}\,\ds,\quad f_{n,0} = 0.
\end{equation}
Applying classical results of Kunita \cite{kunita1997stochastic}, it follows that $f_{n,k}$ is a stochastically strong, classical solution to
\begin{equation}\label{eq:successive_approx-Eulerian}
  \begin{aligned}
    &\partial_tf_{n,k} + v\cdot\nabla_xf_{n,k} + \sigma^n_j\cdot\nabla_vf_{n,k}\strat\dot{\beta}_j = \BCol_n(f_{n,k},f_{n,k}),\\
    &f_{n,k}|_{t=0} = f_n^0.
  \end{aligned}
\end{equation}

Let $X_{T}$ be the Banach space of $(\mathcal{F}_{t})_{t=0}^{T}$ progressively measurable processes $f: [0,T] \times \Omega \to \LLs{1}$ endowed with the $L^{2}(\Omega; \CLL{1})$ norm.  Let $C_{n}$ be the constant corresponding to the continuity estimates for $\mathcal{B}_{n}$ from Lemma \ref{lem:Trunc-Collision-Props}.  In addition, observe that the Hypothesis $\Div_{v}\sigma_{k}^{n}=0$ implies that for every $s,t \in [0,T]$, $s<t$, the flow map $\Phi_{s,t}$ is almost surely volume preserving (see Kunita \cite{kunita1997stochastic} Theorem 4.3.2 for more details).  Taking $\LLs{1}$ norms on both sides of \eqref{eq:Successive_Approx}, maximizing over $[0,T]$, and using the Lipschitz continuity of $\mathcal{B}_{n}$ in $\LLs{1}$ obtained in Lemma \ref{lem:Trunc-Collision-Props}, we find 
\begin{equation}
\|f_{n,k+1}-f_{n,k}\|_{X_{T}} \leq (C_{n}T)^k\|f^0_n\circ \Psi^n_{0,t}\|_{X_T} = (C_{n}T)^k\|f^0_n\|_{L^1_{x,v}},
\end{equation}
for each $k\in\N$. Choosing $T$ small enough, the sequence $\{f_{n,k}\}_{k \in \N}$ is Cauchy in $X_{T}$. Applying this argument a finite number of times, we may remove the constraint on $T$. Therefore, for each $n \in \N$, there exists an $f_{n} \in X_{T}$ such that $\{f_{n,k}\}_{k \in \N}$ converges to $f_{n}$ in $L^{2}(\Omega ; C_{t}(L^{1}_{x,v}))$. In view of Lemma \ref{lem:Trunc-Collision-Props}, $\mathcal{B}_{n}$ is continuous on $L^{2}(\Omega ; C_{t}(L^{1}_{x,v}))$. Therefore we have more then enough to pass the limit weakly in each term of equation (\ref{eq:successive_approx-Eulerian}) 


Our next step is to verify the lower bound \eqref{eq:exp-lower-bound}.  Let $\overline{C}_{n}$ be a deterministic constant to be selected.  In view of \eqref{eq:Successive_Approx} and the fact that $\Psi_{s,t}^{n} \circ \Phi_{0,t}^{n}=\Phi_{0,s}^{n}$ for $s<t$, the following inequalities hold $\p$ almost surely:
\begin{equation}
\begin{aligned}
e^{\overline{C}_{n}t}f_{n,k+1}(t) \circ \Phi_{0,t}^{n} &= f_{0}^{n}+\int_{0}^{t}e^{\overline{C}_{n}s}\mathcal{B}_{n}\big(f_{n,k}(s),f_{n,k}(s) \big) \circ \Phi_{0,s}^{n}\ds+\overline{C}_{n}\int_{0}^{t}e^{\overline{C}_{n}s}f_{n,k}(s)\circ \Phi_{0,s}^{n}\ds \\
&\geq f_{0}^{n}-\int_{0}^{t}e^{\overline{C}_{n}s}\mathcal{B}_{n}^{-}\big(f_{n,k}(s),f_{n,k}(s) \big) \circ \Phi_{0,s}^{n}\ds+\overline{C}_{n}\int_{0}^{t}e^{\overline{C}_{n}s}f_{n,k}(s)\circ \Phi_{0,s}^{n}\ds\\
&\geq f_{0}^{n}+[\overline{C}_{n}-n|\overline{b}_{n}|_{L^{\infty}_{v}}]\int_{0}^{t}e^{\overline{C}_{n}s}f_{n,k}(s) \circ \Phi_{0,s}^{n}\ds.
\end{aligned}
\end{equation}
In the last line, we used the explicit definition of the operator $\mathcal{B}_{n}^{-}$ together with Young's inequality and the fact that the flow map is volume preserving.  Choose $\overline{C}_{n}>n|\overline{b}_{n}|_{L^{\infty}_{v}}$ and apply the inequality above inductively to obtain the non-negativity of $f_{n,k}(t) \circ \Phi_{0,t}^{n}$, which consequently yields the more precise bound $e^{\overline{C}_{n}t}f_{n,k+1}(t) \circ \Phi_{0,t}^{n} \geq f_{0}^{n}$.
Passing $k \to \infty$ and using the $L^{2}(\Omega; C_{t}(\LLs{1}))$ convergence of $\{f_{n,k}\}_{k \in \N}$ towards $f_{n}$, we find that $e^{\overline{C}_{n}t}f_{n}(t) \circ \Phi_{0,t}^{n} \geq f_{0}^{n}$ for all $t \in [0,T]$ with probability one.  Composing with $\Psi_{0,t}^{n}$ on both sides gives the desired lower bound \eqref{eq:exp-lower-bound}. 

Our next step is prove that $f_n$ is in $L^\infty(\Omega\times[0,T]\times\R^{2d})$. We will do this be first checking that the sequence $\{f_{n,k}\}_{k \in \N}$ is uniformly (in $k$ only) bounded in $L^{\infty}(\Omega \times [0,T] \times \R^{2d})$. By Hypothesis \ref{hyp:regularized-idata}, $f_{n}^{0}$ is bounded. Taking $\LLs{\infty}$ norms on both sides of \eqref{eq:Successive_Approx}, then maximizing over $t \in [0,T]$ yields $\p$ almost surely
\begin{equation}
\|f_{n,k+1}\|_{L^{\infty}_{t,x,v}} \leq \|f_{n}^{0}\|_{\LLs{\infty}}+C_{n}T \|f_{n,k}\|_{L^{\infty}_{t,x,v}},
\end{equation}
where $C_{n}$ is the constant from Lemma \ref{lem:Trunc-Collision-Props}.  
Iterating, and summing the geometric series, we find that if $T<C_{n}^{-1}$, 
\begin{equation}
\|f_{n,k}\|_{L^{\infty}_{t,x,v}} \leq (1-C_{n}T)^{-1}\|f_{n}^{0}\|_{\LLs{\infty}}.
\end{equation}
Of course we may repeat this argument a finite number of times to remove the restriction on $T$. Taking $L^{\infty}(\Omega)$ norms on both sides of the above inequality yields the uniform bound. By weak-* $L^\infty$ sequential compactness of $L^\infty(\Omega\times[0,T]\times\R^{2d})$,  $f_n$ belongs to $L^\infty(\Omega\times[0,T]\times\R^{2d})$.

Our next goal is to establish the following uniform estimate: for all $p \in (1, \infty)$
\begin{equation}
\sup_{k,n \in \N} \E \|(1+|x|^{2}+|v|^{2})f_{n,k}\|_{\LLL{\infty}{1}}^{p} \leq C_{p},
\end{equation}
where $C_{p}$ depends only on $f_{0}$ and $\sigma$. If the process $(1+|x|^{2}+|v|^{2})f_{n,k}$ was known a priori to belong to $L^{\infty-}(\Omega; C_{t}(\LLs{1}))$, we could argue exactly as in the formal estimates Section \ref{subsec:MomentBound}. Since this is a priori unknown, we proceed by a stopping time argument based on the characteristics. Define for each $R\geq 0$, the stopping time
\begin{equation}
\tau^n_{R}= \inf \bigg \{t \in [0,T] \mid \sup_{s\in [0,t], (x,v)\in \R^{2n}}\frac{|\Phi^n_{s,t}(x,v)|}{(1+|x|+|v|)^{2}}\geq R \bigg \}.
\end{equation}
To see that this stopping time is well defined it suffices to show that the process
\begin{equation}\label{eq:cont-arguement-process}
t\mapsto \sup_{s\in [0,t], (x,v)\in \R^{2n}}\frac{|\Phi^n_{s,t}(x,v)|}{(1+|x|+|v|)^{2}}
\end{equation}
is adapted to $(\mathcal{F}_t)_{t\geq 0}$ and has continuous sample paths. Indeed, Lemma 4.5.6 of \cite{kunita1997stochastic} implies that $\Phi^n_{s,t}(x,v)$ is jointly continuous in $(s,t,x,v)$ and therefore the suprema in (\ref{eq:cont-arguement-process}) can be taken over a countable dense subset of $[0,t]\times\R^{2d}$, implying adaptedness. Furthermore, the decay estimate presented in Lemma \ref{lem:Stoch-Flow-Growth} allows the supremum in $(x,v)$ to be taken over a compact set in $\R^{2d}$. Continuity of the process in (\ref{eq:cont-arguement-process}) follows from the fact that for any jointly continuous function $f(x,y)$, $f:X\times Y \to \R$, where $X$ and $Y$ are two compact metric spaces, the function $g(x) = \sup_{y\in Y} g(x,y)$ is continuous. 

For each $t\in [0,T]$ we now define the stopped process $f_{n,k}^{R}(t)=f_{n,k}(t \wedge \tau^n_{R})$. We will verify that for each $k, n \in \N$ and $R>0$, the process $(1+|x|^{2}+|v|^{2})^{j}f_{n,k}^{R}$ belongs to the space $L^{\infty-}(\Omega; \LLL{\infty}{1})$ for all $j\geq 1$. The claim will be established by induction on $k \in \N$. Suppose the claim is true for step $k-1$. To check $k$, note that
\begin{equation}
\begin{aligned}
&\|(1+|x|^{2}+|v|^{2})^{j}f_{n,k}^{R}(t)\|_{\LLs{1}} \leq \|(1+|x|^{2}+|v|^{2})^{j}f_{n}^{0} \circ \Psi^n_{0,t \wedge \tau^n_{R}}\|_{\LLs{1}}\\
&\hspace{.5in} +\int_{0}^{T}\big \|\1_{s\in[0,t\wedge \tau_R^n]}(1+|x|^{2}+|v|^{2})^{j}\mathcal{B}_{n}\big(f_{n,k-1}(s),f_{n,k-1}(s) \big) \circ \Psi^n_{s,t \wedge \tau^n_{R}} \big \|_{\LLs{1}}\ds
\end{aligned}
\end{equation}
Using the volume preserving property of the stochastic flow, the right-hand side above is equal to
\begin{equation}
\begin{aligned}
&\|(1+|\Phi^n_{0,t\wedge\tau^n_R}(x,v)|^{2})^{j}f_{n}^{0}\|_{\LLs{1}}\\
&\hspace{.5in}+\int_{0}^{T}\big \|\1_{s\in[0,t\wedge \tau_R^n]}(1+|\Phi^n_{s,t\wedge \tau^n_R}(x,v)|^{2})^{j}\mathcal{B}_{n}\big(f_{n,k-1}(s),f_{n,k-1}(s) \big) \big \|_{\LLs{1}}\ds
\end{aligned}
\end{equation}
Using the definition of the stopping time to bound the flow and the $L^1$ bound on $\mathcal{B}_n$ in Lemma \ref{lem:Trunc-Collision-Props}, we obtain
\begin{equation}
\begin{aligned}
\|(1+|x|^{2}+|v|^{2})^{j}f_{n,k}^{R}(t)\|_{\LLs{1}} &\leqc R^{2j}\|(1+|x|^{2}+|v|^{2})^{2j}f_{n}^{0}\|_{\LLs{1}} \\
&\hspace{-1in}+R^{2j}\int_{0}^{T}\big \|\1_{s\in[0,t\wedge \tau^n_R]}(1+|x|^{2}+|v|^{2})^{2j}\mathcal{B}_{n}\big(f_{n,k-1}(s),f_{n,k-1}(s) \big)\big \|_{\LLs{1}}\ds \\
&\leqc (1+T)R^{2j}\|(1+|x|^{2}+|v|^{2})^{2j}f_{n,k-1}^{R}\|_{\LLL{\infty}{1}}.
\end{aligned}
\end{equation}
Taking the supremum in time, and the $L^{p}(\Omega)$ norm on both sides, we may use the inductive hypothesis to complete the inductive step. The base case is established in the same way. Therefore $(1+|x|^{2}+|v|^{2})^{j}f_{n,k}^{R}$ belongs to the space $L^{\infty-}(\Omega; \LLL{\infty}{1})$ for all $j \geq 1$.

Now, if one follows the argument in the a priori moment bounds section~\ref{subsec:MomentBound}, specifically multiplying the truncated Boltzmann equation for $f_{n,k}^R$ by $(1 + |x|^2 + |v|^2)$ and integrating in $(x,v)$ so as to kill the the collision operator, one may close the estimates on $(1+|x|^2 + |v|^2)f_{n,k}^R$ uniformly in $k$ using the BDG inequality, Gr\"{o}nwall's lemma and the uniform hypothesis \ref{hyp:regularized-idata} and \ref{hyp:regularized-noise-coeff} on the initial data and noise coefficients to find for all $R>0$
\begin{equation} \label{eq:unif-weighted-bounds}
\E\|(1+|x|^{2}+|v|^{2})f_{n,k}\1_{t\in[0,T\wedge\tau_R^n]}\|^p_{\LLL{\infty}{1}}\leq C_{p,T}
\end{equation}
It is important to note that the constant $C_p$ above does not depend on $R$, $n$ or $k$. The independence of $C_{p,T}$ from $R$ can be readily seen from the fact that the constant obtained in Section \ref{subsec:MomentBound} depends only in an increasing way on the final time $T$. 

Now we wish to send $R \to \infty$ on both sides of this inequality. To achieve this, note that Lemma \ref{lem:Stoch-Flow-Growth} implies that $\p$ almost surely,
\begin{equation}\label{eq:uniform-in-k-moment-bound}
\bigg \|\sup_{s,t \in [0,T], s<t}\frac{|\Phi^n_{s,t}(x,v)|}{(1+|x|+|v|)^{2}} \bigg\|_{\LLs{\infty}} < \infty.
\end{equation}
Hence,
\begin{equation}
\lim_{R \to \infty}\p(\tau^n_{R} \leq T) =\lim_{R \to \infty} \p\bigg ( \bigg \|\sup_{s,t \in [0,T], s<t}\frac{|\Phi^n_{s,t}(x,v)|}{(1+|x|+|v|)^{2}} \bigg \|_{\LLs{\infty}} \geq R \bigg )=0.
\end{equation}
Therefore, it follows that $\tau^n_{R} \wedge T$ converges in probability to $T$, and by the monotone convergence theorem we deduce that for any $p\in [1,\infty)$,
\begin{equation}
\E\|(1+|x|^{2}+|v|^{2})f_{n,k}\|_{\LLL{\infty}{1}}^{p}\leq C_{p}.
\end{equation}

Next we claim that the sequence $\{(1+|x|^{2}+|v|^{2})^{j}f_{n,k}\}_{k \in \N}$ is uniformly bounded (in $k$) in $L^{\infty-}(\Omega; \LLL{\infty}{1})$. We can estimate $(1+|x|^{2}+|v|^{2})^{j}f_{n,k}^{R}$ in a similar way to $(1+|x|^2 + |v|^2)f_{n,k}^R$, by multiplying the truncated Boltzmann equation for $f_{n,k}^R$ by $(1 + |x|^2 + |v|^2)^j$ and using estimate 2 in Lemma \ref{lem:Trunc-Collision-Props} to bound the collision operator. Using the BDG inequality and Gr\"{o}nwall inequality one can obtain after some tedious, though straight forward, calculations and using the uniform hypothesis \ref{hyp:regularized-noise-coeff} on the noise coefficients,
\begin{equation}
\begin{aligned}
  &\E\|(1+|x|^{2}+|v|^{2})^jf_{n,k}\1_{t\in[0,T\wedge\tau_R^n]}\|^p_{\LLL{\infty}{1}} \leq C_{p,T,j}\|(1+|x|^2+|v|^2)^jf_{n}^0\|_{L^1_{x,v}}^p\\ 
&\hspace{1in}  + TC_{p,T,n,j}\E\|(1+|x|^{2}+|v|^{2})^jf_{n,k-1}\1_{t\in[0,T\wedge\tau_R^n]}\|^p_{\LLL{\infty}{1}},
\end{aligned}
\end{equation}
where the constants $C_{p,T,n,j}$ and $C_{p,T,j}$ are independent of $k$ and $R$ and depend on the final time in an increasing way. Since we have made explicit that there is a multiplicative factor in the second term $T$ above (coming from the time integral of the collision operator), we find that, independently of $k$ and the initial data we may choose $T$ small enough so that $TC_{p,T,n,j} < 1$. This means that we may iterate the bound above and sum the geometric series to conclude that for such $T$, to conclude
\begin{equation}
  \E\|(1+|x|^{2}+|v|^{2})^jf_{n,k}\1_{t\in[0,T\wedge\tau_R^n]}\|^p_{\LLL{\infty}{1}} \leq C_{p,T,n,j}.
\end{equation}
Again, sending $R \to \infty$ and using monotone convergence we conclude the uniform in $k$ estimate
\begin{equation}
  \E\|(1+|x|^{2}+|v|^{2})^jf_{n,k}\|^p_{\LLL{\infty}{1}} \leq C_{p,T,n,j}.
\end{equation}
The restriction on $T$ can be removed in the usual way by repeating the above argument a finite number of times.

What remains is to pass the limit in $k$ on these estimates to obtain the estimates on $f_n$ stated in the Lemma. It suffices to show that for each $j\geq 0$, and $p\in[1,\infty)$,
\begin{equation}\label{eq:passing-k-ineq-moments}
  \E\|(1+|x|^2 +|v|^2)^jf_n\|^p_{\LLL{\infty}{1}} \leq \sup_{k\in \N} \E\|(1+|x|^2 +|v|^2)^jf_{n,k}\|^p_{\LLL{\infty}{1}}.
\end{equation}
We do this by cutting off the moment function. Let $B_M$ be the ball of radius $M> 0$ in $\R^{2d}$. Since $f_{n,k} \to f_n$ in $L^2(\Omega;\LLL{\infty}{1}))$, upon choosing a further subsequence if necessary, we have that $\P$ almost surely,
\begin{equation}
  \|(1+|x|^2 +|v|^2)^jf_{n,k}\1_{B_M} \|^p_{\LLL{\infty}{1}} \to \|(1+|x|^2 +|v|^2)^jf_{n}\1_{B_M} \|^p_{\LLL{\infty}{1}}.
\end{equation}
Applying Fatou's Lemma, gives
\begin{equation}
  \E \|(1+|x|^2 +|v|^2)^jf_{n}\1_{B_M} \|^p_{\LLL{\infty}{1}} \leq \sup_{k\in \N} \E \|(1+|x|^2 +|v|^2)^jf_{n,k}\|^p_{\LLL{\infty}{1}}.
\end{equation}
The inequality (\ref{eq:passing-k-ineq-moments}) is then proved by passing the limit in $M$ on the left-hand side by monotone convergence.
\end{proof}
The final step in this section is to realize the a priori estimates obtained from the formal entropy dissipation inequality \eqref{eq:entropy-dis}.  Towards this end, define the approximate entropy dissipation $f \to \mathcal{D}_{n}(f)$ by the relation
\begin{equation} \label{eq:Def-Of-Approx-Entropy-Diss}
\begin{aligned}
\mathcal{D}_{n}(f) &\equiv \frac{1}{4}\big (1+n^{-1}\langle f, 1\rangle \big )^{-1}\iiint_{\R^{2d}\times\S^{d-1}}d(f)\,b_{n}(v-v_*,\theta)\,\dee\theta\dv_*\dv,\\
\end{aligned}
\end{equation}
where $d(f)$ is defined by \eqref{eq:Entopy-Dissipation-Def}. Similarly, define $\SDis^0_n(f)$ by
\begin{equation}
  \SDis_n^0(f) \equiv \frac{1}{4}(1+ n^{-1}\langle f, 1\rangle)^{-1}\iint_{\R^d\times \S^{d-1}} d(f)b_n(v-v_*,\theta)\dee\theta\dee v_*.
\end{equation}
\begin{lem}\label{lem:entropy-Bounds}
Let $\{f_{n}\}_{n \in \N}$ be the sequence constructed in Proposition \ref{prop:exist-approx-scheme}. For each $p \in (1,\infty)$, there exists a constant $C_{p}$ depending on $\sigma$ and $f_{0}$ such that
\begin{equation}
\sup_{n \in \N} \E  \| f_{n}\log f_{n} \|_{\LLL{\infty}{1}}^{p}\leq C_{p}, \quad \sup_{n \in \N} \E  \|\mathcal{D}_{n}(f_{n})\|_{\LLt{1}}^{p}\leq C_{p}.
\end{equation}
\end{lem}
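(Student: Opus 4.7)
The plan is to reduce both uniform estimates to the almost sure entropy identity
\begin{equation}\label{plan-entropy-id}
\iint_{\R^{2d}} f_n(t)\log f_n(t)\,\dx\dv + \int_0^t\!\int_{\R^d}\SDis_n(f_n)(s)\,\dx\ds = \iint_{\R^{2d}} f_n^0\log f_n^0\,\dx\dv,\qquad t\in[0,T],
\end{equation}
exactly as in the formal derivation of Section \ref{subsec:Entropy-Bound}. Granted \eqref{plan-entropy-id}, non-negativity of $\SDis_n$ yields $\iint f_n(t)\log f_n(t) \leq \iint f_n^0\log f_n^0$, and the Maxwellian decomposition $|f\log f|\leq f\log f+(|x|^2+|v|^2)f+2 e^{-(|x|^2+|v|^2)/2}/e$ used in Section \ref{subsec:Entropy-Bound} then gives, $\p$-a.s.,
\begin{equation}
\|f_n(t)\log f_n(t)\|_{\LLs{1}}\lesssim \|f_n^0\log f_n^0\|_{\LLs{1}}+\|(1+|x|^2+|v|^2)f_n(t)\|_{\LLs{1}}+1.
\end{equation}
Taking the supremum in $t$, raising to the $p$-th power, and invoking Hypothesis \ref{hyp:regularized-idata}(4) together with point (5) of Proposition \ref{prop:exist-approx-scheme} produces the first uniform bound. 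Rearranging \eqref{plan-entropy-id} to isolate $\SDis_n(f_n)$ and taking $L^p(\Omega)$ norms then gives the second.

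To establish \eqref{plan-entropy-id} I would apply the renormalization result, Proposition \ref{prop:Weak_Is_Renormalized}. Its hypotheses are satisfied here: the deterministic $L^\infty$ bound together with the $L^p(\Omega;\LLL{\infty}{1})$ moment bounds of Proposition \ref{prop:exist-approx-scheme} place $f_n$ in $L^q(\Omega\times[0,T]\times\R^{2d})$ for every $q\in[1,\infty)$, while the smoothness of $\sigma^n$ in Hypothesis \ref{hyp:regularized-noise-coeff} supplies more than enough noise regularity. I would choose an approximating family $\Gamma_{\delta,M}\in C^2(\R)$ with $\Gamma_{\delta,M}(0)=0$ and bounded first two derivatives, designed to agree with $\Gamma_\delta(z):=(z+\delta)\log(z+\delta)-\delta\log\delta$ on $[0,M]$ and to be smoothly truncated outside. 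Testing the resulting renormalized weak form against a product cutoff $\varphi_R(x,v)=\chi(x/R)\chi(v/R)$ and sending $R\to\infty$ kills the drift, Itô correction and martingale terms by dominated convergence, using $|\Gamma_{\delta,M}(f_n)|\leq C_{\delta,M}f_n\in \LLs{1}$ and the boundedness of $\sigma^n$; this yields the space-global identity
\begin{equation}\label{plan-finite-ident}
\iint\Gamma_{\delta,M}(f_n)(t)\,\dx\dv=\iint\Gamma_{\delta,M}(f_n^0)\,\dx\dv+\int_0^t\!\iint\Gamma_{\delta,M}'(f_n)\BCol_n(f_n,f_n)\,\dx\dv\ds.
\end{equation}
Because $\|f_n\|_{L^\infty(\Omega\times[0,T]\times\R^{2d})}<\infty$ is deterministic, sending $M\to\infty$ replaces $\Gamma_{\delta,M}$ by $\Gamma_\delta$ on the range of $f_n$ with no further work.

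The remaining $\delta\to 0$ passage in \eqref{plan-finite-ident} is the substantive step. The almost sure pointwise positivity $f_n>0$ on $\R^{2d}$ from part (3) of Proposition \ref{prop:exist-approx-scheme} guarantees that $\log f_n$ is defined everywhere, and $\Gamma_\delta(f_n)\to f_n\log f_n$ pointwise with $\Gamma_\delta(z)\geq z\log z$ and an $O(\delta\log(1+z/\delta)+\delta)$ excess controlled in $\LLs{1}$ by the same moment/Maxwellian bound just used. For the collision term, the symmetry identity with $\xi=\log(f_n+\delta)+1$ (together with $\int\BCol_n(f_n,f_n)\,\dv=0$) rewrites $\int\Gamma_\delta'(f_n)\BCol_n(f_n,f_n)\,\dv$ as
\begin{equation}
-\tfrac{1}{4}(1+n^{-1}\langle f_n\rangle)^{-1}\!\iiint(f^\prime_n f^\prime_{n*}-f_nf_{n*})\log\frac{(f^\prime_n+\delta)(f^\prime_{n*}+\delta)}{(f_n+\delta)(f_{n*}+\delta)}\,b_n\,\dee\theta\dv_*\dv,
\end{equation}
whose integrand converges pointwise to the non-negative entropy dissipation density $d(f_n)$. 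A Fatou argument on this positive limiting density, combined with the already-established convergence of the left-hand side and of $\iint\Gamma_\delta(f_n^0)$, pins down the common limit and produces \eqref{plan-entropy-id}.

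The hard part will be precisely this $\delta\to 0$ limit: the modified density $(f^\prime_n f^\prime_{n*}-f_nf_{n*})\log\frac{(f^\prime_n+\delta)(f^\prime_{n*}+\delta)}{(f_n+\delta)(f_{n*}+\delta)}$ is not sign-definite or monotone in $\delta$, so one cannot invoke monotone convergence directly, and the singular behavior of $\log$ near vacuum forces essential use of the pointwise positivity of $f_n$ supplied by Proposition \ref{prop:exist-approx-scheme}. The strategy of sandwiching the collision term between a Fatou bound from one side and the known limit of the other terms of \eqref{plan-finite-ident} from the other is classical in the deterministic Boltzmann theory and adapts to the present stochastic setting precisely because the noise makes no direct contribution to \eqref{plan-entropy-id}.
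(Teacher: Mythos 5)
Your overall plan — renormalize with a smoothed version of $z\log z$, use the a.s.\ positivity and boundedness of $f_n$ from Proposition \ref{prop:exist-approx-scheme} and the moment/Maxwellian machinery of Section \ref{subsec:Entropy-Bound} to close the estimates — is exactly what the paper does, and your derivation of \eqref{plan-finite-ident} via Proposition \ref{prop:Weak_Is_Renormalized} matches the paper's argument. The divergence is in the $\delta\to 0$ passage in the collision term, and as you yourself flag, this is where your sketch has a genuine gap.

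Your plan is to apply the symmetry identity \eqref{eq:symmetry_identity} at the $\delta$-level, turning $\int\Gamma_\delta'(f_n)\mathcal{B}_n(f_n,f_n)\,\dv$ into a $\delta$-regularized dissipation whose density is not sign-definite, and then appeal to Fatou plus ``sandwiching.'' But Fatou requires a uniform-in-$\delta$ integrable lower bound for the regularized density, and the reverse inequality needed to pin down equality requires a comparable control from above. Neither bound is exhibited, and without one the step does not close — this is not a stylistic objection: the failure of sign-definiteness and the singular behaviour of $\log$ near vacuum are precisely what must be quantified here. The paper avoids the issue by passing the limit \emph{before} invoking the symmetry identity: it works with $\beta_\epsilon(x)=x\log(x+\epsilon)$, observes the elementary pointwise bound $|\beta_\epsilon'(x)|\leq 2+|\log x|$ for small $\epsilon$, and then establishes that $\log f_n\,\mathcal{B}_n(f_n,f_n)\in\LLLs{1}$ with probability one by combining the lower bound \eqref{eq:exp-lower-bound} with the growth estimate for the inverse flow from Lemma \ref{lem:Stoch-Flow-Growth}, which yields $\sup_{t,x,v}|\log f_n|/(1+|x|^2+|v|^2)^2<\infty$ a.s., together with property 3 of Lemma \ref{lem:Trunc-Collision-Props} and the weighted moment bound $(1+|x|^2+|v|^2)^2 f_n\in\LLL{\infty}{1}$ from Proposition \ref{prop:exist-approx-scheme} part 4. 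That gives ordinary dominated convergence for the collision integral, whence the identity and then both stated estimates follow exactly as in your first paragraph. To repair your version you would need to construct the analogous dominating function for the regularized dissipation density (feasible, using the same polynomial bound on $|\log f_n|$), at which point dominated convergence again applies and the Fatou/sandwich scaffolding becomes unnecessary; as written, though, the argument is incomplete and omits the decisive ingredient, namely Lemma \ref{lem:Stoch-Flow-Growth}.
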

\begin{proof}
Begin by fixing $n \in \N$.  Note that it suffices to verify identity \eqref{eq:entropy-identity} from the formal a priori bounds section.  For each $\epsilon>0$, we define the renormalization $\beta_{\epsilon}(x)=x\log(x+\epsilon)$.  Using Proposition \ref{prop:Weak_Is_Renormalized} and the fact that $f_{n}$ belongs to  $L^{\infty}(\Omega \times [0,T] \times \R^{2d})$ and $L^{2}(\Omega ; \CL{1})$, it can be checked with a truncation argument that $\beta_{\epsilon}(f_{n})$ is a weak solution to the stochastic kinetic equation driven by $\beta_{\epsilon}'(f_{n})\mathcal{B}_{n}(f_{n},f_{n})$, starting from $\beta_{\epsilon}(f_{0}^{n})$. In particular, using the $L^1$ bounds on $f^n$ and the fact that $\BCol(f_n,f_n) \in L^1_{t,x,x}$, we can obtain the $\p$ almost sure identity
\begin{equation}\label{eq:renorm-approx-entropy}
\iint_{\R^{2d}}\beta_{\epsilon}(f_{n}(t))\dx \dv = \iint_{\R^{2d}}\beta_{\epsilon}(f_{n}^{0})\dx \dv+\int_{0}^{t}\iint_{\R^{2d}}\beta_{\epsilon}'(f_{n})\mathcal{B}_{n}(f_{n},f_{n})\dx \dv \ds.
\end{equation}
Observe that almost everywhere in $\Omega \times [0,T] \times \R^{2d}$, as $\epsilon \to 0$ we have the convergence $\beta_{\epsilon}(f_{n}(t)) \to f_{n}\log f_{n}(t)$ and $\beta_{\epsilon}'(f_{n})\mathcal{B}_{n}(f_{n},f_{n}) \to [1+\log f_{n} ]\mathcal{B}_{n}(f_{n},f_{n})$.  Since $f_{n}$ is in $L^{\infty}(\Omega \times [0,T] \times \R^{2d})$ and in $L^{2}(\Omega ; \CLL{1})$, it follows that $\p$ almost surely, for each $t \in [0,T]$
\begin{equation} 
\iint_{\R^{2d}}\beta_{\epsilon}(f_{n}(t))\dx \dv \to \iint_{\R^{2d}}f_{n}(t)\log f_{n}(t)\dx \dv.
\end{equation}
The initial data are also handled similarly in view of Hypothesis \ref{hyp:regularized-idata}.  To pass the limit in the remaining integral on the RHS of \eqref{eq:renorm-approx-entropy}, note that $|\beta_{\epsilon}'(x)|\leq(2+|\log(x)|)$ for $\epsilon$ small. Hence, by the dominated convergence theorem, it suffices to show that $\log f_{n}\mathcal{B}_{n}(f_{n},f_{n})$ belongs to $\LLLs{1}$ with probability one. By Proposition \ref{prop:exist-approx-scheme} combined with Hypothesis \ref{hyp:regularized-idata} we have 
\begin{equation}\label{eq:upper-lower-bound-f_n}
C_{n}e^{-\overline{C}_{n}t}e^{-|\Psi_{0,t}^{n}|^{2}}\leq f_{n} \leq \|f_{n}\|_{L^{\infty}(\Omega \times [0,T] \times \R^{2d})}.
\end{equation}
The second estimate on $\Phi^n_{0,t}$ given in Lemma \ref{lem:Stoch-Flow-Growth}, implies that $\P$ almost surely we have the bound,
\begin{equation}
  \sup_{(t,x,v) \in [0,T]\times \R^{2d}} \frac{|\Psi^n_{0,t}(x,v)|}{(1+|x| + |v|)^2} < \infty
\end{equation}
Combining this with the bounds in (\ref{eq:upper-lower-bound-f_n}) it follows that $\p$ almost surely
\begin{equation}
\sup_{(t,x,v) \in [0,T]\times \R^{2d}} \frac{|\log f_{n}(t,x,v)|}{(1+|x|^{2}+|v|^{2})^{2}}<\infty.
\end{equation}
Using this, the $\P$ almost sure $L^1_{t,x,v}$ estimate on $\log f_{n}\mathcal{B}_{n}(f_{n},f_{n})$ now follows from property $3$ of Lemma \ref{lem:Trunc-Collision-Props} and the fact that $(1+|x|^{2}+|v|^{2})^{2}f_{n} \in \LLL{\infty}{1}$ with probability one.
\end{proof}

\section{Compactness and Preliminary Renormalization}\label{sec:Compactness-and-Prelim-Renorm}
Let $\{\tilde{f}_{n}\}_{n \in \N}$ be the sequence of renormalized weak martingale solutions to \eqref{eq:Approx-Boltzmann} constructed in Proposition \ref{prop:exist-approx-scheme}.  Denote the supporting stochastic basis by $(\tilde{\Omega}, \tilde{\mathcal{F}}, \tilde{\p}, (\tilde{\mathcal{F}}_{t})_{t=0}^{T}, \{ \tilde{\beta}_{k}\}_{k \in \N})$.  In view of Proposition \ref{prop:exist-approx-scheme} and Lemma \ref{lem:entropy-Bounds}, we have the uniform bounds 
\begin{equation}\label{eq:AveragedBoundsOnApprox}
\begin{split}
&\sup_{n \in \N} \tilde{\E} \|(1+|x|^{2}+|v|^{2}+|\log \tilde{f}_{n}|)\tilde{f}_{n}\|_{\LLL{\infty}{1}}^{p}< \infty \\
&\sup_{n \in \N}\tilde{\E} \|\mathcal{D}_{n}(\tilde{f}_{n})\|_{\LLt{1}}^{p}<\infty.
\end{split}
\end{equation}
In this section, we will deduce several key tightness results and apply our main stochastic velocity averaging Lemma \ref{lem:L1-Velocity-Averaging}. We will study the induced laws of the approximations $\{\tilde{f}_{n}\}_{n \in \N}$, the renormalized approximations $\{\Renorm(\tilde{f}_{n})\}_{n \in \N}$, and renormalized collision operators $\{\Renorm'(f_{n})\mathcal{B}_{n}(\tilde{f}_{n},\tilde{f}_{n})\}_{n \in \N}$. The precise results are stated in Lemmas \ref{lem:renorm-collision-tightness-bounds}-\ref{lem:renorm-approx-tightness-f}.  Combining our tightness result with a recent extension of the Skorohod Theorem \ref{Thm:Appendix:Jakubowksi_Skorohod} to non-metric spaces, we will obtain our main compactness result Proposition \ref{prop:Grand-Skorohod}.  

Towards this end, we introduce for each $m \in \N$ a truncation type renormalization $\Renorm_{m}$ defined by
\begin{align} \label{eq:TruncationDef}
\Renorm_{m}(z)=\frac{z}{1+m^{-1}z}.
\end{align}

\subsection{The space $\LLsMw{1}$}
In order the apply the velocity averaging results we will find it convenient to turn the tightness results on velocity averages of $f$ of proved in Section \ref{sec:Stoch-Velocity-Avg} into tightness results for $f$ on a particular space $\LLsMw{p}$ characterizing `convergence in the sense of velocity averages'. To be more precise, we introduce a topological vector space $\LLsMw{p}$ as follows. Let $\Meas_{v}$ denote the space of finite Radon measures on $\R^{d}_v$, which can be identified with the dual of the continuous functions $C_0(\Rd)$ that vanish at $\infty$, and let $\Meas_v^*$ be $\Meas_v$ equipped with it's weak star topology. Consider the collection of equivalence classes (up to Lebesgue $[0,T]\times\Rd_x$ null sets) of measurable maps $f: [0,T] \times \R^{d}_x \to \Meas_{v}^*$, where the Borel sigma algebra is taken on $\Meas_v^*$. For each equivalence class $f$, and $\phi\in C_0(\Rd)$ we let $\langle f,\phi\rangle$ denote the pair between $\Meas_v$ and $C_0(\Rd)$ and for each $\phi \in C_{0}(\R^{d})$, define a corresponding semi-norm $\nu_{\phi}$ via
\begin{equation}
  \nu_{\phi}(f) = \|\langle f,\phi \rangle\|_{L^{p}_{t,x}}.
\end{equation}
We then say that $f$ is in $\LLsMw{p}$ provided that for all $\phi \in C_{0}(\R^{d})$, $\nu_\phi(f) < \infty$. Convergence in the space $\LLsMw{p}$ can be thought of as {\it strong} in the variables $(t,x)$ and ${\it weak}$ in the velocity variable $v$. The space $\LLsMw{p}$ can be identified with $\mathcal{L}(C_0(\Rd),L^p_{t,x})$ the space of bounded linear operators from $C_0(\Rd)$ to $L^1_{t,x}$ under the topology of pointwise convergence (see Lemma \ref{lem:representation-pettis}).

We will also define the space $[\LLsMw{p}]_{\loc}$ of locally integrable functions which is the space of equivalence classes of measurable functions $f:[0,T]\times\Rd \to \mathcal{M}_v$ generated by the semi-norms,
\begin{equation}
\nu_{\phi,K}(f) = \|\langle f,\phi\rangle \1_{K}\|_{L^p_{t,x}}  
\end{equation}
for each $\phi \in C_0(\Rd)$ and each compact set $K\subseteq \Rd$. Again such a space has an identification with $\mathcal{L}(C_0(\Rd),[L^p_{t,x}]_\loc)$.

Our main tool for obtaining compactness in the space $\LLsMw{p}$ are Lemmas \ref{lem:compact-char-vel-avg} and \ref{lem:tightness-crit-vel-averaged-space}, which give necessary and sufficient conditions for compactness and tightness of measure on $\LLsMw{p}$.

\subsection{Statement of the main proposition}
The main result of this section is the following compactness result.
\begin{prop}\label{prop:Grand-Skorohod}
There exists a new probability space $(\Omega, \mathcal{F},\p)$ and a sequence of maps $\{\widetilde{T}_{n}\}_{n \in \N}$ from $\Omega$ to $\tilde{\Omega}$ with the following properties:
\begin{enumerate}
\item For each $n \in \N$, the map $\tilde{T}_{n}$ is measurable from $(\Omega, \mathcal{F})$ to $(\tilde{\Omega}, \tilde{\mathcal{F}})$ and $(\tilde{T}_{n})_{\#}\p=\tilde{\p}$.
\item The new sequence $\{f_{n}\}_{n \in \N}$ defined by $f_{n}=\tilde{f}_{n} \circ \widetilde{T}_{n}$ satisfies the uniform bounds \eqref{eq:AveragedBoundsOnApprox} with $\E$ replacing $\tilde{\E}$.  Moreover, for all $\omega \in \Omega$, there exists a constant $C(\omega)$ such that
\begin{equation}
\begin{split}
&\sup_{n \in \N}\|(1+|x|^{2}+|v|^{2}+|\log f_{n}(\omega)|)f_{n}(\omega)\|_{\LLL{\infty}{1}}\leq C(\omega). \\
&\sup_{n \in \N}\|\mathcal{D}_{n}(f_{n})(\omega)\|_{\LLt{1}}\leq C(\omega).
\end{split}
\end{equation}
\item The new sequence $\{\beta_{k}^{n}\}_{k \in \N}$ defined by $\beta_{k}^{n}=\tilde{\beta}_{k}^{n}\circ \widetilde{T}_{n}$ consists of one-dimensional Brownian motions on $(\Omega, \F, \p)$.
\item There exist random variables $f$ and $\{\beta_{k}\}_{k \in \N}$ with values in $\CLLw{1}$ and $[C_{t}]^{\infty}$ respectively, such that the following convergences hold pointwise on $\Omega$:
\begin{align}
f_{n} \to f \quad &\text{in} \quad \LLsMw{1} \cap \CLLw{1}.\\
\{\beta_{k}^{n}\}_{k \in \N} \to \{\beta_{k}\}_{k \in \N} \quad &\text{in} \quad [C_{t}]^{\infty}. 
\end{align}
\item For each $m \in \N$, there exist auxiliary random variables $\overline{{\Renorm}_{m}(f)}$ and $\overline{\gamma_{m}(f)}$ in $\CLLw{1}$ along with $\mathcal{B}_{m}^{-}$ and $\mathcal{B}_{m}^{+}$ in $\LLLs{1}$ and $\overline{\SDis^0(f)}$ in $\mathcal{M}_{t,x,v}$ such that the following convergences hold pointwise on $\Omega$:
\begin{align}
\Renorm_{m}(f_{n}) \to \overline{{\Renorm}_{m}(f)} \quad &\text{in} \quad \LLsMw{1}\cap\CLLw{1}.\\
\Renorm_{m}'(f_{n})f_{n} \to \overline{\gamma_{m}(f)} \quad &\text{in} \quad \LLsMw{1}\cap\CLLw{1}.\\
\Renorm_{m}'(f_{n})\mathcal{B}_{n}^{+}(f_{n},f_{n}) \to \mathcal{B}_{m}^{+} \quad &\text{in} \quad [\LLLs{1}]_{w}. \\
\Renorm_{m}'(f_{n})\mathcal{B}_{n}^{-}(f_{n},f_{n}) \to \mathcal{B}_{m}^{-} \quad &\text{in} \quad [\LLLs{1}]_{w}\\
\SDis^0_n(f_n) \to \overline{\SDis^0(f)} \quad &\text{in} \quad \mathcal{M}_{t,x,v}^*.
\end{align}
\end{enumerate}
\end{prop}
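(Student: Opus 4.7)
The strategy is to verify joint tightness of the laws induced by the large vector of sequences
$(\tilde{f}_{n}, \{\Renorm_{m}(\tilde{f}_{n})\}_{m}, \{\Renorm_{m}'(\tilde{f}_n)\tilde{f}_n\}_{m}, \{\Renorm_{m}'(\tilde{f}_n)\mathcal{B}_{n}^{\pm}(\tilde{f}_n,\tilde{f}_n)\}_{m}, \SDis_{n}^{0}(\tilde{f}_n), \{\tilde{\beta}_{k}\}_{k})$ on a suitable countable product of topological spaces, and then apply the non-metric generalization of Skorohod's theorem of Jakubowski and Van der Vaart--Wellner to realize them on a common probability space with pointwise convergence. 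For the first factor, tightness of $\{\tilde{f}_n\}$ in $\CLLw{1}$ follows from an Arzel\`a--Ascoli argument: the moment bound \eqref{eq:AveragedBoundsOnApprox} with weight $1+|x|^2+|v|^2$ produces, via Dunford--Pettis, weak compactness of the time slices in $\LLs{1}$ uniformly in $(t,n)$, while Lemma \ref{lem:time-regularity} supplies equicontinuity of $t \mapsto \langle \tilde{f}_n(t), \varphi \rangle$ for each $\varphi \in C^\infty_c(\R^{2d})$. For tightness in $\LLsMw{1}$, I would apply the stochastic velocity averaging Lemma \ref{lem:L1-Velocity-Averaging} to the source $g_n = \mathcal{B}_n(\tilde{f}_n, \tilde{f}_n)$: Hypothesis \ref{hyp:Stoch-Vel-Average-Fixed-n_hyp} is ensured by Proposition \ref{prop:exist-approx-scheme} and Hypothesis \ref{hyp:regularized-noise-coeff}, the uniform $L^1$ bound on $\{g_n\}$ and its weak tightness on $[\LLLs{1}]_{w,\loc}$ come from Arkeryd's inequality \eqref{eq:AckerydBd} combined with the entropy dissipation bound in \eqref{eq:AveragedBoundsOnApprox}, and the $|x|^2$-moment bound provides the tail estimate required in part 2 of Lemma \ref{lem:L1-Velocity-Averaging} to upgrade tightness to the global space $\LLsMw{1}$ rather than its local version.

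For the auxiliary sequences I would proceed analogously. Since the regularity of $\sigma^n$ imposed in Hypothesis \ref{hyp:regularized-noise-coeff} together with the $L^\infty$ bound on $\tilde{f}_n$ verify the assumptions of Proposition \ref{prop:Weak_Is_Renormalized}, each $\Renorm_m(\tilde{f}_n)$ is itself a weak martingale solution driven by $\Renorm_m'(\tilde{f}_n)\mathcal{B}_n(\tilde{f}_n, \tilde{f}_n)$, and the same velocity-averaging and time-equicontinuity argument yields tightness of $\{\Renorm_m(\tilde{f}_n)\}_{n}$ and $\{\Renorm_m'(\tilde{f}_n)\tilde{f}_n\}_{n}$ in $\LLsMw{1} \cap \CLLw{1}$ for each fixed $m$. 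For the collision operators $\Renorm_m'(\tilde{f}_n)\mathcal{B}_n^{\pm}(\tilde{f}_n,\tilde{f}_n)$, the inequality $|\Renorm_m'(z)| \leqs (1+z)^{-1}$ combined with the renormalized collision bound \eqref{eq:renorm-coll-bound} and \eqref{eq:AveragedBoundsOnApprox} furnishes uniform integrability on $\LLLs{1}$, while the $(x,v)$-moment bound supplies the tail estimate needed for Dunford--Pettis to yield tightness on $[\LLLs{1}]_w$ (not merely its local version). The entropy dissipation $\SDis_n^0(\tilde{f}_n)$ is a uniformly $L^1$-bounded positive sequence, hence tight on $\mathcal{M}_{t,x,v}^*$, and the Brownian motions $\{\tilde{\beta}_k\}$ induce a fixed, trivially tight, law on $[C_t]^\infty$.

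Once joint tightness is established on the countable product, the Jakubowski--Skorohod theorem produces $(\Omega, \mathcal{F}, \P)$, measurable maps $\widetilde{T}_n : \Omega \to \tilde{\Omega}$ preserving the full joint law, and almost sure limits in the respective topologies. The averaged uniform bounds in item 2 transfer by the pushforward identity $(\widetilde{T}_n)_{\#}\P = \tilde{\P}$, and the pointwise-in-$\omega$ bounds follow by passing to an almost surely convergent subsequence of moment and entropy norms and applying Fatou's lemma. The main obstacle, in my view, is not any single convergence step but rather verifying that the target spaces -- particularly $\LLsMw{1}$ and $\CLLw{1}$ -- fit into the framework of Jakubowski's extension by admitting a countable family of continuous real-valued functionals separating points. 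For $\LLsMw{1}$ this should follow from its identification with $\mathcal{L}(C_0(\R^d), L^1_{t,x})$ under the topology of pointwise convergence, with duality pairings against a countable dense family in $C_0(\R^d) \otimes L^\infty_{t,x}$ providing the required separating functionals; an analogous countable family can be constructed for $\CLLw{1}$ using test functions in $C_c^\infty(\R^{2d})$ and a countable dense set of times. A secondary subtlety, easily handled, is packaging the infinitely many indices $m \in \N$ into a single tight sequence on a countable product, which requires only that each individual factor be tight.
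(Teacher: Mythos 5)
Your high-level strategy — collect the relevant sequences into a single vector of random variables, prove joint tightness on a countable product of Jakubowski spaces, apply Theorem \ref{Thm:Appendix:Jakubowksi_Skorohod}, and transfer the bounds via the pushforward identity — matches the paper's, and your remarks on the representation $\LLsMw{1} \cong \mathcal{L}(C_0(\R^d), L^1_{t,x})$ and on packaging the $m$-indexed family into a countable product are on target. However, two of your key steps do not go through as stated, and in both places the paper takes a materially different route.

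The more serious gap concerns tightness of $\{\tilde{f}_n\}$ itself. You propose to apply the $L^1$ velocity averaging Lemma \ref{lem:L1-Velocity-Averaging} directly with source $g_n = \BCol_n(\tilde{f}_n,\tilde{f}_n)$, asserting that Arkeryd's inequality \eqref{eq:AckerydBd} together with the entropy dissipation bound gives a uniform $L^1(\tilde\Omega\times[0,T]\times\R^{2d})$ bound on $\{g_n\}$. This is false. Arkeryd's inequality controls $\BCol_n^+$ in terms of $\BCol_n^-$ and $\SDis^0_n$, but $\BCol_n^-$ itself is not uniformly $L^1$-controlled by the a priori bounds: pointwise in $(\omega,t)$ one has
\begin{equation}
\iint_{\R^{2d}} \BCol_n^-(\tilde f_n,\tilde f_n)\,\dv\dx \;\leq\; \|\bar b_n\|_{L^\infty_v}\int_{\R^d_x}\frac{\langle \tilde f_n\rangle^2}{1+n^{-1}\langle\tilde f_n\rangle}\,\dx,
\end{equation}
and the right-hand side grows like $n\,\|\tilde f_n\|_{\LLs{1}}$ when $\langle\tilde f_n\rangle$ is large (indeed the Lipschitz constant $C_n$ in Lemma \ref{lem:Trunc-Collision-Props} is genuinely $n$-dependent). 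A uniform $L^1$ bound is recovered only after renormalization, since $\Gamma_m'(\tilde f_n)\lesssim (1+\tilde f_n)^{-1}$ cancels one power of $\tilde f_n$ and leaves $\bar b_n * \tilde f_n$, which is controlled by Young's inequality. This is exactly why the paper first establishes tightness of $\{\Renorm_m(\tilde f_n)\}_n$ in $\LLsMw{1}\cap\CLLw{1}$ (Lemma \ref{lem:renorm-approx-tightness}, whose hypotheses are verified because $\Renorm_m'(\tilde f_n)\BCol_n(\tilde f_n,\tilde f_n)$ \emph{is} uniformly $L^1$-bounded by Lemma \ref{lem:renorm-collision-tightness-bounds}), and then transfers to $\{\tilde f_n\}_n$ via the strong approximation $\|\tilde f_n - \Renorm_m(\tilde f_n)\|$ controlled by the entropy bound, combined with the decomposition tightness criterion Lemma \ref{lem:tightness-decomp}. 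The same issue afflicts your appeal to Lemma \ref{lem:time-regularity} for equicontinuity: that lemma requires a uniform bound on $g_n$ in $L^q_t(L^1_{x,v})$, which again is unavailable before renormalization.

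The second gap is the derivation of the pointwise-in-$\omega$ bounds in Part 2 of the Proposition. You propose to obtain $\sup_{n}\|(1+|x|^2+|v|^2+|\log f_n(\omega)|)f_n(\omega)\|_{\LLL{\infty}{1}} \leq C(\omega)$ ``by passing to an almost surely convergent subsequence of moment and entropy norms and applying Fatou's lemma.'' Fatou only controls the $\liminf$, not the supremum over $n$, and uniform moment bounds $\E\|X_n\|^p\leq C_p$ do not in general imply $\sup_n\|X_n(\omega)\| <\infty$ almost surely (consider an i.i.d. sequence with exponential tails: all moments are uniformly bounded but the running supremum diverges almost surely). The paper sidesteps this entirely by including the weighted quantities $\tilde X_n = \big((1+|x|^2+|v|^2+|\log\tilde f_n|)\tilde f_n,\,\mathcal{D}^0_n(\tilde f_n)\big)$ as an explicit component of the Skorohod vector, viewing them in the weak-$\star$ dual $[L^1_t(C_0(\R^{2d}))]'_*\times\mathcal{M}^*_{t,x,v}$; tightness there is automatic from Banach--Alaoglu, and the pointwise-in-$\omega$ convergence furnished by Theorem \ref{Thm:Appendix:Jakubowksi_Skorohod} immediately implies pointwise boundedness of the dual norms by the uniform boundedness principle, which transfers back to $\LLL{\infty}{1}\times\LLLs{1}$ via the isometric embedding. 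Without this device, the pointwise bounds claimed in Part 2 — which are used repeatedly in Sections \ref{sec:Renorm-Collision-Analysis} and \ref{sec:Main-Proof} — would not be available.
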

\begin{remark}
For all $n \in \N$, $f_{n}$ is a weak martingale solution to the stochastic kinetic equation driven by $\BCol_n(f_n,f_n)$, starting from $f_0$, with noise coefficients $\sigma^n$. The supporting stochastic basis is given by $(\Omega, \mathcal{F}, \P, (\F_t^n)_{t=0}^T, \{\beta_k^n\}_{k\in\N})$, where the Brownian motions are given by $\beta_{k}^{n}=\tilde{\beta}_{k}^{n}\circ \widetilde{T}_{n}$ and $\mathcal{F}_{t}^{n}=\tilde{T}^{-1}_{n}\circ \tilde{\mathcal{F}}_{t}$.  
\end{remark}

\subsection{Tightness of renormalized quantities} \label{sec:Renorm-Tightness}
In this section, we study the compactness properties of the sequences $\{ \Renorm(\tilde{f}_{n})\}_{n \in \N}$ and \\$\{\Renorm'(\tilde{f}_{n})\mathcal{B}_{n}^{+}(\tilde{f}_{n},\tilde{f}_{n})\}_{n \in \N}$, where $\Gamma$ is a renormalization of a particular type.

\begin{definition}
 Let $\mathcal{R}^\prime$ denote the class of renormalizations $\Renorm \in C^{2}(\R_{+})$, such that $\Renorm(0)=0$ and 
\begin{equation}
\sup_{x \in \R_{+}}\left( |\Renorm(x)|+(1+x)|\Renorm'(x)|+|\Renorm''(x)| \right )<\infty.
\end{equation}
\end{definition}

\begin{lem}\label{lem:renorm-collision-tightness-bounds}
For each $\Gamma \in \mathcal{R}^\prime$, the sequences $\{\Renorm'(\tilde{f}_{n})\mathcal{B}^{-}_{n}(\tilde{f}_{n},\tilde{f}_{n})\}_{n \in \N}$ and $\{\Renorm'(\tilde{f}_{n})\mathcal{B}^{+}_{n}(\tilde{f}_{n},\tilde{f}_{n})\}_{n \in \N}$ are uniformly bounded in $L^{\infty-}(\tilde{\Omega};\LLLs{1})$.
\end{lem}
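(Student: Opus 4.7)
The plan is to prove both bounds by pointwise (in $\omega,t,x,v$) algebraic manipulations that convert everything into quantities already controlled by the uniform bounds \eqref{eq:AveragedBoundsOnApprox} on mass and entropy dissipation. The key structural properties I will exploit are: (i) the defining bound $(1+x)|\Renorm'(x)|\leq C$ for $\Renorm\in\mathcal{R}'$, which lets us absorb one factor of $\tilde{f}_{n}$ into $\Renorm'(\tilde{f}_{n})$; and (ii) the Arkeryd inequality \eqref{eq:AckerydBd} adapted to the truncated kernel $b_{n}$, which trades the gain term for the loss term plus a small multiple of the entropy dissipation.

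For the loss term, I would first observe that since $\BCol_{n}^{-}(\tilde{f}_{n},\tilde{f}_{n})=(1+n^{-1}\langle \tilde{f}_{n}\rangle)^{-1}\,\tilde{f}_{n}\,(\bar{b}_{n}*\tilde{f}_{n})$, the pointwise inequality
\begin{equation*}
|\Renorm'(\tilde{f}_{n})|\,\BCol_{n}^{-}(\tilde{f}_{n},\tilde{f}_{n}) \;\leq\; \frac{(1+\tilde{f}_{n})|\Renorm'(\tilde{f}_{n})|\,\tilde{f}_{n}}{(1+\tilde{f}_{n})(1+n^{-1}\langle \tilde{f}_{n}\rangle)}\,(\bar{b}_{n}*\tilde{f}_{n}) \;\leq\; C\,(\bar{b}_{n}*\tilde{f}_{n})
\end{equation*}
follows directly. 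Young's convolution inequality in $v$ then gives $\|\Renorm'(\tilde{f}_{n})\BCol_{n}^{-}(\tilde{f}_{n},\tilde{f}_{n})\|_{\LLs{1}}\leq C\|\bar{b}_{n}\|_{L^{1}_{v}}\|\tilde{f}_{n}\|_{\LLs{1}}$, which after integrating in $t$ and taking $L^{p}(\tilde\Omega)$ norms is controlled by the uniform $L^{p}(\tilde\Omega;\LLL{\infty}{1})$ bound from \eqref{eq:AveragedBoundsOnApprox}, together with the uniform $L^{1}$ bound on $b_{n}$ provided by Hypothesis \ref{hyp:regularized-collision}.

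For the gain term, I apply Arkeryd's inequality with a fixed $K>1$, which (after dividing through by $(1+n^{-1}\langle \tilde{f}_{n}\rangle)$) produces the pointwise bound $\BCol_{n}^{+}(\tilde{f}_{n},\tilde{f}_{n})\leq K\,\BCol_{n}^{-}(\tilde{f}_{n},\tilde{f}_{n})+(\log K)^{-1}\SDis_{n}^{0}(\tilde{f}_{n})$. Multiplying by $|\Renorm'(\tilde{f}_{n})|$ and using again $(1+x)|\Renorm'(x)|\leq C$ (so in particular $|\Renorm'|\leq C$), I obtain
\begin{equation*}
|\Renorm'(\tilde{f}_{n})|\,\BCol_{n}^{+}(\tilde{f}_{n},\tilde{f}_{n}) \;\leq\; K\,|\Renorm'(\tilde{f}_{n})|\,\BCol_{n}^{-}(\tilde{f}_{n},\tilde{f}_{n}) \,+\, \tfrac{C}{\log K}\,\SDis_{n}^{0}(\tilde{f}_{n}).
\end{equation*}
Integrating over $[0,T]\times\R^{2d}$ and recalling that $\SDis_{n}(\tilde{f}_{n})=\int_{\R^{d}}\SDis_{n}^{0}(\tilde{f}_{n})\dv$, the first term is handled by the loss estimate from the previous step, and the second by the uniform dissipation bound in \eqref{eq:AveragedBoundsOnApprox}. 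Taking $L^{p}(\tilde\Omega)$ norms closes the estimate.

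There is no real obstacle here: the mild point of care is to verify that both pointwise bounds survive the presence of the truncation factor $(1+n^{-1}\langle \tilde{f}_{n}\rangle)^{-1}\leq 1$ uniformly in $n$, and to note that the choice of $K$ can be taken to be any fixed constant greater than one (it is not optimised; the whole argument is purely a uniform bound, not a smallness statement). The class $\mathcal{R}'$ is tailor-made so that $\Renorm'$ supplies exactly the one power of $(1+\tilde{f}_{n})^{-1}$ needed to tame the quadratic nonlinearity in $\BCol_{n}^{\pm}$, which is precisely why these renormalizations are the natural ones to study for the collision operator.
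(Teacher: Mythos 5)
Your proof is correct and takes essentially the same route as the paper: bounding the loss term pointwise by $C\,(\bar b_n*\tilde f_n)$ via the $(1+x)|\Renorm'(x)|\leq C$ property and Young's inequality, then controlling the gain term via Arkeryd's inequality \eqref{eq:AckerydBd} applied pointwise to the truncated operator, reducing it to the loss estimate plus the uniform dissipation bound. The only cosmetic difference is that you spell out the algebra $|\Renorm'(\tilde f_n)|\tilde f_n\leq C\tilde f_n/(1+\tilde f_n)\leq C$ where the paper abbreviates this to $\Renorm'(\tilde f_n)\BCol_n^-\lesssim\BCol_n^-/(1+\tilde f_n)$; the content is identical.
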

\begin{proof}
Let us begin with an estimate for $\{\Renorm'(\tilde{f}_{n})\mathcal{B}^{-}_{n}(\tilde{f}_{n},\tilde{f}_{n})\}_{n \in \N}$.  Since $\Gamma \in \mathcal{R}^\prime$, the mapping \\ $x \to (1+x)|\Gamma'(x)|$ is bounded on $\R_{+}$  Therefore, the following inequalities hold on $\tilde{\Omega} \times [0,T] \times \R^{2d}$
\begin{equation}
\Renorm'(\tilde{f}_{n})\BCol_{n}^{-}(\tilde{f}_{n},\tilde{f}_{n}) \leqs \frac{\BCol_{n}^{-}(\tilde{f}_{n},\tilde{f}_{n})}{1+f_{n}} \leqs \tilde{f}_{n}* \overline{b}_{n},
\end{equation}
where the convolution is only in the variable $v$. Recall, by Hypothesis \ref{hyp:regularized-collision}, the sequence $\{\overline{b}_{n}\}_{n \in \N}$ is uniformly bounded in $L^{1}(\R^{d}_{v})$. Integrating over $\tilde{\Omega} \times [0,T] \times \R^{2d}$ and applying Young's inequality for convolutions yields for each $p\in [1,\infty)$
\begin{equation}\label{eq:NegPartBounds}
\sup_{n \in \N} \tilde{\E} \|\Renorm'(\tilde{f}_{n})\mathcal{B}^{-}_{n}(\tilde{f}_{n},\tilde{f}_{n})\|^p_{\LLLs{1}}\leqs \sup_{n \in \N}\tilde{\E} \|\tilde{f}_{n}\|^p_{\LLLs{1}}.
\end{equation}
Now we can estimate $\{\Renorm'(\tilde{f}_{n})\mathcal{B}^{+}_{n}(\tilde{f}_{n},\tilde{f}_{n})\}_{n \in \N}$ by applying the bound \eqref{eq:AckerydBd} pointwise in $\tilde{\Omega} \times [0,T] \times \R^{2d}$ (to the truncated collision operator $\BCol_n(\tilde{f}_n,\tilde{f}_n)$ instead of $\BCol(f,f)$), then integrating in all variables to find
\begin{equation}\label{eq:PosPartBounds}
\begin{aligned}
\sup_{n \in \N} \tilde{\E} \|\Renorm'(\tilde{f}_{n})\mathcal{B}^{+}_{n}(\tilde{f}_{n},\tilde{f}_{n})\|^p_{\LLLs{1}}
&\leqs \sup_{n \in \N} \tilde{\E} \|\Renorm'(\tilde{f}_{n})\mathcal{B}^{-}_{n}(\tilde{f}_{n},\tilde{f}_{n})\|^p_{\LLLs{1}} + \sup_{n \in \N} \tilde{\E}\|\mathcal{D}_{n}(\tilde{f}_{n}) \|^p_{L^{1}_{t,x}}\\
&\leqs \sup_{n \in \N}\tilde{\E} \|\tilde{f}_{n}\|^p_{\LLLs{1}} + \sup_{n \in \N} \tilde{\E}\|\mathcal{D}_{n}(\tilde{f}_{n}) \|^p_{L^1_{t,x}},
\end{aligned}
\end{equation}
where we used \eqref{eq:NegPartBounds} in the last line. In view of inequalities \eqref{eq:NegPartBounds} and \eqref{eq:PosPartBounds}, the Proposition now follows from the uniform bounds \eqref{eq:AveragedBoundsOnApprox}.
\end{proof}

\begin{lem}\label{lem:tightness:renorm-neg}
For each $\Renorm\in \mathcal{R}^\prime$, the sequence $\{\Renorm'(\tilde{f}_{n})\mathcal{B}^{-}_{n}(\tilde{f}_{n},\tilde{f}_{n})\}_{n \in \N}$ induces tight laws on $[\LLLs{1}]_{w}$.
\end{lem}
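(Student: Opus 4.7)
The plan is to reduce tightness on $[\LLLs{1}]_{w}$ to a probabilistic Dunford--Pettis type criterion, namely $L^{1}$-boundedness in probability, probabilistic uniform integrability, and probabilistic concentration on compact subsets of $[0,T]\times \R^{2d}$. This is precisely the characterization provided by Lemma \ref{lem:L1-tightness-crit}. Since the three properties are all inherited under pointwise domination on $\tilde{\Omega}\times[0,T]\times\R^{2d}$, I work with the same pointwise upper bound used in the proof of Lemma \ref{lem:renorm-collision-tightness-bounds},
\begin{equation*}
\Renorm'(\tilde{f}_{n})\mathcal{B}_{n}^{-}(\tilde{f}_{n},\tilde{f}_{n}) \leqs \tilde{f}_{n} * \overline{b}_{n},
\end{equation*}
where $*$ denotes convolution in the velocity variable only, and verify the three conditions for the dominating sequence $\{\tilde{f}_{n}*\overline{b}_{n}\}_{n \in \N}$.

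The uniform $L^{1}$ bound in probability is immediate from Lemma \ref{lem:renorm-collision-tightness-bounds}. For uniform integrability in probability, I combine the uniform $L\log L$ estimate in \eqref{eq:AveragedBoundsOnApprox} with de la Vall\'ee Poussin and Chebyshev to reduce matters to the following analytic statement: if $f \in \LLLs{1}$ is uniformly integrable and $\overline{b}\in L^{1}(\R^{d})$, then $f * \overline{b}$ is uniformly integrable in $\LLLs{1}$, with quantitative modulus controlled by $\|\overline{b}\|_{L^{1}}$. This is verified by Fubini and translation invariance: for a measurable $A \subset [0,T]\times \R^{2d}$, defining the translated set $A_{w}:=\{(t,x,v):(t,x,v+w)\in A\}$ gives $|A_{w}|=|A|$ and
\begin{equation*}
\int_{A} \tilde{f}_{n}*\overline{b}_{n}\, \dt\dx\dv \leq \int_{\R^{d}} \overline{b}_{n}(w)\Big(\int_{A_{w}} \tilde{f}_{n}\, \dt\dx\dv\Big) \dw,
\end{equation*}
which inherits the required smallness whenever $|A|$ is small, uniformly in $n$, thanks to the uniform $L^{1}$ bound on $\{\overline{b}_{n}\}_{n\in\N}$ from Hypothesis \ref{hyp:regularized-collision}.

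For concentration on compact subsets of $[0,T]\times\R^{2d}$, I use the moment estimate \eqref{eq:AveragedBoundsOnApprox} together with the uniform integrability of $\{\overline{b}_{n}\}_{n\in\N}$ in $L^{1}(\R^{d})$ (a consequence of $L^{1}$ convergence to $\overline{b}$). Since convolution acts only in velocity, the position integral is controlled directly,
\begin{equation*}
\int_{|x|>R}\tilde{f}_{n}*\overline{b}_{n}\, \dt\dx\dv \leq \|\overline{b}_{n}\|_{L^{1}_{v}}\int_{|x|>R}\tilde{f}_{n}\, \dt\dx\dv \leqs R^{-2}\,\||x|^{2}\tilde{f}_{n}\|_{\LLLs{1}},
\end{equation*}
while for the velocity tail I split the convolution according to $|w|\leq R/2$ versus $|w|>R/2$, applying the $|v|^{2}$-moment bound on the inner piece and the uniform $L^{1}$ tail bound on $\overline{b}_{n}$ on the outer piece. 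Taking expectations and applying Chebyshev transfers the bound from the averaged estimates to probabilistic smallness.

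The main obstacle is the book-keeping in transferring the analytic properties (uniform integrability and concentration) through the velocity convolution and through the probabilistic localization afforded by Chebyshev; no new a priori estimate beyond those collected in \eqref{eq:AveragedBoundsOnApprox} and no further compactness result is required.
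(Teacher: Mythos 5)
Your approach follows the same architecture as the paper's: dominate $\Gamma'(\tilde{f}_n)\mathcal{B}_n^-$ pointwise by $\tilde{f}_n * \overline{b}_n$, establish the three Dunford--Pettis conditions (boundedness, uniform integrability, concentration) deterministically from the moment and entropy bounds \eqref{eq:AveragedBoundsOnApprox}, and transfer to tightness of laws via Chebyshev. Where you genuinely depart is the derivation of uniform integrability: the paper cites DiPerna--Lions for the $\Psi$-estimate \eqref{eq:LogLBound} with $\Psi(t)=t|\log t|$ applied directly to $\Gamma'(\tilde{f}_n)\mathcal{B}_n^-$, whereas you give a self-contained Fubini/translation-invariance argument showing that convolution in $v$ against a kernel bounded in $L^1$ preserves the ``small sets give small mass'' form of uniform integrability, and then reduce to the uniform integrability of $\tilde{f}_n$ itself via de la Vall\'ee Poussin and the $L\log L$ bound. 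This is more elementary, avoids any external citation, and makes visible that only $\sup_n\|\overline{b}_n\|_{L^1}$ is used at that step. One inaccuracy to repair: Lemma \ref{lem:L1-tightness-crit} characterizes tightness only on the \emph{local} space $[\LLLs{1}]_{\mathrm{w},\loc}$ and through the uniform integrability condition alone; it does not encode the concentration condition needed for the global topology $[\LLLs{1}]_w$. Since you do prove the concentration estimate (via the $|x|^2+|v|^2$ moment and the $L^1$-tails of $\overline{b}_n$), the mathematical content is present, but you should either prove an extended criterion (adding the concentration clause to Lemma \ref{lem:L1-tightness-crit}) or assemble the weakly compact sets directly, as the paper does with the $\lambda(R)$ modulus, rather than attribute all three conditions to that lemma.
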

\begin{proof}
Effectively, we have to show that the renormalized collision sequence is bounded, uniformly integrable, and tight in $\LLLs{1}$, with uniformly high probability.  Towards this end, let $\Psi(t)=t|\log t|$. By well-known arguments (see Section 3 in \cite{diperna1989cauchy}), there exists a constant $C$ depending only on $\Gamma$ and $\|\overline{b}\|_{L^1_{v}}$ such that the following two inequalities hold.  Regarding uniform integrability,
\begin{equation} \label{eq:LogLBound}
\int_{0}^{T}\iint_{\R^{2d}}\Psi \left( \Renorm'(\tilde{f}_{n})\mathcal{B}^{-}_{n}(\tilde{f}_{n},\tilde{f}_{n}) \right )\dx \dv \ds \leq C \Big[ \|\tilde{f}_n\|_{\LLLs{1}}+ \int_{0}^{T}\iint_{\R^{2d}}\Psi(\tilde{f}_{n})\dx \dv \ds \Big].
\end{equation}
Moreover, regarding tightness (in $\LLLs{1}$), for all $R>0$
\begin{equation} \label{eq:tightnessEst}
\begin{split}
&\int_{0}^{T}\iint_{\R^{2d}}1_{\{|x|+|v|>R \}}\Renorm'(\tilde{f}_{n})\mathcal{B}^{-}_{n}(\tilde{f}_{n},\tilde{f}_{n}) \dx \dv \ds \\
&\hspace{.5in}\leq C \Big [\|\tilde{f}_n\|_{L^1_{t,x,v}}\int_{\R^{d}}1_{\{|v|>\frac{R}{2}\} }\overline{b}_n(v)\dv + R^{-2}\int_{0}^{T}\iint_{\R^{2d}}(|x|^{2}+|v|^{2})\tilde{f}_{n} \dx \dv \ds \Big].
\end{split}
\end{equation}
Define the function $\lambda: \R_+ \to \R_+$ by
\begin{equation}
  \lambda(R) = \max\left\{\sup_n\int_{\R^d}\1_{|v|>\frac{R}{2}}\bar{b}_n(v)\dv\,,\,\, R^{-2}\right\},
\end{equation}
and note that, by Hypothesis \ref{hyp:regularized-collision}, $\lambda(R) \to 0$ as $R \to \infty$. Combining \eqref{eq:LogLBound} and \eqref{eq:tightnessEst} with the uniform bounds on $\tilde{f}_n$,
\begin{align}
&\label{eq:RenormBmin-unif-int}\sup_{n \in \N} \tilde{\E} \Big ( \big \| \psi \left( \Renorm'(\tilde{f}_{n})\mathcal{B}^{-}_{n}(\tilde{f}_{n},\tilde{f}_{n}) \right ) \big \|_{\LLLs{1}} \Big )<\infty. \\
&\label{eq:RenormBmin-tight} \sup_{n \in \N} \tilde{\E} \bigg ( \sup_{R>0} \big [ \lambda(R)^{-1} \big \|1_{\{|x|+|v|>R \}}\Renorm'(\tilde{f}_{n})\mathcal{B}^{-}_{n}(\tilde{f}_{n},\tilde{f}_{n}) \big \|_{\LLLs{1}} \big ] \bigg ) < \infty.
\end{align}
To construct our compact sets, note that for all $M>0$, the set
\begin{equation}
\Big \{f \in \LLLs{1} \mid \|f\|_{\LLLs{1}}+\|\psi(f)\|_{\LLLs{1}}+\sup_{R>0}\big [ \lambda(R)^{-1} \|1_{\{|x|+|v|>R \}}f\|_{\LLLs{1}} \big ] \leq M \Big\}
\end{equation}
is weakly compact in $\LLLs{1}$.  Indeed, every sequence in this set is bounded, uniformly integrable, and tight in $\LLLs{1}$.  By Chebyshev, the uniform bounds (\ref{eq:RenormBmin-unif-int}), (\ref{eq:RenormBmin-tight}) and our previous Lemma \ref{lem:tightness:renorm-neg}, it follows that $\{\Renorm'(\tilde{f}_{n})\mathcal{B}^{-}_{n}(\tilde{f}_{n},\tilde{f}_{n})\}_{n \in \N}$ induces tight laws on $[\LLLs{1}]_{\mathrm{w}}$.
\end{proof}

\begin{lem}\label{lem:tightness:renorm-pos}
For each $\Renorm$, the sequence $\{\Renorm'(\tilde{f}_{n})\mathcal{B}^{+}_{n}(\tilde{f}_{n},\tilde{f}_{n})\}_{n \in \N}$ induces tight laws on $[\LLLs{1}]_{w}$.
\end{lem}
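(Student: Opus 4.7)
My strategy is to mimic the proof of Lemma \ref{lem:tightness:renorm-neg}, establishing tightness on $[\LLLs{1}]_{w}$ by constructing compact sets via a uniform $L^{1}$ bound, a uniform integrability modulus, and a tightness-at-infinity modulus; the novelty compared with the negative-part argument will be Arkeryd's inequality \eqref{eq:AckerydBd}. The first step is to observe that Arkeryd's inequality applies unchanged to the truncated operators since $\BCol_n^\pm$ and $\SDis_n^0$ share the common prefactor $(1+n^{-1}\langle f\rangle)^{-1}$ and $b_n$ appears identically on both sides; multiplying the resulting pointwise bound by $\Renorm'(\tilde{f}_n)$ (which is bounded by some $C_\Gamma$ since $\Gamma \in \mathcal{R}'$) and integrating against $\1_A$ for any measurable $A \subseteq [0,T]\times\R^{2d}$ yields the workhorse estimate
\begin{equation}\label{eq:Arkeryd-workhorse}
\int_{A} \Renorm'(\tilde{f}_n)\BCol_n^{+}(\tilde{f}_n,\tilde{f}_n) \leq K \int_{A} \Renorm'(\tilde{f}_n)\BCol_n^{-}(\tilde{f}_n,\tilde{f}_n) + \frac{C_\Gamma}{\log K}\|\SDis_n(\tilde{f}_n)\|_{L^1_{t,x}},
\end{equation}
valid for every $K>1$, where the final factor arises from the identity $\|\SDis_n^0(\tilde{f}_n)\|_{\LLLs{1}}=\|\SDis_n(\tilde{f}_n)\|_{L^1_{t,x}}$.

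To produce the compact sets, fix $\eta > 0$. First, by Lemma \ref{lem:tightness:renorm-neg} I can select a weakly compact $\mathcal{K}^-\subseteq \LLLs{1}$ with $\tilde{\p}(\Renorm'(\tilde{f}_n)\BCol_n^{-}(\tilde{f}_n,\tilde{f}_n) \in \mathcal{K}^-) \geq 1-\eta/2$ uniformly in $n$; by the Dunford--Pettis criterion $\mathcal{K}^-$ comes equipped with a uniform $L^1$ bound $B_0$, a common modulus of uniform integrability $\beta$ with $\beta(t)\to 0$ as $t \to 0$, and a common tightness-at-infinity modulus $\alpha$ with $\alpha(R) \to 0$ as $R \to \infty$. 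Second, by Chebyshev and \eqref{eq:AveragedBoundsOnApprox}, I can fix $M$ so that $\tilde{\p}(\|\SDis_n(\tilde{f}_n)\|_{L^1_{t,x}} \leq M) \geq 1-\eta/2$ uniformly. On the intersection of these two good events, which has probability at least $1-\eta$, specializing \eqref{eq:Arkeryd-workhorse} to $A=[0,T]\times\R^{2d}$, to $A=E$ with $|E|$ small, and to $A=\{|x|+|v|>R\}$ yields respectively a uniform $L^1$ bound, a uniform integrability modulus, and a tightness-at-infinity modulus for $\Renorm'(\tilde{f}_n)\BCol_n^{+}$. Optimizing $K$ in each instance (for example $K = \beta(|E|)^{-1/2}$ for the integrability modulus, producing $\tilde\beta(t) \leq \beta(t)^{1/2} + 2C_\Gamma M/|\log \beta(t)|$) gives moduli $\tilde\beta, \tilde\alpha$ that still vanish at the appropriate limits. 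The Dunford--Pettis criterion then exhibits a weakly compact set of $\LLLs{1}$ inhabited by $\Renorm'(\tilde{f}_n)\BCol_n^{+}(\tilde{f}_n,\tilde{f}_n)$ with probability at least $1-\eta$.

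The main obstacle is that $\{\SDis_n^0(\tilde{f}_n)\}_{n\in\N}$ carries no uniform integrability or tightness-at-infinity information and therefore does not induce tight laws on $[\LLLs{1}]_w$; one cannot simply combine a Dunford--Pettis compact set from Lemma \ref{lem:tightness:renorm-neg} with a compact set for $\SDis_n^0$. The value of Arkeryd's inequality is precisely that the problematic contribution enters with the small coefficient $1/\log K$, which can be driven to zero by enlarging $K$, while the enlarged factor $K$ multiplies only the already-tight term $\Renorm'(\tilde{f}_n)\BCol_n^{-}$. Balancing these two effects against each other is what allows the argument to close.
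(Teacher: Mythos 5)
Your proposal is correct and follows essentially the same route as the paper: Arkeryd's inequality \eqref{eq:AckerydBd} (applied to the truncated operators) transfers the problem to the negative part, whose tightness was just established, plus an entropy-dissipation term that enters with the small factor $1/\log K$ and is controlled only through its $L^1$ bound via Chebyshev. The paper packages the weakly compact set explicitly as an intersection $K_\epsilon = \bigcap_{j}K_{j,\epsilon}$ and sends $R\to\infty$ (or $|E|\to 0$) before $j\to\infty$, whereas you optimize $K$ directly to produce Dunford--Pettis moduli $\tilde\alpha,\tilde\beta$; these are the same argument in slightly different clothing.
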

\begin{proof} The main ingredient in the proof is a version of inequality (\ref{eq:AckerydBd}), which we state again in the precise form required. Specifically, for each $j > 1$ the following inequality holds pointwise a.e in $\Omega\times[0,T]\times\R^{2d}$,
  \begin{equation}\label{eq:B_+-B_--dis-bound}
    \Renorm'(\tilde{f}_{n})\mathcal{B}^{+}_{n}(\tilde{f}_{n},\tilde{f}_{n}) \leq j \Renorm'(f_{n})\mathcal{B}^{-}_{n}(\tilde{f}_{n},\tilde{f}_{n}) + \frac{1}{\log{j}}\SDis_n^0(\tilde{f}_n),
  \end{equation}
  where we recall that
  \begin{equation}
    \SDis_n^0(\tilde{f}_n) = \frac{1}{ 1+ n^{-1}\int_{\Rd} \tilde{f_n}\dv}\int_{\Rd} d_n(\tilde{f}_n)\dee v_*.
  \end{equation}
Let $\epsilon>0$.  By Lemma \ref{lem:tightness:renorm-neg}, we may select a weakly compact set $K_{\epsilon}^{-}$ in $\LLLs{1}$ such that
\begin{equation}
\sup_{n \in \N}\tilde{\p}\big ( \Renorm'(f_{n})\mathcal{B}^{-}_{n}(\tilde{f}_{n},\tilde{f}_{n}) \notin K_{\epsilon}^{-} \big ) \leq \frac{\epsilon}{2}.
\end{equation}
Moreover, in view of the uniform bound on the entropy dissipation (\ref{eq:AveragedBoundsOnApprox}), we can select a closed ball, $B_{M_\ep}$ of size $M
_\ep >0$ in $\LLLs{1}$ such that
\begin{equation}
\sup_{n \in \N} \tilde{\p}\big ( \mathcal{D}^0_{n}(\tilde{f}_{n}) \notin B_{M_\ep} \big ) \leq \frac{\epsilon}{2}.
\end{equation}
For each $j \in \N$, we define a set
\begin{equation}
K_{j,\epsilon}=\big \{f \in \LLLs{1} \mid \text{There exists }g \in K_{\epsilon}^{-}\text{ and }h \in B_{M_\ep} \text{ such that } f \leq j g+(\log j)^{-1} h  \big \}.
\end{equation}
The inequality in the definition of $K_{j,\ep}$ is understood to hold a.e. on $[0,T] \times \R^{2d}$.  
Next define the set $K_{\epsilon}$ via
\begin{equation}
K_{\epsilon} = \bigcap_{j \in \N} K_{j,\epsilon}.
\end{equation}
Note that if $\Gamma^\prime(\tilde{f}_n)\BCol_n^-(\tilde{f}_n,\tilde{f}_n) \in K_\ep^-$ and $\mathcal{D}^0_n(\tilde{f}_n) \in B_{M_\ep}$, then inequality (\ref{eq:B_+-B_--dis-bound}) implies that $\Gamma^\prime(\tilde{f}_n)\BCol_n^+(\tilde{f}_n,\tilde{f}_n)\in K_\ep$. It follows, by the contrapositive, that
\begin{equation}
\sup_{n \in \N}\tilde{\p} \big (\Renorm'(\tilde{f}_{n})\mathcal{B}^{+}_{n}(\tilde{f}_{n},\tilde{f}_{n}) \notin K_{\epsilon} \big ) \leq \sup_{n \in \N} \tilde{\p}\big (\Renorm'(\tilde{f}_{n})\mathcal{B}^{-}_{n}(\tilde{f}_{n},\tilde{f}_{n}) \notin K_{\epsilon}^{-} \big ) + \sup_{n \in \N}\tilde{\p}\big (\mathcal{D}^0_{n}(\tilde{f}_{n},\tilde{f}_{n}) \notin B_{M} \big ).
\end{equation}  
Since each term above is of order $\epsilon$, the proof of the Lemma will be complete if we verify that $K_{\epsilon}$ is a weakly compact subset of $\LLLs{1}$.  By classical compactness criteria, it suffices to verify the following:
\begin{align}
& \lim_{R \to \infty} \sup_{f \in K_{\epsilon}} \int_{0}^{T}\iint_{\R^{2d}} 1_{ \{|x|+|v|>R\}  } |f| \dx \dv \dt = 0. \label{eq:to-check-tightness}\\
&\lim_{\delta \to 0} \sup_{f \in K_{\epsilon}} \sup_{|E| \leq \delta} \int_{0}^{T}\iint_{\R^{2d}} 1_{E}|f| \dx \dv \dt = 0 \label{eq:to-check-unif-int},
\end{align}
where in (\ref{eq:to-check-unif-int}) the supremum is taken over all measurable $E \subseteq [0,T]\times\R^{2d}$ with Lebesgue measure $|E|<\delta$. To verify \eqref{eq:to-check-tightness} and  \eqref{eq:to-check-unif-int}, note that for all $j > 1$, by construction of $K_{\ep}$
\begin{equation}
\sup_{f \in K_{\epsilon}}\int_{0}^{T}\iint_{\R^{2d}} 1_{ \{|x|+|v|>R\}  } |f| \dx \dv \dt \leq j \sup_{g \in K_{\epsilon}^{-}}\int_{0}^{T}\iint_{\R^{2d}} 1_{ \{|x|+|v|>R\}  } |g| \dx \dv \dt+\frac{M_\ep}{\log j},
\end{equation}
and
  \begin{equation}
\sup_{f \in K_{\epsilon}}\sup_{|E|<\delta}\int_{0}^{T}\iint_{\R^{2d}} 1_{E} |f| \dx \dv \dt \leq j \sup_{g \in K_{\epsilon}^{-}}\sup_{|E|<\delta}\int_{0}^{T}\iint_{\R^{2d}} 1_{E} |g| \dx \dv \dt+\frac{M_\ep}{\log j}. 
\end{equation}
First taking $R\to \infty$ and using the $\LLLs{1}$ weak compactness of $K_{\epsilon}^{-}$ and then sending $j\to \infty$  yields \eqref{eq:to-check-tightness} and (\ref{eq:to-check-unif-int}).
\end{proof}

\begin{lem}\label{lem:renorm-approx-tightness}
For each $\Gamma \in \mathcal{R}^\prime$, the laws of $\{\Renorm(\tilde{f}_{n})\}_{n \in \N}$ are tight on $\CLLw{1} \cap \LLsMw{1}$.  
\end{lem}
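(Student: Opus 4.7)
The plan is to verify tightness separately in each factor of the product $\CLLw{1}\cap\LLsMw{1}$.

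\textbf{Tightness in $\LLsMw{1}$.} The key input will be the stochastic velocity averaging Lemma \ref{lem:L1-Velocity-Averaging}, applied to the sequence $\{\Gamma(\tilde{f}_n)\}_{n\in\N}$. Since $\Gamma \in \mathcal{R}^\prime$ is bounded with bounded first and second derivatives, Proposition \ref{prop:exist-approx-scheme} together with the renormalization machinery guarantees that $\Gamma(\tilde{f}_n)$ is a weak martingale solution to a stochastic kinetic equation driven by $g_n := \Gamma'(\tilde{f}_n)\mathcal{B}_n(\tilde{f}_n,\tilde{f}_n)$, starting from $\Gamma(\tilde{f}_n^0)$, relative to the regularized coefficients $\sigma^n$. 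Hypothesis \ref{hyp:Stoch-Vel-Average-Fixed-n_hyp} is then routine: using $|\Gamma(z)|\leq C|z|$ together with Proposition \ref{prop:exist-approx-scheme} and Hypothesis \ref{hyp:regularized-idata} gives $\Gamma(\tilde{f}_n) \in L^{\infty-}(\tilde{\Omega};\LLLs{1}\cap\LLLs{\infty})$ and uniform integrability of $\Gamma(\tilde{f}_n^0)$ in $\LLs{1}$; the noise-coefficient hypotheses are part of Hypothesis \ref{hyp:regularized-noise-coeff}. The uniform $L^1$ bound and tightness of $\{g_n\}$ on $[\LLLs{1}]_{\w}$ follow from Lemmas \ref{lem:renorm-collision-tightness-bounds}, \ref{lem:tightness:renorm-neg}, and \ref{lem:tightness:renorm-pos}. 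To apply part (2) of Lemma \ref{lem:L1-Velocity-Averaging}, I verify the tail decay: for any $\phi \in C_c^\infty(\R^d_v)$ and $\eta>0$, a Chebyshev estimate gives
\[
\tilde{\p}\Big(\|\langle \Gamma(\tilde{f}_n)\,\phi\rangle \1_{|x|>R}\|_{L^1_{t,x}}>\eta\Big) \leq \frac{C\|\phi\|_\infty}{R^2\eta}\,\tilde{\E}\|(1+|x|^2)\tilde{f}_n\|_{\LLL{1}{1}}\to 0
\]
as $R\to\infty$, uniformly in $n$, thanks to the uniform moment bound \eqref{eq:AveragedBoundsOnApprox}. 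This produces tightness of every velocity average $\{\langle\Gamma(\tilde{f}_n),\phi\rangle\}_{n\in\N}$ on $L^1_{t,x}$, and the characterization of tight sets in $\LLsMw{1}$ proved in the appendix (Lemma \ref{lem:tightness-crit-vel-averaged-space}) then upgrades this to tightness of $\{\Gamma(\tilde{f}_n)\}_{n\in\N}$ on $\LLsMw{1}$.

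\textbf{Tightness in $\CLLw{1}$.} I would use the standard Ascoli-type criterion for the weak topology on $L^1_{x,v}$: exhibit a weakly compact set $K\subset L^1_{x,v}$ containing all marginals $\Gamma(\tilde{f}_n)(t)$ with high probability, and establish equicontinuity (with probability close to one) of the scalar processes $t\mapsto \langle\Gamma(\tilde{f}_n)(t),\varphi_j\rangle$ for a countable dense family $\{\varphi_j\}\subset C_c^\infty(\R^{2d})$. The first requirement follows immediately from $\sup_n \tilde{\E}\|(1+|x|^2+|v|^2)\Gamma(\tilde{f}_n)\|_{\LLL{\infty}{1}}^p<\infty$ plus Chebyshev, which controls boundedness, uniform integrability, and spatial tightness in $L^1_{x,v}$ simultaneously. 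For the second, I would write the It\^o form \eqref{eq:weak-martingale-stochastic-integral} of the equation for $\Gamma(\tilde{f}_n)$, so that for $s<t$,
\[
\langle \Gamma(\tilde{f}_n)(t)-\Gamma(\tilde{f}_n)(s),\varphi_j\rangle = \int_s^t\iint_{\R^{2d}}\big[\Gamma(\tilde{f}_n)(v\cdot\nabla_x+\mathcal{L}_{\sigma^n})\varphi_j+\varphi_j g_n\big]\dx\dv\dr + M^{n,j}_t - M^{n,j}_s.
\]
The deterministic drift involving $(v\cdot\nabla_x+\mathcal{L}_{\sigma^n})\varphi_j$ is bounded by $C_{\varphi_j,\sigma}(t-s)\|\Gamma(\tilde{f}_n)\|_{\LLL{\infty}{1}}$. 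The driver contribution $\|\varphi_j\|_\infty\int_s^t\|g_n(r)\|_{\LLs{1}}\dr$ is controlled qualitatively: by Lemmas \ref{lem:tightness:renorm-neg} and \ref{lem:tightness:renorm-pos}, with probability at least $1-\ep$ the driver $g_n$ lies in a fixed weakly compact (hence uniformly integrable) subset of $\LLLs{1}$, on which the absolute-continuity modulus $\sup_{|t-s|<\delta}\int_s^t\|g_n(r)\|_{\LLs{1}}\dr$ vanishes as $\delta\to 0$ \emph{uniformly in $n$}. The stochastic term is handled via BDG exactly as in Lemma \ref{lem:time-regularity}: for any $p>2$,
\[
\tilde{\E}|M^{n,j}_t - M^{n,j}_s|^p \leq C_{\varphi_j,\sigma}|t-s|^{p/2}\tilde{\E}\|\Gamma(\tilde{f}_n)\|_{\LLL{\infty}{1}}^p,
\]
and choosing $p$ large enough, Kolmogorov's continuity criterion furnishes a uniform H\"older modulus in probability. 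A diagonal argument over $\{\varphi_j\}$ then yields tightness of $\{\Gamma(\tilde{f}_n)\}_{n\in\N}$ on $\CLLw{1}$.

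\textbf{Main obstacle.} The principal subtlety is that the driver $g_n$ has no better than $L^1([0,T])$ control on $r\mapsto\|g_n(r)\|_{\LLs{1}}$, so in contrast to the setting of Lemma \ref{lem:time-regularity} one cannot extract a fractional Sobolev modulus of continuity directly from the drift. The uniform equicontinuity in probability has to be extracted \emph{qualitatively}, by transferring the uniform absolute continuity enjoyed by elements of a weakly compact subset of $\LLLs{1}$ to the time integrals of $\|g_n(r)\|_{\LLs{1}}$. Interleaving this qualitative modulus with the quantitative BDG estimate on the stochastic term inside an Ascoli-type criterion is the point at which care is required.
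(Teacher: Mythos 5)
Your proposal is correct and follows essentially the same route as the paper: tightness on $\LLsMw{1}$ via the $L^{1}$ velocity averaging lemma (verifying Hypothesis \ref{hyp:Stoch-Vel-Average-Fixed-n_hyp} and the spatial tail decay by Chebyshev against the weighted moment bound), then tightness on $\CLLw{1}$ via the criterion of Lemma \ref{Lem:Appendix:TightCrit}, decomposing $\langle\Gamma(\tilde{f}_n),\varphi\rangle$ into a drift-plus-martingale part controlled quantitatively (H\"older modulus from BDG, as in Lemma \ref{lem:time-regularity}) and a driver part controlled qualitatively through weak $L^{1}$ compactness and uniform absolute continuity. The only cosmetic difference is invoking Kolmogorov's continuity criterion rather than the Sobolev--Slobodeckij embedding, which is equivalent here.
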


\begin{proof}
  We will check that $\{\Renorm(\tilde{f}_{n})\}_{n \in \N}$ induces tight laws on the space $\LLsMw{1}$ by first verifying the requirements of the $L^{1}$ velocity averaging Lemma \ref{lem:L1-Velocity-Averaging} and deducing that for each $\varphi\in C^\infty_c(\Rd)$,  $\{\langle\Gamma(\tilde{f}_n), \varphi\rangle\}_{n\in\N}$ induces tight laws on $L^1_{t,x}$ and then applying Lemma \ref{lem:tightness-crit-vel-averaged-space} to conclude that $\{\Gamma(\tilde{f}_n)\}_{n\in\N}$ induces tight law on $\LLsMw{1}$.

Observe that for each $n\in \N$, $\Gamma(\tilde{f}_{n})$ is a weak martingale solution to the stochastic kinetic equation driven by $\Renorm'(\tilde{f}_{n})\mathcal{B}_{n}(\tilde{f}_{n},\tilde{f}_{n})$, starting from $\Renorm(\tilde{f}_{n}^{0})$, with noise coefficients $\sigma^{n}$. By Proposition \ref{prop:exist-approx-scheme} on the approximating scheme, and the fact that $\Gamma(z) \leqc |z|$, we can easily conclude that $\Gamma(\tilde{f}_{n})$ and $\Renorm'(\tilde{f}_{n})\mathcal{B}_{n}(\tilde{f}_{n},\tilde{f}_{n})$ belong to $L^{\infty-}(\Omega; \LLLs{1}\cap\LLLs{\infty})$ and $\Gamma(f^0_n)$ is in $\LLs{1}\cap\LLs{\infty}$. Also, by assumption, $\{\sigma^n\}_{n\in\N}$ satisfy Hypothesis \ref{hyp:Noise-Coefficients} uniformly. 

Next, since $|\Gamma(z)|\leqc |z|$, and $\{f_n^0\}_{n\in\N}$ is uniformly integrable, then $\{\Gamma(f_n^0)\}_{n\in\N}$ is uniformly integrable. Similarly, the uniform estimates (\ref{eq:AveragedBoundsOnApprox}) imply that for $p \in [1,\infty)$
\begin{equation}\label{eq:Weighted-Bounds-On-Renorm}
\sup_{n \in \N} \tilde{\E} \big\| (1+|x|^2 + |v|^2)\Gamma(\tilde{f}_n)\big \|_{\LLL{\infty}{1}}^{p}<\infty.
\end{equation}
Also, Lemma \ref{lem:renorm-collision-tightness-bounds} implies that $\{\Renorm'(\tilde{f}_{n})\mathcal{B}_{n}(\tilde{f}_{n},\tilde{f}_{n})\}_{n\in\N}$ is uniformly bounded in $L^{\infty-}(\Omega;\LLLs{1})$, while Lemmas \ref{lem:tightness:renorm-neg} and \ref{lem:tightness:renorm-pos} imply that $\{\Renorm'(\tilde{f}_{n})\mathcal{B}_{n}(\tilde{f}_{n},\tilde{f}_{n})\}_{n\in\N}$ also induce tight laws on $[\LLLs{p}]_{\mathrm{w}}$. Finally, we see by Chebyshev that
\begin{equation}
 \tilde{\P}(\|\langle \Gamma(\tilde{f}_n),\varphi\rangle\1_{|x|>R}\|_{L^1_{t,x}} > \eta) \leqc \frac{1}{\eta R^2 }\tilde{\E} \big\| (1+|x|^2 +|v|^2)\Gamma(\tilde{f}_n)\big \|_{\LLL{\infty}{1}},
\end{equation}
and therefore the right-hand side vanishes uniformly in $n$ as $R\to\infty$. Hence, we meet all the requirements of Lemma \ref{lem:L1-Velocity-Averaging} to conclude that 
$\{\langle\Gamma(\tilde{f}_n), \varphi\rangle\}_{n\in\N}$ induces tight laws on $L^1_{t,x}$.

To check that $\{\Renorm(\tilde{f}_{n})\}_{n \in \N}$ induces tight laws on the space $\CLLw{1}$, by Lemma \ref{Lem:Appendix:TightCrit} it suffices to show that for each $\varphi \in C^{\infty}_{c}(\R^{2d})$, the sequence $\{\langle \Renorm(\tilde{f}_{n}),\varphi \rangle\}_{n \in \N}$ induces tight laws on $C[0,T]$ and
\begin{equation}
  \begin{aligned}
    &\sup_n\tilde{\E}\|\Gamma(\tilde{f}_n)\|_{L^\infty_t(L^1_{x,v})} <\infty,\\
    &\lim_{R\to\infty}\sup_n\tilde{\E}\|\Gamma(\tilde{f_n})\1_{|x|^2+|v|^2 > R}\|_{L^\infty_t(L^1_{x,v})} = 0,\\
    &\lim_{L\to\infty}\sup_n\tilde{\E}\|\Gamma(\tilde{f}_n)\1_{|\Gamma(\tilde{f}_n)|>L}\|_{L^\infty_t(L^1_{x,v})} = 0.\\
  \end{aligned}
\end{equation}
The first two follow from (\ref{eq:Weighted-Bounds-On-Renorm}), while the last follows from the fact that $|\Gamma(z)|\leq C|z|$ for some constant $C$, implies that
\begin{equation}
  |\Gamma(\tilde{f}_n)|\1_{|\Gamma(\tilde{f}_n)|>L} \leq |\tilde{f}_n|\1_{|\tilde{f}_n| > L/C}
\end{equation}
and therefore
\begin{equation}
  \lim_{L\to\infty}\sup_n\tilde{\E}\|\Gamma(\tilde{f}_n)\1_{|\Gamma(\tilde{f}_n)|>L}\|_{L^\infty_t(L^1_{x,v})} \leq \lim_{L\to\infty}\frac{1}{\log{L/C}}\sup_n\tilde{\E}\|\tilde{f}_n\log{\tilde{f}_n}\|_{L^\infty_t(L^1_{x,v})} = 0.
\end{equation}
To see this, use the weak form to obtain the decomposition $\langle \Renorm(\tilde{f}_{n}),\varphi \rangle=I^{n,1}+I^{n,2}$, where the continuous processes $(I^{n,1}_{t})_{t=0}^{T}$ and $(I^{n,2}_{t})_{t=0}^{T}$ are defined via:
\begin{align}
&I^{n,1}_{t}=\iint_{\R^{2d}}\Renorm(f_{n}^0)\varphi \dx \dv
+ \int_{0}^{t}\iint_{\R^{2d}}\Renorm(\tilde{f}_{n})[v \cdot \nabla_{x}\varphi+\LStrat_{\sigma^n}\varphi]\dx\dv\ds \\
&\hspace{1in}+\sum_{k=1}^{\infty}\int_{0}^{t}\iint_{\R^{2d}}\Renorm(\tilde{f}_{n})\sigma^n_{k}\cdot \nabla_{v}\varphi\, \dx\dv \dee\beta_{k}(s). \\
&I^{n,2}_{t}=\int_{0}^{t}\iint_{\R^{2d}}\Renorm'(\tilde{f}_{n})\mathcal{B}_{n}(\tilde{f}_{n},\tilde{f}_{n})\varphi \dx\dv\ds.
\end{align}
Arguing as in Lemma \ref{lem:time-regularity}, using the uniform bounds, and a Sobolev embedding, there exists an an $\alpha >0$ and $p>1$ such that $\{I^{n,1}\}_{n \in \N}$ is a bounded sequence in $L^{p}(\tilde{\Omega}; C^{\alpha}_{t})$.  Next observe that by Lemmas \ref{lem:tightness:renorm-neg} and \ref{lem:tightness:renorm-pos}, the sequence 
$\{\partial_{t}I^{n,2}\}_{n \in \N}$ induces tight laws on $L^{1}[0,T]$ endowed with the weak topology.

Let $\epsilon>0$ and let $K_{\epsilon}^{1}$ be the closed ball of radius $\epsilon^{-1}$ in $C^{\alpha}_{t}$. In addition, choose a uniformly integrable subset of $L^{1}[0,T]$, denoted $\hat{K}_{\epsilon}^{2}$, such that
\begin{equation}
\sup_{n \in \N} \tilde{\p}( \partial_{t}I^{2,n} \notin \hat{K}_{\epsilon}^{2})<\epsilon.
\end{equation}
Define $K_{\epsilon}^{2}$ to be the anti-derivatives of $\hat{K}_{\epsilon}^{2}$, that is:
\begin{equation}
K_{\epsilon}^{2}= \big \{ f \in C[0,T] \mid f(0)=0 \text{ and there exists } g \in \hat{K}_{\epsilon}^{2} \text{ such that } \partial_{t}f=g \big \}.
\end{equation}
Finally, let $K_{\epsilon}$ be the algebraic sum (in $C[0,T]$) of $K_{\epsilon}^{1}$ and $K_{\epsilon}^{2}$.  In view of our decomposition, it follows that
\begin{equation}
\sup_{n \in \N} \tilde{\p}( \langle \Renorm(\tilde{f}_{n}),\varphi \rangle \notin K_{\epsilon}) \leq \sup_{n \in \N} \tilde{\p}( I^{n,1}\notin K_{\epsilon}^{1})+ \sup_{n \in \N} \tilde{\p}(I^{n,2}\notin K_{\epsilon}^{2}).
\end{equation}
Each of the probabilities above are of order $\epsilon$. Since, by construction, $K_{\epsilon}^{1}$ and $K_{\epsilon}^{2}$ are compact of $C[0,T]$ (by Arzel\`{a}-Ascoli), it follows that $K_\ep$ is itself compact in $C[0,T]$. This completes the proof.
\end{proof}
\begin{lem}\label{lem:renorm-approx-tightness-f}
The sequence $\{\tilde{f}_{n}\}_{n \in \N}$ induces tight laws on the space $\CLLw{1} \cap \LLsMw{1}$.  
\end{lem}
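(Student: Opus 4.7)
My plan is to bootstrap Lemma~\ref{lem:renorm-approx-tightness} via the truncations $\Gamma_m(z) := z/(1+m^{-1}z)$, which belong to $\mathcal{R}'$ and approximate the identity as $m\to\infty$. The pointwise identity $\tilde{f}_n - \Gamma_m(\tilde{f}_n) = \tilde{f}_n^{\,2}/(m+\tilde{f}_n)$, combined with the splitting $\{\tilde{f}_n\leq L\}\sqcup\{\tilde{f}_n>L\}$ at level $L>1$ and the uniform moment and $L\log L$ bounds in \eqref{eq:AveragedBoundsOnApprox}, will give, upon choosing $L=\sqrt{m}$,
\begin{equation*}
a_m := \sup_{n\in\N}\tilde{\E}\,\|\tilde{f}_n - \Gamma_m(\tilde{f}_n)\|_{\LLL{\infty}{1}}\, \lesssim\, \frac{1}{\sqrt{m}} + \frac{1}{\log m}\,\longrightarrow\, 0 \quad\text{as } m\to\infty.
\end{equation*}

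\paragraph{Construction of compact sets.} For $\epsilon>0$ fixed, I would use $a_m\to 0$ and Chebyshev to pick integers $m_k\uparrow\infty$ with $\sup_n\tilde{\p}(\|\tilde{f}_n-\Gamma_{m_k}(\tilde{f}_n)\|_{\LLL{\infty}{1}}>2^{-k})<\epsilon\,2^{-k-1}$, and then use Lemma~\ref{lem:renorm-approx-tightness} to select compacts $K_k\subset\CLLw{1}\cap\LLsMw{1}$ with $\sup_n\tilde{\p}(\Gamma_{m_k}(\tilde{f}_n)\notin K_k)<\epsilon\,2^{-k-1}$. Setting
\begin{equation*}
\mathcal{K}_\epsilon := \bigcap_{k=1}^{\infty}\bigl\{h\in \CLL{1} : \exists\, g\in K_k \text{ with } \|h-g\|_{\LLL{\infty}{1}}\leq 2^{-k}\bigr\},
\end{equation*}
a union bound will give $\sup_n\tilde{\p}(\tilde{f}_n\notin\mathcal{K}_\epsilon)<\epsilon$, so tightness on $\CLLw{1}\cap\LLsMw{1}$ reduces to showing that $\mathcal{K}_\epsilon$ is sequentially compact in the intersection topology.

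\paragraph{Compactness of $\mathcal{K}_\epsilon$ and the main obstacle.} Given $\{h_j\}\subset\mathcal{K}_\epsilon$ with witnesses $g_{j,k}\in K_k$, a diagonal argument produces a subsequence $j_l$ along which $g_{j_l,k}\to g_k$ in $\CLLw{1}\cap\LLsMw{1}$ for every $k$. Weak lower semicontinuity of the $L^1_{x,v}$ norm under $C_t([L^1_{x,v}]_w)$ convergence forces $\|g_k-g_{k'}\|_{\LLL{\infty}{1}}\leq 2^{-k}+2^{-k'}$, so $(g_k)$ is Cauchy in $\LLL{\infty}{1}$ with strong limit $g_\infty$. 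The elementary bounds
\begin{equation*}
\sup_{t\in[0,T]}|\langle h(t),\varphi\rangle|\leq\|\varphi\|_{L^\infty_{x,v}}\|h\|_{\LLL{\infty}{1}},\qquad \|\langle h,\phi\rangle\|_{L^1_{t,x}}\leq T\|\phi\|_{L^\infty_v}\|h\|_{\LLL{\infty}{1}},
\end{equation*}
show that strong $\LLL{\infty}{1}$ convergence upgrades to convergence in both $\CLLw{1}$ and $\LLsMw{1}$; a three-term decomposition $h_{j_l}-g_\infty = (h_{j_l}-g_{j_l,k})+(g_{j_l,k}-g_k)+(g_k-g_\infty)$ then yields $h_{j_l}\to g_\infty$ in the intersection topology. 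I expect the main technical obstacle to lie in this last compactness verification — in particular, in confirming that the diagonal limit $g_\infty$ inherits continuity in $t$ with values in $[L^1_{x,v}]_w$ (which follows from uniform-in-$t$ transfer through the strong $\LLL{\infty}{1}$ approximation of each pairing), and that the velocity-averaged topology of $\LLsMw{1}$ interacts cleanly with the $\LLL{\infty}{1}$ closeness rather than requiring an additional uniform-integrability-in-$v$ ingredient.
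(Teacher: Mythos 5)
Your argument is correct and reaches the conclusion, but by a genuinely different mechanism than the paper. The paper first reduces tightness on $\LLsMw{1}$ and on $\CLLw{1}$ separately to tightness of the scalar processes $\langle \tilde f_n,\varphi\rangle$ in the metrizable spaces $\LLt{1}$ and $C[0,T]$ (Lemmas \ref{lem:tightness-crit-vel-averaged-space} and \ref{Lem:Appendix:TightCrit}), and then invokes the abstract ``tightness via decomposition'' Lemma \ref{lem:tightness-decomp} in those Fr\'echet spaces. You instead prove the uniform $\LLL{\infty}{1}$ proximity $a_m\to 0$ directly --- which dominates both scalar norms, since $\|\langle\cdot,\phi\rangle\|_{\LLt{1}}\leq T\|\phi\|_{L^\infty_v}\|\cdot\|_{\LLL{\infty}{1}}$ and $\|\langle\cdot,\varphi\rangle\|_{L^\infty_t}\leq\|\varphi\|_{\LLs{\infty}}\|\cdot\|_{\LLL{\infty}{1}}$ --- and then build compact sets directly in $\CLLw{1}\cap\LLsMw{1}$ by diagonalization plus a three-term decomposition, in effect re-proving Lemma \ref{lem:tightness-decomp} inside the non-metrizable intersection rather than passing through the two reduction lemmas. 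Both arguments share the essential ingredients (Lemma \ref{lem:renorm-approx-tightness} for $\Gamma_m(\tilde f_n)$ and the elementary bound $|\Gamma_m(z)-z|\leq\tfrac{R}{m}z+|\log R|^{-1}z|\log z|$); yours is more self-contained but requires verifying sequential compactness of $\mathcal{K}_\epsilon$ by hand. Two small points worth fixing: define $\mathcal{K}_\epsilon$ with $h\in\CLLw{1}\cap\LLsMw{1}$ rather than $h\in\CLL{1}$, since the diagonal limit $g_\infty$ is only guaranteed to land in the former (the $g_k$ come from the compacts $K_k\subset\CLLw{1}\cap\LLsMw{1}$, not from $\CLL{1}$); and the concerns flagged in your last paragraph are in fact non-issues --- weak continuity in $t$ of $g_\infty$ follows because $\langle g_\infty,\varphi\rangle$ is a uniform limit of continuous functions, and the estimate $\|\langle\cdot,\phi\rangle\|_{L^1_{t,x}}\leq T\|\phi\|_{L^\infty_v}\|\cdot\|_{\LLL{\infty}{1}}$ shows no extra uniform integrability in $v$ is needed.
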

\begin{proof}
Let us begin by verifying that $\{\tilde{f}_{n}\}_{n \in \N}$ induces a tight sequence of laws on $\LLsMw{1}$.  From the uniform bounds, we know that $\{\tilde{f}_{n}\}_{n \in \N}$ is uniformly bounded in $L^{1}(\tilde{\Omega} \times [0,T] \times \R^{2d})$.  By the appendix Lemma \ref{lem:tightness-crit-vel-averaged-space}, it suffices to check that for each $\varphi \in C^{\infty}_{c}(\R^{d}_{v})$, the sequence $\{\langle \tilde{f}_{n},\varphi \rangle \}_{n \in \N}$ induces a tight sequence of laws on $\LLt{1}$. For this purpose, we will use the compactness criterion given in appendix Lemma \ref{lem:tightness-decomp}, together with Lemma \ref{lem:renorm-approx-tightness}. Indeed, recall the definition of $\Gamma_m(z)$ in equation \eqref{eq:TruncationDef}, then for each $m \in \N$, we have the decomposition
\begin{equation}
\langle \tilde{f}_{n},\varphi \rangle=\langle \Renorm_{m}(\tilde{f}_{n}),\varphi \rangle+\langle f_{n}-\Renorm_{m}(\tilde{f}_{n}),\varphi \rangle.
\end{equation}
By Lemma \ref{lem:renorm-approx-tightness}, the sequence $ \{ \langle \Renorm_{m}(\tilde{f}_{n}),\varphi \rangle \}_{n \in \N}$ induces a tight sequence of laws on $\LLt{1}$.  Hence, by Lemma \ref{lem:tightness-decomp}, it only remains to verify that
\begin{equation}
\lim_{m \to \infty}\sup_{n \in \N}\tilde{\E}\big \| \langle \tilde{f}_{n}-\Renorm_{m}(\tilde{f}_{n}),\varphi \rangle \big \|_{\LLt{1}}=0.
\end{equation}
Towards this end, note the elementary inequality: for all $R>1$ and $z>0$,
\begin{equation}
|\Renorm_{m}(z)-z|\leq \frac{R}{m}z + z1_{z\geq R} \leq \frac{R}{m}z+|\log R|^{-1}z|\log z|.
\end{equation}
Hence, for all $m \in \N$ and $R>1$, we have the inequality
\begin{equation}
\begin{aligned}
&\sup_{n \in \N} \tilde{\E}\big\| \langle \tilde{f}_{n}-\Renorm_{m}(\tilde{f}_{n}),\varphi \rangle \big \|_{\LLt{1}}\leq  \frac{R}{m}\|\varphi\|_{L^\infty_v}\sup_{n \in \N}\tilde{\E}\|\tilde{f}_{n} \|_{\LLLs{1}}\\
&\hspace{1in} + |\log R|^{-1}\|\varphi\|_{L^\infty_v}\sup_{n \in \N}\tilde{\E}\|\tilde{f}_{n}\log{\tilde{f}_{n}} \|_{\LLLs{1}}.
\end{aligned}
\end{equation}
Taking first $m \to \infty$ and then $R \to \infty$ gives the claim.

The next step is to check that the sequence $\{\tilde{f}_{n}\}_{n \in \N}$ induces a tight sequence of laws on $\CLLw{1}$. In view of the uniform bounds (\ref{eq:AveragedBoundsOnApprox}) and tightness criterion on $C_t([L^1_{x,v}]_{\mathrm{w}})$ given in Lemma \ref{Lem:Appendix:TightCrit}, it suffices to verify that for all $\varphi \in C^{\infty}_{c}(\R^{2d})$, the sequence $\{\langle \tilde{f}_{n},\varphi \rangle \}_{n \in \N}$ induces tight laws on the space $C[0,T]$.  Again, for each $m \in \N$ we have the decomposition
\begin{equation}
\langle \tilde{f}_{n},\varphi \rangle=\langle \Renorm_{m}(\tilde{f}_{n}),\varphi \rangle+\langle \tilde{f}_{n}-\Renorm_{m}(\tilde{f}_{n}),\varphi \rangle.
\end{equation}
Moreover, the sequence $\{ \langle \Renorm_{m}(\tilde{f}_{n}),\varphi \rangle \}_{n \in \N}$ induces tight laws on $C[0,T]$ by Lemma \ref{lem:renorm-approx-tightness}. Arguing in a similar way as above, we find that 
\begin{equation}
\lim_{m \to \infty}\sup_{n \in \N}\tilde{\E}\big \| \langle \tilde{f}_{n}-\Renorm_{m}(\tilde{f}_{n}),\varphi \rangle \big \|_{L^{\infty}_{t}}=0.
\end{equation}
Therefore by Lemma \ref{lem:tightness-decomp} $\{\langle \tilde{f}_n,\varphi\rangle\}_{n\in \N} $ is tight on $C[0,T]$.
\end{proof}

\subsection{Proof of Proposition \ref{prop:Grand-Skorohod}}
For each $n \in \N$, introduce random variables $\tilde{X}_{n},\tilde{Y}_{n}, $ and $\tilde{Z}_{n}$ by setting
\begin{align}
&\tilde{X}_{n}=\big ( (1+|x|^{2}+|v|^{2}+|\log \tilde{f}_{n}|)\tilde{f}_{n},\mathcal{D}^0_{n}(\tilde{f}_{n}) \big ) \\
&\tilde{Y}_{n}=\big (\tilde{f}_{n}, \{\tilde{\beta}_{k}\}_{k \in \N} \big ) \\
&\tilde{Z}_{n}= \Big\{ \big ( \Renorm_{m}(\tilde{f}_{n}),\gamma_{m}(\tilde{f}_{n}),\Renorm_{m}'(\tilde{f}_{n})\mathcal{B}^{-}_{n}(\tilde{f}_{n},\tilde{f}_{n}),\Renorm_{m}'(\tilde{f}_{n})\mathcal{B}^{+}_{n}(\tilde{f}_{n},\tilde{f}_{n}) \big ) \Big \}_{m \in \N} 
\end{align}
The random variables $\tilde{X}_{n},\tilde{Y}_{n}, $ and $\tilde{Z}_{n}$ induce laws defined on the spaces $E,F, $ and $G$ respectively, where 
\begin{align}
&E=[L^{1}_{t}(C_{0}(\R^{2d}))]'_{*}\times \MMMsw{\infty}\\
&F=\LLsMw{1} \cap \CLLw{1} \times [C_{t}]^{\infty} \\
&G=\big [ [\LLsMw{1}\cap\CLLw{1}]^{2} \times [\LLLs{1}]_{w}^{2} \big ]^{\infty}.
\end{align}
To be clear, we use $[L^{1}_{t}(C_{0}(\R^{2d}))]'_{*}$ to denote the dual of $L^{1}_{t}(C_{0}(\R^{2d}))$ endowed with the weak star topology.  

Our first observation is that the sequence $\{\tilde{X}_{n} \}_{n \in \N}$ induces tight laws on $E$.  For this, we use the fact that $\LLL{\infty}{1}$ embeds isometrically into the space $L^{\infty}_{t}(\mathcal{M}_{x,v})$, which in turn embeds isometrically into $[L^{1}_{t}(C_{0}(\R^{2d}))]'$ by classical duality results on Lebesgue-Bochner spaces. Also, $\LLLs{1}$ embeds isometrically into $\mathcal{M}_{t,x,v}$.   Since bounded sets in $\LLL{\infty}{1} \times \LLLs{1}$ are compact in E, the uniform bounds \eqref{eq:AveragedBoundsOnApprox} and Banach Alaoglu yield the tightness claim.  

Next we observe that $\{\tilde{Y}_{n}\}_{n \in \N}$ induces tight laws on $F$.
This follows from Lemma \ref{lem:renorm-approx-tightness-f} and classical facts about Brownian motions.  Finally, by Lemmas \ref{lem:tightness:renorm-neg}, \ref{lem:tightness:renorm-pos}, and \ref{lem:renorm-approx-tightness} it follows that the sequence $\{\tilde{Z}_{n}\}_{n \in \N}$ induces tight laws on $G$.  Combining these observations, we find that the sequence  $ \{ (\tilde{X}_{n},\tilde{Y}_{n},\tilde{Z}_{n}) \}_{n \in \N}$ induces tight laws on $E \times F \times G$.

Apply the Jakubowski/Skorohod Theorem  \ref{Thm:Appendix:Jakubowksi_Skorohod} (working on a subsequence if necessary) to obtain a new probability space $(\Omega, \mathcal{F},\p)$, random variables $(X,Y,Z)$ on $E \times F \times G$, and a sequence of maps $\{\tilde{T}_{n}\}$ satisfying Part 1 of Proposition \ref{prop:Grand-Skorohod}.  First observe that the uniform bounds and the explicit representation guarantees that $X_{n}(\omega) \in \LLL{\infty}{1} \times \LLLs{1}$ for almost all $\omega \in \Omega$ and $n \in \N$.  Thus, Part 1 now yields that $\{f_{n}\}_{n \in \N}$ satisfies the uniform bounds \eqref{eq:AveragedBoundsOnApprox} with $\E$ in place of $\tilde{\E}$.  This gives the first claim in Part 2 of Proposition \ref{prop:Grand-Skorohod}.   Theorem \ref{Thm:Appendix:Jakubowksi_Skorohod} also guarantees that the sequence $\{X_{n}\}_{n \in \N}$ defined by $X_{n}=\tilde{X}_{n} \circ \tilde{T}_{n}$ converges pointwise on $\Omega$ to $X$ in the space $E$.  In particular, there exists a random constant $C(\omega)$ such that
\begin{equation}
\begin{split}
&\sup_{n \in \N}\|(1+|x|^{2}+|v|^{2}+|\log f_{n}(\omega)|)f_{n}(\omega)\|_{[L^{1}_{t}(C_{0}(\R^{2d}))]'}\leq C(\omega). \\
&\sup_{n \in \N}\|\mathcal{D}^0_{n}(f_{n})(\omega)\|_{\MMMs{\infty}}\leq C(\omega).
\end{split}
\end{equation}
Using again the isometric embedding of $\LLL{\infty}{1}$ into $[L^{1}_{t}(C_{0}(\R^{2d}))]'$ and $\LLLs{1}$ into $\MMMs{\infty}$, together with the fact that $X_{n}(\omega) \in \LLL{\infty}{1} \times \LLLs{1}$, this completes the proof of Part 2.  To obtain the remaining parts of Proposition \ref{prop:Grand-Skorohod}, let $\overline{D}$ be the second component of $X$, and denote
\begin{align}
&Y=\big (f, \{\beta_{k}\}_{k \in \N} \big ). \\
&Z= \Big\{ \big ( \overline{{\Renorm}_{m}(f)},\overline{\gamma_{m}(f)},\mathcal{B}_{m}^{-},\mathcal{B}_{m}^{+} \big) \Big \}_{m \in \N}. 
\end{align}
Part 3 follows easily from Part 1 and the martingale representation theorem.  Part 4 follows from the pointwise convergence of $\{Y_{n}\}_{n \in \N}$ towards $Y$ in the space $F$.  Part 5 follows from the pointwise convergence of $\{Z_{n}\}_{n \in \N}$ towards $Z$ and $\{X_n\}_{n\in\N}$ to $X$.  This completes the proof of Proposition \ref{prop:Grand-Skorohod}.   


\subsection{Preliminary identification}\label{sec:Preliminary-identification}
As our first application of Proposition \ref{prop:Grand-Skorohod}, we send $n \to \infty$, but the limit passage is in a preliminary sense.  Namely, we do not yet obtain the renormalized form for $f$, but we obtain a stochastic kinetic equation for a strong approximation $\overline{\Renorm_{m}(f)}$. In fact, using Proposition \ref{prop:Grand-Skorohod}, we will prove:  
\begin{cor}\label{cor:PrelimMartingale}
For all $m \in \N$, the process $\overline{\Renorm_{m}(f)}$ is a renormalized weak martingale solution to the stochastic kinetic equation driven by $\mathcal{B}_{m}^{+}-\mathcal{B}_{m}^{-}$, starting from $\Renorm_{m}(f_{0})$, with noise coefficients $\sigma = \{\sigma_k\}_{k\in\N}$. Moreover, $\P$ almost surely,  $\overline{\Renorm_m(f)}$ belongs to $L^\infty_{t,x,v}$ and has strongly continuous sample paths in $C_t(L^{1}_{x,v})$.
\end{cor}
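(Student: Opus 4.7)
The plan is to apply the weak stability Proposition~\ref{prop:stability-weak-martingale} to the sequence $\{\Renorm_m(f_n)\}_{n \in \N}$ on the probability space built in Proposition~\ref{prop:Grand-Skorohod}, identify its limit as $\overline{\Renorm_m(f)}$, and then invoke Proposition~\ref{prop:Weak_Is_Renormalized} and Lemma~\ref{lem:strong-cont} to obtain the renormalized and strongly continuous conclusions. First I would note that for each fixed $n$, the process $\Renorm_m(f_n)$ is itself a weak martingale solution driven by $\Renorm_m'(f_n)\mathcal{B}_n(f_n,f_n)$, starting from $\Renorm_m(f_n^0)$, with noise coefficients $\sigma^n$ and the transferred stochastic basis from Proposition~\ref{prop:Grand-Skorohod}. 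Indeed, $f_n \in L^\infty_{t,x,v}$ by Proposition~\ref{prop:exist-approx-scheme} and the $\sigma^n$ are smooth by Hypothesis~\ref{hyp:regularized-noise-coeff}, so Proposition~\ref{prop:Weak_Is_Renormalized} applies and $f_n$ is renormalizable; since $\Renorm_m \in \mathcal{R}'$, this yields the desired equation for $\Renorm_m(f_n)$. The pointwise bound $0 \le \Renorm_m(f_n) \le m$ together with the moment bounds from Proposition~\ref{prop:Grand-Skorohod} Part~2 give a uniform $L^{2+}(\Omega;\LLL{\infty}{1})$ bound on $\{\Renorm_m(f_n)\}_n$, while Lemma~\ref{lem:renorm-collision-tightness-bounds} gives uniform $L^2(\Omega;\LLLs{1})$ control on the driving sequence $\{\Renorm_m'(f_n)\mathcal{B}_n(f_n,f_n)\}_n$.

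Next I would verify the remaining convergence hypotheses of Proposition~\ref{prop:stability-weak-martingale}. The $\P$-almost sure convergence $\Renorm_m(f_n) \to \overline{\Renorm_m(f)}$ in $C_t([L^1_{x,v}]_w)$ is supplied directly by Part~5 of Proposition~\ref{prop:Grand-Skorohod}. For the time-integral hypothesis, I would use that the same proposition provides $\P$-almost sure weak $L^1_{t,x,v}$ convergence $\Renorm_m'(f_n)\mathcal{B}_n^\pm(f_n,f_n) \to \mathcal{B}_m^\pm$: testing against $\mathbbm{1}_{[0,t]}(s)\varphi(x,v) \in L^\infty_{t,x,v}$ (with $\varphi \in C_c^\infty(\R^{2d})$) gives almost sure convergence of the time integrals, which Vitali's theorem upgrades to $L^2(\Omega)$ convergence using the uniform $L^q(\Omega)$ bound from Lemma~\ref{lem:renorm-collision-tightness-bounds}. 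This is expected to be the most delicate point of the argument, since the Skorokhod construction produces only weak $L^1_{t,x,v}$ convergence pointwise in $\Omega$, whereas the stability result requires $L^2(\Omega)$ convergence of the paired time integrals. The Brownian motion convergence is Part~4 of Proposition~\ref{prop:Grand-Skorohod}, and $\Renorm_m(f_n^0) \to \Renorm_m(f_0)$ in $L^1_{x,v}$ follows from Hypothesis~\ref{hyp:regularized-idata} and dominated convergence using $|\Renorm_m(z)|\le|z|$; the uniform coloring of $\sigma^n$ is supplied by Hypothesis~\ref{hyp:regularized-noise-coeff}. Proposition~\ref{prop:stability-weak-martingale} would then assert that $\overline{\Renorm_m(f)}$ is a weak martingale solution driven by $\mathcal{B}_m^+ - \mathcal{B}_m^-$, starting from $\Renorm_m(f_0)$, with noise coefficients $\sigma$.

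Finally I would upgrade to a renormalized solution with strongly continuous sample paths. The bound $\Renorm_m(f_n) \le m$ passes to the $[L^1_{x,v}]_w$ limit by testing against non-negative $\varphi \in L^1_{x,v}\cap L^\infty_{x,v}$, so $\overline{\Renorm_m(f)}\le m$ almost everywhere on $\Omega\times[0,T]\times\R^{2d}$; in particular $\overline{\Renorm_m(f)}\in L^\infty_{t,x,v}$ with probability one. Proposition~\ref{prop:Weak_Is_Renormalized} then applies for $p$ arbitrarily large (Hypothesis~\ref{hyp:Noise-Coefficients-Derivatives} supplies the Sobolev bounds required on $\sigma$), so $\overline{\Renorm_m(f)}$ is renormalized in the sense of Definition~\ref{def:Renorm-Martingale}. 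Strong continuity in $\CLL{q}$ for every $q \in (1,\infty)$ then follows from Lemma~\ref{lem:strong-cont}; strong continuity in $\CLL{1}$ is obtained by combining the strong $L^q_{x,v}$ continuity for $q > 1$ against a cutoff $\mathbbm{1}_{|x|^2+|v|^2 \le R}$ with control of the tail via the inherited moment bound $(1+|x|^2+|v|^2)\overline{\Renorm_m(f)}\in L^\infty_t(L^1_{x,v})$ (which passes to the limit from Proposition~\ref{prop:Grand-Skorohod} by lower semicontinuity).
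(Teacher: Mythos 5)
Your proof is correct and follows the same route as the paper: verify the hypotheses of the stability Proposition~\ref{prop:stability-weak-martingale} for $\{\Renorm_m(f_n)\}_{n}$, upgrade the pointwise weak $L^1$ convergence of the driver to the required $L^2(\Omega)$ convergence via the uniform moment bounds, then pass $\overline{\Renorm_m(f)}$ through Proposition~\ref{prop:Weak_Is_Renormalized} (using $\overline{\Renorm_m(f)}\le m$ plus the $L^1$ bound and interpolation) and Lemma~\ref{lem:strong-cont}. The only cosmetic deviation is at the last step, where you reach $\CLL{1}$ by a cutoff-plus-moment-tail argument rather than the paper's observation that weak $\LLs{1}$ convergence combined with strong $\LLs{2}$ convergence yields strong $\LLs{1}$ convergence; both deductions are sound and essentially equivalent.
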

\begin{proof}
  Fix an $m \in \N$.  First, using the uniform bounds and the convergence results obtained in Proposition \ref{prop:Grand-Skorohod}, we verify the hypotheses of the stability result for martingale solutions of stochastic kinetic equations, Proposition \ref{prop:stability-weak-martingale}.  Namely, we will analyze the sequence $\{\Gamma_{m}(f_{n})\}_{n \in \N}$.  Once we verify Parts $1-3$ of Proposition \ref{prop:stability-weak-martingale}, we may conclude that  the process $\overline{\Renorm_{m}(f)}$ is a weak martingale solution to the stochastic kinetic equation driven by $\mathcal{B}_{m}^{+}-\mathcal{B}_{m}^{-}$, starting from $\Renorm_{m}(f_{0})$, with noise coefficients $\sigma = \{\sigma_k\}_{k\in\N}$. The next step will be to show that the solution is actually a renormalized weak martingale solution, applying the renormalization Proposition \ref{prop:Weak_Is_Renormalized}. Finally we will show strong continuity by applying Lemma \ref{lem:strong-cont} on our renormalized weak martingale solution.

To verify Part 1 of Proposition \ref{prop:stability-weak-martingale}, let us first check that the process $\Renorm_{m}(f_{n})$ is a weak martingale solution to the stochastic kinetic equation driven by $\Gamma_{m}'(f_{n})\mathcal{B}_{n}(f_{n},f_{n})$, starting from $\Renorm_{m}(f_{n}^{0})$, relative to the noise coefficients $\sigma^{n}$ and the Brownian motions $\{\beta_{k}^{n}\}_{k \in \N}$ obtained in \ref{prop:Grand-Skorohod}.  Indeed, $\{\tilde{f}_{n}\}_{n \in \N}$ is a renormalized weak martingale solution to the stochastic kinetic equation driven by $\mathcal{B}_{n}(\tilde{f}_{n},\tilde{f}_{n})$, starting from $f_{n}^{0}$, relative to the noise coefficients $\sigma^{n}$ and the Brownian motions $\{\tilde{\beta}_{k}^{n}\}_{k \in \N}$.  The claim can now be checked by using the explicit expression for $\{f_{n}\}_{n \in \N}$ and $\{\beta_{k}^{n}\}_{k \in \N}$ in terms of the maps $\{\tilde{T}_{n}\}_{n \in \N}$ together with the fact that $\Renorm_{m} \in \mathcal{R}^\prime$.

To verify Part 2 of Proposition \ref{prop:stability-weak-martingale}, from the uniform bounds in Proposition \ref{prop:Grand-Skorohod} and the fact that $\Renorm_{m}(z) \leq z$, it follows that the sequence $\{\Gamma_{m}(f_{n})\}_{n \in \N}$ is uniformly bounded in $L^{2}(\Omega; \LLL{\infty}{1})$.  Also, Lemma \ref{lem:renorm-collision-tightness-bounds} and Part 1 of Proposition \ref{prop:Grand-Skorohod} imply that $\{\Renorm'_{m}(f_{n})\mathcal{B}^{-}_{n}(f_{n},f_{n})\}_{n \in \N}$ and $\{\Renorm'_{m}(f_{n})\mathcal{B}^{+}_{n}(f_{n},f_{n})\}_{n \in \N}$ are uniformly bounded in $L^{2}(\Omega;\LLLs{1})$.  Combining this with the pointwise  convergences from Part 5 of Proposition \ref{prop:Grand-Skorohod}, we easily verify \eqref{eq:conv-at-fixed-time} and \eqref{eq:conv-time-ints}.

Finally Part 3 of Proposition \ref{prop:stability-weak-martingale} follows from the convergences from Part 4 of Proposition \ref{prop:Grand-Skorohod} together with Hypotheses \ref{hyp:regularized-noise-coeff} and \ref{hyp:regularized-idata} regarding the sequences $\{\sigma^{n}\}_{n \in \N}$ and $\{f_{n}^{0}\}_{n \in \N}$.

Next we argue that $\overline{\Gamma_m(f)}$ is actually a {\it renormalized} weak martingale solution. Indeed, by the conditions on the noise coefficients $\sigma$ in Hypotheses (\ref{eq:Noise-Assumption-3}) and (\ref{eq:Noise-Assumption-4}) this will follow from Proposition \ref{prop:Weak_Is_Renormalized} as soon as $\overline{\Gamma_m(f)} \in L^{\infty-}(\Omega\times[0,T]\times\R^{2d})$. To argue this, we note that since $\Gamma_m(z) \leq m$ and $\Gamma_m(z) \leq z$, this gives the following uniform bounds in $L^\infty_{\omega,t,x,v}$ and $L^1_{\omega,t,x,v}$,
\begin{equation}
  \begin{aligned}
    &\sup_n\|\Gamma_m(f_n)\|_{L^\infty(\Omega\times[0,T]\times\R^{2d})} < m <\infty\\
    &\sup_n\|\Gamma_m(f_n)\|_{L^1(\Omega\times[0,T]\times\R^{2d})} \leq T\sup_n\E\|f_n\|_{L^\infty_t(L^1_{x,v})} <\infty.
    \end{aligned}
  \end{equation}
Therefore, by interpolation, $\{\Gamma_m(f_n)\}_{n\in\N} \in L^p(\Omega\times[0,T]\times\R^{2d})$ uniformly in $n$ for each $p \in [1,\infty]$ and $m\geq 1$. Using the weak sequential compactness of $L^p(\Omega\times[0,T]\times\R^{2d})$ for $p\in(1,\infty)$, weak-* sequential compactness of $L^\infty(\Omega\times[0,T]\times\R^{2d})$, and the fact that by Proposition \ref{prop:Grand-Skorohod}, $\P$ almost surely, $\Gamma_m(f_n) \to \overline{\Gamma_m(f)}$ in $C_t([L^1_{x,v}]_w)$, we can conclude that the limit $\overline{\Gamma_m(f)}$ must belong to $L^{p}(\Omega\times[0,T]\times\R^{2d})$ for every $p \in [1,\infty]$.

Finally we show that process $t \mapsto \overline{\Gamma_m(f_t)}$ has continuous sample paths in $\LLs{1}$ with the strong topology. Observe that any sequence converging strongly in $\LLs{2}$ and weakly in $\LLs{1}$ also converges strongly in $\LLs{1}$. Therefore, since $\overline{\Gamma_m(f)}\in \CLLw{1}$ with probability one, it suffices to show that $\overline{\Gamma_m(f)}\in \CLL{2}$ with probability one. However, since $\overline{\Renorm_m(f)}$ is a renormalized weak martingale solution, by Lemma \ref{lem:strong-cont} it is sufficient to show that $\overline{\Renorm_m(f)}$ belongs to $L^\infty_t(L^2_{x,v})$ $\P$ almost surely. Since Proposition \ref{prop:Grand-Skorohod} implies that $\overline{\Gamma_m(f)}$ also belongs to $L^\infty_t(L^1_{x,v})$ $\P$ almost surely and $\overline{\Gamma_m(f)}$ belongs to $L^{\infty}(\Omega\times[0,T]\times\R^{2d})$, we can conclude, again by interpolation, that $\overline{\Gamma_m(f)}$ belongs to $L^\infty_t(L^2_{x,v})$ $\P$ almost surely. 
\end{proof}

In fact, this preliminary identification of $\overline{\Gamma_m(f)}$ allows us to upgrade the continuity properties on $f$ from weakly continuous to strongly continuous. This is the content of the following corollary.

\begin{cor} \label{lem:Strong-Renorm-Limit}
The sample paths of $f$ belong $\P$ almost surely to $C_t(L^1_{x,v})$. Moreover as $m \to \infty$, the sequence $\{ \overline{\Renorm_{m}(f)} \}_{m \in \N}$ converges $\p$ a.s. to $f$ in $\CLL{1}$.
\end{cor}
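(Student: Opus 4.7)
The plan is to establish that $\overline{\Renorm_m(f)} \to f$ uniformly in $t$ in the strong topology of $\LLs{1}$, $\P$ almost surely, and then to use the strong continuity of each $\overline{\Renorm_m(f)}$ from Corollary \ref{cor:PrelimMartingale} to upgrade the weak continuity of $f$ to strong continuity via a uniform-limit-of-continuous-functions argument.

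The starting point is the elementary inequality already exploited in the proof of Lemma \ref{lem:renorm-approx-tightness-f}: for every $R > 1$ and $z \geq 0$,
\begin{equation*}
|z - \Renorm_m(z)| \leq \frac{R}{m}\,z + \frac{1}{\log R}\,z|\log z|.
\end{equation*}
Applied pointwise in $(x,v)$ and integrated, this gives, for every $t \in [0,T]$ and every $n \in \N$,
\begin{equation*}
\|f_n(t) - \Renorm_m(f_n(t))\|_{\LLs{1}} \leq \frac{R}{m}\|f_n(t)\|_{\LLs{1}} + \frac{1}{\log R}\|f_n(t)\log f_n(t)\|_{\LLs{1}} \leq C(\omega)\Big(\frac{R}{m} + \frac{1}{\log R}\Big),
\end{equation*}
where the final inequality holds $\P$-a.s.\ and uniformly in $(n,t)$, thanks to the pathwise estimates in Part 2 of Proposition \ref{prop:Grand-Skorohod}.

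Next, at fixed $m$ and $R$, I would pass $n \to \infty$ pointwise in $t$. The $\CLLw{1}$ convergences provided by Parts 4 and 5 of Proposition \ref{prop:Grand-Skorohod} imply that for each $t \in [0,T]$ the difference $f_n(t) - \Renorm_m(f_n(t))$ converges weakly in $\LLs{1}$ to $f(t) - \overline{\Renorm_m(f)}(t)$. Weak lower semicontinuity of the $\LLs{1}$ norm then yields
\begin{equation*}
\sup_{t \in [0,T]}\|f(t) - \overline{\Renorm_m(f)}(t)\|_{\LLs{1}} \leq C(\omega)\Big(\frac{R}{m} + \frac{1}{\log R}\Big),
\end{equation*}
and sending first $m \to \infty$, then $R \to \infty$, gives $\overline{\Renorm_m(f)} \to f$ in $\LLL{\infty}{1}$, $\P$ almost surely.

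To conclude, Corollary \ref{cor:PrelimMartingale} ensures that each $\overline{\Renorm_m(f)}$ has sample paths in $\CLL{1}$; since the above convergence is uniform in $t$ in the strong $\LLs{1}$ topology, the limit $f$ inherits strong continuity as the uniform limit of continuous functions, so $f \in \CLL{1}$ with probability one. Both assertions of the Corollary follow simultaneously. No serious obstruction arises: the only substantive point is the pathwise uniform-in-$(n,t)$ control coming from the moment and $L\log L$ bounds, which is precisely what Proposition \ref{prop:Grand-Skorohod} was engineered to deliver.
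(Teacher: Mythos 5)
Your proposal is correct and follows essentially the same route as the paper: pass to the weak $L^{1}_{x,v}$ limit pointwise in $t$ via the $\CLLw{1}$ convergences from Proposition \ref{prop:Grand-Skorohod}, invoke weak lower semicontinuity of the $L^{1}$ norm, and control $\sup_{n,t}\|f_{n}(t)-\Renorm_{m}(f_{n}(t))\|_{L^{1}_{x,v}}$ using the pathwise uniform bounds before sending $m\to\infty$, then conclude strong continuity of $f$ as a uniform limit of the strongly continuous $\overline{\Renorm_{m}(f)}$. The only cosmetic difference is that the paper specializes to $R=\sqrt{m}$ and invokes uniform integrability directly, whereas you keep $R$ free and use the $L\log L$ bound; both are valid and interchangeable here.
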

\begin{proof} 
Recall, by Corollary \ref{cor:PrelimMartingale}, $\overline{\Gamma_m(f)}$ belongs to $C_t(L^1_{x,v})$, hence it suffices to show that $\{ \overline{\Renorm_{m}(f)} \}_{m \in \N}$ converges $\p$ a.s. to $f$ in $\LLL{\infty}{1}$. This is accomplished by applying Proposition \ref{prop:Grand-Skorohod} to conclude that for each $t\in [0,T]$,  $f_n(t) - \Renorm_m(f_n(t)) \to f_t - \overline{\Renorm_m(f)_t}$ weakly in $\LLs{1}$, $\P$ almost-surely, then using weak lower semi-continuity of the $\LLs{1}$ norm to obtain the $\P$ almost-sure inequality
\begin{align}
\sup_{t \in [0,T]}\|f_{t}-\overline{\Renorm_{m}(f)}_{t} \|_{\LLs{1}} &\leq \liminf_{n \to \infty} \sup_{t \in [0,T]}\|f_{n}(t)-\Renorm_{m}(f_{n})(t)\|_{\LLs{1}} \\
& \leq \frac{1}{\sqrt{m}}\sup_n\|f_n\|_{\LLL{\infty}{1}} + \sup_n\|f_{n}\1_{f_{n} \geq \sqrt{m}} \|_{\LLL{\infty}{1}},
\end{align}
where in the last inequality we used the fact that
\begin{equation}
  |x - \Gamma_m(x)| \leq \frac{1}{\sqrt{m}}x + x \1_{|x| \geq \sqrt{m}}. 
\end{equation}
In view of Part 2 in Proposition \ref{prop:Grand-Skorohod}, for $\p$ almost all $\omega \in \Omega$, the sequence $\{f_{n}(\omega)\}_{n \in \N}$ is uniformly integrable in $\LLL{\infty}{1}$. Taking $m \to \infty$ on both sides of the inequality above completes the proof.
\end{proof}



\section{Analysis of the Renormalized Collision Operator}\label{sec:Renorm-Collision-Analysis}
In this section, we prepare for the passage of $m \to \infty$. By applying the renormalization lemma for martingale solutions of stochastic kinetic equations, we obtain the following immediate corollary.
\begin{cor}
For all $m \in \N$, the process $\log (1+ \overline{\Renorm_{m}(f)} )$ is a weak martingale solution to the stochastic kinetic transport equation driven by $(1+\overline{\Renorm_{m}(f)})^{-1}[\mathcal{B}_{m}^{+}-\mathcal{B}_{m}^{-}]$, starting from $\log ( 1+ \Renorm_{m}(f_{0}) )$.
\end{cor}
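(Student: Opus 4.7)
The plan is to apply the renormalized martingale solution property of $\overline{\Renorm_{m}(f)}$ established in Corollary \ref{cor:PrelimMartingale} to the renormalization $\beta(z) = \log(1+z)$. Since $\beta$ is not in the admissible class of Definition \ref{def:Renorm-Martingale} (it is unbounded and not defined on $(-\infty,-1]$), one first uses a smooth truncation $\beta_K$ that agrees with $\beta$ on a sufficiently large interval; the pointwise bound $\overline{\Renorm_{m}(f)} \leq m$ then lets one undo the truncation and recover the stated equation.

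First I will establish the uniform pointwise bound $0 \leq \overline{\Renorm_{m}(f)} \leq m$ on $\Omega \times [0,T]\times \R^{2d}$. Since $\Renorm_m(z)=z/(1+m^{-1}z) \in [0,m]$ for $z\geq 0$, each $\Renorm_m(f_n)$ lies pointwise in $[0,m]$. The set $\{g \in L^1_{x,v} \mid 0 \leq g \leq m \text{ a.e.}\}$ is convex and strongly closed, hence weakly closed, so the limit obtained in Part 5 of Proposition \ref{prop:Grand-Skorohod} inherits the same bound $\p$-almost surely. In particular $0 \leq \Renorm_m(f_0) \leq m$ as well. Then, for any $K>m$, I construct $\beta_K \in C^2(\R)$ satisfying $\beta_K(0)=0$, $\sup_{z\in\R}(|\beta_K'(z)|+|\beta_K''(z)|)<\infty$, and $\beta_K(z)=\log(1+z)$ for all $z \in [0,K]$; this is a routine truncation, e.g. $\beta_K(z)=\log(1+\varphi_K(z))$ where $\varphi_K \in C^2(\R)$ equals $z$ on $[0,K]$ and is bounded outside.

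Since $\overline{\Renorm_{m}(f)}$ is a \emph{renormalized} weak martingale solution driven by $\mathcal{B}_m^+ - \mathcal{B}_m^-$ with initial data $\Renorm_m(f_0)$ (Corollary \ref{cor:PrelimMartingale}), Definition \ref{def:Renorm-Martingale} applied to $\beta_K$ immediately gives that $\beta_K(\overline{\Renorm_{m}(f)})$ is a weak martingale solution to the stochastic kinetic equation driven by $\beta_K'(\overline{\Renorm_{m}(f)})(\mathcal{B}_m^+ - \mathcal{B}_m^-)$, starting from $\beta_K(\Renorm_m(f_0))$, on the same stochastic basis $(\Omega,\mathcal{F},\p,(\mathcal{F}_t)_{t=0}^T,\{\beta_k\}_{k\in\N})$. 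Invoking the pointwise bounds $\overline{\Renorm_{m}(f)} \leq m < K$ and $\Renorm_m(f_0) \leq m < K$, the truncation disappears on the relevant range:
\begin{equation}
\beta_K(\overline{\Renorm_{m}(f)}) = \log(1+\overline{\Renorm_{m}(f)}), \quad \beta_K(\Renorm_m(f_0)) = \log(1+\Renorm_m(f_0)), \quad \beta_K'(\overline{\Renorm_{m}(f)}) = (1+\overline{\Renorm_{m}(f)})^{-1},
\end{equation}
yielding exactly the desired equation.

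The integrability requirements of Definition \ref{def:Martingale-Sol} are straightforward: using $\log(1+z) \leq z$ and $\overline{\Renorm_{m}(f)} \in L^\infty(\Omega\times[0,T]\times\R^{2d}) \cap L^2(\Omega;\LLL{\infty}{1})$ from Corollary \ref{cor:PrelimMartingale}, we find $\log(1+\overline{\Renorm_{m}(f)}) \in L^2(\Omega;\LLL{\infty}{1})$; and since $(1+\overline{\Renorm_{m}(f)})^{-1} \leq 1$ while $\mathcal{B}_m^\pm \in L^2(\Omega;\LLLs{1})$ by Lemma \ref{lem:renorm-collision-tightness-bounds} and Proposition \ref{prop:Grand-Skorohod}, the driver lies in $L^2(\Omega;\LLLs{1})$. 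There is no substantial obstacle here; the only mild technical point is the construction of a valid truncation $\beta_K \in C^2(\R)$ with bounded derivatives and the verification that the pointwise upper bound passes to the weak limit $\overline{\Renorm_{m}(f)}$, both of which are routine. The content of the corollary is essentially a direct consequence of the already-established renormalization property combined with the a priori $L^\infty$ bound inherent in the definition of $\Renorm_m$.
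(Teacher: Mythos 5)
Your proof is correct and takes essentially the same approach the paper intends: the paper presents this as an ``immediate corollary'' of the renormalization property established in Corollary \ref{cor:PrelimMartingale} combined with the a priori bound $\overline{\Renorm_m(f)} \in L^\infty_{\omega,t,x,v}$. You supply exactly the two technical details the paper leaves tacit — namely, that the pointwise bound $0 \leq \overline{\Renorm_m(f)} \leq m$ survives the weak $L^1$ limit (via weak closedness of the convex set $\{0\leq g\leq m\}$, or equivalently the interpolation/weak-$*$ argument the paper uses in Corollary \ref{cor:PrelimMartingale}), and that $\log(1+z)$ must first be replaced by an admissible $C^2$ truncation $\beta_K$ with bounded derivatives before Definition \ref{def:Renorm-Martingale} can be invoked, after which the bound by $m<K$ lets the truncation disappear. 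One cosmetic remark: in your construction $\beta_K(z)=\log(1+\varphi_K(z))$, you should also require $\varphi_K$ to be bounded below by some constant strictly greater than $-1$ so that $\beta_K\in C^2(\R)$ with $|\beta_K'|, |\beta_K''|$ globally bounded; this is implicit in your phrase ``routine truncation'' but worth stating.
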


Our primary focus is to analyze the limiting behavior of the sequence $\{ \mathcal{B}_{m}^{+}\}_{m \in \N}$. The main source of difficulty here is that this sequence is not bounded in $L^{1}(\Omega \times [0,T] \times \R^{2d})$. This is natural in the sense that we expect $\mathcal{B}_{m}^{+}$ to be close to $\mathcal{B}^{+}(f,f)$ as we relax the truncation parameter $m \in \N$. In fact, we know that the main strategy in dealing with $\mathcal{B}^{+}(f,f)$ is to renormalize with $\Gamma^\prime(f)\BCol^+(f,f)$ before we can hope for an estimate it in $L^{1}(\Omega \times [0,T] \times \R^{2d})$. 
The main result of this section is the following:
\begin{prop}\label{prop:RenormCollision} For any $\phi\in \LLLs{\infty}$ as $m \to \infty$, the following convergences hold:
\begin{align}
\bigg\{\bigg\langle\frac{\mathcal{B}_{m}^{-}}{1+\overline{\Renorm_{m}(f)}},\phi \bigg\rangle \bigg \}_{m \in \N} \to  \, \, \bigg\langle\frac{\mathcal{B}^{-}(f,f)}{1+f}, \phi\bigg \rangle \quad &\text{in} \quad L^2(\Omega), \\
\bigg \{\bigg\langle\frac{\mathcal{B}_{m}^{+}}{1+\overline{\Renorm_{m}(f)}}, \phi\bigg\rangle \bigg \}_{m \in \N} \to \, \, \bigg\langle\frac{\mathcal{B}^{+}(f,f)}{1+f}, \phi \bigg\rangle \quad &\text{in} \quad L^2(\Omega).
\end{align}
\end{prop}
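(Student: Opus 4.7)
I will establish both convergences pointwise on $\Omega$ and then upgrade to $L^{2}(\Omega)$ via uniform integrability. The loss and gain terms require rather different treatments: the loss term factors explicitly into a weakly convergent piece multiplied by a velocity-averaged piece that converges strongly, while the gain term requires a pointwise-in-$\omega$ strong compactness result for the fully renormalized collision operator, following the strategy of Lions \cite{Lions1994-jf}.

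\textbf{Loss term.} Write
\begin{equation*}
\Renorm_m'(f_n)\BCol_n^-(f_n,f_n) = \frac{f_n}{(1+m^{-1}f_n)^2} \cdot \frac{\bar b_n \ast f_n}{1+n^{-1}\langle f_n\rangle},
\end{equation*}
with the convolution in $v$ only. For fixed $m$ the first factor is bounded by $m/4$ and, $\P$-a.s., converges weakly in $[\LLLs{p}]_\loc$ for every $p<\infty$ to $\overline{\gamma_m(f)}$ (Part 5 of Proposition \ref{prop:Grand-Skorohod}). The second factor converges strongly in $[\LLLs{1}]_{\loc}$ to $\bar b \ast f$: the $L^{1}$ velocity-averaging tightness established in Section 6 gives, pointwise in $\omega$, strong $\LLt{1}$ convergence of $\langle f_n, \varphi\rangle$ for each $\varphi\in C_c^\infty(\Rd)$, and combining with $b_n\to b$ in $L^{1}$ (Hypothesis \ref{hyp:regularized-collision}) and a tail cutoff in $v_*$ yields the convolution convergence. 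The product-limit lemma then identifies $\BCol_m^- = \overline{\gamma_m(f)}\,(\bar b \ast f)$. Since $1+\overline{\Renorm_m(f)}\geq 1$, division is licit. Taking $m\to\infty$, the strong convergence $\overline{\Renorm_m(f)}\to f$ in $\CLL{1}$ of Corollary \ref{lem:Strong-Renorm-Limit}, together with the analogous convergence $\overline{\gamma_m(f)}\to f$ (obtained by the same argument using the pointwise bound $|z-\gamma_m(z)|\leqs m^{-1/2}z+z\,\1_{z>\sqrt m}$), delivers the limit $\BCol^-(f,f)/(1+f)$ in $\LLLs{1}$, $\P$-a.s.

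\textbf{Gain term.} The central ingredient is the pointwise-in-$\omega$ strong compactness
\begin{equation*}
\frac{\widehat{\BCol}_n^+(f_n,f_n)}{1+\langle f_n\rangle} \longrightarrow \frac{\BCol^+(f,f)}{1+\langle f\rangle} \quad \text{in } [\LLsMw{1}]_{\loc},
\end{equation*}
where $\widehat{\BCol}_n^+$ denotes the un-truncated gain with kernel $b_n$. This will be obtained by applying the $L^{1}$ velocity-averaging Lemma \ref{lem:L1-Velocity-Averaging} to velocity averages of this normalized gain, after verifying tightness on $[\LLLs{1}]_{w,\loc}$ via Arkeryd's inequality \eqref{eq:AckerydBd} and the uniform entropy-dissipation bounds from Proposition \ref{prop:Grand-Skorohod}. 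With this in hand, factor
\begin{equation*}
\Renorm_m'(f_n)\BCol_n^+(f_n,f_n) = \frac{\Renorm_m'(f_n)(1+\langle f_n\rangle)}{1+n^{-1}\langle f_n\rangle} \cdot \frac{\widehat{\BCol}_n^+(f_n,f_n)}{1+\langle f_n\rangle}.
\end{equation*}
For fixed $m$, the first factor is bounded and weakly convergent in $[\LLLs{1}]_{\loc}$, while the second converges strongly in $[\LLsMw{1}]_\loc$. Another application of the product-limit lemma identifies $\BCol_m^+$, and after dividing by $1+\overline{\Renorm_m(f)}$ and passing $m\to\infty$ (again invoking Corollary \ref{lem:Strong-Renorm-Limit}, together with a further use of Arkeryd's inequality to absorb residual positive-part contributions into a vanishing multiple of the entropy dissipation) we obtain the desired pointwise limit $\BCol^+(f,f)/(1+f)$.

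\textbf{Upgrade to $L^{2}(\Omega)$ and main obstacle.} For each fixed $\phi\in\LLLs{\infty}$, the random variables $\langle\BCol_m^\pm/(1+\overline{\Renorm_m(f)}),\phi\rangle$ are uniformly bounded in $L^{2+}(\Omega)$ by \eqref{eq:renorm-coll-bound} applied pointwise in $\omega$, together with the arbitrary polynomial moment bounds of Proposition \ref{prop:Grand-Skorohod}; Vitali's theorem then promotes pointwise-in-$\omega$ convergence to $L^{2}(\Omega)$ convergence. The principal obstacle is the strong $[\LLsMw{1}]_\loc$ compactness of $\widehat{\BCol}_n^+/(1+\langle f_n\rangle)$: because $\widehat{\BCol}_n^+$ itself is not uniformly $L^{1}$-bounded, one is forced to work with the renormalized quantity, and the verification of the $[\LLLs{1}]_{w,\loc}$ tightness hypothesis of Lemma \ref{lem:L1-Velocity-Averaging} requires a careful interplay between Arkeryd's inequality, the uniform entropy-dissipation bound, and the representation of the gain term via the pre-collisional change of variables.
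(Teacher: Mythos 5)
The loss-term argument is essentially the paper's, and is sound: the factorization into $\gamma_m(f_n)$ (bounded by $m/4$, weakly-$\star$ convergent) times $(\bar b_n * f_n)/(1+n^{-1}\langle f_n\rangle)$ (uniformly integrable, converging in measure, hence strongly in $L^1$ by Vitali) is exactly what the paper does.

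The gain-term argument, however, has two genuine gaps. First, the central compactness claim — that $\widehat{\BCol}_n^+(f_n,f_n)/(1+\langle f_n\rangle) \to \BCol^+(f,f)/(1+\langle f\rangle)$ in $[\LLsMw{1}]_\loc$ — cannot be obtained by ``applying Lemma \ref{lem:L1-Velocity-Averaging} to the normalized gain.'' That lemma takes as input a sequence of \emph{weak martingale solutions to stochastic kinetic equations}; the normalized gain term is a source term, not a kinetic solution, so the lemma simply does not speak to it. The correct route (Lemma \ref{Lem:AveragedPositivePart} in the paper) is the pre/post-collisional change of variables identity
\begin{equation}
\left\langle \frac{\BCol_n^+(f_n,f_n)}{1+\langle f_n\rangle}\,\varphi\right\rangle = \left\langle f_n\,\frac{\mathcal{L}_n f_n}{1+\langle f_n\rangle}\right\rangle, \qquad \mathcal{L}_n f(v)=\iint f_* b_n \varphi'\,\dee v_*\,\dee\theta,
\end{equation}
which converts the gain pairing into a velocity average of $f_n$ itself against a factor $\mathcal{L}_n f_n/(1+\langle f_n\rangle)$ that \emph{is} uniformly bounded in $\LLLs{\infty}$ (since $|\mathcal{L}_n f_n|\leqs \langle f_n\rangle$) and converges in measure. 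Only then does the $\LLsMw{1}$ compactness of $\{f_n\}$ (which was obtained from velocity averaging applied to $f_n$, not to any collision term) plus the product-limit lemma give the result. You gesture at this identity in your ``obstacle'' paragraph, but it is not an obstacle to be circumvented — it is the argument.

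Second, your factorization
\begin{equation}
\Renorm_m'(f_n)\BCol_n^+(f_n,f_n) = \frac{\Renorm_m'(f_n)(1+\langle f_n\rangle)}{1+n^{-1}\langle f_n\rangle}\cdot\frac{\widehat{\BCol}_n^+(f_n,f_n)}{1+\langle f_n\rangle}
\end{equation}
cannot be closed by the product-limit lemma: the first factor is \emph{not} uniformly bounded in $\LLLs{\infty}$. Since $\Renorm_m'(z)\leq 1$ and $(1+y)/(1+y/n)\uparrow n$ as $y\to\infty$, the first factor takes values up to $n$ where $\langle f_n\rangle$ is large (which is allowed, since $\langle f_n\rangle\in L^1_{t,x}$ only), so neither form of Lemma \ref{lem:product_lemma} applies. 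The paper's final factorization instead reads $\BCol_m^+/(1+\overline{\Renorm_m(f)}) = \bigl[(1+\langle f\rangle)/(1+\overline{\Renorm_m(f)})\bigr]\cdot\bigl[\BCol_m^+/(1+\langle f\rangle)\bigr]$, where the $n$-limit has already been taken and the $m$-dependence of the second factor is controlled by the lower-semicontinuity argument of Lemma \ref{lem:B_+-lim-continuity} together with the peak-removal Lemma \ref{lem:LinfRed}; none of that machinery appears in your sketch. Finally, the promotion to $L^2(\Omega)$ via \eqref{eq:renorm-coll-bound} and Vitali is also too quick: \eqref{eq:renorm-coll-bound} bounds $(1+f)^{-1}\BCol(f,f)$, but $\BCol_m^\pm$ and $\overline{\Renorm_m(f)}$ are weak limits, not collision operators evaluated at $f$. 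The uniform-in-$m$ dominating function $\overline{\SDis^0(f)}_{\mathrm{ac}}+\bar b * f$ of \eqref{eq:abs-L1-bound} is a nontrivial construction, obtained by passing Arkeryd's inequality through the weak limits, and this step should not be compressed to a citation of \eqref{eq:renorm-coll-bound}.
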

The most challenging part of the analysis is analyzing the positive part of the collision operator.  To analyze the $m \to \infty$ limit, we must analyze the consequences of the pointwise (in $\omega$) convergence of $f_{n}(\omega)$ towards $f(\omega)$ in the space $\LLsMw{1}$. In fact, this has not been used so far in the proof.
\begin{lem} \label{Lem:AveragedPositivePart}
As $n \to \infty$, the following convergence holds $\p$ almost surely:
\begin{equation}
  \frac{\BCol^{+}_n(f_{n},f_{n})}{1+\langle f_{n}\rangle} \to \frac{\BCol^{+}(f,f)}{1+\langle f\rangle} \quad \text{in} \quad L^{1}_{t,x}(\mathcal{M}_{v}^{*}).
\end{equation}
\end{lem}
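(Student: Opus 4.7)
The plan is to pair against test functions $\phi \in C_c(\R^d_v)$, which are dense in $C_0(\R^d_v)$, and show strong $L^1_{t,x}$-convergence of the resulting integrals, pointwise on $\Omega$. The key tool is the classical Boltzmann collision symmetry: applying the involution $(v,v_*)\leftrightarrow(v',v'_*)$ at fixed $\theta \in \S^{d-1}$, which preserves both $d\theta\, dv_*\, dv$ and the kernel $b_n(v-v_*,\theta)$, I would recast
\begin{equation*}
J_n(t,x) := \int_{\R^d}\phi(v)\,\frac{\BCol^+_n(f_n,f_n)}{1+\langle f_n\rangle}\,dv = \frac{1}{(1+\langle f_n\rangle)(1+n^{-1}\langle f_n\rangle)}\iint_{\R^{2d}} K_n(v,v_*)f_n(v)f_n(v_*)\,dv\,dv_*,
\end{equation*}
where $K_n(v,v_*) := \int_{\S^{d-1}}\phi(v')b_n(v-v_*,\theta)\,d\theta$ depends only on $(v,v_*)$. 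Because $\phi$ has compact support and $b_n$ is compactly supported with uniform control (Hypothesis \ref{hyp:regularized-collision}), $K_n$ is continuous and compactly supported in $(v,v_*)$, uniformly in $n$, and converges uniformly to the analogous $K$ built from $b$.

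Next, since $K$ is continuous with compact support, Stone--Weierstrass yields uniform approximation by finite tensor sums $K^{\text{ap}} = \sum_{j=1}^{N_\eta}\alpha_j^\eta(v)\beta_j^\eta(v_*)$ with $\alpha_j^\eta,\beta_j^\eta \in C_c(\R^d_v)$. For a single tensor term $\alpha(v)\beta(v_*)$ the bilinear form collapses to the product of velocity averages, so the corresponding piece of $J_n$ is
\begin{equation*}
\frac{\langle f_n,\alpha\rangle\langle f_n,\beta\rangle}{(1+\langle f_n\rangle)(1+n^{-1}\langle f_n\rangle)} = \frac{\langle f_n,\alpha\rangle}{1+\langle f_n\rangle}\cdot\frac{\langle f_n,\beta\rangle}{1+n^{-1}\langle f_n\rangle}.
\end{equation*}
The first factor is bounded pointwise by $\|\alpha\|_{L^\infty}$, and the $\LLsMw{1}$-convergence $f_n \to f$ from Proposition \ref{prop:Grand-Skorohod}, combined with the moment bound $(1+|v|^2)f_n \in L^\infty_t L^1_{x,v}$ (used to handle the non-compactly-supported weight $1$ in $v$), yields $\langle f_n,\gamma\rangle \to \langle f,\gamma\rangle$ in $L^1_{t,x}$ for $\gamma \in \{\alpha,\beta,1\}$. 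Extracting a further subsequence so that these convergences also hold a.e. in $(t,x)$ and applying dominated convergence, with uniform integrability of $\{\langle f_n\rangle\}$ in $L^1_{t,x}$ afforded by the $f_n\log f_n$ and $(1+|x|^2)f_n$ bounds, delivers strong $L^1_{t,x}$-convergence of each tensor contribution to its natural limit.

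The approximation error is then controlled uniformly via the pointwise estimate $\langle f_n\rangle^2/[(1+\langle f_n\rangle)(1+n^{-1}\langle f_n\rangle)]\leq \langle f_n\rangle$, which gives
\begin{equation*}
\|J_n - J_n^{\text{ap}}\|_{L^1_{t,x}} \leq \|K_n - K^{\text{ap}}\|_{L^\infty}\,\|\langle f_n\rangle\|_{L^1_{t,x}} \leq \bigl(\eta + o(1)\bigr)\,\sup_n\|f_n\|_{L^1_{t,x,v}},
\end{equation*}
with an analogous bound for $J$. Sending first $n\to\infty$ and then $\eta\to 0$ closes the argument. The principal technical obstacle will be carefully upgrading a.e. pointwise convergence of the tensor terms to strong $L^1_{t,x}$-convergence; this is precisely where the combined entropy and moment bounds from Proposition \ref{prop:Grand-Skorohod} are indispensable, and the symmetrization step is what replaces the quadratic nonlinearity with an expression amenable to the velocity-average compactness of Section \ref{sec:Stoch-Velocity-Avg}.
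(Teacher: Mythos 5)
Your proposal takes a genuinely different route from the paper. After the collision symmetrization (which both arguments use), the paper does \emph{not} decouple the kernel via tensor products: it instead observes that the entire ratio $\frac{\mathcal{L}_{n}f_{n}}{1+\langle f_{n}\rangle}$ is pointwise bounded in $L^{\infty}$ (exactly because the denominator absorbs $\langle f_n\rangle$), shows this ratio converges in measure using only pointwise convergence of $b_{n}$ plus the $\LLsMw{1}$-convergence of $f_{n}$, and then applies the product limit lemma (Lemma \ref{lem:product_lemma}) against $f_{n}$ to conclude $\LLsMw{1}$-convergence of the product. Your Stone--Weierstrass tensor-product decomposition is a recognized alternative in the Boltzmann literature and the factorization of each tensor piece as $\frac{\langle f_n,\alpha\rangle}{1+\langle f_n\rangle}\cdot\frac{\langle f_n,\beta\rangle}{1+n^{-1}\langle f_n\rangle}$ is clean and does feed into the velocity-averaging machinery correctly.

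However, there is a genuine gap in the way you control the remainder. You claim that $K_{n}$ is ``compactly supported in $(v,v_*)$, uniformly in $n$, and converges uniformly to $K$.'' Neither part follows from Hypothesis \ref{hyp:regularized-collision}. That hypothesis only gives each $b_{n}$ compactly supported (with supports that typically grow in $n$ when $b\in L^1\cap L^\infty$ is not compactly supported), boundedness in $L^\infty$, and convergence $b_n\to b$ in $L^1(\R^d\times\S^{d-1})$. Convergence in $L^1$ of $b_n$ does not imply $\|K_n-K\|_{L^\infty}\to 0$; moreover $K$ itself need not be continuous or lie in $C_0(\R^{2d})$ when $b$ is only $L^1\cap L^\infty$, so a direct Stone--Weierstrass approximation of $K$ requires further justification. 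Consequently the key error estimate
\begin{equation*}
\|J_n - J_n^{\text{ap}}\|_{L^1_{t,x}} \leq \|K_n - K^{\text{ap}}\|_{L^\infty}\,\|\langle f_n\rangle\|_{L^1_{t,x}} \leq (\eta + o(1))\sup_n\|f_n\|_{\LLLs{1}}
\end{equation*}
is not justified: without uniform convergence $K_n\to K$, the $o(1)$ term does not vanish, and the remainder can remain $O(1)$. To repair the argument one would need to either (a) separate the $b_n\to b$ replacement from the Stone--Weierstrass step and handle it with the tightness of $\{f_n\}$ in $\LLLs{1}$ plus an Egorov/uniform-integrability argument on compact velocity sets, or (b) follow the paper's route, where only pointwise convergence of $b_n$ and in-measure convergence are needed because the quantity $\mathcal{L}_{n}f_{n}/(1+\langle f_n\rangle)$ never leaves $L^\infty$.
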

\begin{proof}
The proof follows essentially the same manipulations as in \cite{diperna1989cauchy} and \cite{Golse2005-zh}, carried out pointwise in $\omega \in \Omega$.  We sketch the proof only to convince the reader that the compactness properties obtained in Proposition \ref{prop:Grand-Skorohod} are sufficient to deduce the claim in the same way as for the deterministic theory, without pulling any further subsequences (potentially depending on $\omega$).  Let $\varphi \in C_{c}(\R^{d}_{v})$.  We will fix an $\omega \in \Omega$ and mostly omit dependence on this variable throughout the proof. A change of variables from $(v,v_*) \to (v^\prime,v^\prime_*)$ and an application of Fubini yields the identities
\begin{equation} \label{eq:UsefulId}
\begin{split}
\left \langle  \frac{\BCol^{+}_n(f_{n},f_{n})}{1+\langle f_{n} \rangle} \, \varphi \right \rangle 
&= \left \langle f_{n} \frac{ \mathcal{L}_{n}f_{n} }{1+\langle f_{n} \rangle} \right \rangle. \\
\left \langle  \frac{\BCol^{+}(f,f)}{1+\langle f  \rangle} , \varphi \right \rangle
&= \left \langle f \frac{ \mathcal{L}f }{1+\langle f  \rangle}  \right \rangle,
\end{split}
\end{equation}
where $\mathcal{L}_{n}$ is the linear operator on $L^{1}(\R^{d}_{v})$ defined by
\begin{equation}
\mathcal{L}_{n}f(v) = \iint_{\R^{d} \times \S^{d-1}}f_{*}b_{n}(v-v_{*})\varphi'\dee v_{*}\dee \theta,
\end{equation}
and $\mathcal{L}$ is defined analogously, but with $b$ replacing $b_{n}$.  Since $\{f_{n}(\omega)\}_{n \in \N}$ converges to $f(\omega)$ in $\LLsMw{1}$ and is tight as a sequence in $\LLLs{1}$, while $b_{n}$ converges to $b$ pointwise on $\R^{d} \times \S^{d-1}$ and is bounded in $L^{\infty}(\R^{d} \times \S^{d-1})$ by Hypothesis \ref{hyp:regularized-collision}, one can deduce that, $\P$ almost surely, both $\{\mathcal{L}_nf_n\}_{n\in\N} \to \mathcal{L}f$ and $\{\langle f\rangle\}_{n\in\N} \to \langle f\rangle$ in measure on $[0,T] \times \R^{2d}$ and $[0,T]\times\R^d$ respectively. Therefore, $\P$ almost surely
\begin{equation}\label{eq:CleverRewrite}
\left \{ \frac{ \mathcal{L}_{n}f_{n} }{1+\langle f_{n} \rangle} \right \}_{n \in \N} \to \frac{\mathcal{L}f}{ 1+\langle f\rangle }\quad \text{in measure on}\quad [0,T] \times \R^{2d}.
\end{equation}
Using the uniform bounds on $\{b_n\}_{n\in\N}$ in $L^{\infty}(\R^{d} \times \S^{d-1})$, the sequence in \eqref{eq:CleverRewrite} is also uniformly bounded in $\LLLs{\infty}$, pointwise in $\omega$. Applying the second part of the product lemma \ref{lem:product_lemma} gives
\begin{equation}
\left \{ f_{n}\frac{ \mathcal{L}_{n}f_{n} }{1+\langle f_{n} \rangle} \right \}_{n \in \N} \to f\frac{\mathcal{L}f}{ 1+\langle f\rangle } \quad \text{in} \quad \LLsMw{1}.
\end{equation}
An approximation argument (since $1$ does not belong to $C_0(\Rd_v$)) and the pointwise (in $\omega$) uniform bounds on $\{f_{n}\}_{n \in \N}$ from Proposition \ref{prop:Grand-Skorohod} yields the $\p$ almost sure convergence
\begin{equation}
\left \langle f_{n} \frac{ \mathcal{L}_{n}f_{n} }{1+\langle f_{n}  \rangle} \right \rangle \to \left \langle f \frac{ \mathcal{L}f }{1+\langle f  \rangle}  \right \rangle \quad \text{in} \quad \LLt{1}. 
\end{equation}
In view of the identities \eqref{eq:UsefulId}, this completes the proof.
\end{proof}
The purpose of the next lemma is to reduce our analysis of $\mathcal{B}_{m}^{+}$ to regions where there are no concentrations in $\{f_{n}\}_{n \in \N}$.
\begin{lem} \label{lem:LinfRed}
As $R \to \infty$, the following limit holds $\p$ almost surely:
\begin{equation}
\frac{\BCol_{n}^{+}\left (f_{n},f_{n}\right)}{1+\langle f_{n}\rangle} \1_{f_{n}>R} \to 0 \quad \text{in} \quad \LLsMw{1},
\end{equation}
uniformly in $n \in \N$.
\end{lem}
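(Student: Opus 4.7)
My plan is to follow the Arkeryd splitting already used in Lemma~\ref{lem:tightness:renorm-pos}, combined with the pointwise (in $\omega$) uniform bounds granted by Part 2 of Proposition~\ref{prop:Grand-Skorohod}. The key point is that these bounds are independent of $n$, so the splitting parameters introduced below can be selected using only $\omega$. Fix $\omega$ in the probability-one set on which
\[
C(\omega) := \sup_n \|(1+|\log f_n|)f_n\|_{\LLL{\infty}{1}} + \sup_n \|\SDis_n(f_n)\|_{\LLt{1}} < \infty,
\]
and let $\phi \in C_{0}(\R^d_v)$. By definition of $\LLsMw{1}$, the claim amounts to showing that
\[
\sup_n \bigg\| \Big\langle \tfrac{\BCol_n^+(f_n,f_n)}{1+\langle f_n\rangle}\,\1_{f_n>R},\, \phi \Big\rangle \bigg\|_{\LLt{1}} \xrightarrow[R\to\infty]{} 0.
\]

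Applying the truncated Arkeryd inequality~\eqref{eq:B_+-B_--dis-bound} pointwise in $(t,x,v)$ with a free parameter $K>1$, I can dominate
\[
\frac{\BCol_n^+(f_n,f_n)}{1+\langle f_n\rangle}\,\1_{f_n>R} \;\leq\; K\,\frac{\BCol_n^-(f_n,f_n)}{1+\langle f_n\rangle}\,\1_{f_n>R} \;+\; \frac{1}{\log K}\,\frac{\SDis_n^0(f_n)}{1+\langle f_n\rangle}\,\1_{f_n>R}.
\]
For the loss contribution, the explicit expression $\BCol_n^-(f_n,f_n) = f_n(\overline{b}_n * f_n)/(1+n^{-1}\langle f_n\rangle)$, together with the uniform bound on $\{\overline{b}_n\}_{n \in \N}$ in $L^\infty(\R^d)$ from Hypothesis~\ref{hyp:regularized-collision}, gives $\BCol_n^-(f_n,f_n)/(1+\langle f_n\rangle) \leq C f_n$. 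Integrating against $|\phi|$ then reduces the matter to the uniform $L^1$ tail estimate $\sup_n \|f_n \1_{f_n>R}\|_{\LLLs{1}} \leq T\,C(\omega)/\log R$, which is immediate from $f_n\1_{f_n>R} \leq f_n|\log f_n|/\log R$ for $R>1$. For the dissipation contribution, I would simply drop the factors $(1+\langle f_n\rangle)^{-1}$ and $\1_{f_n>R}$, then use $\int \SDis_n^0(f_n)\,\dv = \SDis_n(f_n)$, which yields a uniform bound by $\|\phi\|_{L^\infty_v}\,C(\omega)$ independently of $R$.

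Putting everything together produces an estimate of the form
\[
\sup_n \bigg\| \Big\langle \tfrac{\BCol_n^+(f_n,f_n)}{1+\langle f_n\rangle}\,\1_{f_n>R},\, \phi \Big\rangle \bigg\|_{\LLt{1}} \;\leq\; \|\phi\|_{L^\infty_v}\!\left(\frac{C\,K\,T\,C(\omega)}{\log R} + \frac{C(\omega)}{\log K}\right),
\]
valid for all $K,R>1$. The proof is then concluded by the usual two-step choice: given $\varepsilon>0$, first fix $K$ sufficiently large to absorb the dissipation term below $\varepsilon/2$, and only then send $R\to \infty$ to kill the loss term. The only delicate feature of the argument is this ordering of limits—dictated by the fact that the entropy dissipation controls the gain operator only through the Arkeryd inequality, not through any direct tail estimate—but no genuine obstacle is anticipated, since both the $L\log L$ bound and the uniform $L^\infty$ control on the kernels $\overline{b}_n$ are readily available.
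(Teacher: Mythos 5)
Your proof is correct and follows essentially the same argument as the paper's, down to the Arkeryd splitting, the pointwise bound $\BCol_n^-(f_n,f_n)/(1+\langle f_n\rangle) \lesssim f_n$ via uniform $L^\infty$ control of $\overline{b}_n$, the $L\log L$ bound for the tail of $f_n$, and the uniform entropy-dissipation bound for the remainder. The only small difference is the ordering of the double limit (you fix $K$ and then send $R\to\infty$, while the paper sends $R\to\infty$ first and then $K\to\infty$), and your characterization of this ordering as delicate is somewhat overstated—since the two error terms in your final estimate are decoupled, either order gives the result, and indeed one could even take $K=K(R)\to\infty$ with $K(R)=o(\log R)$ to dispense with the two-step limit altogether.
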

\begin{proof}
  Let $\varphi \in C_{0}(\R^{d}_{v})$ be a non-negative function. Fix an $\omega \in \Omega$ and mostly omit dependence throughout the proof. The bound \eqref{eq:AckerydBd} yields the following inequality on $\Omega \times [0,T] \times \R^{2d}$: for all $K>1$, 
\begin{equation}
\BCol^{+}_{n}\left (f_{n},f_{n}\right) \leq (\log K)^{-1}\mathcal{D}^0_{n}(f_{n}) + K \BCol^{-}_{n}\left ( f_{n},f_{n}\right ).
\end{equation}
Hence, for almost every $(\omega,t,x) \in \Omega \times [0,T] \times \R^{d}$, we find that
\begin{align}
&\left \langle \frac{\BCol^{+}_{n} \left (f_{n},f_{n}\right )}{1+\langle f_{n}\rangle} \1_{f_{n}>R}\, \varphi  \right \rangle \\
&\leq (\log K)^{-1}\| \varphi\|_{L^{\infty}_{v}}\mathcal{D}_{n}\left (f_{n} \right )+ K \left \langle \frac{\BCol^{-}_{n} \left (f_{n},f_{n}\right )}{1+\langle f_{n}\rangle} \1_{f_{n}>R}\, \varphi \right \rangle.
\end{align}
Next we observe that pointwise in $\Omega$,
\begin{equation}
 \bigg \| \left \langle \frac{\BCol^{-}_{n} \left (f_{n},f_{n}\right )}{1+\langle f_{n} \rangle} \1_{f_{n}>R}\, \varphi \right \rangle \bigg \|_{\LLt{1}}
 \leq \| \overline{b}_{n}\|_{L^{\infty}_{v}}\| \varphi\|_{L^{\infty}_{v}}\|f_{n}\1_{f_{n}>R} \|_{\LLLs{1}}.
\end{equation}
By Proposition \ref{prop:Grand-Skorohod}, $\{f_{n}(\omega)\}_{n \in \N}$ is uniformly integrable in $\LLLs{1}$ and $\{\overline{b}_{n}\}_{n \in \N}$ is uniformly bounded in $L^{\infty}_{v}$, passing $R \to \infty$ yields
\begin{equation}\label{eq:finalIneq}
\limsup_{R \to \infty}\sup_{n \in \N} \bigg \| \left\langle \frac{\BCol^{+}_n \left (f_{n},f_{n}\right )}{1+\langle f_{n}\rangle} \1_{f_{n}>R}\, \varphi  \right \rangle \bigg\|_{\LLt{1}} \leq (\log K)^{-1}\| \varphi\|_{L^{\infty}_{v}}\sup_{n \in \N}\|\mathcal{D}_{n}\left (f_{n}\right)\|_{\LLt{1}},
\end{equation}
pointwise in $\Omega$.  By Proposition \ref{prop:Grand-Skorohod}, there exists a constant $C(\omega)$ such that
\begin{equation}
\sup_{n \in \N}\|\mathcal{D}_{n}( f_{n})(\omega)\|_{\LLt{1}} \leq C(\omega).
\end{equation}
Sending $K \to \infty$ on both sides of \eqref{eq:finalIneq} we find
\begin{equation}
  \lim_{R\to \infty}\sup_n\left\|\left\langle \frac{\BCol_{n}^{+}\left (f_{n},f_{n}\right)}{1+\langle f_{n}\rangle} \1_{f_{n}>R}\, \varphi \right\rangle\right\|_{L^1_{t,x}} \to 0.
\end{equation}
Since we can always split any $\varphi\in C_0(\Rd_v)$ into positive and negative parts also in $C_0(\Rd_v)$, the above convergence holds for any $\varphi \in C_0(\Rd_v)$, completing the proof.
\end{proof}
The next step is to apply Lemma \ref{lem:LinfRed} to obtain another Lemma written below.

\begin{lem}\label{lem:B_+-lim-continuity}
As $m \to \infty$, the following limit holds $\p$ almost surely:
\begin{equation}
\frac{\BCol_{m}^{+}}{1+\langle f\rangle } \to \frac{\BCol^+(f,f)}{1+\langle f\rangle } \quad \text{in} \quad \LLsMw{1}.
\end{equation}
\end{lem}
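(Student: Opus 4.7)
Fix $\omega$ in the intersection of the full-measure sets on which Proposition \ref{prop:Grand-Skorohod}, Lemma \ref{Lem:AveragedPositivePart}, and Lemma \ref{lem:LinfRed} all hold, together with a test function $\varphi \in C_0(\R^d_v)$. The plan is to show that $\|\langle (\BCol_m^+ - \BCol^+(f,f))/(1+\langle f\rangle),\,\varphi\rangle\|_{L^1_{t,x}}\to 0$ as $m \to \infty$. By Proposition \ref{prop:Grand-Skorohod}, $\Renorm_m'(f_n)\BCol_n^+(f_n,f_n)\to \BCol_m^+$ weakly in $\LLLs{1}$ as $n\to\infty$. Since $\varphi/(1+\langle f\rangle)$ belongs to $\LLLs{\infty}$, pairing yields weak $L^1_{t,x}$ convergence of $\langle \Renorm_m'(f_n)\BCol_n^+(f_n,f_n)/(1+\langle f\rangle),\varphi\rangle$ to $\langle \BCol_m^+/(1+\langle f\rangle),\varphi\rangle$; on the other hand, Lemma \ref{Lem:AveragedPositivePart} furnishes \emph{strong} $L^1_{t,x}$ convergence of $\langle \BCol_n^+(f_n,f_n)/(1+\langle f_n\rangle),\varphi\rangle$ to $\langle \BCol^+(f,f)/(1+\langle f\rangle),\varphi\rangle$.

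The heart of the proof will be the algebraic decomposition
\begin{equation*}
\frac{\Renorm_m'(f_n)\BCol_n^+(f_n,f_n)}{1+\langle f\rangle} - \frac{\BCol_n^+(f_n,f_n)}{1+\langle f_n\rangle} = -I^{(1)}_{n,m} + I^{(2)}_{n,m},
\end{equation*}
with $I^{(1)}_{n,m} = (1-\Renorm_m'(f_n))\BCol_n^+(f_n,f_n)/(1+\langle f_n\rangle)$ and $I^{(2)}_{n,m} = \Renorm_m'(f_n)\BCol_n^+(f_n,f_n)(\langle f_n\rangle - \langle f\rangle)/[(1+\langle f\rangle)(1+\langle f_n\rangle)]$. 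For $I^{(1)}_{n,m}$, I will exploit the monotonicity of $z\mapsto 1-\Renorm_m'(z) = 1 - (1+z/m)^{-2}$ to write $1-\Renorm_m'(f_n)\leq (1-\Renorm_m'(R))\1_{f_n\leq R} + \1_{f_n > R}$; this splits the contribution of $\langle I^{(1)}_{n,m},|\varphi|\rangle$ in $L^1_{t,x}$ into a first piece that picks up the vanishing factor $1-\Renorm_m'(R)\to 0$ (as $m\to\infty$) against the $\nu_{|\varphi|}$-bounded family $\{\BCol_n^+(f_n,f_n)/(1+\langle f_n\rangle)\}_n$ (finite bound from Lemma \ref{Lem:AveragedPositivePart}), and a tail piece controlled by $\epsilon(R)\to 0$ from Lemma \ref{lem:LinfRed}, both estimates uniform in $n$.

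To conclude, I will apply weak lower semicontinuity of the $L^1_{t,x}$ norm, passing $n\to\infty$ in the resulting inequality to obtain $\|\langle(\BCol_m^+-\BCol^+(f,f))/(1+\langle f\rangle),\varphi\rangle\|_{L^1_{t,x}} \leq (1-\Renorm_m'(R))C(\omega,\varphi) + \epsilon(R)$; then $m\to\infty$ kills the first term at each fixed $R$, and finally $R\to\infty$ kills the second. The main obstacle will be the handling of $I^{(2)}_{n,m}$, where the denominators $1+\langle f\rangle$ and $1+\langle f_n\rangle$ disagree. Here the plan is to show that for fixed $m$, $\|\langle I^{(2)}_{n,m},\varphi\rangle\|_{L^1_{t,x}}\to 0$ as $n\to\infty$ by combining: (a) the uniform integrability of $\{\Renorm_m'(f_n)\BCol_n^+(f_n,f_n)\}_n$ in $\LLLs{1}$, a consequence of its weak $L^1$ convergence via Dunford--Pettis; (b) the uniform bound $|(\langle f_n\rangle - \langle f\rangle)/[(1+\langle f\rangle)(1+\langle f_n\rangle)]|\leq 1$ together with its convergence to zero in measure, which follows from $\langle f_n\rangle\to\langle f\rangle$ in $L^1_{t,x}$ (obtained from $\LLsMw{1}$ convergence of $f_n\to f$ by approximating the constant $1$ with compactly supported velocity cutoffs $\phi_R\in C_0(\R^d_v)$ and using the uniform $(1+|v|^2)f_n$ moment bound from Proposition \ref{prop:Grand-Skorohod}); and (c) the standard fact that $\int |c_n| A_n\to 0$ whenever $|c_n|\leq 1$, $c_n\to 0$ in measure, and $\{A_n\}$ is uniformly integrable.
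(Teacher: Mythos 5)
Your proof is correct and follows essentially the same strategy as the paper's: establish a $\liminf$ inequality via weak lower semicontinuity of the $L^1_{t,x}$ norm, using the weak $\LLLs{1}$ convergence of $\Renorm_m'(f_n)\BCol_n^+(f_n,f_n)$ from Proposition \ref{prop:Grand-Skorohod} together with the strong convergence from Lemma \ref{Lem:AveragedPositivePart}, then split $1-\Renorm_m'(f_n)$ on $\{f_n\leq R\}$ versus $\{f_n > R\}$ and send $m\to\infty$ then $R\to\infty$ via Lemma \ref{lem:LinfRed}. The only structural difference is that pairing $\Renorm_m'(f_n)\BCol_n^+(f_n,f_n)$ directly with $\varphi/(1+\langle f\rangle)$ produces the extra cross term $I^{(2)}_{n,m}$ which you must show vanishes separately, whereas the paper keeps $1+\langle f_n\rangle$ in both denominators and absorbs the replacement $\langle f_n\rangle\to\langle f\rangle$ into the weak lower-semicontinuity step through the product limit Lemma \ref{lem:product_lemma} applied to $\Renorm_m'(f_n)\BCol_n^+(f_n,f_n)/(1+\langle f_n\rangle)\to \BCol_m^+/(1+\langle f\rangle)$; your handling of $I^{(2)}_{n,m}$ via Dunford--Pettis and a Vitali-type argument is a valid and essentially equivalent way of supplying the same ingredient.
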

\begin{proof}
Let $\varphi \in C_{0}(\R^{d}_{v})$ be non-negative. Fix $\omega \in \Omega$ throughout and mostly omit.  The first step is to observe that for each fixed $m \in \N$, pointwise in $\Omega$,
\begin{equation}\label{eq:Lower-semicont}
\begin{split}
&\bigg \| \left \langle \frac{\BCol_{m}^{+}-\BCol^{+}\left (f,f \right)}{1+ \langle f \rangle }\,\varphi \right \rangle \bigg \|_{\LLt{1}} \\
& \leq \liminf_{n \to \infty} \bigg \| \left \langle \frac{ \Renorm_{m}'(f_{n}) \BCol_{n}^{+}\left( f_{n},f_{n}  \right)-\BCol^{+}_{n}\left (f_{n},f_{n} \right)}{1+\langle f_{n}\rangle}\,\varphi \right \rangle \bigg \|_{\LLt{1}}.
\end{split}
\end{equation}
Indeed, this follows from the following two observations.  In view of Lemma \ref{lem:LinfRed},
\begin{equation}
 \left \langle \frac{\BCol^{+}_{n} \left (f_{n},f_{n} \right )}{1+ \langle f_{n}\rangle }\, \varphi \right \rangle \to  
 \left \langle \frac{ \BCol^{+} \left (f,f \right )}{1+ \langle f\rangle}\,\varphi \right \rangle \quad \text{strongly in} \quad \LLt{1},
\end{equation}
pointwise in $\Omega$.  By Proposition \ref{prop:Grand-Skorohod}, $ \{ \Renorm_{m}'(f_{n}) \BCol_{n}^{+}(f_{n},f_{n})(\omega) \}_{n \in \N}$ converges to $\BCol_{m}^{+}(\omega)$ weakly in $\LLLs{1}$ and $\{f_{n}(\omega)\}_{n \in \N}$ converges to $f(\omega)$ in $\LLsMw{1}$. Therefore using the uniform bounds on $\{f_n(\omega)\}$ we conclude that $\langle f_{n}(\omega),1 \rangle$ converges to $\langle f(\omega),1 \rangle$ in measure on $[0,T] \times \R^{2d}$. Therefore, the product Lemma \ref{lem:product_lemma} yields the $\p$ almost sure convergence
\begin{equation}
\frac{\Renorm_{m}' \left ( f_{n} \right) \BCol_{n}^{+}\left( f_{n},f_{n}  \right)}{1+\langle f_{n},1 \rangle} \to \frac{\BCol_{m}^{+}}{1+ \langle f,1 \rangle } \quad \text{weakly in} \quad \LLLs{1}. 
\end{equation}
Now the desired inequality follows from the lower semi-continuity of the $\LLLs{1}$ norm with respect to weak convergence.  

The next step is to observe that for all $R>1$,
\begin{equation}\label{eq:splittingHighLow}
\begin{split}
&\bigg \| \left \langle \frac{ \Renorm_{m}'(f_{n}) \BCol_{n}^{+}\left( f_{n},f_{n}  \right)-\BCol^{+}_{n}\left (f_{n},f_{n} \right)}{1+\langle f_{n}\rangle}\,\varphi \right \rangle \bigg \|_{\LLt{1}}\\
&\hspace{.5in}\leq \left [1-\Big (1+\frac{R}{m}\Big )^{-2} \right] \bigg \| \left \langle \frac{\BCol^{+}_n \left (f_{n},f_{n} \right )}{1+ \langle f_{n} \rangle }\, \varphi \right \rangle \bigg \|_{\LLt{1}} + \bigg \| \left \langle \frac{\BCol^{+}_n \left (f_{n},f_{n} \right )}{1+ \langle f_{n} \rangle }1_{f_{n}>R}\, \varphi \right \rangle \bigg \|_{\LLt{1}}.
\end{split}
\end{equation}
Indeed, writing $1=1_{f_{n}<R}+1_{f_{n}\geq R}$ and recalling that $\Renorm_{m}'(x)=(1+\frac{x}{m})^{-2}$, we find the following upper and lower bounds hold pointwise in $\Omega \times [0,T] \times \R^{2d}$
\begin{equation}
\begin{split}
\frac{ \BCol_{n}^{+}\left (f_{n},f_{n}\right) }{ (1+\frac{f_{n}}{m})^{2} } &\leq \BCol_{n}^{+}\left (f_{n},f_{n}\right).\\
\frac{ \BCol_{n}^{+}\left (f_{n},f_{n}\right) }{ (1+\frac{f_{n}}{m})^{2} } &\geq \frac{ \BCol_{n}^{+}\left (f_{n},f_{n}\right) }{ (1+\frac{R}{m})^{2} }-\BCol_{n}^{+}\left (f_{n},f_{n}\right)1_{f_{n}\geq R} .
\end{split}
\end{equation}
Subtracting $\BCol_{n}^{+}\left (f_{n},f_{n}\right)$ on both sides, pairing with $\varphi$, dividing by $1+\langle f_{n}\rangle$, and integrating over $[0,T] \times \R^{d}$ gives the claim.  

Using \eqref{eq:Lower-semicont}, we may pass $n \to \infty$ on both side of \eqref{eq:splittingHighLow}, pointwise in $\Omega$. Appealing to Lemma \ref{Lem:AveragedPositivePart} to pass the limit in the first term on the right-hand side of (\ref{eq:splittingHighLow}), we find that for each $m \in \N$ and $R>1$, the following inequality holds pointwise in $\Omega$
\begin{align}
&\bigg \| \left \langle \frac{\BCol_{m}^{+}-\BCol^{+}\left (f,f \right)}{1+\langle f \rangle}\,\varphi \right \rangle \bigg \|_{\LLt{1}} \\
&\leq \left [1-\left(1+\frac{R}{m}\right)^{-2} \right] \bigg \| \left \langle  \frac{\BCol^{+}\left (f,f \right)}{1+ \langle f\rangle }\, \varphi \right \rangle \bigg \|_{\LLt{1}}+\sup_{n \in \N} \bigg \| \left \langle \frac{\BCol^{+}_n \left (f_{n},f_{n} \right )}{1+ \langle f_{n}\rangle }1_{f_{n}>R}\, \varphi \right \rangle \bigg \|_{\LLt{1}}.
\end{align}
Passing $m \to \infty$ yields for each $R>1$, pointwise in $\Omega$
\begin{equation}
\limsup_{m \to \infty}\bigg \| \left \langle \frac{\BCol_{m}^{+}-\BCol^{+}\left (f,f \right)}{1+\langle f\rangle }\,\varphi \right \rangle \bigg \|_{\LLt{1}}
\leq \sup_{n \in \N} \bigg \| \left \langle \frac{\BCol^{+}_n \left (f_{n},f_{n} \right )}{1+ \langle f_{n}\rangle  }1_{f_{n}>R}\, \varphi \right \rangle \bigg \|_{\LLt{1}}.
\end{equation}
Finally, sending $R \to \infty$ and applying Lemma \ref{lem:LinfRed} to remove the peaks completes the proof.
\end{proof}
\subsection{Proof of Proposition \ref{prop:RenormCollision}}
Finally, we can apply our lemmas in order to obtain our main Proposition.
\begin{proof}[Proof of Proposition \ref{prop:RenormCollision}]
  Let us begin with the analysis of the  negative part $\mathcal{B}_{m}^{-}$.  The first point is to observe that for all $\omega \in \Omega$, we may identify $\mathcal{B}_{m}^{-}(\omega)=\overline{\gamma_{m}(f)}\,\overline{b}*_vf(\omega)$. Indeed, recall that $\{ \Renorm_{m}'(f_{n})\mathcal{B}_{n}^{-}(f_{n},f_{n})(\omega) \}_{n \in \N}$ converges to $\mathcal{B}_{m}^{-}(\omega)$ weakly in $\LLLs{1}$ by Proposition \ref{prop:Grand-Skorohod}. On one hand, since $\{\overline{b}_{n}*_vf_{n}(\omega)\}_{n \in \N}$ is uniformly integrable in $L^1_{t,x,v}$ and converges in measure on $[0,T] \times \R^{2d}$ to $\overline{b}*_vf(\omega)$, then by Vitali convergence
  \begin{equation}
    \{\overline{b}_{n}*_vf_{n}(\omega)\}_{n \in \N} \to \overline{b}*_vf(\omega)\quad\text{in}\quad L^1_{t,x,v}.
  \end{equation}
  On the other hand, $\{f_{n}\Renorm_{m}'(f_{n})(\omega)\}_{n \in \N}$ converges weakly to $\overline{\gamma_{m}(f)}(\omega)$ in $\LLLs{1}$, and is uniformly (in $n$) bounded in $L^{\infty}_{t,x,v}$, then (up to a subsequence) $\{f_{n}\Renorm_{m}'(f_{n})(\omega)\}_{n \in \N}$ converges to $\overline{\gamma_m(f)}$ in $[L^\infty_{t,x,v}]^*$. Therefore (up to a subsequence),  since this is a weak-* $L^\infty$ - strongly $L^1$ product limit, we obtain
  \begin{equation}\label{eq:B-renorm-identification}
    \Gamma^\prime_m(f_n)\mathcal{B}_{n}^{-}(f_{n},f_{n})(\omega) \to \overline{\gamma_{m}(f)}\,\overline{b}*_vf(\omega) \quad \text{in} \quad [L^1_{t,x,v}]_{w}.
  \end{equation}
However, since $\{ \Renorm_{m}'(f_{n})\mathcal{B}_{n}^{-}(f_{n},f_{n})(\omega) \}_{n \in \N}$ converges to $\mathcal{B}_{m}^{-}(\omega)$ in $[\LLLs{1}]_{\mathrm{w}}$ the above convergence holds for the whole sequence and the claimed identification holds.

Next, by Corollary \ref{lem:Strong-Renorm-Limit}, $\overline{\Gamma_m(f)}(\omega) \to f(\omega)$ in $\LLLs{1}$, and by an analogous argument one can show $\overline{\gamma_m(f)}(\omega)\to f(\omega)$ in $\LLLs{1}$. This allows us to deduce that $\p$ almost surely,
\begin{equation}
\left\{\frac{\BCol_m^-}{1+\overline{\Renorm_{m}(f)}}\right \}_{m \in \N} \to \frac{B^-(f,f)}{1+f} \quad \text{in measure on}\quad [0,T] \times \R^{2d}.
\end{equation}
Since $\gamma_m(z) = z\Gamma^\prime_m(z) = (1+\frac{z}{m})^{-1}\Gamma_m(z)$, then $\overline{\gamma_{m}(f)}\leq \overline{\Gamma_m(f)}$ pointwise for each $m\in\N$. This yields the pointwise inequality
\begin{equation}\label{eq:B-_m-bound}
\frac{\mathcal{B}_{m}^{-}}{1+\overline{\Renorm_{m}(f)}} \leq \overline{b}*_vf.
\end{equation}
A double application of Lebesgue dominated convergence (first in $[0,T]\times\R^{2d}$ and then in $\Omega$) using the bound above and the fact that $f\in L^2(\Omega; \LLLs{1})$ allows us to complete the first part of the proof (in fact it gives strong convergence in $L^2(\Omega; L^1_{t,x,v})$).

To treat the positive part of the renormalized collision operator, observe that for each $m,n \in \N$, the bound \eqref{eq:AckerydBd} gives the pointwise bound
\begin{equation}
  \frac{\Gamma_m^\prime(f_n)\BCol_n^+(f_n,f_n)}{1+ \overline{\Gamma_m(f)}} \leq \frac{1}{\log{K}}\SDis^0_n(f_n) +  K\frac{\Gamma^{\prime}_m(f_n)B^{-}_n(f_n,f_n)}{1+\overline{\Gamma_m(f)}}.
\end{equation}
Next we pair with a positive $\phi \in C_0([0,T]\times\R^{2d})$ and pass the $n\to\infty$ limit on both sides of the inequality above and use the convergence of $\mathcal{D}^0_n(f_n)$ to $\overline{\SDis^0(f)}$ in $\mathcal{M}_{t,x,v}^*$ given in Proposition \ref{prop:Grand-Skorohod} and the inequality (\ref{eq:B-_m-bound}) to obtain
\begin{equation}
  \left\langle\frac{\BCol_m^+}{1+ \overline{\Gamma_m(f)}},\phi\right\rangle \leq \frac{1}{\log{K}}\left\langle \overline{\SDis^0(f)}, \phi\right\rangle +  K\left\langle\bar{b}*_v f,\phi\right\rangle.
\end{equation}
For the second term on the right-hand side above we used the convergence (\ref{eq:B-renorm-identification}) and the poinwise bound $\overline{\gamma_{m}(f)}\leq \overline{\Gamma_m(f)}$. Furthermore, using the fact that $(1+\overline{\Gamma_m(f)})^{-1}\BCol_m^+$ is in $L^1_{t,x,v}$ and taking $\phi$ to be a suitable approximation of the identity allows us to conclude the almost everywhere $\Omega\times[0,T]\times\R^{2d}$ inequality
\begin{equation}\label{eq:abs-L1-bound}
\frac{\mathcal{B}_{m}^{+}}{1+\overline{\Renorm_{m}(f)}} \leqs \overline{\SDis^0(f)}_{\mathrm{ac}} + \overline{b}*f,
\end{equation}
where $\overline{\SDis^0(f)}_{\mathrm{ac}}$ is the density of the absolutely continuous part of $\overline{\SDis^0(f)}$.

To finish the proof, we write
\begin{equation}
\frac{\mathcal{B}_{m}^{+}}{1+\overline{\Renorm_{m}(f)}} = \frac{1+\langle f,1\rangle }{1+\overline{\Renorm_{m}(f)}} \frac{\mathcal{B}_{m}^{+}}{1+\langle f\rangle}.
\end{equation}
By Corollary \ref{lem:Strong-Renorm-Limit}, 
\begin{align}
  &\left \{ \frac{1}{1+\overline{\Renorm_{m}(f)}} \right \}_{m \in \N} \to \frac{1}{1+f} \quad \text{in measure on} \quad [0,T] \times \R^{2d}, \\
  \intertext{and by Lemma \ref{lem:B_+-lim-continuity}}
&\left \{ \frac{\mathcal{B}_{m}^{+}}{1+\langle f\rangle} \right \}_{m \in \N} \to \frac{ \mathcal{B}^{+}(f,f) }{1+\langle f\rangle} \quad \text{in}\quad \LLsMw{1}.
\end{align}
The product limit Lemma \ref{lem:product_lemma}, gives $\P$ almost surely
\begin{equation}
  \left\{ \frac{\BCol_m^+}{(1+\langle f\rangle)(1+\overline{\Gamma_m(f)})}\right\}_{m\in\N} \to \frac{B^+(f,f)}{(1+\langle f\rangle) (1+f)}\quad \text{in}\quad \LLsMw{1},
\end{equation}
and therefore we can conclude (using the fact that $\langle f\rangle$ is independent of $v$), for each $\varphi\in C_0(\Rd_v)$,
\begin{equation}
  \left\{\left\langle\frac{\BCol_m^+}{1+\overline{\Gamma_m(f)}}\,\varphi\right\rangle\right\}_{m\in\N} \to \left\langle\frac{B^+(f,f)}{1+f}\,\varphi\right\rangle \quad \text{in measure on}\quad [0,T]\times\Rd_x.
\end{equation}

In view of the bound (\ref{eq:abs-L1-bound}) we would like to again use a double application of the dominated convergence theorem (first in $[0,T]\times \Rd_x$ and then in $\omega$) to complete the proof. Indeed in order to apply dominated convergence in $\Omega$ it suffices to show that $\overline{\SDis^0(f)}_{ac} \in L^2(\Omega;\LLLs{1})$. To show this, choose $\phi\in C_0([0,T]\times\R^{2d})$ non-negative. By the $\P$ almost sure convergence of $\SDis_n^0(f_n)$ in Proposition $\Meas_{t,x,v}^*$, $\{|\langle \SDis_n^0(f_n), \phi\rangle|^2\}_{n\in\N}$ converges to $|\langle \overline{\SDis^0(f)}, \phi\rangle|^2$ $\P$ almost surely. It follows by Fatou's Lemma (in $\Omega$) that
\begin{equation}
  \E|\langle \overline{\SDis^0(f)}_{\mathrm{as}}, \phi\rangle|^2\leq \E|\langle \overline{\SDis^0(f)}, \phi\rangle|^2 \leq  \sup_n \E |\langle \SDis_n^0(f_n), \phi\rangle|^2 \leq \|\phi\|_{\LLLs{\infty}}^2 \sup_n\E \|\SDis_n(f_n)\|^2_{\LLt{1}}.
\end{equation}

Since $\overline{\SDis^0(f)}_{\mathrm{as}} \geq 0$, we may replace $\phi$ by a sequence of non-negative functions $\{\phi_k\}_{k\in\N} \subseteq C_0(\Rd)$, $\phi_k \to 1$ pointwise and monotonically. Then, passing $k\to \infty$ using monotone convergence and using the uniform bounds on $\SDis_n(f_n)$ yields the result.
\end{proof}


\section{Proof of Main Result}\label{sec:Main-Proof}
\begin{proof}[Proof of Theorem \ref{thm:MainResult}]
We begin by proving estimates (\ref{eq:Moment-ent-dis-estimates-thm}). Recall that Proposition \ref{prop:Grand-Skorohod} implies that $\{f_{n}\}_{n \in \N}$ converges to $f$ in $\CLLw{1}$ with probability one. We begin by showing the bound on $(1+|x|^2 +|v|^2)f$.  Let $B_{R}$ denote the ball of radius $R>0$ in $\R^{2d}$.  It follows that $\p$ almost surely,
\begin{equation}
\|(1+|x|^{2}+|v|^{2})1_{B_{R}}f_{n}\|_{\LLL{\infty}{1}}^{p} \to \|(1+|x|^{2}+|v|^{2})1_{B_{R}}f\|_{\LLL{\infty}{1}}^{p}.
\end{equation}
By Fatou's Lemma in $\Omega$, we find that
\begin{equation}
\E\|(1+|x|^{2}+|v|^{2})1_{B_{R}}f\|_{\LLL{\infty}{1}}^{p} \leq \sup_{n \in \N} \E\|(1+|x|^{2}+|v|^{2})1_{B_{R}}f_{n}\|_{\LLL{\infty}{1}}^{p}< \infty,
\end{equation}
in view of Part 2 of Proposition \ref{prop:Grand-Skorohod}. Sending $R \to \infty$ and applying Fatou's Lemma once more yields
\begin{equation}
  \E\|(1+|x|^2 + |v|^2)f\|^p_{\LLL{\infty}{1}} < \infty.
\end{equation}

To show the bounds on $f|\log{f}|$ and $\SDis(f)$, we recall the proof of Lemma \ref{lem:entropy-Bounds}, where we showed that $\{f_n\}_{n\in\N}$ satisfies the following entropy equation $\P$- almost surely for each $t\in[0,T]$,
\begin{equation}\label{eq:entropy-identity-n}
\iint_{\R^{2d}} f_n(t)\log{f_n(t)}\dx\dv  + \int_0^t\int_{\Rd}\SDis_n(f_n(s))\dx\ds = \iint_{\R^{2d}} f_0\log{f_0}\dx\dv.
\end{equation}
Since $z \mapsto z\log{z}$ is convex, and $\{f_n\}_{n\in\N} \to f$ in $C_t([\LLs{1}]_w)$ $\P$ almost surely, then, by lower semi-continuity and the non-negativity of $\SDis_n(f_n)$, the following inequality holds pointwise in $\Omega\times[0,T]$,
\begin{equation}
  \iint_{\R^{2d}} f\log{f}\dx\dv \leq \iint_{\R^{2d}} f_0\log{f_0}\dx\dv. 
\end{equation}
From this point on, we may follow the arguments in Section \ref{subsec:Entropy-Bound} to conclude
\begin{equation}
  \E\|f\log{f}\|_{\LLL{\infty}{1}}^p <\infty.
\end{equation}

To show the bound on the dissipation $\SDis(f)$, we remark that a standard modification of the proof of Lemma \ref{Lem:AveragedPositivePart} allows us to conclude the $\P$ almost surely
\begin{equation}
 \frac{f^\prime_nf^\prime_{n,*}}{1+\ep\langle f_n\rangle} \to \frac{f^\prime f^\prime_{*}}{1+\ep\langle f\rangle}\quad \text{in} \quad [L^1([0,T]\times\R^{3d}_{x,v,v*}\times\S^{d-1})]_w,
\end{equation}
for each $\ep >0$. Similarly, by the product limit Lemma \ref{lem:product_lemma}, we may also conclude that $\P$ almost surely,
\begin{equation}
  \frac{f_n f_{n,*}}{1+\ep\langle f_n\rangle} \to \frac{f f_{*}}{1+\ep\langle f\rangle} \quad \text{in}\quad [L^1([0,T]\times\R^{3d}_{x,v,v_*})]_w.
\end{equation}
Notice that the function $(x,y) \mapsto (x-y)(\log{x} - \log{y})$ is convex on $\R^2_+$. Therefore, by lower semi-continuity we may conclude that $\P$ almost surely, for every $t\in [0,T]$ and each $\ep >0$
\begin{equation}
\begin{aligned}
  &\int_0^t\iiint_{\R^{3d}\times{\S^{d-1}}} \frac{d(f)b}{1+\ep\langle f \rangle}\dee{\theta}\dv\dv_*\dx\ds\\
&\hspace{1in} \leq \liminf_n \int_0^t\iiint_{\R^{3d}\times{\S^{d-1}}} \frac{d(f_n)b}{1+\ep\langle f_n\rangle}\dee{\theta}\dv\dv_*\dx\ds\\
 &\hspace{1in}\leq \liminf_n \int_0^t\int_{\Rd} \SDis_n(f_n)\dx\ds.
\end{aligned}
\end{equation}
Taking $\ep \to 0$, by the monotone convergence theorem, gives
\begin{equation}
  \int_0^t \int_{\Rd} \SDis(f)\dx\ds \leq \liminf_n \int_0^t\int_{\Rd} \SDis_n(f_n)\dx\ds.
\end{equation}
Passing $n\to\infty$ on both sides of (\ref{eq:entropy-identity-n}) yields, the global entropy inequality (\ref{eq:global-entropy-ineq}),
\begin{equation}
  \iint_{\R^{2d}} f(t)\log{f}(t)\dx\dv + \int_0^t\int_{\Rd} \SDis(f)(s)\dx\ds \leq \int_{\R^{2d}} f_0\log{f_0}\dx\dv.
\end{equation}
Whereby we obtain the bound
\begin{equation}
  \|\SDis(f)\|_{\LLLs{1}} \leq \|f\log{f}\|_{\LLL{\infty}{1}} + \|f_0\log{f_0}\|_{\LLs{1}}.
\end{equation}
Using the bound on $f\log{f}$ above, gives
\begin{equation}
  \E \|\SDis(f)\|_{\LLLs{1}}^p <\infty.
\end{equation}

Next we show the conservation laws (\ref{eq:local-mass-conv}-\ref{eq:global-entropy-ineq}). In fact we have already shown (\ref{eq:global-entropy-ineq}) in the computation above. To show (\ref{eq:local-mass-conv}-\ref{eq:global-energy-ineq}), recall that $f_n$ satisfies for each $\varphi\in C^\infty_c(\R^{2d})$
\begin{equation}\label{eq:approximate-Boltzmann-n}
  \begin{aligned}
    &\langle f_n, \varphi\rangle = \langle f_n^0,\varphi\rangle  + \int_0^t\langle f_n(s), v\cdot\nabla_x\varphi + \mathcal{L}_{\sigma^n}\varphi\rangle\ds\\
    &\hspace{1in}+\int_0^t\langle f_n(s),\sigma^n_k\cdot\nabla_v\varphi\rangle\dee\beta^n_k(s) + \int_0^t \langle\BCol_n(f_n,f_n),\varphi\rangle\ds
  \end{aligned}
\end{equation}
in distribution in $x,v$. Using the $\P$ almost sure moment estimates provided by property 2 in Proposition \ref{prop:Grand-Skorohod} and the boundedness of the truncated collision operator $\BCol_n(f_n,f_n)$
\begin{equation}\label{eq:p-as-moment-estimate}
  \begin{aligned}
     \|(1+|x|^2+|v|^2)^kf_n\|_{L^\infty_t(L^1_{x,v})} <\infty, \quad \|(1+|x|^2 +|v|^2)^k\BCol_n(f_n,f_n)\|_{L^1_{t,x,v}} <\infty.
  \end{aligned}
\end{equation}
It is straight forward to use these estimates to upgrade equation (\ref{eq:approximate-Boltzmann-n}) to a class of test functions $\varphi(x,v)$ with polynomial growth. Indeed, choosing the test function $\varphi$ to be $1,v$ and $\tfrac{1}{2}|v|^2$ in give the following balances,
\begin{equation}\label{eq:local-mass-conv-n}
  \int_{\Rd} f_n(t) \dv + \Div_x\int_0^t\int_{\Rd}vf_n(s)\dv\ds = \int_{\Rd} f_0\dv  \quad \text{in}\quad \mathcal{D}^\prime_x,
\end{equation}
\begin{equation}\label{eq:local-momentum-n}
  \E\iint_{\R^{2d}} vf_n(t) \dv\dx = \E\int_0^t\iint_{\R^{2d}}(\mathcal{L}_{\sigma^n}v)f_n(s)\dv\dx\ds + \int_{\R^{2d}} vf_0^n\dv\dx,
\end{equation}
\begin{equation}\label{eq:local-energy-n}
  \E\iint_{\R^{2d}} \frac{1}{2}|v|^2 f_n(t) \dv\dx = \E\int_0^t\iint_{\R^{2d}}\frac{1}{2}(\mathcal{L}_{\sigma^n}|v|^2)f_n(s)\dv\dx\ds + \int_{\R^{2d}} \frac{1}{2}|v|^2f_0^n\dv\dx.
\end{equation}

In order the pass the limit in $n$ above, will find it useful to prove the following extension of the product limit Lemma \ref{lem:product_lemma} for the sequence $\{f_n\}_{n\in\N}$.
\begin{lem}
  Let $\{\phi_n\}_{n\in\N}$ be a sequence of functions in $[L^\infty_{x,v}]_{\loc}$ converging pointwise a.e to $\phi$ satisfying the uniform growth assumption
  \begin{equation}\label{eq:phi-stict-sub-quad}
    \lim_{R\to\infty}\sup_n\left\|\frac{\phi_n(x,v)}{1+|x|^2+|v|^2}\1_{B_R^c}\right\|_{L^\infty_{x,v}} = 0, \quad \left\|\frac{\phi(x,v)}{1+|x|^2+|v|^2}\1_{B_R^c}\right\|_{L^\infty_{x,v}} <\infty.
  \end{equation}
  where $B_R\subset \R^{2d}$ is the ball of radius $R$. Then,
  \begin{equation}
     \iint_{\R^{2d}} \phi_nf_n\dv\dx \to \iint_{\R^{2d}} \phi f \dv\dx \quad \text{in} \quad [L^2(\Omega\times[0,T])]_{\mathrm{w}}.
  \end{equation}
\end{lem}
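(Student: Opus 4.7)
The plan is to prove this auxiliary limit lemma by combining a moment-based tail truncation, a pointwise (in $\omega,t$) limit passage on a compact set in $(x,v)$, and an upgrade from pointwise to weak $L^2$ convergence using the higher moment bounds from Proposition \ref{prop:Grand-Skorohod}.

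\textbf{Step 1: Tail truncation.} For $R>0$ write $\phi_n = \phi_n\1_{B_R}+\phi_n\1_{B_R^c}$ and similarly for $\phi$. Using the sub-quadratic growth hypothesis \eqref{eq:phi-stict-sub-quad}, the tail can be estimated pointwise by
\begin{equation}
\Big|\iint_{\R^{2d}}\phi_n\1_{B_R^c}f_n\dv\dx\Big| \leq \varepsilon_{n}(R)\,\|(1+|x|^2+|v|^2)f_n\|_{L^1_{x,v}},
\end{equation}
where $\varepsilon_n(R):=\|\phi_n(1+|x|^2+|v|^2)^{-1}\1_{B_R^c}\|_{L^\infty_{x,v}}$ satisfies $\sup_n\varepsilon_n(R)\to 0$ as $R\to\infty$. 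Combining with the uniform bound $\sup_n\E\|(1+|x|^2+|v|^2)f_n\|_{L^\infty_t(L^1_{x,v})}^p<\infty$ from Proposition \ref{prop:Grand-Skorohod}, one obtains
\begin{equation}
\lim_{R\to\infty}\sup_n\Big\|\iint_{\R^{2d}}\phi_n\1_{B_R^c}f_n\dv\dx\Big\|_{L^2(\Omega\times[0,T])}=0,
\end{equation}
and the analogous statement holds for $\phi f$. Thus, modulo an $R$-dependent error uniform in $n$, it suffices to pass the limit in the truncated integrals on $B_R$.

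\textbf{Step 2: Pointwise convergence on $B_R$.} Fix $R>0$ and split
\begin{equation}
\iint_{B_R}\phi_nf_n\dv\dx - \iint_{B_R}\phi f\dv\dx = \iint_{B_R}(\phi_n-\phi)f_n\dv\dx + \iint_{B_R}\phi(f_n-f)\dv\dx.
\end{equation}
Since Proposition \ref{prop:Grand-Skorohod} gives $f_n\to f$ in $\CLLw{1}$ pointwise on $\Omega$, and $\phi\1_{B_R}\in L^\infty_{x,v}$ by the growth assumption on $\phi$, the second term vanishes for every $(\omega,t)$. For the first term I would use Egorov: for any $\delta>0$ there exists $E_\delta\subseteq B_R$ with $|B_R\setminus E_\delta|<\delta$ on which $\phi_n\to\phi$ uniformly. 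Then
\begin{equation}
\Big|\iint_{B_R}(\phi_n-\phi)f_n\dv\dx\Big| \leq \|\phi_n-\phi\|_{L^\infty(E_\delta)}\|f_n\|_{L^1_{x,v}} + 2\sup_m\|\phi_m\|_{L^\infty(B_R)}\iint_{B_R\setminus E_\delta}f_n\dv\dx.
\end{equation}
The first summand tends to zero in $n$ for fixed $\delta$; the second is controlled, for any $\eta>0$, by choosing $\delta$ small enough using uniform integrability of $\{f_n\}$ in $\LLs{1}$ (which follows from the $L\log L$ bound in Proposition \ref{prop:Grand-Skorohod}). Hence $\iint_{B_R}\phi_nf_n\dv\dx\to\iint_{B_R}\phi f\dv\dx$ pointwise in $\Omega\times[0,T]$.

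\textbf{Step 3: Upgrade to weak $L^2$ convergence.} The almost-sure pointwise convergence of the full integrals $\iint\phi_nf_n\dv\dx$ to $\iint\phi f\dv\dx$ on $\Omega\times[0,T]$ follows by combining Steps 1 and 2 (letting $R\to\infty$ after the pointwise limit on $B_R$, with the tail bound uniform in $n$). To upgrade this to weak $L^2$ convergence, I would use a Vitali-type argument: from Step 1, $|\iint\phi_nf_n\dv\dx| \lesssim \|(1+|x|^2+|v|^2)f_n\|_{L^1_{x,v}}$, and Proposition \ref{prop:Grand-Skorohod} provides $L^p$ bounds for every $p\in[1,\infty)$. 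In particular $\{\iint\phi_nf_n\dv\dx\}_{n\in\N}$ is bounded in $L^p(\Omega\times[0,T])$ for any $p>2$, hence uniformly integrable as a family in $L^2(\Omega\times[0,T])$, and combined with pointwise convergence this yields weak (in fact strong) convergence in $L^2(\Omega\times[0,T])$.

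The main technical subtlety I expect is Step 2, specifically handling the product of merely pointwise convergent $\phi_n-\phi$ against $L^1$-weakly convergent $f_n$; the Egorov plus uniform integrability combination is the natural fix, but one must be careful that the constant $\sup_m\|\phi_m\|_{L^\infty(B_R)}$ is finite, which is guaranteed by \eqref{eq:phi-stict-sub-quad} on the bounded region $B_R$.
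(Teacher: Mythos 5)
Your proof is correct and follows essentially the same route as the paper's: truncate to a ball $B_R$ and control the tail by the moment bounds, obtain pointwise (in $\omega,t$) convergence of the truncated integrals on $B_R$ (the paper does this by citing the product limit Lemma \ref{lem:product_lemma}, whose proof is exactly the Egorov-plus-uniform-integrability argument you give), and then upgrade via Vitali—your Step 3 is, if anything, slightly more careful than the paper's in making explicit that one needs the $L^{p}$, $p>2$, moment bounds rather than mere $L^{2}$-boundedness. One small caveat, which the paper's own proof shares: the hypothesis \eqref{eq:phi-stict-sub-quad} only controls $\phi_{n}$ on $B_{R}^{c}$, so it does \emph{not} by itself yield $\sup_{m}\|\phi_{m}\|_{L^{\infty}(B_{R})}<\infty$ as you assert; that local uniform bound is an implicit extra assumption (true in all the paper's applications, where $\phi_{n}=\sigma^{n}_{k}\cdot\nabla_{v}\sigma^{n}_{k}$, etc.), not a consequence of \eqref{eq:phi-stict-sub-quad} restricted to $B_{R}$.
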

\begin{proof}
  Proposition \ref{prop:Grand-Skorohod} implies that $\P$ almost surely $\{f_n\}_{n\in\N} \to f$ in $C_t([L^1_{x.v}]_{\mathrm{w}})$. Since $\phi_n \1_{B_R}$ is uniformly bounded in $L^\infty_{x,v}$ and converges in pointwise a.e. to $\phi \1_{B_R}$ the product limit Lemma \ref{lem:product_lemma} implies that $\P$ almost surely for each $t\in[0,T]$
  \begin{equation}\label{eq:local-product-limi}
   \int_{\R^{2d}}\phi_n\1_{B_R}f_n(t)\dv\dx \to \int_{\R^{2d}}\phi\1_{B_R}f(t)\dv\dx
  \end{equation}
  Now, letting $C_1 <\infty$ denote the (random) constant such that
  \begin{equation}
    \sup_n\|(1+|x|^2+|v|^2)(|f_n|+ |f|)\|_{L^\infty_t(L^1_{x,v})} < C_1,
  \end{equation}
  and $C_2<\infty$ be such that
  \begin{equation}
   \left\|\frac{|\phi(x,v)| + |\phi_n(x,v)|}{1+|x|^2+|v|^2}\1_{B_R^c}\right\|_{L^\infty_{x,v}} < C_2.
  \end{equation}
  We have
  \begin{equation}
    \begin{aligned}
      \left|\int_{\R^{2d}}(\phi f -  \phi_nf_n)\1_{B_R^c}\dv\dx\right| &= \left|\int_{\R^{2d}}(\phi - \phi_n)f\1_{B_R^c}\dv\dx\right| + \left|\int_{\R^{2d}}\phi_n(f-f_n)\1_{B_R^c}\dv\dx\right|\\
      &\hspace{-.5in}\leq C_2 \|(1+|x|^2+|v|^2)f\1_{B_R^c}\|_{L^\infty_t(L^1_{x,v})} + C_1\sup_n\left\|\frac{\phi_n(x,v)}{1+|x|^2+|v|^2}\1_{B_R^c}\right\|_{L^\infty_{x,v}}.
    \end{aligned}
  \end{equation}
  Taking the $\limsup$ of both sides and and sending $R\to \infty$, we conclude
  \begin{equation}
    \lim_{R\to\infty}\limsup_n\left|\int_{\R^{2d}}(\phi f -  \phi_nf_n)\1_{B_R^c}\dv\dx\right| = 0.
  \end{equation}
  Therefore, in light of the decomposition
  \begin{equation}
    \int_{\R^{2d}}(\phi f -  \phi_nf_n)\dv\dx = \int_{\R^{2d}}(\phi f -  \phi_nf_n)\1_{B_R}\dv\dx + \int_{\R^{2d}}(\phi f -  \phi_nf_n)\1_{B_R^c}\dv\dx,
  \end{equation}
and (\ref{eq:local-product-limi}), we conclude that for all $\phi_n$ satisfying (\ref{eq:phi-stict-sub-quad}), $\P$ almost surely, and for each $t\in [0,T]$,
    \begin{equation}
      \int_{\R^{2d}}\phi_nf_n\dv\dx \to \int_{\R^{2d}}\phi f\dv\dx.
    \end{equation}
    Moreover by the average moment estimate on $\{f_n\}_{n\in\N}$,
 \begin{equation}
  \left\{\iint_{\R^{2d}} \phi_nf_n \dv\dx\right\}_{n\in \N}\quad  \text{is bounded in}\quad  L^2(\Omega\times[0,T]),
\end{equation}
and therefore by Vitali convergence we may conclude that
\begin{equation}
  \iint_{\R^{2d}} \phi_nf_n\dv\dx \to \iint_{\R^{2d}}\phi f \dv\dx \quad \text{in} \quad [L^2(\Omega\times[0,T])]_{\mathrm{w}}.
\end{equation}
  \end{proof}

Immediately we can use this Lemma to pass the limit in each term of (\ref{eq:local-mass-conv-n}). Taking the derivative in time gives the local conservation law (\ref{eq:local-mass-conv}). Also using the fact that $\mathcal{L}_{\sigma^n}v = \sigma^n_k\cdot\nabla_v\sigma^n_k$ is bounded in $L^\infty_{x,v}$ and converges pointwise to $\mathcal{L}_{\sigma}v$, we may also pass the limit in each term of (\ref{eq:local-momentum-n}) to obtain (\ref{eq:global-momentum-bal}).

Now, note that we cannot pass the limit directly in the energy equation (\ref{eq:local-energy-n}) since $\frac{1}{2}|v|^2$ does not satisfy (\ref{eq:phi-stict-sub-quad}). However, $\mathcal{L}_{\sigma^n}|v|^2$ does satisfy (\ref{eq:phi-stict-sub-quad}), and so upon cutting of the domain on the left hand side of (\ref{eq:local-energy-n}) can pass the limit in $n$ and conclude for each $R>0$,
\begin{equation}
  \E\int_{\R^{2d}} \frac{1}{2}\1_{|v|<R}|v|^2f(t) \leq \E\int_0^t\iint_{\R^{2d}}\frac{1}{2}(\mathcal{L}_{\sigma}|v|^2)f(s)\dv\dx\ds + \int_{\R^{2d}} \frac{1}{2}|v|^2f_0\dv\dx.
\end{equation}
Apply the monotone convergence theorem to the left-hand side and sending $R\to \infty$ gives the desired inequality (\ref{eq:global-energy-ineq}).

Next, we prove that $f$ verifies the conditions of Definition \ref{def:Renorm-Martingale-BM}. Begin by observing that for each $n \in \N$, $\tilde{f}_{n}$ has the property that for each $(t, \omega) \in [0,T] \times \Omega$, the quantity $\tilde{f}_{n}(t,\omega)$ is a non-negative element of $\LLs{1}$.  Since $f_{n}$ is given explicitly as $f_{n}=\tilde{f}_{n}\circ \tilde{T}_{n}$, it inherits this property.  Finally, Proposition \ref{prop:Grand-Skorohod} implies that $\{f_{n}(t,\omega) \}_{n \in \N}$ converges to $f(t,\omega)$ weakly in $\LLs{1}$.  Since weak convergence is order preserving, this shows that $f$ satisfies Part 1 of Definition \ref{def:Renorm-Martingale-BM}.  Also, by Corollary \ref{lem:Strong-Renorm-Limit}, $f: \Omega \times [0,T] \to L^{1}_{x,v}$ has continuous sample paths. 

In view of Definition \ref{def:Martingale-Sol} and Remark \ref{rem:stoch-integral-form-of-eq}, Parts 2 and 3 of Definition \ref{def:Renorm-Martingale-BM} will follow once we check that for each $\Renorm \in \mathcal{R}$, the process $\Renorm(f)$ is a weak martingale solution to the stochastic kinetic equation driven by $\Renorm'(f)\mathcal{B}(f,f)$, starting from $\Renorm(f^{0})$. In fact, the problem can be reduced further. 

Let us show that it suffices to verify $\log(1+f)$ is a weak martingale solution driven by $(1+f)^{-1}\mathcal{B}(f,f)$, starting from $\log(1+f^{0})$. Assume for the moment this property of $\log(1+f)$ and let $\Renorm \in \mathcal{R}$ be arbitrary. Since we showed $f \in L^{2}(\Omega; \LLL{\infty}{1})$, it follows that $\log(1+f)$ belongs to $L^{2}(\Omega \times [0,T] \times \R^{2d})$. Hence, by Proposition \ref{prop:Weak_Is_Renormalized}, $\log(1+f)$ is a renormalized solution. We would like to renormalize by a $\beta$ such that $\beta \circ \log(1+x)=\Renorm(x)$, or equivalently $\beta(x)=\Renorm(e^{x}-1)$, but this is not quite admissible in the sense of Definition \ref{def:Renorm-Martingale} since $\Gamma \in \mathcal{R}$ need not imply boundedness of $\beta''$. Instead, we proceed by a sequence of approximate renormalizations $\{\beta_{k}\}_{k \in \N}$ where   
$\beta_{k}(x)=\Renorm_{k}(e^{x}-1)$ and $\{\Renorm_{k}\}_{k \in \N}$ have the following properties: for each $k \in \N$, $\Renorm_{k}$ is compactly supported (and hence $\beta_{k}''$ is bounded), the pair $(\Renorm_{k},\Renorm_{k}') \to (\Renorm,\Renorm')$ pointwise in $\R_{+}$, and the following uniform bound holds
\begin{equation}
\sup_{k \in \N}\sup_{x \in \R_{+}}(1+x)|\Gamma_{k}'(x)|<\infty.
\end{equation}
By Proposition \ref{prop:Weak_Is_Renormalized}, it follows that $\Renorm_{k}(f)$ is a weak martingale solution driven by $\Renorm_{k}'(f)\mathcal{B}(f,f)$. Using the properties of $\{\Gamma_k\}_{k\in \N}$ and the fact that $f\in L^{2}(\Omega; \LLL{\infty}{1})$ and $(1+f)^{-1}\BCol(f,f)\in L^2(\Omega; \LLLs{1})$, it is straight forward to use the stability result, Proposition \ref{prop:stability-weak-martingale}, to pass $k\to\infty$ and conclude that $\Gamma(f)$ is a weak martingale solution driven by $\Gamma^\prime(f)\BCol(f,f)$ starting from $\Gamma(f_0)$. 

Thus, it remains to show that $\log(1+f)$ is a weak martingale solution to the stochastic kinetic equation driven by $(1+f)^{-1}\mathcal{B}(f,f)$, starting from $\log(1+f_{0})$.  For this, we use once more our stability result.  Recall that for each $m \in \N$, the process $\log (1+ \overline{\Renorm_{m}(f)} )$ is a weak martingale solution to the stochastic kinetic equation driven by $(1+\overline{\Renorm_{m}(f)})^{-1}[\mathcal{B}_{m}^{+}-\mathcal{B}_{m}^{-}]$, starting from $\log ( 1+ \Renorm_{m}(f_{0}) )$. First observe that that for all $\varphi \in C^{\infty}_{c}(\R^{2d})$, the sequence $\{\langle \log(1+\overline{\Renorm_{m}}), \varphi \rangle\}_{m \in \N}$ converges in $L^{2}(\Omega ; C_{t})$ towards $\langle \log(1+f),\varphi \rangle$.  Indeed, this follows from Corollary \ref{lem:Strong-Renorm-Limit}, the almost everywhere inequality $\overline{\Renorm_{m}} \leq f$, and the estimates (\ref{eq:Moment-ent-dis-estimates-thm}). Next, for each $t \in [0,T]$ we can use Proposition \ref{prop:RenormCollision} with $\phi=\1_{[0,t]}\varphi$ to conclude that 
\begin{equation}
\int_{0}^{t} \bigg\langle \frac{\mathcal{B}_{m}}{1+\overline{\Renorm_{m}}},\varphi \bigg\rangle \ds \to \int_{0}^{t}\bigg\langle \frac{\mathcal{B}(f,f)}{1+f}, \varphi \bigg\rangle \ds \quad \text{in} \quad L^{2}(\Omega).
\end{equation}
Using these facts together with the stability result Proposition \ref{prop:stability-weak-martingale}, we may pass $m \to \infty$ and complete the proof.
\end{proof}


\begin{appendices}
\addtocontents{toc}{\protect\setcounter{tocdepth}{0}} 
\section{Compactness and tightness criterion}

Let $(\Omega, \mathcal{F},\p)$ be a probability space and $(E,\tau,\mathcal{B}_{\tau})$ be a topological space endowed with its Borel sigma algebra.  A mapping $X : \Omega \to (E,\tau)$ is called an ``$E$ valued random variable'' provided it is a measurable mapping between these spaces.  Every $E$ valued valued random variable induces a probability measure on $(E,\tau,\mathcal{B}_{\tau})$ by pushforward.  A  sequence of probability measures $\left \{\p_{n} \right \}_{n \in \N}$ on $\mathcal{B}_{\tau}$ is said to be ``tight'' provided that for each $\epsilon >0$ there exists a $\tau$ compact set $K_{\epsilon}$ such that $\p_{n}(K_{\epsilon}) \geq 1-\epsilon$ for all $n \in \N$.  

\begin{definition}
A topological space $(E,\tau)$ is called a Jakubowski space provided it admits a countable sequence continuous functionals which separate points.
\end{definition} 
Our main interest in such spaces is the following fundamental result given in \cite{jakubowski1997non}.
\begin{thm} \label{Thm:Appendix:Jakubowksi_Skorohod}
Let $(E,\tau)$ be a Jakubowski space.  Suppose $\{\tilde{X}_{n} \}_{n \in \N}$ is a sequence of $E$ valued random variables on a probability spaces $(\tilde{\Omega}, \F,\p)$ inducing tight laws with respect to the topology $\tau$. Then there exists a new probability space $(\Omega , \F, \p)$ endowed with an $E$ valued random variable X and a sequence of measurable maps $ \{ \widetilde{T}_{n} \}_{n \in \N}$ 
$$ \widetilde{T}_{n} : (\Omega , \F, \p) \to (\tilde{\Omega}, \tilde{\F},\tilde{\p})$$
with the following two properties:
\begin{enumerate}
\item For each $n \in \N$, the measure $\tilde{\p}_{n}$ is the pushforward of $\p$ by $\widetilde{T}_{n}$.
\item The new sequence  $\{X_{n}\}_{n \in \N}$ defined via $X_{n}=\widetilde{X}_{n} \circ \widetilde{T}_{n}$ converges $\p$ a.s. to $X$ (with respect to the topology $\tau$).
\end{enumerate}
\end{thm}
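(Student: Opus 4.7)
The plan is to reduce to the classical Skorohod representation theorem on a Polish space by exploiting the separating family of continuous functionals. Let $\{f_k\}_{k \in \N}$ be the countable family of continuous maps $f_k: E \to \R$ that separates points, which exists by the definition of a Jakubowski space. Define $\iota: E \to \R^{\N}$ by $\iota(x) = (f_k(x))_{k \in \N}$, where $\R^{\N}$ is equipped with the product topology (a Polish space). The map $\iota$ is continuous by construction and injective because the $f_k$ separate points.

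First I would push forward the laws. Let $\tilde{\nu}_n := (\iota \circ \tilde{X}_n)_{\#}\tilde{\p}$ denote the laws induced on $\R^{\N}$. Since $\iota$ is continuous, compact sets in $E$ map to compact sets in $\R^{\N}$, so tightness of $\{(\tilde{X}_n)_{\#}\tilde{\p}\}_{n \in \N}$ on $(E,\tau)$ transfers to tightness of $\{\tilde{\nu}_n\}_{n \in \N}$ on $\R^{\N}$. By Prokhorov's theorem and the classical Skorohod representation theorem, passing to a subsequence if necessary, there is a probability space $(\Omega, \F, \p)$ supporting $\R^{\N}$-valued random variables $Y_n, Y$ with $(Y_n)_{\#}\p = \tilde{\nu}_n$ and $Y_n \to Y$ almost surely in $\R^{\N}$. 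The second conceptual step is to show $Y(\omega) \in \iota(E)$ almost surely and to produce a measurable inverse. Using tightness one verifies that the limit measure is concentrated on a $\sigma$-compact subset of $\iota(E)$; combined with the fact that $\iota$ restricted to any compact subset of $E$ is a homeomorphism onto its image (continuous injection on a compact Hausdorff space), the map $\iota^{-1}$ is Borel measurable on $\iota(E)$. Setting $X := \iota^{-1}(Y)$ and $X_n := \iota^{-1}(Y_n)$ yields $X_n \to X$ almost surely in $(E,\tau)$ because convergence in $\R^{\N}$ means convergence of each $f_k$, which, by the separating property on precompact sets, forces convergence in $\tau$.

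The final and main obstacle is the construction of the maps $\widetilde{T}_n: \Omega \to \tilde{\Omega}$ satisfying both $(\widetilde{T}_n)_{\#}\p = \tilde{\p}$ and $X_n = \tilde{X}_n \circ \widetilde{T}_n$. I would proceed by disintegration: since $\tilde{X}_n$ is a measurable map into the standard Borel space $(\iota(E), \mathcal{B}(\R^{\N})|_{\iota(E)})$, the measure $\tilde{\p}$ admits a regular conditional probability $\{\tilde{\p}(\,\cdot\,|\tilde{X}_n = x)\}_{x \in E}$ supported on the fibers $\tilde{X}_n^{-1}(\{x\})$. I would then enlarge $\Omega$ (without loss of generality, by taking a product with $[0,1]^{\N}$ equipped with Lebesgue measure to supply auxiliary randomness) and for each $n$ use a measurable selection argument—based on these regular conditional probabilities together with a Borel-isomorphism of $\tilde{\Omega}$ to a standard Borel space—to build $\widetilde{T}_n(\omega)$ as a random element of the fiber $\tilde{X}_n^{-1}(\{X_n(\omega)\})$ with the correct conditional distribution. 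By the law of total probability this yields $(\widetilde{T}_n)_{\#}\p = \tilde{\p}$, and by construction $\tilde{X}_n \circ \widetilde{T}_n = X_n$. The technical bottleneck is exactly this last disintegration/selection step, which requires the image space to be standard Borel; that is guaranteed by the embedding into $\R^{\N}$ and the Borel measurability of $\iota(E)$ established in the previous stage.
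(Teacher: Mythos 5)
The paper does not prove this theorem; it is quoted from Jakubowski \cite{jakubowski1997non} (together with \cite{van1996weak} for the explicit maps $\widetilde{T}_n$), so I am comparing your argument against the proof in that reference. Your reduction to $\R^{\N}$ via the separating functionals $\iota=(f_k)_k$ is indeed the strategy of Jakubowski's proof, and your observations about tightness transferring, $\iota|_K$ being a homeomorphism on each compact $K$, and the limit law being carried by a $\sigma$-compact subset of $\iota(E)$ are all correct.

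There is, however, a genuine gap at the central step. From ``$Y_n(\omega)\to Y(\omega)$ in $\R^\N$'' you conclude ``$X_n(\omega)\to X(\omega)$ in $\tau$ ... by the separating property on precompact sets.'' That inference is valid only if, for almost every $\omega$, the sequence $\{X_n(\omega)\}_{n}$ eventually lies in a single $\tau$-compact set $K_m$ (equivalently, is $\tau$-relatively compact): on a fixed compact $K$ the map $\iota|_K$ is a homeomorphism, so coordinatewise convergence does force $\tau$-convergence, but the initial topology induced by $\iota$ on all of $\bigcup_m K_m$ may be \emph{strictly coarser} than $\tau$. Nothing in your argument establishes this eventual confinement. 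Knowing only that $Y(\omega)\in\iota(K_{m_0})$ for some $m_0$ does not place the $Y_n(\omega)$ in $\iota(K_{m_0})$ eventually (compact sets typically have empty interior), and tightness together with the union bound gives $\p(\exists n:\,Y_n\notin\iota(K_m))\le\sum_n 2^{-m}=\infty$, which is useless. This is precisely the obstruction Jakubowski's proof is designed to overcome: he does not invoke the off-the-shelf Skorohod theorem on a Polish space and pull back, but performs a custom Skorohod-type construction on $[0,1]$ that is built around the exhaustion by $\iota(K_m)$ (so that for each $\omega$ there is a \emph{fixed} $m(\omega)$ with $Y_n(\omega)\in\iota(K_{m(\omega)})$ for all $n$), and this is also why passage to a subsequence is essential in his statement. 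So the gap is not a detail but the heart of the theorem.

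Two smaller remarks. First, your disintegration construction of $\widetilde{T}_n$ via regular conditional probabilities is a reasonable route to the maps and you are right to flag that it needs $(\tilde{\Omega},\tilde{\F},\tilde{\p})$ to be (isomorphic to) a standard probability space; the theorem statement in the paper omits this hypothesis, but it is tacitly assumed in the applications, and without some such regularity there need not even exist a measurable map from $[0,1]$ pushing Lebesgue measure to $\tilde{\p}$. Second, you correctly supply the ``pass to a subsequence'' step, which the paper's statement elides but implicitly restores when it applies the theorem ``(working on a subsequence if necessary).''
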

 

We begin by recalling the following `compact plus small ball'' criterion for compactness in Frechet spaces.

\begin{lem}\label{lem:compact+small}
  Let $F$ be a Fr\'{e}chet space. Then $U\subset F$ is precompact in $F$ if for every $\epsilon>0$, there exists a compact set $K_\epsilon\subset F$, such that
  \begin{equation}
    U \subset K_\epsilon + B_\epsilon,
  \end{equation}
where $B_\ep$ is a $\rho$-ball centered at $0$ of radius $\epsilon$, for a given metric $\rho$.
\end{lem}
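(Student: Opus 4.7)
The plan is to prove this by reducing precompactness to total boundedness, which is equivalent in the complete metric setting of a Fr\'echet space. Recall that a Fr\'echet space can be equipped with a translation-invariant metric $\rho$ (one induced from a sequence of seminorms), and with respect to such a metric, $F$ is complete; hence a subset $U \subset F$ has compact closure if and only if it is totally bounded, i.e.\ for every $\epsilon>0$ there is a finite $\epsilon$-net covering $U$.

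First I would fix $\epsilon>0$ arbitrary and apply the hypothesis with parameter $\epsilon/2$ to obtain a compact set $K_{\epsilon/2} \subset F$ with $U \subset K_{\epsilon/2}+B_{\epsilon/2}$. Since $K_{\epsilon/2}$ is compact, it is itself totally bounded: there exist finitely many points $x_{1},\dots,x_{N} \in F$ such that
\begin{equation}
K_{\epsilon/2} \subset \bigcup_{i=1}^{N} B(x_{i},\epsilon/2).
\end{equation}
Then for any $u \in U$, we may write $u = k+b$ with $k \in K_{\epsilon/2}$ and $b \in B_{\epsilon/2}$, and choose $i$ with $\rho(k,x_{i})<\epsilon/2$. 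Using translation invariance of $\rho$, the triangle inequality gives
\begin{equation}
\rho(u,x_{i}) \leq \rho(u-b,x_{i})+\rho(u,u-b) = \rho(k,x_{i})+\rho(b,0) < \epsilon.
\end{equation}
Hence $U \subset \bigcup_{i=1}^{N}B(x_{i},\epsilon)$, exhibiting a finite $\epsilon$-net for $U$. Since $\epsilon$ was arbitrary, $U$ is totally bounded.

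Finally, invoking completeness of $F$, total boundedness of $U$ implies that its closure $\overline{U}$ is compact, which is the definition of $U$ being precompact. The only mildly delicate point is the triangle inequality step, where translation invariance of $\rho$ is essential to convert the algebraic sum $K_{\epsilon/2}+B_{\epsilon/2}$ into a metric $\epsilon$-neighborhood; this is built into the standard choice of metric on a Fr\'echet space, so it is not really an obstacle but is worth noting explicitly.
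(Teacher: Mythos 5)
Your proof is correct and follows essentially the same route as the paper: both reduce precompactness to total boundedness, take a finite net of the compact set $K_\epsilon$, and enlarge each ball to cover $U$ via the inclusion $U \subseteq K_\epsilon + B_\epsilon$. The only cosmetic differences are that you apply the hypothesis with $\epsilon/2$ to land exactly in $B_\epsilon(x_i)$ whereas the paper keeps $\epsilon$ and settles for $B_{2\epsilon}(x_i)$, and you make the use of translation invariance of the Fr\'echet metric explicit, which the paper leaves implicit.
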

\begin{proof}
  Fix $\ep >0$ and let $K_\ep$ be the compact set defined as above. Since $K_\ep$ is compact and $F$ is a metric space, it is totally bounded. Therefore there exists a finite collection of points $\{x_i\}_{i=1}^N$ so that $K_\ep \subseteq \bigcup_{i=1}^NB_\ep(x_i)$. However, since
  \begin{equation}
    K\subseteq \bigcup_{i=1}^NB_\ep(x_i) + B_\ep(0) \subseteq \bigcup_{i=1}^NB_{2\ep}(x_i), 
  \end{equation}
  then $K$ is totally bounded and therefore precompact in $F$.
\end{proof}

In the stochastic setting, we make use of the analogous version as a tightness criterion.

\begin{lem}\label{lem:tightness-decomp}
Let $F$ be a Frechet space and $\{X_{n}\}_{n \in \N}$ be a sequence of $F$-valued random variables. Assume that for all $L \in \R_{+}$ there exists a decomposition
\begin{equation}
X_{n}=Y_{n}^{L}+Z_{n}^{L},
\end{equation}
where $\{Y_{n}^{L}\}_{n \in \N}$ induces a tight sequence of laws on $F$.  If in addition, $Z_n^L$ satisfies for every $\eta >0$,
\begin{equation}
  \lim_{L\to\infty}\sup_n\P\left(Z_{n}^L \notin B_\eta\right) = 0.
\end{equation}
Then $X_n$ induces tight laws on $F$.
\end{lem}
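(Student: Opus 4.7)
The plan is to build, for each $\epsilon > 0$, a compact set $K_\epsilon \subset F$ such that $\sup_{n} \P(X_n \notin K_\epsilon) \leq \epsilon$, by combining the tightness of $\{Y_n^L\}_n$ at finitely/countably many scales $L$ with the fact that the residuals $Z_n^L$ become small in probability (at any prescribed precision) uniformly in $n$ as $L \to \infty$. Since $F$ is a Fr\'{e}chet space, I fix a translation-invariant metric $\rho$ generating its topology, denote by $B_\eta$ the $\rho$-ball of radius $\eta$ centered at $0$, and will use that compactness in the complete metric space $F$ is equivalent to being closed and totally bounded.

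For each $k \in \N$, I use the second hypothesis to choose $L_k$ so large that $\sup_n \P(Z_n^{L_k} \notin B_{1/k}) < \epsilon\, 2^{-k-1}$. Then, invoking the assumed tightness of $\{Y_n^{L_k}\}_n$, I select a compact set $\hat{K}_k \subset F$ with $\sup_n \P(Y_n^{L_k} \notin \hat{K}_k) < \epsilon\, 2^{-k-1}$. Define
\begin{equation}
K_\epsilon \;=\; \bigcap_{k \in \N} \overline{\hat{K}_k + B_{1/k}}.
\end{equation}
Note that the events $\{Y_n^{L_k} \in \hat{K}_k\}$ and $\{Z_n^{L_k} \in B_{1/k}\}$ together imply $X_n \in \hat{K}_k + B_{1/k}$, so by a union bound
\begin{equation}
\P\bigl(X_n \notin \overline{\hat{K}_k + B_{1/k}}\bigr) \leq \P(Y_n^{L_k} \notin \hat{K}_k) + \P(Z_n^{L_k} \notin B_{1/k}) < \epsilon\,2^{-k}.
\end{equation}
Summing over $k \in \N$ gives $\sup_n \P(X_n \notin K_\epsilon) < \epsilon$, exactly as required.

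The final task is to verify that $K_\epsilon$ is compact in $F$. It is closed by construction. For total boundedness, fix any $\delta > 0$ and choose $k$ so large that $1/k < \delta/2$. Since $\hat{K}_k$ is compact, it can be covered by finitely many $\rho$-balls of radius $\delta/2$, say $\hat{K}_k \subset \bigcup_{i=1}^{N} B_{\delta/2}(x_i)$. Then $\hat{K}_k + B_{1/k} \subset \bigcup_{i=1}^N B_{\delta}(x_i)$, and taking closures preserves this cover up to an arbitrarily small enlargement, so $K_\epsilon \subset \overline{\hat{K}_k + B_{1/k}}$ is covered by finitely many balls of radius $\delta$. Hence $K_\epsilon$ is totally bounded, and completeness of $F$ upgrades this to compactness.

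This is essentially a structural argument; the only real subtlety is making sure the Fr\'{e}chet-space compactness is handled through the totally-bounded plus closed criterion (rather than metric sequential compactness applied to a non-metric setting), which is why I fix a compatible translation-invariant metric at the outset. No further probabilistic ingredients are needed beyond the two stated hypotheses and the union bound.
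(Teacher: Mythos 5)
Your argument is correct and follows the paper's proof almost verbatim: both select $L_k$ so that $Z_n^{L_k} \in B_{1/k}$ with probability $\geq 1 - \epsilon 2^{-k}$ uniformly in $n$, pick compact sets containing $Y_n^{L_k}$ with comparable probability, and intersect the resulting ``compact plus small ball'' sets over $k$. The only (cosmetic, but in fact slightly more careful) difference is that you explicitly take closures and re-derive compactness of the intersection via total boundedness and completeness, whereas the paper invokes its Lemma~\ref{lem:compact+small}, which strictly speaking only yields precompactness of $\bigcap_j (K_j + B_{1/j})$ without the closure you add.
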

\begin{proof}
  Fix $\ep >0$ and choose a sequence $\{L_j\}_{j\in\N}$ so that
  \begin{equation}
    \sup_n \P\left( Z^{L_j}_n \notin B_{1/j}\right) < \ep/2^j.
  \end{equation}
  By the tightness of $Y_n^L$, for each $j\in \N$ there is a compact set $K_j\subseteq F$ such that
  \begin{equation}
    \sup_n \P\left(Y_n^{L_j} \in K_j\right) < \ep/2^j
  \end{equation}
  By the classical compactness criterion, Lemma \ref{lem:compact+small}, the set
  \begin{equation}
    K = \bigcap_j (K_j + B_{1/j}).
  \end{equation}
  is compact in $E$. It follows that
  \begin{equation}
    \sup_n \P(X_n \notin K) \leq \sum_{j}\left(\sup_n\P\left(Y_n^{L_j} \notin K_j\right) + \sup_n\P\left(Z_n^{L_j} \notin B_{1/j}\right)\right) < 2\ep.
  \end{equation}
  Therefore $\{X_n\}_{n\in\N}$ induce tight laws on $F$.
\end{proof}

Next, we recall the classical Dunford-Pettis compactness criterion on $[L^1]_{\mathrm{w},\loc}$.
\begin{lem}\label{lem:Dunford-Pettis}
  Let $K$ be a bounded subset of $[L^1(\Rd)]_\loc$, then $K$ is precompact in $[L^1(\Rd)]_{\mathrm{w},\loc}$ if and only if the following limit holds
  \begin{equation}
    \lim_{L\to\infty} \sup_{f\in K} \|f\1_{|f|> L}\|_{L^1}= 0.
  \end{equation}
\end{lem}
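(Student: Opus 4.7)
The plan is to reduce the statement to the classical Dunford–Pettis theorem via localization to balls, plus a standard diagonal argument. Recall that precompactness in $[L^1(\R^d)]_{\mathrm{w},\loc}$ is, by the definition of the local weak topology, equivalent to precompactness of the family of restrictions $K|_{B_R}$ in the weak topology of $L^1(B_R)$ for every ball $B_R\subset\R^d$, together with a subsequence extraction that is consistent across an exhaustion $R_n\uparrow\infty$.

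For the ($\Leftarrow$) direction I would proceed as follows. Fix a ball $B_R$. The restriction $K|_{B_R}$ is bounded in $L^1(B_R)$ by the local boundedness of $K$. Moreover, the pointwise bound
\begin{equation}
\sup_{f\in K}\|f\1_{|f|>L}\|_{L^1(B_R)}\leq \sup_{f\in K}\|f\1_{|f|>L}\|_{L^1(\R^d)}
\end{equation}
shows that the hypothesis provides the de la Vall\'ee–Poussin uniform integrability condition on $B_R$. Since $B_R$ has finite Lebesgue measure, the classical Dunford–Pettis theorem yields precompactness of $K|_{B_R}$ in $[L^1(B_R)]_{\mathrm{w}}$. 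A diagonal extraction along $R_n\uparrow \infty$ gives precompactness of $K$ in $[L^1(\R^d)]_{\mathrm{w},\loc}$.

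For the ($\Rightarrow$) direction I would argue by contradiction. Suppose the integrability condition fails: there exist $\delta>0$, $L_n\uparrow\infty$ and $f_n\in K$ with $\|f_n\1_{|f_n|>L_n}\|_{L^1(\R^d)}\geq\delta$. By the assumed precompactness, pass to a subsequence so that $f_n\to f$ in $[L^1]_{\mathrm{w},\loc}$. The converse direction of the classical Dunford–Pettis theorem, applied on each ball $B_R$, shows that $\{f_n|_{B_R}\}$ is uniformly integrable in $L^1(B_R)$, so that
\begin{equation}
\lim_{L\to\infty}\sup_n\|f_n\1_{|f_n|>L}\|_{L^1(B_R)}=0
\end{equation}
for every $R$. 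One then splits $\|f_n\1_{|f_n|>L_n}\|_{L^1(\R^d)}$ as a sum of an integral over $B_R$ and one over $B_R^c$, and contradicts the lower bound $\delta$ by choosing first $R$ and then $L$ suitably large.

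The main obstacle is this last split: the "mass at infinity in $x$" piece $\|f_n\1_{B_R^c}\|_{L^1}$ is not controlled by the local-weak precompactness alone. The cleanest reading of the statement — and the one consistent with its use in Lemma \ref{lem:L1-Velocity-Averaging} — is that $K$ is in fact bounded in $L^1(\R^d)$ (the word $\loc$ in the lemma's premise being inessential for the forward direction and effectively upgraded by the hypothesis). Under this reading, global $L^1$-boundedness together with local uniform integrability reduces the remaining tail control to an elementary Chebyshev-type estimate, completing the contradiction.
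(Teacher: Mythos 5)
The paper gives no proof of this lemma: it is stated as a ``recall'' of the classical Dunford--Pettis criterion, so there is no argument to compare yours against. Your $(\Leftarrow)$ direction is sound. The hypothesis supplies de la Vall\'{e}e--Poussin uniform integrability on each ball $B_R$, the classical Dunford--Pettis theorem on the finite-measure space $B_R$ yields weak precompactness of $K\vert_{B_R}$, and a diagonal extraction along an exhaustion $R_n\uparrow\infty$ produces a subsequence converging in $[L^1]_{\mathrm{w},\loc}$.

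You also correctly spot the real obstruction in $(\Rightarrow)$: local weak precompactness controls $\|f\1_{|f|>L}\|_{L^1(B_R)}$ for each fixed $R$ but says nothing about the tail $\|f\1_{|f|>L}\|_{L^1(B_R^c)}$. However, the repair you propose --- upgrade the premise to global $L^1(\R^d)$-boundedness and close the tail by a ``Chebyshev-type estimate'' --- does \emph{not} close the gap. Take $f_n=n\,\1_{[n,\,n+1/n]}$ on $\R$. Then $\|f_n\|_{L^1}=1$ for every $n$, so $K=\{f_n\}$ is globally $L^1$-bounded; $f_n\to 0$ in $[L^1]_{\mathrm{w},\loc}$ since each $f_n$ vanishes identically on any fixed compact once $n$ is large enough, so $K$ is precompact there; yet for $n>L$ one has $f_n\1_{|f_n|>L}=f_n$, hence $\sup_{n}\|f_n\1_{|f_n|>L}\|_{L^1}=1$ for every $L$. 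Chebyshev only bounds the \emph{measure} of $\{|f|>L\}$, not where it sits, and global $L^1$-boundedness supplies no tightness in $x$; the example shows that simultaneously ``concentrating to height $>L$'' and ``escaping to spatial infinity'' is perfectly compatible with both hypotheses. For a genuine iff in the local weak topology one must localize the criterion to $\|f\1_{|f|>L}\|_{L^1(B_R)}$ on each ball; if instead one insists on the global $L^1$ norm, then precompactness has to be taken in $[L^1(\R^d)]_{\mathrm{w}}$, in which case $(\Rightarrow)$ holds but now $(\Leftarrow)$ fails (consider $f_n=\1_{[n,n+1]}$, which satisfies the uniform integrability condition trivially but admits no weakly convergent subsequence in $L^1(\R)$). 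The ``iff'' as stated does not survive either reading, and your patch inherits the same defect.
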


In the stochastic setting, the corresponding tightness condition is:
\begin{lem}\label{lem:L1-tightness-crit}
  Let $\mu_n$ be a sequence of probability measures on $L^1(\Rd)_\loc$, then $\{\mu_n\}_{n\in\N}$ are tight on $[L^1(\Rd)]_{\mathrm{w},\loc}$ if and only if for every $\eta > 0$ the following limit hold
  \begin{equation}\label{eq:L1-tight-limits}
      \lim_{L\to\infty} \sup_n\mu_n\left\{f: \|f\1_{|f|> L}\|_{L^1} > \eta\right\} = 0.
  \end{equation}
\end{lem}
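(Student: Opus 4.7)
\medskip

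\noindent\textbf{Proof plan.} The plan is to prove the two implications separately, using the deterministic Dunford--Pettis characterization of Lemma \ref{lem:Dunford-Pettis} as the bridge between probabilistic tightness and uniform integrability, and employing Lemma \ref{lem:tightness-decomp} (or a direct construction) to assemble compact sets.

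For the forward direction, I would fix $\ep>0$ and use tightness of $\{\mu_n\}_{n\in\N}$ to produce a compact set $K_\ep \subset [L^1(\Rd)]_{\mathrm{w},\loc}$ satisfying $\mu_n(K_\ep^c) < \ep$ for all $n\in\N$. By Lemma \ref{lem:Dunford-Pettis}, any such compact set is uniformly integrable, i.e.
\begin{equation}
\lim_{L\to\infty} \sup_{f\in K_\ep}\|f\1_{|f|>L}\|_{L^1} = 0.
\end{equation}
In particular, for any $\eta>0$ there exists $L_0=L_0(\eta,\ep)$ such that for every $L\geq L_0$, no element of $K_\ep$ lies in the set $\{f:\|f\1_{|f|>L}\|_{L^1}>\eta\}$. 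Consequently
\begin{equation}
\sup_{n\in\N}\mu_n\{f:\|f\1_{|f|>L}\|_{L^1}>\eta\} \leq \sup_{n\in\N}\mu_n(K_\ep^c) \leq \ep,
\end{equation}
and sending first $L\to\infty$ and then $\ep\to 0$ yields the desired limit.

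For the reverse direction, I would fix $\ep>0$ and, using the hypothesis, select a sequence $\{L_j\}_{j\in\N}$ so that
\begin{equation}
\sup_{n\in\N}\mu_n\{f:\|f\1_{|f|>L_j}\|_{L^1}>1/j\} < \ep/2^j.
\end{equation}
Define
\begin{equation}
K_\ep := \bigcap_{j\in\N}\{f\in L^1_{\loc}(\Rd):\|f\1_{|f|>L_j}\|_{L^1}\leq 1/j\}.
\end{equation}
This set is uniformly integrable by construction, and one readily deduces a bound in $L^1_\loc$ from $\|f\|_{L^1(K)}\leq L_1|K|+1$ for every compact $K\subset\Rd$. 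Lemma \ref{lem:Dunford-Pettis} then gives that $K_\ep$ is precompact in $[L^1(\Rd)]_{\mathrm{w},\loc}$; taking $\overline{K_\ep}$ produces a genuine compact set whose complement satisfies $\overline{K_\ep}^c \subseteq K_\ep^c$, hence a union bound delivers
\begin{equation}
\sup_{n\in\N}\mu_n(\overline{K_\ep}^c) \leq \sum_{j\in\N}\sup_{n\in\N}\mu_n\{f:\|f\1_{|f|>L_j}\|_{L^1}>1/j\} < \ep.
\end{equation}

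The only real subtlety is that the functionals $f\mapsto\|f\1_{|f|>L_j}\|_{L^1}$ need not be continuous in the weak local topology, so $K_\ep$ itself may fail to be closed; this is why one passes to $\overline{K_\ep}$. The fact that uniform integrability and local $L^1$ boundedness are preserved under closure in $[L^1]_{\mathrm{w},\loc}$ (by lower semi-continuity of the relevant functionals under weak convergence) ensures that $\overline{K_\ep}$ remains compact, closing the argument.
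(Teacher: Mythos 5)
Your proof follows the paper's strategy---Dunford--Pettis plus an explicit compact-set construction---and your forward direction is essentially the paper's argument verbatim. In the reverse direction you have flagged a genuine subtlety that the paper glosses over: the paper asserts without justification that the sets $A_k = \{f : \|f\1_{|f|>L_k}\|_{L^1}\leq 1/k\}$ are closed in $[L^1]_{\mathrm{w},\loc}$ and then concludes that $K=\bigcap_k A_k$ is compact. That closedness would require the truncation functional $f\mapsto\|f\1_{|f|>L}\|_{L^1}$ to be weakly lower semi-continuous, which is false: take $f_n=(2+r_n)\1_{[0,\delta]}$ with $r_n$ the Rademacher functions, so $f_n\to 2\1_{[0,\delta]}$ weakly in $L^1$, yet
\begin{equation}
\|f_n\1_{|f_n|>1}\|_{L^1}=\tfrac{3\delta}{2}\ <\ 2\delta=\big\|(2\1_{[0,\delta]})\1_{|\cdot|>1}\big\|_{L^1}.
\end{equation}
Passing to the closure $\overline{K_\ep}$, as you do, repairs this, and since $\overline{K_\ep}^c\subseteq K_\ep^c$ the union bound still yields $\sup_n\mu_n(\overline{K_\ep}^c)<\ep$.

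However, the parenthetical you offer for why $\overline{K_\ep}$ is compact---that uniform integrability is preserved under closure ``by lower semi-continuity of the relevant functionals''---invokes exactly the lower semi-continuity that fails, so as stated the justification is circular. It is also unnecessary: once $K_\ep$ is shown to be bounded and uniformly integrable, Dunford--Pettis gives that $K_\ep$ is \emph{precompact} (relatively compact), and the closure of a relatively compact set is compact by definition---no re-verification of uniform integrability of $\overline{K_\ep}$ is required. (If you do want to note that $\overline{K_\ep}$ remains uniformly integrable, it follows from the converse implication in Dunford--Pettis applied to the compact set $\overline{K_\ep}$, not from lower semi-continuity of the truncation functionals.)
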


\begin{proof}
  First suppose that the limits (\ref{eq:L1-tight-limits}) hold. Let $\ep >0$ and choose a sequence $\{L_k\}$ such that
    \begin{equation}
        \sup_n\mu_n\left\{f : \|f\1_{|f|> L_k}\|_{L^1} > 1/k \right\} <\ep 2^{-k}.
    \end{equation}
    Define the closed set
    \begin{equation}
      A_k = \left\{f : \|f\1_{|f|> L_k}\|_{L^1} \leq 1/k \right\}.
    \end{equation}
    Then by the classical compactness criterion in Lemma \ref{lem:Dunford-Pettis},
    \begin{equation}
      K = \bigcap_k A_k
    \end{equation}
    is a compact set in $[L^1(\Rd)]_{\mathrm{w},\loc}$. Furthermore, we have
    \begin{equation}
      \sup_n\mu_n(K) \leq \sum_k\sup_n\mu_n(A_k) < \ep. 
    \end{equation}
    Therefore $\{\mu_n\}$ are tight on $[L^1(\Rd)]_{\mathrm{w},\loc}$.

    Next suppose that $\{\mu_n\}$ are tight on $[L^1(\Rd)]_{\mathrm{w}}$. And let $K$ be a compact subset of $[L^1]_{\mathrm{w}}$ such that $\sup_n \mu_n(K^c) < \ep$. For each $\eta >0$ it follows by the compactness criterion in Lemma \ref{lem:Dunford-Pettis} that for large enough $L$ (depending on $\eta$), the following set is empty
    \begin{equation}
      \left\{ f\in K : \|f\1_{|f|>L}\|_{L^1} > \eta\right\} = \emptyset.
    \end{equation}
    Therefore for large enough $L$ we have
    \begin{equation}
      \sup_n\mu_n\left\{f : \|f\1_{|f|>L}\|_{L^1} > \eta \right\} \leq \sup_n\mu_n(K^c) < \ep.
    \end{equation}
\end{proof}

We now introduce a useful tightness criterion for probability measures on $\CLw{1}$. First we will need a basic criterion for compactness in $\CLw{1}$. 
\begin{lem}\label{lem:Compactness-crit}
  Let $K \subseteq C([0,T];[L^1(\Rd)]_w)$ and denote for each $\varphi\in C^\infty_c(\Rd)$, the set
  \begin{equation}
    K_\varphi = \left\{ \langle f,\varphi\rangle\,:\, f\in K\right\} \subseteq C([0,T]).
  \end{equation}
Then $K$ is precompact in $C([0,T]\,;\,[L^1(\Rd)]_w)$ if any only if $K$ is a weakly precompact subset of $L^\infty([0,T];L^1(\Rd))$ and $K_\varphi$ equicontinuous in $C([0,T])$ for each $\varphi \in C^\infty_c(\Rd)$.
\end{lem}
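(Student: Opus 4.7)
The plan is to prove the two directions separately. Necessity follows from routine continuity and Arzel\`a-Ascoli arguments, while sufficiency---the substantive direction---proceeds by metrizing the weak $L^1$ topology on a suitable weakly compact set and then applying classical Arzel\`a-Ascoli.

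For necessity, I observe that for each $\varphi \in C^\infty_c(\R^d)$ the evaluation $\Phi_\varphi: C([0,T]; [L^1_x]_w) \to C([0,T])$, $\Phi_\varphi(f)(t) = \langle f(t), \varphi\rangle$, is continuous, so $K_\varphi = \Phi_\varphi(K)$ inherits precompactness in $C([0,T])$; the classical Arzel\`a-Ascoli theorem then gives equicontinuity. The weak precompactness in $L^\infty_t(L^1_x)$ comes from the fact that the joint evaluation $(f,t) \mapsto f(t)$ maps the compact set $\overline{K} \times [0,T]$ continuously into $[L^1_x]_w$, so its image is weakly compact in $L^1$; Dunford--Pettis then provides boundedness, uniform integrability, and tightness at infinity, which together encode the stated weak precompactness.

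For sufficiency, given a sequence $\{f_n\}_{n \in \N} \subseteq K$, I extract a convergent subsequence as follows. Let $W$ be the weak $L^1$ closure of $\bigcup_{n \in \N, t \in [0,T]} \{f_n(t)\}$, which is weakly compact by hypothesis. Since $L^1(\R^d)$ is separable, the weak topology on $W$ is metrizable: for any countable family $\{\varphi_k\} \subseteq C^\infty_c(\R^d)$ dense in, say, $L^2(\R^d)$, the metric
\begin{equation}
d_W(u,v) = \sum_{k=1}^\infty 2^{-k}\min\bigl(1, |\langle u - v, \varphi_k\rangle|\bigr)
\end{equation}
induces a Hausdorff topology on $W$ coarser than the weak topology; the principle that a continuous bijection from a compact space onto a Hausdorff space is a homeomorphism forces these two topologies to agree. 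Now $K \subseteq C([0,T]; (W, d_W))$, and equicontinuity in the metric $d_W$ follows directly from the hypothesis: given $\epsilon > 0$, truncate the series at level $N$ with $\sum_{k>N}2^{-k} < \epsilon/2$, then invoke the given equicontinuity for each of the finitely many pairings $\langle\cdot, \varphi_k\rangle$, $k \leq N$, to obtain a common modulus of continuity. Classical Arzel\`a-Ascoli in $C([0,T]; (W, d_W))$ then yields a convergent subsequence, which is precisely convergence in $C([0,T]; [L^1_x]_w)$.

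The main obstacle is the metrization step: since the weak topology on $L^1$ is not metrizable globally, one must restrict to the weakly compact set $W$ and carefully justify, via the compact-to-Hausdorff uniqueness principle, that the countable-functional metric $d_W$ genuinely recovers the weak topology on $W$. Once this is in hand, the rest is a routine adaptation of Arzel\`a-Ascoli, and the hypotheses of the lemma map directly onto the two requirements (equicontinuity and values in a compact metric target) needed to apply it.
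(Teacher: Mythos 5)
The paper states Lemma~\ref{lem:Compactness-crit} in the appendix without proof, so there is no internal argument to compare against; you are supplying the missing proof. Your overall strategy — metrize the weak $L^1$ topology on the weakly compact set $W$ via a countable family of functionals, reduce to classical Arzel\`a-Ascoli in $C([0,T];(W,d_W))$, and observe that equicontinuity in $d_W$ follows from truncating the metric series and invoking the hypothesis finitely many times — is the natural one and is essentially correct. The necessity direction is also fine: continuity of $\Phi_\varphi$ into $C[0,T]$ gives equicontinuity of $K_\varphi$ by Arzel\`a-Ascoli, and continuity of the joint evaluation $(f,t)\mapsto f(t)$ on the compact $\overline{K}\times[0,T]$ into $[L^1]_w$ (evaluation is continuous since $[0,T]$ is locally compact) gives weak compactness of the orbit $\{f(t):f\in\overline K,\,t\in[0,T]\}$, which is the correct reading of ``weakly precompact in $L^\infty_t(L^1_x)$'' in light of how the lemma is used in Lemma~\ref{Lem:Appendix:TightCrit}.

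One technical imprecision should be corrected. For the compact-to-Hausdorff principle to identify $(W,d_W)$ with $(W,\mathrm{weak})$, the family $\{\varphi_k\}$ must separate points of $L^1(\R^d)$, so that $d_W$ is Hausdorff. Density of $\{\varphi_k\}\subseteq C^\infty_c$ in $L^2(\R^d)$ does not by itself yield this: for $f\in L^1\setminus L^2$, the conclusion $\langle f,\varphi_{k_j}\rangle\to\langle f,\psi\rangle$ does not follow from $\varphi_{k_j}\to\psi$ in $L^2$ (the duality pairing of an $L^1$ function against $L^\infty$ functions does not pass to $L^2$ limits). The fix is immediate: choose $\{\varphi_k\}$ to be a countable subset of $C^\infty_c(\R^d)$ dense in $C_0(\R^d)$ for the sup norm. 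Then $\langle f,\varphi_k\rangle=0$ for all $k$ implies $\langle f,\psi\rangle=0$ for all $\psi\in C_0$, which forces $f=0$; and the equicontinuity hypothesis applies to each $\varphi_k$ since it is assumed for all of $C^\infty_c$. A second step you should make explicit: Arzel\`a-Ascoli gives precompactness in the metric space $C([0,T];(W,d_W))$, and to transfer this to $C([0,T];[L^1]_w)$ you should note that $C([0,T];W)$ is closed in $C([0,T];[L^1]_w)$ (values in the weakly closed set $W$, continuity preserved under uniform limits), with induced topology coinciding with that of $C([0,T];(W,d_W))$ because the weak topology on $W$ agrees with $d_W$; hence the closure of $K$ is the same in both spaces and compactness transfers.
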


This gives rise to the following tightness criterion on $\CLw{1}$.

\begin{lem} \label{Lem:Appendix:TightCrit}
  Let $\{\mu_n\}_{n\in\N}$ be a sequence of probability  measures on $C([0,T],[L^1(\Rd)]_w)$, and for any $\varphi\in C^\infty_c(\Rd)$, let $\{\nu_n^\varphi\}_{n\in\N}$ be the sequence of measures on $C([0,T])$ induced by the mapping $f\mapsto \langle f,\varphi\rangle$. Then the measures $\{\mu_n\}_{n\in\N}$ are tight if and only if $\{\nu_n^\varphi\}_{n\in\N}$ are tight for every $\varphi \in C^\infty_c$ and for every $\eta >0$ we have
       \begin{equation}
      \lim_{M\to \infty} \sup_n\mu_n\Big\{ f: \|f\|_{L^\infty_t(L^1)} > M\Big\} = 0,
    \end{equation}
    \begin{equation}
      \lim_{L\to\infty} \sup_n\mu_n\Big\{ f : \|f\1_{|f|>L}\|_{L^\infty_t(L^1)} >\eta\Big\} = 0,
    \end{equation}
and 
    \begin{equation}
      \lim_{R\to\infty} \sup_n\mu_n\Big\{ f : \|f \1_{B_R^c}\|_{L^\infty_t(L^1)} >\eta \Big\} = 0,
    \end{equation}
\end{lem}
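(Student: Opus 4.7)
The strategy is to apply the compactness characterization of Lemma \ref{lem:Compactness-crit}: a set $K \subseteq \CLw{1}$ is precompact if and only if $K$ is weakly precompact in $L^\infty_t(L^1_{x,v})$ and, for every $\varphi \in C^\infty_c(\R^d)$, the family $K_\varphi := \{\langle f,\varphi\rangle : f \in K\}$ is equicontinuous in $C[0,T]$. Weak precompactness in $L^\infty_t(L^1)$ will in turn be extracted from uniform boundedness, uniform integrability, and tightness in $x$ (the Dunford--Pettis conditions of Lemma \ref{lem:L1-tightness-crit} applied uniformly in $t$), which is precisely what the three quantitative assumptions supply.

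For the ``only if'' direction, fix $\varepsilon > 0$ and choose a compact $K \subseteq \CLw{1}$ with $\mu_n(K^c) < \varepsilon$ uniformly. By Lemma \ref{lem:Compactness-crit}, $K$ is weakly precompact in $L^\infty_t(L^1_{x,v})$, which forces $\sup_{f \in K}\|f\|_{L^\infty_t(L^1)} < \infty$ and forces $\sup_{f \in K}\|f\1_{|f|>L}\|_{L^\infty_t(L^1)}$, $\sup_{f \in K}\|f\1_{B_R^c}\|_{L^\infty_t(L^1)}$ to vanish as $L, R \to \infty$; the three limits follow. For each $\varphi \in C^\infty_c$, the map $f \mapsto \langle f,\varphi\rangle$ from $\CLw{1}$ to $C[0,T]$ is continuous, so tightness of $\nu_n^\varphi$ is obtained by pushing forward the tight family $\{\mu_n\}$.

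For the ``if'' direction, fix $\varepsilon > 0$ and a countable sequence $\{\varphi_j\}$ chosen to be dense in $C^\infty_c(\R^d)$ in the following sense: every $\varphi \in C^\infty_c(B_R)$ can be uniformly approximated by a subsequence of $\{\varphi_j\}$ drawn from $C^\infty_c(B_R)$ (such a sequence exists since each $C^\infty_c(B_R)$ is separable in sup norm). Use the three quantitative conditions to select $M$, $L_k \uparrow \infty$, $R_k \uparrow \infty$ such that the events $\{\|f\|_{L^\infty_t(L^1)} > M\}$, $\{\|f\1_{|f|>L_k}\|_{L^\infty_t(L^1)} > 1/k\}$, and $\{\|f\1_{B_{R_k}^c}\|_{L^\infty_t(L^1)} > 1/k\}$ each have $\mu_n$-mass at most $\varepsilon/2^{k+2}$ uniformly in $n$. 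Using tightness of $\nu_n^{\varphi_j}$, pick compacts $\tilde K_j \subseteq C[0,T]$ with $\sup_n \nu_n^{\varphi_j}(\tilde K_j^c) < \varepsilon/2^{j+2}$. Define $K$ to be the intersection of all the corresponding sublevel sets in $\CLw{1}$. Then $\mu_n(K^c) < \varepsilon$ for every $n$, and it remains to check that $K$ is compact in $\CLw{1}$ via Lemma \ref{lem:Compactness-crit}.

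The main obstacle is the verification of equicontinuity of $K_\varphi$ for \emph{every} $\varphi \in C^\infty_c$, rather than merely the countable collection $\{\varphi_j\}$. To handle this, I would exploit the triangle inequality
\[
|\langle f_t - f_s, \varphi\rangle| \leq |\langle f_t - f_s, \varphi_j\rangle| + 2M\,\|\varphi - \varphi_j\|_{L^\infty},
\]
valid for $f \in K$ because $\|f\|_{L^\infty_t(L^1)} \leq M$. Given $\eta > 0$, first choose $\varphi_j$ in the same $C^\infty_c(B_R)$ as $\varphi$ with $2M\|\varphi - \varphi_j\|_{L^\infty} < \eta/2$, then exploit equicontinuity of $K_{\varphi_j}$ (inherited from the compactness of $\tilde K_j$) to control $|\langle f_t - f_s,\varphi_j\rangle| < \eta/2$ uniformly over $f \in K$ for $|t-s|$ small. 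The weak-$L^\infty_t(L^1)$ precompactness of $K$ follows directly from the three sublevel constraints via the uniform-in-$t$ Dunford--Pettis criterion. Assembling these pieces yields compactness of $K$ and completes the proof.
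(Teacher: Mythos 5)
Your proof is correct and follows essentially the same route as the paper's: necessity via continuity of the pairing and the compactness characterization (Lemma~\ref{lem:Compactness-crit}), sufficiency by building a compact set $K$ from countably many sublevel constraints (boundedness, uniform integrability, spatial tightness, and moduli of continuity for a countable dense family of test functions), with each constraint costing $\varepsilon 2^{-k}$ in probability. The one place you add genuine content is the verification that equicontinuity of $K_{\varphi_j}$ for the countable dense family $\{\varphi_j\}$ propagates to \emph{all} $\varphi \in C^\infty_c$: you spell out the triangle inequality $|\langle f_t-f_s,\varphi\rangle| \le |\langle f_t-f_s,\varphi_j\rangle| + 2M\|\varphi-\varphi_j\|_{L^\infty}$, using the uniform $L^\infty_t(L^1)$ bound built into $K$. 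The paper dismisses this step as ``straightforward'' without detail, so your version is a modest but useful completion of that argument. (A small simplification: the support-matching condition on $\varphi_j$ is unnecessary — the $L^\infty$ bound controls the error regardless of supports, so any countable sup-norm-dense subset of $C^\infty_c$ suffices.)
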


\begin{proof}
Define for any function $f\in C([0,T])$ and $\delta >0$ the modulus of continuity
\begin{equation}
  \omega_\delta(f) := \sup_{|t-s|<\delta}|f(t)-f(s)|.
\end{equation}
We prove sufficiency first. Let $\ep >0$, and let $\{\varphi_j\}$ be a dense subset of $C_c^\infty(\Rd)$. Then by the classical tightness criterion for functions in $C([0,T])$, we can conclude that for each $\eta>0$ and $\varphi_j$, we have
\begin{equation}
  \lim_{\delta\to 0} \sup_n\mu_n\Big\{ f : \omega_{\delta}(\langle f, \varphi_j\rangle) > \eta\Big\} = 0.
\end{equation}

Therefore for each $j,k \geq 0$ we may choose values $(M_k,L_k,R_k,\delta_{k,j})$ so that
\begin{equation}
  \sup_n\mu_n\Big\{ f: \|f\|_{L^\infty_t(L^1)} > M_k\Big\} <\ep 2^{-k}
\end{equation}
  \begin{equation}
    \sup_n\mu_n\Big\{ f : \|f\1_{|f|>L_k}\|_{L^\infty_t(L^1)} >1/k\Big\} <\ep 2^{-k}
  \end{equation}
  \begin{equation}
    \sup_n\mu_n\Big\{ f : \|f \1_{B_{R_k}^c}\|_{L^\infty_t(L^1)} >1/k \Big\} < \ep 2^{-k}
  \end{equation}
  \begin{equation}
    \sup_n\mu_n\Big\{ f: \omega_{\delta_{k,j}}(\langle f,\varphi_j\rangle) > 1/k\Big\} <\ep 2^{-k-j}.
  \end{equation}
Define the closed sets,
\begin{equation}
\begin{aligned}
  &A_k = \Big\{f: \|f\|_{L^\infty_t(L^1)} \leq M_k\Big\}
  &B_k = \Big\{ f : \|f\1_{|f|>L_k}\|_{L^\infty_t(L^1)} \leq 1/k\Big\}\\
  &C_k = \Big\{ f : \|f \1_{B_{R_k}^c}\|_{L^\infty_t(L^1)} \leq 1/k \Big\}
  &D_{k,j} = \Big\{ f: \omega_{\delta_{k,j}}(\langle f,\varphi_j\rangle)\leq 1/k\Big\}.
\end{aligned}
\end{equation}
and let
\begin{equation}
  K = \bigcap_{j,k} A_k\cap B_k\cap C_k \cap D_{k,j}.
\end{equation}
By the compactness criterion in Lemma \ref{lem:Compactness-crit} it is straight forward to verify that $K$ is a compact subset of $C([0,T]\,;\, [L^1]_w)$. Furthermore, we have
\begin{equation}
  \mu_n(K^c) \leq \sum_k \mu_n(A_k^c) + \sum_k \mu_k(B_k^c) + \sum_k \mu_k(C_k^c) + \sum_{k,j} \mu_n (D_{k,j}^c) < 4\ep,
\end{equation}
whereby tightness follows.

To prove necessity. We remark that since $f\mapsto \langle f,\varphi\rangle$ is continuous from $C([0,T]\,;\, [L^1]_w)$ to $C([0,T])$ for every $\varphi\in C^\infty_c(\Rd)$, then tightness of $\{\mu_n\}_{n\in\N}$ automatically implies tightness of $\{\nu^\varphi_n\}_{n\in\N}$. Now let $\ep>0$ and let $K$ be the compact subset of $C([0,T]\,;\, [L^1]_w)$ such that $\sup_n\mu_n(K^c) <\ep$. Fix an $\eta >0$. The compactness criterion in Lemma \ref{lem:Compactness-crit} implies that there exist $(M^\prime,L^\prime, R^\prime)$ such that for and $M > M^\prime$, $L>L^\prime$, $R>R^\prime$ the following sets are empty
\begin{equation}
\begin{aligned}
   &\Big\{ f\in K: \|f\|_{L^\infty_t(L^1)} > M\Big\} = \emptyset,\\
   &\Big\{ f\in K : \|f\1_{|f|>L}\|_{L^\infty_t(L^1)} >\eta\Big\} = \emptyset,\\
   &\Big\{ f\in K : \|f \1_{B_R^c}\|_{L^\infty_t(L^1)} >\eta \Big\} = \emptyset,
\end{aligned}
\end{equation}
Therefore, for such $M, L$ and $R$ large enough, we have 
\begin{equation}
  \begin{aligned}
    &\mu_n\Big\{ f: \|f\|_{L^\infty_t(L^1)} > M\Big\} \leq \mu_n(K^c) < \ep,\\
&\mu_n\Big\{ f : \|f\1_{|f|>L}\|_{L^\infty_t(L^1)} >\eta\Big\} \leq \mu_n(K^c) <\ep\\
    &\mu_n\Big\{ f : \|f \1_{B_R^c}\|_{L^\infty_t(L^1)} >\eta \Big\} \leq \mu_n(K^c) <\ep.
  \end{aligned}
\end{equation}
This completes the proof.
\end{proof}

We have the following representation and compactness criterion for $\LLsMw{p}$.
\begin{lem}\label{lem:representation-pettis}
  The space $\LLsMw{p}$ $p\in [1,\infty]$ is continuously linearly isomorphic to $\mathcal{L}(C_0(\Rd),L^p_{t,x})$ the space of continuous linear operators from $C_0(\Rd)$ to $L^p_{t,x}$ under the topology of pointwise convergence. Similarly $[\LLsMw{p}]_\loc$ is continuously linearly isomorphic to $\mathcal{L}(C_0(\Rd),[L^p_{t,x}]_\loc)$.
\end{lem}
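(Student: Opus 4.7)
The candidate isomorphism is $\Phi: \LLsMw{p} \to \mathcal{L}(C_0(\Rd), L^p_{t,x})$ defined by $[\Phi(f)](\phi)(t,x) := \langle f(t,x), \phi \rangle$. I will verify in turn that $\Phi$ is well defined, linear, injective, a homeomorphism onto its image, and finally surjective; the last step is the only one requiring real work.

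For well-definedness I would show that $\Phi(f)$ is a bounded linear operator via the closed graph theorem. Linearity is clear. For continuity, suppose $\phi_n \to \phi$ uniformly and $\Phi(f)(\phi_n) \to g$ in $L^p_{t,x}$; extracting an a.e.\ convergent subsequence, one has $g(t,x) = \lim_n \langle f(t,x), \phi_n \rangle$ almost everywhere, and since $f(t,x)$ is a finite Radon measure with $|\phi_n| \leq \|\phi\|_\infty + 1$ eventually, bounded convergence gives $g(t,x) = \langle f(t,x), \phi \rangle = \Phi(f)(\phi)(t,x)$, so the graph is closed. Injectivity of $\Phi$ follows by picking a countable dense $\{\phi_k\} \subset C_0(\Rd)$: if $\Phi(f)=0$, then off a single null set, $\langle f(t,x), \phi_k \rangle = 0$ for all $k$, forcing $f(t,x)=0$ in $\Meas_v^*$. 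The continuity of $\Phi$ and $\Phi^{-1}$ is tautological once one matches the generating seminorms, since $\nu_\phi(f) = \|\Phi(f)(\phi)\|_{L^p_{t,x}}$ is exactly the seminorm defining pointwise convergence in $\mathcal{L}(C_0(\Rd), L^p_{t,x})$.

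The heart of the proof is surjectivity. Given $T \in \mathcal{L}(C_0(\Rd), L^p_{t,x})$, I would select a countable $\mathbb{Q}$-linear subspace $D = \{\phi_k\}_{k \in \N}$ dense in $C_0(\Rd)$, fix a measurable representative of $T(\phi_k)$ for each $k$, and discard a single null set $N$ covering the exceptional sets for the $\mathbb{Q}$-linear relations among $\{T(\phi_k)\}$. Off $N$, setting $\mu_{t,x}(\phi_k) := T(\phi_k)(t,x)$ defines a $\mathbb{Q}$-linear functional on $D$. The main obstacle is then to verify the pointwise bound $|\mu_{t,x}(\phi_k)| \leq C(t,x)\|\phi_k\|_\infty$ with $C(t,x) < \infty$ a.e., so that $\mu_{t,x}$ extends uniquely by continuity to a Radon measure $f(t,x) \in \Meas_v$. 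The hard point is that the operator bound $\|T(\phi_k)\|_{L^p_{t,x}} \leq \|T\|\,\|\phi_k\|_\infty$ does not directly yield a pointwise bound when $p<\infty$. I would overcome this via a fiberwise lifting/Banach--Steinhaus argument: one realizes the measurable maximal function $M(t,x) := \sup_{\phi_k \in D, \, \|\phi_k\|_\infty \leq 1} |T(\phi_k)(t,x)|$ and shows $M < \infty$ a.e.\ by exploiting the adjoint $T^*$ acting into $\Meas(\Rd)$ together with a disintegration against the $\sigma$-finite base $[0,T]\times\Rd$. Once the bound is in hand, the extended $f(t,x) \in \Meas_v$ is weak-$*$ measurable (since each $(t,x) \mapsto \langle f(t,x), \phi_k \rangle = T(\phi_k)(t,x)$ is measurable) and satisfies $\Phi(f)(\phi_k) = T(\phi_k)$ in $L^p_{t,x}$ for every $k$; density of $D$ and continuity of both $T$ and $\Phi(f)$ then promote this identification to all of $C_0(\Rd)$.

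For the local statement $[\LLsMw{p}]_\loc \cong \mathcal{L}(C_0(\Rd), [L^p_{t,x}]_\loc)$, I would run the identical argument with the global seminorms $\nu_\phi$ replaced by their compactly truncated analogues $\nu_{\phi,K}$; the construction of $f$ on each compact set $K \subset \Rd_x$ is the same, and the resulting pieces glue consistently on the exhausting family of compacts to yield a single $f \in [\LLsMw{p}]_\loc$.
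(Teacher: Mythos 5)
Your forward map $\Phi$, the injectivity argument, and the observation that the defining seminorms on both sides coincide (so that continuity of $\Phi$ and $\Phi^{-1}$ is automatic) are all correct and mirror the paper's easy direction. The genuine content is surjectivity, and there your proposal has a gap precisely at the step you flag as hard. You wish to show that the maximal function $M(t,x)=\sup_{k}|T(\phi_k)(t,x)|$ (sup over a countable dense subset of the unit ball of $C_0(\Rd)$, with representatives fixed) is finite a.e.\ via a ``fiberwise lifting/Banach--Steinhaus argument'' plus ``disintegration against the $\sigma$-finite base,'' but neither tool applies as stated. Banach--Steinhaus requires a family of bounded operators on a common Banach space; here the ``evaluation functionals'' $\phi\mapsto T\phi(t,x)$ are defined only on the countable set $D$ after fixing representatives, are not functionals on all of $C_0(\Rd)$, and cannot be extended to $C_0(\Rd)$ without already possessing the bound $M(t,x)<\infty$ that you are trying to establish. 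Likewise, ``disintegrating'' the $\Meas_v$-valued set function $E\mapsto T^*(\1_E)$ into a $dt\,dx$-measurable family $f(t,x)\in\Meas_v$ is not a routine step: it is precisely the content of the weak-$*$ Radon--Nikodym property. The a.e.\ finiteness of $M$, equivalently of $\|f(t,x)\|_{TV}$, is the \emph{output} of that theorem, so presenting it as a prerequisite to be supplied by a short functional-analytic argument begs the question.

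The paper closes the gap by going through vector-measure theory. Given $T\in\mathcal{L}(C_0(\Rd),L^p_{t,x})$, it transposes: for $g\in L^q_{t,x}$ the functional $\phi\mapsto\langle T\phi,g\rangle$ is represented by $f_g\in\Meas_v$ via Riesz--Markov; then $E\mapsto f_{\1_E}$ is a $dt\,dx$-absolutely continuous, $\Meas_v$-valued vector measure of $\sigma$-finite variation; and finally it invokes the weak-$*$ Radon--Nikodym property of $\Meas_v=C_0(\Rd)^*$ (valid because $C_0(\Rd)$ is separable; the paper cites \cite{musial2002pettis}, Theorem~9.1) to extract a weak-$*$ measurable density $f_T$ with $\langle T\phi,g\rangle=\int_0^T\int\langle f_T,\phi\rangle\, g\,\dx\dt$, whence $\Phi(f_T)=T$ by density. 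Your instinct to examine $T^*$ is correct — it is exactly the map $g\mapsto f_g$ above — but the argument must be routed through the w$*$-RNP (or you must re-prove that representation theorem, a substantial undertaking involving, e.g., an actual lifting of $L^\infty$). As written, the surjectivity step is a sketch of where a proof might live, not a proof.
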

\begin{proof}
  For each $f\in \LLsMw{p}$ we can trivially associate a bounded linear operator $S_f : C_0(\Rd) \to L^p_{t,x}$, by $S_f\phi = \langle f,\phi\rangle$, clearly the map $f\mapsto S_f$ is one-to-one, linear, and continuous from $\LLsMw{p}$ to $\mathcal{L}(C_0(\Rd),L^p_{t,x})$ with it's pointwise topology.

Conversely for each bounded linear operator $S\in \mathcal{L}(C_0(\Rd),L^p_{t,x})$ one may define for each $g\in L^q_{t,x}$, $ q= p/(p-1)$, the bounded linear functional $h_g:C_0(\Rd) \to \R$, by $h_g\phi = \langle S\phi , g\rangle$ which, by the Riesz-Markov theorem can be represented by a measure $f_g \in \Meas_v$, satisfying
  \begin{equation}
    h_g\phi = \langle f_g, \phi\rangle = \langle S\phi, g\rangle.
  \end{equation}
Since the mapping $g\mapsto f_g$ is clearly a continuous linear mapping from $L^q_{t,x}$ to $\Meas_v^*$, one can readily prove that for any bounded Borel $E\subset [0,T]\times\Rd$, that $\nu(E) = f_{\1_E}$ defines an $\Meas_v$ valued measure that $\dt\dx$ absolutely continuous and of $\sigma$ finite variation. Since $\Meas_v$ is a dual space, it has the weak-* Radon-Nikodym property (see \cite{musial2002pettis} Theorem 9.1) and therefore there is a measurable function $f_S:[0,T]\times\Omega \to \Meas_v^*$ such that $|\langle f_S,\phi\rangle| \in [L^1_{t,x}]_{\loc}$ and
\begin{equation}
  \langle S\phi , 1_{E}\rangle = \langle \nu(E), \phi\rangle = \iint_{E} \langle f_S, \phi\rangle \dx\dt.
\end{equation}
Using density of simple functions in $L^q_{t,x}$ we can conclude
\begin{equation}\label{eq:inverse-rep-formula}
  \langle S\phi, g\rangle = \int_0^T\int_{\Rd} \langle f_S, \phi\rangle g \dx\dt,
\end{equation}
for any $g\in L^q_{t,x}$. Taking the sup in $g\in L^q_{t,x}$, $\|g\|_{L^q} = 1$, on both sides of (\ref{eq:inverse-rep-formula}) we find
\begin{equation}
  \|\langle f,\phi \rangle\|_{L^p_{t,x}} = \|T\phi\|_{L^p_{t,x}} <\infty
\end{equation}
and therefore $f\in \LLsMw{p}$. Moreover this identity implies that the mapping $S\mapsto f_S$ is continuous from $\mathcal{L}(C_0(\Rd),L^p_{t,x})$ with it's pointwise topology to $\LLsMw{p}$, while identity (\ref{eq:inverse-rep-formula}) implies that $S\mapsto f_S$ is linear and one-to-one.

The proof on $[\LLsMw{p}]_{\loc}$ is similar and can be proved by the above argument on compact sets of $[0,T]\times\R^d$.
\end{proof}
\begin{lem}
\label{lem:compact-char-vel-avg}
Let $K$ be subset of $[\LLsMw{p}]_{\loc}$, $p\in [1,\infty]$, and let $\{\phi_k\}_{k=1}^\infty \subseteq C^\infty_c(\Rd_v)$ be a countable dense subset of $C_0(\Rd)$. Define the map $\Pi_{\phi_k} :[\LLsMw{p}]_{\loc} \to [L_{t,x}^{p}]_\loc$ by
\begin{equation}
  \Pi_{\phi_k}(f) = \langle f,\phi_k\rangle.
\end{equation}
Then $K$ is a compact subset of $[\LLsMw{p}]_\loc$ if and only if $K$ is bounded in $\LLsMw{p}$ and $\Pi_{\phi_{j}}K$ is compact in $[\LLt{p}]_{\loc}$ for all $j\geq 1$.
\end{lem}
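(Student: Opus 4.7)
\medskip

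\noindent\textbf{Proof proposal.} The plan is to leverage the operator-theoretic representation of Lemma \ref{lem:representation-pettis}, which identifies $[\LLsMw{p}]_\loc$ with $\mathcal{L}(C_0(\Rd),[L^p_{t,x}]_\loc)$ equipped with the topology of pointwise convergence. Under this identification, a net $f_\alpha \to f$ in $[\LLsMw{p}]_\loc$ precisely means that $\Pi_\phi f_\alpha \to \Pi_\phi f$ in $[L^p_{t,x}]_\loc$ for every $\phi \in C_0(\Rd)$.

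\medskip

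The necessity direction ($\Rightarrow$) is immediate from this description: each map $\Pi_{\phi_j}$ is continuous by the very definition of the topology on $[\LLsMw{p}]_\loc$, so $\Pi_{\phi_j}K$ is a continuous image of a compact set, hence compact in $[L^p_{t,x}]_\loc$. Boundedness under each semi-norm $\nu_{\phi,Q}$ follows since compact sets in a topological vector space are bounded.

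\medskip

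For sufficiency ($\Leftarrow$), given a sequence $\{f_n\} \subseteq K$, the goal is to extract a subsequence converging in $[\LLsMw{p}]_\loc$. First, since each $\Pi_{\phi_j}K$ is relatively compact in the metrizable space $[L^p_{t,x}]_\loc$, a standard diagonal extraction produces a subsequence (not relabeled) so that $\langle f_n,\phi_j\rangle \to g_j$ in $[L^p_{t,x}]_\loc$ for every $j \in \N$. Second, I would invoke Banach--Steinhaus on the family $\{S_{f_n}\}\subseteq \mathcal{L}(C_0(\Rd),[L^p_{t,x}]_\loc)$: the boundedness of $K$ in $\LLsMw{p}$ gives pointwise boundedness of the operator family, and since $[L^p_{t,x}]_\loc$ is a Fr\'echet space, this upgrades to equicontinuity, yielding for every compact $Q \subseteq [0,T]\times \Rd_x$ a constant $M_Q$ such that
\begin{equation}
\sup_{n\in\N}\|\langle f_n,\phi\rangle\|_{L^p(Q)} \leq M_Q \|\phi\|_{C_0(\Rd)}, \qquad \forall \phi \in C_0(\Rd).
\end{equation}

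\medskip

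Third, I would upgrade the convergence from the dense subset $\{\phi_j\}$ to all of $C_0(\Rd)$. For arbitrary $\phi \in C_0(\Rd)$ and any $\phi_j$, the triangle inequality gives
\begin{equation}
\|\langle f_n - f_m,\phi\rangle\|_{L^p(Q)} \leq 2M_Q\|\phi - \phi_j\|_{C_0(\Rd)} + \|\langle f_n - f_m,\phi_j\rangle\|_{L^p(Q)}.
\end{equation}
Given $\varepsilon > 0$, first choose $\phi_j$ with $\|\phi - \phi_j\|_{C_0} < \varepsilon/(4M_Q)$, then choose $n,m$ large so that the second term is under $\varepsilon/2$. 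Hence $\{\langle f_n,\phi\rangle\}$ is Cauchy in $[L^p_{t,x}]_\loc$, with limit $S\phi$. Linearity of $\phi \mapsto S\phi$ is clear, the estimate $\|S\phi\|_{L^p(Q)} \leq M_Q\|\phi\|_{C_0}$ passes to the limit, so $S \in \mathcal{L}(C_0(\Rd),[L^p_{t,x}]_\loc)$. Applying Lemma \ref{lem:representation-pettis} produces $f \in [\LLsMw{p}]_\loc$ with $S = S_f$, and by construction $f_n \to f$ in $[\LLsMw{p}]_\loc$.

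\medskip

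The principal technical step — and the main obstacle — is the application of Banach--Steinhaus used to obtain the uniform operator bound: pointwise boundedness on each individual $\phi$ (all that the hypothesis on $K$ directly provides) must be boosted to a single control of the form $\|\langle f_n,\phi\rangle\|_{L^p(Q)} \lesssim \|\phi\|_{C_0}$ uniformly in $n$. This is what allows the density of $\{\phi_j\}$ in $C_0(\Rd)$ to be exploited to extend convergence from the countable test family to all of $C_0(\Rd)$. Without this uniform control, the diagonal extraction alone would only yield convergence along a countable set of test functions, which is strictly weaker than convergence in $[\LLsMw{p}]_\loc$.
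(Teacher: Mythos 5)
Your proof matches the paper's own argument essentially step for step: the same identification with $\mathcal{L}(C_0(\Rd),[L^p_{t,x}]_\loc)$ via Lemma~\ref{lem:representation-pettis}, the same diagonal extraction over the countable dense set $\{\phi_j\}$, the same appeal to Banach--Steinhaus/uniform boundedness to upgrade pointwise boundedness (from the boundedness of $K$) to equicontinuity, and the same density argument to extend convergence to all of $C_0(\Rd)$. The only thing you leave implicit is the final logical step: your argument shows $K$ is sequentially compact, and passing from sequential compactness to compactness requires noting, as the paper does, that $[\LLsMw{p}]_\loc$ (with the pointwise-convergence topology on the operator side) is a sequential space; without that remark the argument establishes only sequential compactness.
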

\begin{proof}

  Let $\{f_n\}_{n=1}^\infty \subseteq K$, and assume that $j\geq 1$, $\{\langle f_n, \phi_j\rangle\}_{n=1}^\infty$ is compact in $[L_{t,x}^{p}]_{\loc}$. By a standard argument we may produce a diagonal subsequence, still denoted $\{f_n\}_{n=1}^\infty$, such that $\langle f_{n}, \phi_j\rangle$ converges as $n\to \infty$ for each $j\geq 1$. Identify $[\LLsM{p}]_{\loc}$ with $\mathcal{L}(C_0(\Rd); [L_{t,x}^{p}]_{\loc})$ as in Lemma \ref{lem:representation-pettis}, and for each $f\in [\LLsM{p}]_{\loc}$ let $T_f$ denote the corresponding element of $\mathcal{L}(C_0(\Rd); [L_{t,x}^{p}]_{\loc})$. Since $\{f_n\}_{n=1}^\infty$ is bounded in $[\LLsMw{p}]_{\loc}$, we have for any compact set $C\subset [0,T]\times\Rd$,
  \begin{equation}
    \sup_n \|T_{f_n}\phi\|_{L^p_{t,x}(C)} = \|\langle f_n,\phi\rangle\|_{L^p_{t,x}(C)} <\infty.
  \end{equation}
  By the uniform boundedness principle,
  \begin{equation}
    \sup_n \|\1_{C}T_{f_n}\|_{\mathcal{L}(C_0(\Rd); [L_{t,x}^{p}]_{\loc})} < \infty.
  \end{equation}
Therefore the mappings $\phi \mapsto \1_CT_{f_{n}}\phi = \1_C\langle f_n, \phi\rangle$ are equicontinuous. Since $\{\phi_j\}_{j=1}^\infty$ is dense in $C_0(\Rd)$, this equicontinuity implies that for each $\phi\in C_0(\Rd)$, $\{\1_C\langle f_{n},\phi\rangle\}_{n=1}^\infty$ is Cauchy in $L_{t,x}^{p}(C)$ and therefore $\{\langle f_{n},\phi\rangle\}_{n=1}^\infty$ is convergent in $[L^p_{t,x}]_{\loc}$. This limit defines a mapping $f: C_0(\Rd) \to [L_{t,x}^{p}]_{\loc}$, by 
\begin{equation}
f(\phi) \equiv \lim_{k\to\infty} \langle f_{n}, \phi\rangle.
\end{equation}
It is a simple consequence of the linearity of $\langle f_{n},\,\cdot\,\rangle$ and the boundedness of $\{f_n\}_{n=1}^\infty$, that the limiting $f$ belongs to $\mathcal{L}(C_0(\Rd), [L_{t,x}^{p}]_{\loc})$, and therefore belongs to $[\LLsMw{p}]_{\loc}$. Therefore $K$ is sequentially compact. Compactness of $K$ now follows from the fact that $[\LLsMw{p}]_{\loc}$ is a sequential space. 

The converse is simple. If $K$ is compact, since $\Pi_{\phi_j}$ are continuous, $\Pi_{\phi_j} K$ are compact in $[L_{t,x}^{p}]_{\loc}$.
\end{proof}
\begin{lem}\label{lem:tightness-crit-vel-averaged-space}
  Let $(\Omega,\mathcal{F},\P)$ be a probability space, and let $\{f_{n}\}_{n \in \N}$ be a bounded sequence in $L^{p}(\Omega \times [0,T] \times \R^{2d})$ for some $p\in [1,\infty]$. Then $\{f_{n}\}_{n \in \N}$ induces a tight family of laws on $[\LLsMw{p}]_{\loc}$ if and only if for all $\varphi \in C^{\infty}_{c}(\R^{d}_{v})$, the sequence $\{\langle f_{n},\varphi \rangle \}_{n \in \N}$ induces a tight family of laws on $[\LLt{p}]_{\loc}$.
  
\end{lem}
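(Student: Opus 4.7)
The forward implication is a direct consequence of continuous mapping. By the very definition of the topology on $[\LLsMw{p}]_{\loc}$, each $\varphi \in C^{\infty}_{c}(\R^{d}_{v})$ gives rise to a continuous projection $\Pi_{\varphi}:[\LLsMw{p}]_{\loc}\to[\LLt{p}]_{\loc}$ via $\Pi_{\varphi}(f)=\langle f,\varphi\rangle$. Since continuous mappings preserve tightness, the assumed tightness of $\{f_{n}\}$ on $[\LLsMw{p}]_{\loc}$ would immediately transfer to tightness of $\{\langle f_{n},\varphi\rangle\}$ on $[\LLt{p}]_{\loc}$.

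For the converse, the plan is to construct, for each $\epsilon>0$, a candidate compact set $B^{\epsilon}\subseteq [\LLsMw{p}]_{\loc}$ trapping mass $1-\epsilon$ uniformly in $n$, and verify its compactness via the characterization in Lemma \ref{lem:compact-char-vel-avg}. To set this up, first fix a countable dense family $\{\phi_{k}\}_{k\geq 1}\subset C^{\infty}_{c}(\R^{d}_{v})$ inside $C_{0}(\R^{d}_{v})$.  For each $k$, the hypothesized tightness provides a compact $K_{k}^{\epsilon}\subset [\LLt{p}]_{\loc}$ with $\sup_{n}\P(\langle f_{n},\phi_{k}\rangle\notin K_{k}^{\epsilon})<\epsilon\, 2^{-k-1}$. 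Simultaneously, the uniform $L^{p}(\Omega\times[0,T]\times\R^{2d})$ bound together with Chebyshev's inequality selects $R_{\epsilon}$ such that $\sup_{n}\P(\|f_{n}\|_{L^{p}([0,T]\times\R^{2d})}>R_{\epsilon})<\epsilon/2$. Define
\begin{equation}
B^{\epsilon}=\bigg\{f\in [\LLsMw{p}]_{\loc}\;:\;\nu_{\phi,K}(f)\leq R_{\epsilon}\|\phi\|_{L^{p'}_{v}}\text{ for every }\phi\in C^{\infty}_{c}(\R^{d}_v),\,K\Subset [0,T]\times\R^{d}_{x}\bigg\}\cap\bigcap_{k\geq 1}\Pi_{\phi_{k}}^{-1}(K_{k}^{\epsilon}).
\end{equation}
The key algebraic tool connecting the $L^{p}$ bound with the semi-norm condition is Minkowski's integral inequality in the form $\|\langle f,\phi\rangle\|_{L^{p}_{t,x}(K)}\leq \|\phi\|_{L^{p'}_{v}}\|f\|_{L^{p}_{t,x,v}}$, valid for $\phi\in C^{\infty}_{c}$, which ensures that the event $\{\|f_{n}\|_{L^{p}}\leq R_{\epsilon}\}$ is contained in the first defining set. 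Subadditivity then yields $\sup_{n}\P(f_{n}\notin B^{\epsilon})<\epsilon/2+\sum_{k}\epsilon\,2^{-k-1}=\epsilon$.

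It remains to verify compactness of $B^{\epsilon}$, which will be done by invoking Lemma \ref{lem:compact-char-vel-avg}. The first defining clause provides uniform control of the defining seminorms on $[\LLsMw{p}]_{\loc}$, so $B^{\epsilon}$ is bounded there. The second clause ensures that $\Pi_{\phi_{k}}(B^{\epsilon})\subseteq K_{k}^{\epsilon}$ is relatively compact in $[\LLt{p}]_{\loc}$ for each element $\phi_k$ of the chosen dense family. Closedness of $B^{\epsilon}$ follows from the continuity of every $\Pi_{\phi_{k}}$, the closedness of each $K_{k}^{\epsilon}$, and the lower semi-continuity of the seminorms $\nu_{\phi,K}$. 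With all hypotheses of Lemma \ref{lem:compact-char-vel-avg} met, $B^{\epsilon}$ is compact in $[\LLsMw{p}]_{\loc}$, completing the tightness argument. The principal obstacle is the technical step bridging the scalar $L^{p}$ bound on $\{f_{n}\}$ with the family of seminorms generating the topology of $[\LLsMw{p}]_{\loc}$; this is exactly where Minkowski's integral inequality is indispensable, allowing a single Chebyshev estimate to control an entire continuum of semi-norm conditions simultaneously.
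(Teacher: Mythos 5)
Your proof is correct and takes essentially the same route as the paper: the forward direction by continuity of the projections $\Pi_{\varphi}$, and the converse by trapping $\{f_n\}$ inside a set built from a countable dense family $\{\phi_k\}$, the preimages $\Pi_{\phi_k}^{-1}(K_k^{\epsilon})$, and a uniform bound coming from Chebyshev on the $L^p(\Omega\times[0,T]\times\R^{2d})$ norms, then applying Lemma \ref{lem:compact-char-vel-avg}. The only cosmetic differences are that you phrase the bound event directly in terms of the defining seminorms via H\"older's inequality (which you attribute to Minkowski) rather than as the $L^p_{t,x,v}$-ball the paper uses, and that you explicitly spell out closedness of the trapping set, which the paper leaves implicit.
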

\begin{proof}
Clearly if $ \{f_n\}_{n\in\N}$ induce tight laws on $[\LLsMw{p}]_{\loc}$ then for each $\varphi \in C_c^\infty(\Rd_v)$, since the mapping $f\mapsto \langle f, \varphi \rangle$ is continuous from $[\LLsMw{p}]_{\loc}$ to $[\LLt{p}]_\loc$, $\{\langle f_n, \varphi\rangle\}_{n\in\N}$ is tight on $[\LLt{p}]_{\loc}$.

We proceed in the other direction by explicitly constructing a set $K$ which is compact in $[\LLsMw{p}]_\loc$ which has uniformly small probability. Fix and $\ep >0$ and let $\{\varphi_j\}_{j=1}^\infty\subseteq C^\infty_c(\Rd_v)$ be a dense subset of $C_0(\Rd_v)$. Since $\{\langle f_n, \varphi_j\rangle\}_{n\in\N}$ induce tight laws in $[\LLt{p}]_{\loc}$, then for each $j\in\N$ there exist a compact set $K_j$ in $[\LLt{p}]_{\loc}$ such that
  \begin{equation}
    \sup_n \P\{ \langle f_n, \varphi_j\rangle \notin K_j\} < \ep 2^{-j}.
  \end{equation}
  Define, as in Lemma \ref{lem:compact-char-vel-avg}, $\Pi_{\varphi_j}f = \langle f,\varphi_j\rangle$. Since $\Pi_{\varphi_j}$ is continuous from $[\LLsMw{p}]_\loc$ to  $[\LLt{p}]_\loc$, the pre-images $\Pi_{\phi_j}^{-1}K_j$ are closed in $[\LLsMw{p}]_{\loc}$. Let $C = \sup_n\E\|f_n\|_{\LLLs{p}}$ and define
  \begin{equation}
    B = \left\{f \in \LLLs{p}\,:\, \|f\|_{\LLLs{p}} \leq C\ep^{-1}\,\right\}
  \end{equation}
  and note that $B$ is a bounded subset of $[\LLsMw{p}]_{\loc}$. Now, define the closed set
\begin{equation}
  K = \bigcap_{j=1}^\infty \left(B\cap\Pi_{\varphi_j}^{-1}K_j\right),
\end{equation}
and note that $K \subseteq B$ is a bounded subset of $[\LLsMw{p}]_{\loc}$, and for each $j\in\N$, $\Pi_{\varphi_j} K$ is a closed subset of $K_j$, so the set $\Pi_{\varphi_j} K$ is compact in $[\LLt{p}]_{\loc}$. Therefore Lemma \ref{lem:compact-char-vel-avg} implies that $K$ is compact in $[\LLsMw{p}]_{\mathrm{loc}}$. We conclude the proof with
\begin{equation}
    \P\left\{ f_n \notin K\right\} \leq \P\left\{\|f\|_{\LLLs{p}} > C\ep^{-1}\right\}+ \sum_{j=1}^\infty\P\left\{\langle f_n,\varphi_j\rangle \notin K_j\right\} < 2\ep.
\end{equation}
\end{proof}

\section{}

The following product-limit lemma can be established in a classical way, using Egorov's theorem.
\begin{lem}\label{lem:product_lemma}
Let  $\{g_{n}\}_{n \in \N}$ and $\{h_{n}\}_{n \in \N}$ be sequences in $\LLLs{1}$.  Assume that $\{g_{n}\}_{n \in \N}$ is uniformly bounded in $\LLLs{\infty}$ and converges to $g$ in measure on $[0,T] \times \R^{2d}$. Then we have the following:
\begin{enumerate}
\item If the sequence $\{h_{n}\}_{n \in \N}$ converges to $h$ in $[\LLLs{1}]_{w}$,  then the sequence of products $\{g_{n}h_{n}\}_{n \in \N}$ converge to $gh$ in $[\LLLs{1}]_{w}$.  
\item If the sequence $\{h_{n}\}_{n \in \N}$ converges to $h$ in $[\LLLs{1}]_{w} \cap \LLsMw{1}$, then the sequence of products $\{g_{n}h_{n}\}_{n \in \N}$ converge to $gh$ in $\LLsMw{1}$.
\end{enumerate}
\end{lem}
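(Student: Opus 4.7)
The plan is to handle both parts simultaneously via the split
\begin{equation*}
g_nh_n - gh = (g_n - g)h_n + g(h_n - h),
\end{equation*}
treating each piece separately. In both parts, weak $\LLLs{1}$ convergence of $\{h_n\}$ to $h$ supplies, via Dunford-Pettis, uniform integrability and tightness of $\{h_n\}$ in $\LLLs{1}$; the convergence in measure together with the $\LLLs{\infty}$ bound on $\{g_n\}$ additionally gives $g \in \LLLs{\infty}$ with $\|g\|_\infty \leq \liminf_n \|g_n\|_\infty$.

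For the first piece $(g_n - g)h_n$, the plan is a standard Egorov argument. Given $\epsilon > 0$, tightness of $\{h_n\}$ selects a set $K$ of finite measure with $\sup_n \int_{K^c}|h_n| < \epsilon$. Extracting a subsequence along which $g_n \to g$ almost everywhere on $K$, Egorov produces $E \subset K$ on which $g_n \to g$ uniformly with $|K \setminus E|$ arbitrarily small; uniform integrability controls $\int_{K \setminus E}|h_n|$, while the uniform convergence on $E$ handles $\int_E |g_n - g||h_n|$. A subsequence-of-a-subsequence argument promotes this to $\|(g_n - g)h_n\|_{\LLLs{1}} \to 0$, which suffices for Part 1 (pairing against $\varphi \in \LLLs{\infty}$) and also for Part 2 via the pointwise bound $\|\langle (g_n - g)h_n, \phi\rangle\|_{\LLt{1}} \leq \|\phi\|_{L^\infty_v}\|(g_n - g)h_n\|_{\LLLs{1}}$. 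For the second piece in Part 1, $g\varphi \in \LLLs{\infty}$, so weak $\LLLs{1}$ convergence of $h_n$ to $h$ immediately gives $\int g(h_n - h)\varphi \to 0$, completing Part 1.

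The real content of the lemma is the analogous claim $\|\langle g(h_n - h), \phi\rangle\|_{\LLt{1}} \to 0$ in Part 2. The plan is to approximate $g\phi$ by finite sums of tensor products $\alpha(t,x)\psi(v)$. First, tightness of $\{h_n\} \cup \{h\}$ reduces matters to a compact set $K \subset [0,T]\times\R^{2d}$. On $K$, Lusin's theorem combined with Tietze extension produces a continuous $\tilde g$ with $\|\tilde g\|_\infty \leq \|g\|_\infty$ that equals $g$ off a set of arbitrarily small measure, the discrepancy $(g - \tilde g)\phi(h_n - h)$ being absorbed uniformly in $n$ by uniform integrability. On any compact box in $(t,x,v)$, Stone-Weierstrass uniformly approximates $\tilde g \phi$ by finite sums $\sum_k \alpha_k(t,x)\psi_k(v)$ with $\alpha_k \in C_c([0,T]\times\R^d_x)$ and $\psi_k \in C_c(\R^d_v) \subset C_0(\R^d_v)$. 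For each fixed such approximation, the $\LLsMw{1}$ hypothesis yields
\begin{equation*}
\Big\|\sum_k \alpha_k \langle \psi_k, h_n - h\rangle\Big\|_{\LLt{1}} \leq \sum_k \|\alpha_k\|_\infty \|\langle \psi_k, h_n - h\rangle\|_{\LLt{1}} \to 0,
\end{equation*}
while the uniform approximation error on the compact box, multiplied by the uniform $\LLLs{1}$ bound on $h_n - h$, tends to zero as the approximation is refined.

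The main obstacle is precisely this approximation step in Part 2. Convergence in $\LLsMw{1}$ only controls test functions depending on $v$ alone, whereas $g\phi$ genuinely couples $(t,x)$ with $v$; bridging this gap requires combining Lusin/Tietze to replace $g$ by a continuous function with Stone-Weierstrass to separate variables, while the Dunford-Pettis properties of $\{h_n\}$ are used in tandem to absorb each error term uniformly in $n$.
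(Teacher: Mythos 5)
The paper itself gives no proof of this lemma; it simply asserts in Appendix B that it ``can be established in a classical way, using Egorov's theorem.'' So there is no in-paper argument to compare against, and your proof must stand on its own.

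Your Part 1 argument is correct. The decomposition $g_nh_n-gh=(g_n-g)h_n+g(h_n-h)$, with the first piece handled by Dunford--Pettis (uniform integrability and tightness) plus Egorov on an a.e.-convergent subsequence plus the subsequence-of-a-subsequence principle, and the second piece handled by pairing $g\varphi\in\LLLs{\infty}$ against the weakly convergent $h_n-h$, is exactly the standard Egorov argument the paper alludes to. Note in passing that the carry-over to Part 2 of the first piece, via $\|\langle(g_n-g)h_n,\phi\rangle\|_{\LLt{1}}\le\|\phi\|_{L^\infty_v}\|(g_n-g)h_n\|_{\LLLs{1}}$, is also fine.

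For the second piece in Part 2 your strategy---approximate $g\phi$ by finite sums $\sum_k\alpha_k(t,x)\psi_k(v)$ with $\psi_k\in C_0(\R^d_v)$, so that the $\LLsMw{1}$ hypothesis can be applied factor by factor---is the right idea and, once the details are supplied, does close. There is, however, one point that needs to be stated more carefully than ``tightness of $\{h_n\}\cup\{h\}$ reduces matters to a compact set $K$.'' If you literally replace $h_n-h$ by $(h_n-h)\1_K$, then the $\LLsMw{1}$ hypothesis no longer applies directly, because $\langle\psi_k,(h_n-h)\1_K\rangle\neq\langle\psi_k,h_n-h\rangle$ and $\1_K$ does not split as a tensor product. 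The correct bookkeeping is to localize the \emph{multiplier} rather than $h_n-h$: keep $h_n-h$ intact, write $\langle g(h_n-h),\phi\rangle=\langle(g\phi-\sum_k\alpha_k\psi_k)(h_n-h)\rangle+\sum_k\alpha_k\langle\psi_k,h_n-h\rangle$, and then split the first term over $K$ and $K^c$. The $K^c$ contribution is controlled by tightness only if $\|\sum_k\alpha_k\psi_k\|_{L^\infty}$ is bounded uniformly in the approximation parameter; this is achievable but requires choosing the Stone--Weierstrass approximants to be compactly supported in a box containing $K$ with sup norm at most, say, $2\|g\|_\infty\|\phi\|_\infty$ (exploiting that $\tilde g\phi$ vanishes near the boundary of the box in $v$, so a cutoff can be inserted without damaging the approximation). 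Your sketch leaves this uniform $L^\infty$ control implicit, and without it the $K^c$ error term does not close. A slightly cleaner route that avoids Stone--Weierstrass altogether, and keeps the $L^\infty$ control trivial, is to approximate $\tilde g$ on $K$ by a function piecewise constant in $(t,x)$ --- that is, by $\sum_\ell\1_{A_\ell}(t,x)\,\tilde g(t_\ell,x_\ell,v)$ for a fine finite partition $\{A_\ell\}$ of the $(t,x)$-projection of $K$, which is uniformly close by uniform continuity of $\tilde g$ on $K$, has sup norm at most $\|\tilde g\|_\infty$, and already has the required tensor-product form with $\psi_\ell=\tilde g(t_\ell,x_\ell,\cdot)\phi\in C_0(\R^d_v)$.
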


The next lemma provides a procedure for identifying a continuous, adapted process as a series of one dimensional stochastic integrals.  

\begin{lem} \label{Lem:Appendix:Three_Martingales_Lemma}
Let $(\Omega, \mathcal{F},\p, \{ \mathcal{F}^{t}\}_{t=0}^{T}, \{ \beta_{k}\}_{k=1}^{\infty} )$ be a stochastic basis and let $\left ( M_{t} \right )_{t=0}^{T}$ be a continuous $(\mathcal{F}_{t})_{t=0}^{T}$ martingale with the quadratic variation process $(\int_{0}^{t}|f_{s}|^{2}_{\ell^2(\N)})_{t=0}^{T}$. Moreover, assume that for each $k \in \N$ the cross variation of $\left ( M_{t} \right )_{t=0}^{T}$ with $\beta_{k}$ is given by the process $(\int_{0}^{t}f_{k}(s)ds)_{t=0}^{T}$.
Under these hypotheses, the martingale may be identified as
$$
M_{t}=\sum_{k=1}^{\infty} \int_{0}^{t}f_{k}(s)\dee\beta_{k}(s).
$$
\end{lem}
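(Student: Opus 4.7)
The plan is to prove that $N_t := M_t - \sum_{k=1}^\infty \int_0^t f_k(s)\dee\beta_k(s)$ is the zero process by showing it is a continuous martingale starting at zero whose quadratic variation vanishes identically. Note that $N_0 = 0$, since the hypothesis that $\langle M, \beta_k\rangle_0 = 0$ together with $M$ being a martingale and the cross-variation computation forces $M_0 = 0$ (if not already implicit in the statement).

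The first step is to make sense of the series $\sum_k \int_0^t f_k \dee\beta_k$. The quadratic variation hypothesis implies $\int_0^T |f_s|_{\ell^2(\N)}^2 \ds < \infty$ $\p$ almost surely, and the processes $f_k$ inherit predictability from the definition of cross variation. Define the stopping times
\begin{equation}
\tau_R = \inf\Big\{ t \in [0,T] \,:\, \int_0^t |f_s|_{\ell^2(\N)}^2 \ds \geq R \Big\} \wedge T,
\end{equation}
so that $\tau_R \to T$ almost surely as $R \to \infty$. For each fixed $R$, the stopped partial sums $S_t^{K,R} := \sum_{k=1}^K \int_0^{t\wedge \tau_R} f_k \dee\beta_k$ form a Cauchy sequence of $L^2$ martingales in view of the BDG inequality
\begin{equation}
\E \sup_{t \in [0,T]} |S_t^{K,R} - S_t^{J,R}|^2 \leqs \E \int_0^{\tau_R}\sum_{k=J+1}^{K} f_k^2(s)\ds,
\end{equation}
which vanishes as $J,K \to \infty$ by dominated convergence. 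Hence $S_t^{K,R}$ converges uniformly on $[0,T]$ in probability to a continuous $L^2$ martingale, and undoing the localization yields a well-defined continuous local martingale $S_t^\infty := \sum_k \int_0^t f_k \dee\beta_k$.

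Next, we compute $\langle N, N\rangle$ by bilinearity. Using the hypothesized cross variations $\langle M, \beta_k\rangle_t = \int_0^t f_k(s)\ds$ and the fact that the Brownian motions $\{\beta_k\}_{k \in \N}$ have $\langle \beta_j, \beta_k\rangle_t = \delta_{jk}\, t$, we get
\begin{equation}
\Big\langle M, \int_0^\cdot f_k \dee\beta_k \Big\rangle_t = \int_0^t f_k(s)\dee\langle M,\beta_k\rangle_s = \int_0^t f_k^2(s)\ds,
\end{equation}
and
\begin{equation}
\Big\langle \int_0^\cdot f_k\dee\beta_k, \int_0^\cdot f_j \dee\beta_j\Big\rangle_t = \delta_{jk}\int_0^t f_k^2(s)\ds.
\end{equation}
Summing these after a localization to $[0,\tau_R]$ (to justify interchange of summation and quadratic variation in the series defining $S^\infty$), and combining with the hypothesized $\langle M, M\rangle_t = \int_0^t |f_s|_{\ell^2(\N)}^2\ds$, gives
\begin{equation}
\langle N, N\rangle_{t\wedge\tau_R} = \int_0^{t\wedge\tau_R}|f_s|_{\ell^2(\N)}^2\ds - 2\int_0^{t\wedge\tau_R}|f_s|_{\ell^2(\N)}^2\ds + \int_0^{t\wedge\tau_R}|f_s|_{\ell^2(\N)}^2\ds = 0.
\end{equation}

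Letting $R \to \infty$, we conclude that $N$ is a continuous local martingale with identically zero quadratic variation and $N_0 = 0$. A standard result then implies $N \equiv 0$ almost surely on $[0,T]$, which is the desired identity. The main technical obstacle is the careful justification of the series convergence and the interchange of summation with the quadratic variation bracket, both of which are handled cleanly via the localization by $\{\tau_R\}_{R>0}$ together with the BDG inequality for partial sums.
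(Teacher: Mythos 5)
Your proof is correct and follows the standard route for this type of identification lemma: form the difference $N_t = M_t - \sum_k\int_0^t f_k\,\dee\beta_k(s)$, expand $\langle N,N\rangle$ by bilinearity using the Kunita--Watanabe formula, observe that the three contributions cancel, and conclude from the fact that a continuous local martingale null at zero with vanishing bracket is identically zero. The localization by $\tau_R = \inf\{t : \int_0^t|f_s|^2_{\ell^2}\,\ds \geq R\}$ correctly handles both the convergence of the series in $L^2$ (via BDG) and the interchange of summation with the bracket. The paper states this lemma without proof, so there is no authorial argument to compare against; your approach is the expected one.

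One small inaccuracy worth flagging: you write that the hypotheses \emph{force} $M_0 = 0$. They do not. The bracket processes $\langle M,M\rangle_0$ and $\langle M,\beta_k\rangle_0$ vanish at time zero for any continuous (local) martingale, regardless of the value of $M_0$, so they carry no information about the starting value. Without the assumption $M_0 = 0$ the conclusion is simply false (take $M \equiv 1$ with $f \equiv 0$). What is true is that $M_0 = 0$ is implicit in the statement via its intended use: the process $M_t(\varphi)$ in the definition of a weak martingale solution is constructed to vanish at $t=0$, so in context the hypothesis is automatic. Your hedge ``if not already implicit in the statement'' catches the right idea, but the word ``forces'' should be replaced by ``must be assumed.''
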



\section{} \label{sec:Vel-Aver-Appendix}
\begin{proof}[Proof of Lemma \ref{lem:L2_velocity_averaging_x}]
For convenience we denote the velocity averaged process by
\begin{equation}
  \rho^\phi(t,x;\omega) = \int_{\Rd}f(t,x,v;\omega)\varphi(v)\dv.
\end{equation}
To begin, we assume that $f$ is regular enough for all the following computations to be well defined. Let $\Ft_x$ denote the Fourier transform in $x$ and let $\xi$ be the corresponding Fourier variable, for simplicity denote $\ft{f} = \Ft_x(f)$ and $\ft{g}= \Ft_x(g)$. Taking the Fourier transform of both sides of (\ref{eq:StochKin}) in It\^{o} form gives
\begin{equation}
\partial_t \ft{f}  + iv\cdot\xi\ft{f} +  \Ft_x(\Div_v(f\sigma_k\,\dot{\beta}_k)) = \Ft_x(\LStrat_{\sigma}f) + \ft{g}.
\end{equation}
If $|\xi| \leq 1$ we have the simple estimate
\begin{equation}
  \E\int_{0}^T\int_{\R^d}|\xi|^{1/3}|\ft{\rho}^\phi|^2\,\1_{|\xi|\leq 1}\,\dxi\dt \leq \|\phi\|_{L^\infty_v}^2\E\|f\|_{\LLLs{2}}^2.
\end{equation}
To show the $H_x^{1/6}$ estimate, it suffices to consider $|\xi| \geq 1$. We will find it useful to solve this equation with the addition of a damping term on both sides (corresponding to a pseudo-differential operator acting on $f$ in $x$). Let $\lambda\in C^\infty(\Rd_\xi)$, we now consider
\begin{equation}
\partial_t \ft{f}  + iv\cdot\xi\ft{f} +  \Ft_x(\Div_v(f\sigma_k\,\dot{\beta}_k)) + \lambda\ft{f} = \Ft_x(\LStrat_{\sigma}f) + \ft{g} + \lambda \ft{f}.
\end{equation}
Solving this via Duhammel, we find
\begin{equation}\label{eq:Duhammel-Vel-avg-L2}
\begin{split}
&\ft{f}(t,\xi,v) = e^{-(\lambda(\xi)+iv\cdot \xi)t}\ft{f_0}(\xi,v) + \lambda(\xi)\int_0^t e^{-(\lambda(\xi)+iv\cdot\xi)(t-s)}\ft{f}(s,\xi,v)\,\ds \\
&\hspace{.4in}+ \int_0^t e^{-(\lambda(\xi)+iv\cdot\xi)(t-s)}\ft{g}(s,\xi,v)\,\ds + \int_0^t e^{-(\lambda(\xi)+iv\cdot\xi)(t-s)}\Ft_x(\LStrat_\sigma f)(s,\xi,v)\,\ds \\
&\hspace{.4in}- \sum_{k=1}^\infty\int_0^t e^{-(\lambda(\xi)+iv\cdot\xi)(t-s)}\Ft_x(\Div_v(\sigma_kf))(s,\xi,v)\dee\beta_k(s).
\end{split}
\end{equation}
Let $\phi \in C^\infty_c(\Rd_v)$, upon multiplying both sides of (\ref{eq:Duhammel-Vel-avg-L2}) by $\phi$ and integrating in $v$, we see that the velocity average $\ft{\rho}\,{}^\phi$ satisfies
\begin{equation}
\label{eq:Duhammel-for-vel-avg}
\begin{split}
&\ft{\rho}\,{}^\phi(t,\xi) = \int_{\R^d}e^{-(\lambda(\xi) + iv\cdot\xi)t}\phi(v)\ft{f}_0(\xi,v)\,\dv\\
&\hspace{.5in}+\int_0^t \left(\int_{\Rd} e^{-(\lambda(\xi)+iv\cdot\xi)(t- s)}\ft{\Gamma}_0(s,\xi,v)\,\dv\right)\ds\\
&\hspace{.5in}- \sum_{k=1}^\infty\int_0^t\left(\int_{\Rd} e^{-(\lambda(\xi) + i v\cdot \xi)(t-s)}\Ft_x(\phi\Div_v(\sigma_kf))(s,\xi,v)\,\dv\right)\dee\beta_k(s)
\end{split}
\end{equation}
Where $\Gamma_0$ is defined so that
\begin{equation}\label{eq:gamma_0-D}
  \ft{\Gamma_0}(t,\xi,v) = \phi(v)\left(\lambda(\xi)\ft{f}(t,\xi,v) + \ft{g}(t,\xi,v) + \Ft_x(\LStrat_\sigma f)(t,\xi,v)\right).
\end{equation}
Note that the $v$ integrals in equation (\ref{eq:Duhammel-for-vel-avg}), can be written as a Fourier transform in $v$. We will denote such a Fourier transform in both $x$ and $v$ as $\Ft_{x,v}$, and denote by $\eta$ the Fourier variable dual to $v$. We find
\begin{equation}
\label{eq:Duhammel-for-vel-avg-after-ft-v}
\begin{split}
&\ft{\rho}\,{}^\phi(t,\xi) = e^{-\lambda(\xi) t}\Ft_{x,v}(\phi(v)f_0)(\xi,\xi t) +\int_0^te^{-\lambda(\xi)(t-s)}\Ft_{x,v}(\Gamma_0)(s,\xi,\xi(t-s))\ds\\
&\hspace{1in}- \sum_{k=1}^\infty\int_0^te^{-\lambda(\xi)(t-s)}\Ft_{x,v}(\phi\Div_v(\sigma_kf))(s,\xi,\xi(t-s))\dee\beta_k(s)\\
&\hspace{.5in}= I_1 + I_2 + I_3.
\end{split}
\end{equation}
The first term, $I_1$, we can bound 
\begin{equation}
|I_1|(t,\xi) \leq |\Ft_{x,v}(\phi\,f_0)(\xi,\xi t)|.
\end{equation}
For the second term, $I_2$, we have by Cauchy-Schwartz
\begin{equation}
\begin{aligned}
  |I_2|^2(t,\xi) &\leq \left(\int_0^t e^{-2\lambda(\xi)(t-s)}\,\ds\right)\left(\int_0^t \Bigl(e^{-\lambda(\xi)(t-s)}|\Ft_{x,v}(\Gamma_0)(s,\xi,\xi(t-s))|\Bigr)^2\,\ds\right)\\
&\leq \frac{1}{2\lambda(\xi)}\int_0^t \Bigl|e^{-\lambda(\xi)(t-s)}\Ft_{x,v}(\Gamma_0)(s,\xi,\xi(t-s))\Bigr|^2\,\ds.
\end{aligned}
\end{equation}
The term, $I_3(t,\xi)$ is a Martingale with quadratic variation
\begin{equation}
  \int_0^t\sum_{k=1}^\infty \Bigl(e^{-\lambda(\xi)(t-s)}|\Ft_{x,v}(\Gamma_k)(s,\xi,\xi(t-s))|\Bigr)^2\,\ds,
\end{equation}
where $\Gamma_k(t,x,v) = \phi\Div_v(\sigma_k f)(t,x,v)$. We conclude by the BDG inequality that
\begin{equation}
  \E|I_3|^2(t,\xi) \leq \E\int_0^t \sum_{k=1}^\infty \Bigl(e^{-\lambda(\xi)(t-s)}|\Ft_{x,v}(\Gamma_k)(s,\xi,\xi(t-s))|\Bigr)^2\,\ds.
\end{equation}
and therefore
\begin{equation}
\begin{split}
  &\E|\ft{\rho}\,{}^{\phi}(t,\xi)|^2\leq \E|\Ft_{x,v}(\phi\,f_0)(\xi,\xi t)|^2\\
&\hspace{1in} + \frac{1}{2\lambda(\xi)} \E\int_0^t \Bigl(e^{-\lambda(\xi)(t-s)}|\Ft_{x,v}(\Gamma_0)(s,\xi,\xi (t-s))|\Bigr)^2\ds\\
 &\hspace{1in} + \E\int_0^t\sum_{k=1}^\infty \Bigl(e^{-\lambda(\xi)(t-s)}|\Ft_{x,v}(\Gamma_k)(s,\xi,\xi(t-s))|\Bigr)^2\ds.
\end{split}
\end{equation}

The following identities can be readily verified
\begin{equation}
  \Gamma_k = \phi\, \Div_v(\sigma_k f) = \Div_v(\phi\, \sigma_k f) - \nabla \phi \cdot \sigma_k f,
\end{equation}
and 
\begin{equation}
\begin{split}
  \phi\,\LStrat_\sigma f &= \nabla_v^2 : (D_\sigma\,\phi\, f) - 2\Div_v(D_\sigma \nabla \phi f) + \nabla^2_v\phi : D_\sigma\,f - \Div_v(G_\sigma\,\phi\, f) + \nabla\phi \cdot  G_\sigma f,
\end{split}
\end{equation}
where we have denoted for convenience
\begin{equation}
  D_\sigma = \sum_{k=1}^\infty \sigma_k\tensor\sigma_k\quad\text{and}\quad  G_\sigma = \sum_{k=1}^\infty \sigma_k\cdot\nabla\sigma_k.
\end{equation}
This implies 
\begin{equation}
  \Ft_{x,v}(\Gamma_k) = i\eta\cdot\Ft_{x,v}(\phi\,\sigma_k f) - \Ft_{x,v}(\nabla\phi\cdot \sigma_k f).
\end{equation}
and
\begin{equation}
\begin{split}
  \label{eq:Lstrat-Fourier}
  &\Ft_{x,v}(\phi\, \LStrat_\sigma f) = -\eta\tensor\eta : \Ft_{x,v}(D_\sigma\,\phi\,f) -  2i\eta\cdot\Ft_{x,v}(D_\sigma\nabla\phi\,f) + \Ft_{x,v}(\nabla_v^2\phi : D_\sigma \,f)\\
&\hspace{1.5in} - i\eta\cdot\Ft_{x,v}(G_\sigma\,\phi\, f) + \Ft_{x,v}(\nabla\phi\cdot G_\sigma f).
\end{split}
\end{equation}

Using that $z^pe^{-\lambda z} \leq C_p \lambda^{-p}$, where $C_p$ is constant depending on $p$, we may bound
\begin{equation}
  \begin{split}
  e^{-\lambda z}|\Ft_{x,v}(\Gamma_k)(s,\xi,z\, \xi)| \leqc \lambda^{-1}|\xi|\,|\Ft_{x,v}(\phi\,\sigma_kf)(s,\xi,z\, \xi)| + |\Ft_{x,v}(\nabla \phi\cdot \sigma_k f)(s,\xi,z\, \xi)|
  \end{split}
\end{equation}
and using the definition of $\Gamma_0$, (\ref{eq:gamma_0-D}), and (\ref{eq:Lstrat-Fourier}) we can bound 
\begin{equation}
  \begin{split}
    &e^{-\lambda z}|\Ft_{x,v}(\Gamma_0)(s,\xi, z\,\xi)|\\
&\hspace{.5in}\leqc \lambda|\Ft_{x,v}(\phi\,f)(s,\xi ,z \xi)| + |\Ft_{x,v}(\phi\,g)(s,\xi ,z\, \xi)| + \lambda^{-2}|\xi|^2|\Ft_{x,v}(\phi D_\sigma f)(s,\xi ,z\, \xi)|\\
&\hspace{1in} + \lambda^{-1}|\xi||\Ft_{x,v}(D_{\sigma}\nabla\phi\,f)(s,\xi, z\,\xi)| + |\Ft_{x,v}(\nabla^2_v\phi\,:D_\sigma f)(s,\xi,z\,\xi)|\\
&\hspace{1in} + \lambda^{-1}|\xi||\Ft_{x,v}(\phi G_\sigma f)(s,\xi,z\, \xi)| + |\Ft_{x,v}(\nabla\phi\cdot G_{\sigma}f)(s,\xi,z\, \xi )|.
  \end{split}
\end{equation}
Integrating $\E|\ft{\rho}^\phi(t,\xi)|^2$ over $[0,T]$ and using the previous two bounds we get for a.e $\xi \in \Rd$,
\begin{equation}\label{eq:timeintegrated-fourier-vel-avg}
  \begin{split}
    &\E\int_0^T|\ft{\rho}^\phi(t,\xi)|^2\dt \leqc \E\int_0^T |\Ft_{x,v}(\phi\,f_0)(\xi,\xi t)|^2\dt\\
&\hspace{.3in} + \E\int_0^T\int_0^t\Bigg\{\lambda|\Ft_{x,v}(\phi\,f)(s,\xi ,(t-s) \xi)|^2 + \lambda^{-1}|\Ft_{x,v}(\phi\,g)(s,\xi ,(t-s)\, \xi)|^2\\
&\hspace{.3in}+ \lambda^{-5}|\xi|^4|\Ft_{x,v}(\phi D_\sigma f)(s,\xi ,(t-s)\, \xi)|^2 + \lambda^{-3}|\xi|^2|\Ft_{x,v}(D_{\sigma}\nabla\phi\,f)(s,\xi, (t-s)\,\xi)|^2\\
&\hspace{.3in} + \lambda^{-1}|\Ft_{x,v}(\nabla^2_v\phi\,:D_\sigma f)(s,\xi,(t-s)\,\xi)|^2 + \lambda^{-3}|\xi|^2|\Ft_{x,v}(\phi G_\sigma f)(s,\xi,(t-s)\, \xi)|^2\\
&\hspace{.3in} + \lambda^{-1}|\Ft_{x,v}(\nabla\phi\cdot G_{\sigma}f)(s,\xi,(t-s)\, \xi )|^2 + \sum_{k=1}^\infty \lambda^{-2}|\xi|^2\,|\Ft_{x,v}(\phi\,\sigma_kf)(s,\xi,(t-s)\, \xi)|^2\\
&\hspace{.3in} + \sum_{k=1}^\infty|\Ft_{x,v}(\nabla \phi\cdot \sigma_k f)(s,\xi,(t-s)\, \xi)|^2\Bigg\}\,\ds\dt.
  \end{split}
\end{equation}

Let's remark that, apart from the initial data, the above estimate is comprised entirely of integrals of the form
\begin{equation}
  \int_0^T \int_0^t |\Ft_{x,v} (h)(s,\xi,(t-s)\xi)|^2\ds\dt.
\end{equation}
Following the technique in \cite{Bouchut1999-nh}, such integrals can be estimated by changing variables to $(z,s) = (|\xi|(t-s),s)$, using Fubini, applying the classical trace theorem on the one dimensional integral in the $z$ variable, and applying Plancharel. We find that for any $\gamma > (d-1)/2$, 
\begin{equation}
\begin{split}
  \int_0^T \int_0^t |\Ft_{x,v} (h)(s,\xi,(t-s)\xi)|^2\ds\dt &\leq |\xi|^{-1}\int_0^T \int_{-\infty}^\infty \left|\Ft_{x,v}(h) \left(s,\xi, z\frac{\xi}{|\xi|}\right)\right|^2\dz \ds\\
&\leqc |\xi|^{-1}\int_0^T\int_{\Rd} (1+|v|^2)^{\gamma}|\Ft_{x}(h)(s,\xi,v)|^2\dv\ds,
 \end{split} 
\end{equation}
and for the initial data,
\begin{equation}
  \int_0^T |\Ft_{x,v}(\phi\,f_0)(\xi,\xi t)|^2\dt \leqc |\xi|^{-1}\int_{\Rd} (1+ |v|^2)^\gamma|\Ft_{x}(h)(\xi,v)|^2\dv.
\end{equation}
Applying the above two estimates term by term to (\ref{eq:timeintegrated-fourier-vel-avg}), we can readily estimate for a.e. $\xi$,
\begin{equation}
  \begin{split}
    &\E\int_0^T |\rho^\phi(t,\xi)|^2\dt \leq C_{\sigma,\phi}M(\xi)\Big(\int_{\Rd}|\ft{f}_0(\xi,v)|^2+ \E\int_0^T\int_{\Rd}|\ft{f}(\xi,v,s)|^2\dv\ds\\
 &\hspace{1in} +\E\int_{0}^T\int_{\Rd}|\ft{g}(\xi,v,s)|^2\dv\ds\Big),
  \end{split}
\end{equation}
where
\begin{equation}
M(\xi) = \frac{|\xi|^3}{\lambda(\xi)^5} + \frac{|\xi|}{\lambda(\xi)^3}+ \frac{|\xi|}{\lambda(\xi)^2} + \frac{1}{|\xi|\lambda(\xi)} + \frac{\lambda(\xi)}{|\xi|} + \frac{1}{|\xi|},
\end{equation}
and
\begin{equation}
  C_{\sigma,\phi} \leqc \|(|\phi|^2 + |\nabla\phi|^2 + |\nabla^2\phi|^2)(1+|v|^2)^{\gamma}\|_{L^\infty_v}\Big\|\sum_{k=1}^\infty (|\sigma_k|^2 + |\sigma\cdot\nabla\sigma_k|)\Big\|_{L^\infty_v}
\end{equation}
Choosing $\lambda(\xi) = |\xi|^{2/3}$, (really take $\lambda(\xi) = (\epsilon + |\xi|^2)^{1/3}$ and take $\epsilon \to 0$) we conclude that
\begin{equation}
M(\xi) =  3|\xi|^{-1/3} + 2|\xi|^{-1} + |\xi|^{-5/3} = \leq 6|\xi|^{-1/3} \text{ if } |\xi|\geq 1.
\end{equation}
Therefore
\begin{equation}
\begin{split}
&\E\int_{0}^T\int_{\Rd}|\xi|^{1/3}|\ft{\rho}^\phi|^2\1_{|\xi|\geq 1}\;\dee\xi\ds \leq C_{\sigma,\phi}\Big(\|f_0\|_{L^2_{x,v}}^2 + \E\|f\|_{L^2_{t,x,v}}^2 + \E\|g\|_{L^2_{t,x,v}}^2\Big),
\end{split} 
\end{equation}
whereby we have the desired inequality using the Fourier characterization of $H^{1/6}_x$.

The above proof can be extended to weak solutions $f \in L^2_{\omega,t,x,v}$, by first mollifying the equation in $(x,v)$ as in the proof of theorem \ref{prop:Weak_Is_Renormalized} and including the commutators with the term $g$ (along with another stochastic integral). The above computation, with the addition of a stochastic integral to the right-hand-side, still apply and the resulting estimates are computed in terms of the the $L^2_{\omega,t,x,v}$ norm of the right-hand-side, the commutator contribution will then vanish as the mollification parameter goes to $0$. Furthermore we may pass the limit in each term on the right-hand side using the properties of mollifiers. The resulting $H^{1/6}$ estimate on the mollified velocity average can then be used to conclude the associated $H^{1/6}$ estimate on the limiting $f$ by a monotone convergence argument on the Fourier side.

\end{proof}


\end{appendices}


\bibliographystyle{abbrv}
\bibliography{bibliography}

\end{document}